\documentclass[10pt,a4paper]{article}


\usepackage[left=3cm, right=3cm, bottom=3cm]{geometry} 



\usepackage[T1]{fontenc} 
\usepackage{ngerman} 

\usepackage[english]{babel}

\newcommand{\footremember}[2]{%
	\footnote{#2}
	\newcounter{#1}
	\setcounter{#1}{\value{footnote}}%
}
 
\usepackage{palatino}

\usepackage{amsmath}
\usepackage{amsthm}
\usepackage{amssymb}
\usepackage{url}
\usepackage[square,sort,comma,numbers]{natbib}

\usepackage{hyperref}
\usepackage{graphicx}
\usepackage{url}
\usepackage{listings}

\makeindex


\allowdisplaybreaks


\newtheorem{Theorem}{Theorem}[section]
\newtheorem{Definition}[Theorem]{Definition} 
\newtheorem{Lemma}[Theorem]{Lemma}	
\newtheorem{Proposition}[Theorem]{Proposition}
\newtheorem{Corollary}[Theorem]{Corollary}

\numberwithin{equation}{section} 
\newcommand{\C}{\ensuremath{\mathbb{C}}}
\newcommand{\R}{\ensuremath{\mathbb{R}}}
\newcommand{\Q}{\ensuremath{\mathbb{Q}}}
\newcommand{\E}{\ensuremath{\mathbb{E}}}
\newcommand{\PP}{\ensuremath{\mathbb{P}}}
\newcommand{\F}{\ensuremath{\mathbb{F}}}

\newcommand{\N}{\ensuremath{\mathbb{N}}}

\newcommand{\Div}{\ensuremath{\operatorname{div}}}

\newcommand{\ft}{\ensuremath{\mathcal{F}_t}}

\newcommand{\ind}{\ensuremath{\mathbf{1}}}

\newcommand{\re}{\operatorname{Re}}

\DeclareMathOperator*{\esssup}{\operatorname{ess\,sup}}
\DeclareMathOperator*{\essinf}{\operatorname{ess\,inf}}

%
%
%

\begin{document}
\title{Quasilinear parabolic stochastic evolution equations via maximal \texorpdfstring{$ L^{p} $}{Lp}-regularity}

\author{Luca Hornung\footremember{address}{{Institute for Analysis, Karlsruhe Institute of Technology, D-76128 Karlsruhe, Germany (E-mail adress: luca.hornung@kit.edu)}} }

\date{\today}
        \maketitle
     
    \begin{abstract}
    	\noindent
We study the Cauchy problem for an abstract quasilinear stochastic parabolic evolution equation on a Banach space driven by a cylindrical Brownian motion. We prove existence and uniqueness of a local strong solution up to a maximal stopping time, that is characterised by a blow-up alternative. The key idea is an iterative application of the theory about maximal $ L^p $- regularity for semilinear stochastic evolution equations by Van Neerven, Veraar and Weis. We apply our local well-posedness result to a convection-diffusion equation on a bounded domain with Dirichlet, Neumann or mixed boudary conditions and to a generalized Navier-Stokes equation describing non-Newtonian fluids. In the first example, we can even show that the solution exists globally.

\medskip

\noindent
\textbf{Mathematics Subject Classification (2010):} 60H15, 60H30, 35K59, 65J08, 58D25, 60H15, 76A05, 35Q35, 35K57  

\medskip
\noindent
\textbf{Keywords:}  quasilinear stochastic equations, stochastic maximal $ L^p $-regularity, stochastic evolution equations in Banach spaces, blow-up alternative, functional calculus, stochastic reaction diffusion equation, non-Newtonian fluids
    \end{abstract}

    \section{Introduction}

In this article, we develop an abstract framework to deal with quasilinear stochastic evolution equations driven by a cylindrical Brownian motion, like the convection-diffusion equation
\begin{equation}\label{diffusion_convection_equation}
\begin{cases}
du(t)=\big[\Div(a(u(t))\nabla u(t))-\Div(G(u))\big]dt+\sum_{n=1}^{\infty}B_n(u(t))d\beta_n(t), t\in (0,T]\\
u(0)=u_0,
\end{cases}
\end{equation} 
on a domain $ D\subset \R^{d} $ with Dirichlet, Neumann or even mixed boundary conditions, a uniformly positive definite diffusion matrix $ a $ and a sequence of independent Brow\-nian motions $ (\beta_n)_{n\in\N}. $ \eqref{diffusion_convection_equation} describes certain phenomena where particles, energy, or other physical quantities are transferred inside a physical system due to diffusion and convection. Here the stochastic perturbation represents random external perturbation or a lack of knowledge of certain parameters.	\\ \\
We consider the abstract quasilinear stochastic evolution equations 
\begin{equation*}
(\operatorname{QSEE})\begin{cases}
du(t)=\big[-A(u(t))u(t)+F(u(t))\big]dt+B(u(t))dW(t),\ t\in (0,T],\\
u(0)=u_0,
\end{cases}
\end{equation*} 
on a Banach space $ E $ in a setting that covers \eqref{diffusion_convection_equation} and we aim to establish existence and uniqueness of local strong solutions of $ (\operatorname{QSEE}) $ up to a maximal stopping time $ \tau $.\\ \\
In a deterministic setting, these equations have been studied for more than 30 years using strong linearisation techniques relying on the solvability of non-autonomous equations under certain H\"older continuity assumptions (see e.g. \cite{lunardi_abstract_quasilinear_parabolic_equation}, \cite{Yagi_abstract_parabolic_evolution_equations_and_their_applications}) or relying on maximal $ L^p $-regularity  (see e.g.  \cite{amann_linear_and_quasilinear_parabolic_problems}, \cite{amann_quasilinear_parabolic_via_maximal_regularity}, \cite{clement_li_abstract_parabolic_quasilinear_equations_and_application}, \cite{pruess_simonett_moving_interfaces_and_quasilinear}). In a stochastic setting, concrete quasilinear equations have been extensively studied in the literature in case of monotone coefficients (see e.g. \cite{krylov_stochastic_evolution_equation}, \cite{prevot_roeckner_a_concise_course_on_spde}, \cite{barbu_prato_roeckner_existence_uniqueness_nonnegative_stochastic_porous_media} and \cite{barbu_prato_roeckner_existence_uniqueness_stochastic_porous_media} and the references therein). In the same spirit is \cite{liu_roeckner_spde_in_hilbert_space_locally_monotone_coefficients}, where the authors extend the known results to the case of locally monotone coefficients. Existence and uniqueness of \eqref{diffusion_convection_equation} on the torus $ \mathbb{T}^d $ with a globally Lipschitz continuous quasilinearity $ a $ was proven in \cite{hofmanova_zhang_existence_uniqueness_quasilinear} by approximating $ a $ and $ G $ with locally monotone coefficients, deriving uniform estimates for the solutions of the approximating problem and passing to the limit afterwards. However, as far as we know, there are no abstract results comparable to the state of knowledge in the deterministic case. One reason might be, that the theory of maximal regularity for the stochastic integral in Banach spaces has not been developed until 2012. Then, Van Neerven, Veraar and Weis found out (see \cite{nerven_veraar_weis_maximal_lp_regularity_stochastic_convolution}), that if $ E=L^{q}(O,\mu) $ for some $ q\geq 2 $ (or more general, $ E $ is a UMD Banach space of type $ 2 $) and the operator $ B $ has a bounded $ H^{\infty}(\Sigma_{\theta}) $-calculus on a sector $ \Sigma_{\theta}  $ with $ \theta\in (0,\pi/2), $ one has
\[ \big\|t\mapsto\sum_{k=1}^{\infty}\int_{0}^{t}B^{1/2}e^{-B(t-s)}g_k(s)d\beta_s\big\|_{L^{p}(\Omega\times\R_{\geq 0};L^{q}(O))} \leq C_{\operatorname{MRS}}\|g\|_{L^{p}(\Omega\times\R_{\geq 0};L^{q}(O;l^2))}. \]
for $ p>2 $. Together with the well-known deterministic maximal regularity result
\[ \big\|t\mapsto\int_{0}^{t}Be^{-B(t-s)}f(s)\operatorname{ds}\big\|_{L^{p}(\R_{\geq 0};L^{q}(O))} \leq C_{\operatorname{MRD}}\|f\|_{L^{p}(\R_{\geq 0};L^{q}(O))}, \]
this led to a new theory for semilinear stochastic evolution equations of the form
\begin{equation*}
\operatorname{(SEE)}\begin{cases}
du(t)=\big[-Bu(t)+G(u(t))\big]dt+B(u(t))dW(t),\ t\in (0,T],\\
u(0)=u_0,
\end{cases}
\end{equation*} 
where nonlinearites $ G:[0,T]\times D(B)\to L^{q}(O) $ and $ B:[0,T]\times D(B)\to D(B^{1/2}) $ were allowed, as long as they were Lipschitz continuous with a small Lipschitz constant (see \cite{nerven_veraar_weis_maximal_lp_stochastic_evolution}). \\ \\
Now, we briefly describe our main assumptions and our strategy. We also work on UMD Banach spaces $ E $ of type $ 2 $, e.g. $ E=L^{q}(O;\mu) $ for $ q\geq 2 $ and we choose $ p>2. $  We assume, that the domain of the operators $ A(z) $ is constant, i.e. there exists a Banach space $ E^1 $ such that $ D(A(z))=E^1 $ for every $ z\in  \big(E,E^1\big)_{1-1/p,p}. $ and we demand $ A $ to be at least locally Lipschitz continuous, i.e. for every $ R>0 $ there exists $ L(R)>0 $ such that
$$ \|A(z)-A(y)\|_{B(E^1,E)}\leq L(R)\|z-y\|_{(E,E^1)_{1-1/p,p}} $$ 
for every $ y,z $ with $ \|y\|_{(E,E^1)_{1-1/p,p}},\|z\|_{(E,E^1)_{1-1/p,p}}\leq R $. As a first step, we consider 
\[\widetilde{F}_1(u(t))=\theta_{\lambda}\big(\sup\limits_{s\in[0,t]}\|u(s)-u_0\|_{(E,E^1)_{1-1/p,p}}+\|u\|_{L^{p}(0,t;E^1)}\big)\big(A(u(t))u(t)-A(u_0)u(t)\big),\]
where $ \theta_{\lambda}:\R_{\geq 0}\to [0,1] $ is a Lipschitz continuous cut-off function, that equals one on $ [0,\lambda] $ and vanishes on $ [2\lambda,\infty) $. This means, that as long as $ u(t) $ is close enough to $ u_0 $ and $ \|u\|_{L^{p}(0,t;E^1)} $ is small, we have $ A(u(t))u(t)=A(u_0)u(t)+\widetilde{F}_1(u(t)). $
We prove that $ \widetilde{F}_1 $ has a Lipschitz constant proportional to $ \lambda $ and thus, choosing $ \lambda $ small enough, satisfies the assumptions needed to solve $ (\operatorname{SEE}) $ with $ B=A(u_0) $ and $ G(u(t))=-\widetilde{F}_1(u(t))+F(u(t)). $ 
A solution of $ (\operatorname{SEE}) $ exists on $ [0,T], $ but it just solves $ (\operatorname{QSEE}) $ on the random interval $ [0,\tau_1], $ where $ \tau_1 $ is a stopping time given by
\[ \tau_1=\inf\big\{t\in [0,T]:\|u(t)-u_0\|_{(E,E^1)_{1-1/p,p}}+\|u\|_{L^{p}(0,t;E^1)}>\lambda\big \}. \]
Then, we iterate this procedure with the new initial value $ u(\tau_1) $ and set
\[\widetilde{F}_2(u(t))=\theta_{\lambda}\big(\sup_{s\in[\tau_1,t]}\|u(s)-u(\tau_1)\|_{(E,E^1)_{1-1/p,p}}+\|u\|_{L^{p}(\tau_1,t;E^1)}\big)\big(A(u(t))u(t)-A(u_{\tau_1})u(t)\big)\]
for $ t\in [\tau_1,T] $ and get a stopping time $ \tau_2 $ and a solution of $ (\operatorname{QSEE}) $ on $ [\tau_1,\tau_2]. $ Inductively, we construct a sequence of stopping times $ (\tau_n)_n $ such that $ u $ solves $ (\operatorname{QSEE}) $ on $ [0,\tau_n] $ for every $ n\in\N. $ Finally, we define $ \tau=\sup_{n\in\N}\tau_n $ and show that the solution of $(\operatorname{QSEE})$ is unique and satisfies 
\[ \mathbb{P}\Big\{\tau<T, \|u\|_{L^{p}(0,\tau;E^{1})}<\infty, u:[0,\tau)\to (E,E^1)_{1-1/p,p} \text{ is uniformly continuous}\Big \}= 0.  \]
This means, that if one wants to prove that a local solution exists in the whole interval $ [0,T], $ it is sufficient to show uniform continuity with values in $ (E,E^1)_{1-1/p,p} $ and to control the quantity $ \|u\|_{L^{p}(0,\tau;E^{1})} $ pathwise.\\ \\
All in all, the main advantage of our abstract operator theoretic approach is, that we can cover a wide range of applications. For example, our assumption on the $ H^{\infty} $-calculus is satisfied in many situations. As a rule of thumb, one might say, that almost all elliptic differential operators of even order have such a calculus (see e.g. \cite{amann_hieber_simonet_bounded_functional_calculus_elliptic}, \cite{denk_hieber_pruss_new_thoughts_on_old_results}, \cite{denk_hieber_pruss_r_boundedness_fourier_multiplier}, \cite{duong_mcintosh_functional_calculus_of_second_order_measurable},
\cite{duong_robinson_semigroup_kernels_holomorphic_functional_calculus},
\cite{duong_simonett_functional_calculus_elliptic_operators}, \cite{duong_yan_bounded_holomorphic_functional_calculus_for_non-divergence}, \cite{kalton_kunstmann_weis_pertubation_and_interpolation_for_functional_calculus}, \cite{kalton_weis_functional_calculus_and_sum_of_closed_operators},
\cite{Kunstmann_Weis_Lecture_Notes} and \cite{mcintosh_operators_which_have_a_functional_calculus}).

The full power of our setting can be seen in our examples. First, we discuss a quasilinear elliptic equation on $ \R^{d} $ of the form
\begin{equation*}
\begin{cases}
du(t)=\big[\sum_{i,j=1}^{d}a_{ij}(\cdot,u(t),\nabla u(t))\partial_i\partial_ju(t)+ f(t)\big]dt+B(t,u(t))dW(t),\\
u(0)=u_0.
\end{cases}
\end{equation*}
As far as we know, this equation has not been studied in a stochastic setting, since the usual monotonicity methods are not available and it is difficult to derive energy estimates. 

Next, we give an application to fluid dynamics. The example of a generalized Navier-Stokes equation for non-Newtonian fluids is inspired by the deterministic work of Bothe and Pr\"uss in \cite{bothe_pruss_navier_stokes}. The stochastic noise perturbation occur in the context of turbulences, for example in the Kreichnan model. 

Last but not least, we discuss the above mentioned convection-diffusion equation $ \eqref{diffusion_convection_equation} $ with mixed boundary conditions. We prove existence and uniqueness of local weak solutions in the sense of partial differential equations, i.e. we consider it as an equation in $ W^{-1,q}(D). $ In this application, we make use of the great progress within the last years concerning mixed boundary problems in $ W^{-1,q}(D) $ for $ q>2. $ Exemplary, we need the square-root-property of operators in divergence form in $ L^{q}(D) $ (see \cite{auschzer_badr_haller_rehberg_square_root_divergence_form_lp},
\cite{egert_haller_tolksdorf_kato_mixed_boundary} and the references therein) and the so called isomorphism property between $ W^{1,q}(D) $ and $ W^{-1,q}(D) $ (see \cite{disser_kaiser_rehberg_optimal_sobolev_regularity_for_divergence_elliptic_operators}, \cite{elschner_rehberg_gunther_optimal_regularity_c1_interace}). Afterwards, we restrict us to Dirichlet boundary conditions and show that under a global Lipschitz assumption on the diffusion matrix, the solution does not explode and exists on the whole interval $ [0,T]. $ This generalises the work of Hofmanova and Zhang (\cite{hofmanova_zhang_existence_uniqueness_quasilinear}) from the torus to arbitrary bounded $ C^1 $-domains. Moreover, our method does not need initial data in the space $ C^{1+\varepsilon}(\overline{D}) $, but only $ u_0\in (W^{-1,q}(D),W^{1,q}_0(D))_{1-1/p,p}, $ which seems to be natural, if one expects solutions that are pathwise in $ L^{p}(0,T;W^{1,q}_0(D)). $

    \section{Preliminaries}
The purpose of this section is to provide a short overview over the basic tools used in this paper. For most of the proofs and further details, we give references to the literature.

Throughout this paper, let $ (\Omega,\mathfrak{F},\F=(\ft)_{t\geq 0},\PP) $ be a filtered probability space, that satisfies the usual conditions, i.e. $ \mathcal{F}_0 $ contains all $ \PP $-null sets and the filtration is right-continuous. Moreover, given normed spaces $ X $ and $ Y $, $ B(X,Y) $ denotes the set of all linear and bounded operators from $ X $ to $ Y $. Last but not least, we write $ C(a,b;X) $ for the space of uniformly continuous functions on $ [a,b] $ with values in $ X $ equipped with its usual norm.

\subsection{Stopping times and the \texorpdfstring{$\sigma$-algebra of $\tau$-past}{sigma-algebra of tau-past}}
It will be necessary to stop a stochastic process when it leaves certain balls around the initial value. However, this time will differ from path to path and therefore we introduce stopping times. $ \tau:\Omega\to [0,T] $ is called $ \F $-stopping time, if $ \{\tau\leq t \}\in\ft $ for all $ t\in [0,T]. $ By the right-continuity, this is equivalent to $ \{\tau<t \}\in\ft. $ The $ \sigma $-algebra
\[ \mathcal{F}_\tau =\big\{A\in\mathfrak{F}: A\cap\{t\leq \tau \}\in\mathcal{F}_t\ \forall t\in [0,T]\big\}\]
is called $ \sigma $-algebra of $ \tau $-past and can be interpreted as the knowledge of an observer at the random moment $ \tau. $ 

The following well-known results will be used frequently. The proof can be found e.g. in \cite{Klenke}, Lemma $ 9.21 $ and Lemma $ 9.23. $
\begin{Proposition}\label{appendix_probability_tau_past}
$ \mathcal{F}_\tau $ is a $ \sigma $-algebra and satisfies the following properties.
\begin{itemize}
\item [a)] If $ \tau=t $ almost surely for some $ t\in [0,T], $ we have $ \mathcal{F}_\tau=\ft. $
\item [b)] Given another $ \F$-stopping time $ \sigma, $ we have $ \mathcal{F}_{\tau\wedge\sigma}=\mathcal{F}_\tau\cap\mathcal{F}_\sigma. $ In particular, if $ \tau\leq\sigma $ almost surely, we have the inclusion $ \mathcal{F}_\tau\subset \mathcal{F}_\sigma. $
\item [c)] If $ (X(\cdot,t))_{t\in[0,T]} $ is a progressively measurable process with respect to $ \F, $ then the random variable $ X_\tau(\omega):=X(\omega,\tau(\omega)) $ is $ \mathcal{F}_\tau $-measurable.
\end{itemize}
\end{Proposition}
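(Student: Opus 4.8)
\emph{Proof proposal.} All three assertions are standard, so the plan is simply to unwind the definition of $\mathcal{F}_\tau$ (written, as usual, as the collection of $A\in\mathfrak{F}$ with $A\cap\{\tau\le t\}\in\ft$ for all $t$), using repeatedly that $\{\rho\le t\}\in\ft$ and that $\rho\wedge t$ is $\ft$-measurable for every $\F$-stopping time $\rho$, together with the fact that $\mathcal{F}_0$, hence every $\ft$, contains all $\PP$-null sets.

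That $\mathcal{F}_\tau$ is a $\sigma$-algebra is immediate: $\Omega\cap\{\tau\le t\}=\{\tau\le t\}\in\ft$; for $A\in\mathcal{F}_\tau$ one has $A^c\cap\{\tau\le t\}=\{\tau\le t\}\setminus(A\cap\{\tau\le t\})\in\ft$; and for a sequence $(A_n)\subset\mathcal{F}_\tau$ one has $(\bigcup_nA_n)\cap\{\tau\le t\}=\bigcup_n(A_n\cap\{\tau\le t\})\in\ft$. For (a) I would argue that if $\tau=t$ almost surely, then for $A\in\ft$ and any $s\in[0,T]$ the event $A\cap\{\tau\le s\}$ differs from $A$ by a null set when $s\ge t$ and is itself a null set when $s<t$, so in either case it lies in $\mathcal{F}_s$; thus $\ft\subseteq\mathcal{F}_\tau$, while $A\in\mathcal{F}_\tau$ forces $A=A\cap\{\tau\le t\}$ modulo a null set, hence $A\in\ft$. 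The same ``modulo a null set'' observation shows $\mathcal{F}_\rho=\mathcal{F}_{\rho'}$ whenever $\rho=\rho'$ a.s., a fact I will use below.

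For (b), the inclusion $\mathcal{F}_{\tau\wedge\sigma}\subseteq\mathcal{F}_\tau\cap\mathcal{F}_\sigma$ follows from monotonicity of $\rho\mapsto\mathcal{F}_\rho$: if $\rho\le\rho'$ and $A\in\mathcal{F}_\rho$, then $A\cap\{\rho'\le t\}=(A\cap\{\rho\le t\})\cap\{\rho'\le t\}\in\ft$ because $\{\rho'\le t\}\subseteq\{\rho\le t\}$; apply this to $\tau\wedge\sigma\le\tau$ and $\tau\wedge\sigma\le\sigma$. For the reverse inclusion, if $A\in\mathcal{F}_\tau\cap\mathcal{F}_\sigma$, then $A\cap\{\tau\wedge\sigma\le t\}=(A\cap\{\tau\le t\})\cup(A\cap\{\sigma\le t\})\in\ft$, so $A\in\mathcal{F}_{\tau\wedge\sigma}$. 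The ``in particular'' statement is then immediate: if $\tau\le\sigma$ a.s., then $\tau\wedge\sigma=\tau$ a.s., so $\mathcal{F}_\tau=\mathcal{F}_{\tau\wedge\sigma}=\mathcal{F}_\tau\cap\mathcal{F}_\sigma\subseteq\mathcal{F}_\sigma$.

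Part (c) is the only step with genuine content. Fixing $t\in[0,T]$, the map $\omega\mapsto(\tau(\omega)\wedge t,\omega)$ is measurable from $(\Omega,\ft)$ into $([0,t]\times\Omega,\mathcal{B}([0,t])\otimes\ft)$ since $\tau\wedge t$ is $\ft$-measurable, and progressive measurability of $X$ means exactly that its restriction to $[0,t]\times\Omega$ is $\mathcal{B}([0,t])\otimes\ft$-measurable into $E$; composing, $\omega\mapsto X(\tau(\omega)\wedge t,\omega)$ is $\ft$-measurable. Since this function coincides with $X_\tau$ on $\{\tau\le t\}$, for any Borel set $B\subseteq E$ we obtain $\{X_\tau\in B\}\cap\{\tau\le t\}=\{X(\tau\wedge t,\cdot)\in B\}\cap\{\tau\le t\}\in\ft$, and letting $t$ range over $[0,T]$ this gives $\{X_\tau\in B\}\in\mathcal{F}_\tau$, i.e.\ $X_\tau$ is $\mathcal{F}_\tau$-measurable. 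I expect the only mild obstacle to be making this composition argument fully rigorous in the vector-valued setting, essentially identifying ``progressively measurable'' with joint $\mathcal{B}([0,t])\otimes\ft$-measurability on each slab $[0,t]\times\Omega$; everything else is set algebra and the null-set convention.
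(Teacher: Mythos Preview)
Your proof is correct and follows the standard textbook route. The paper itself does not give a proof at all: it simply cites Klenke, Lemma~9.21 and Lemma~9.23, so there is nothing to compare beyond noting that you have supplied the details the paper omits.
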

Throughout this article, we will use the notation 
\[ \Lambda\times [\tau,\mu):=\big\{(\omega,t)\in \Omega\times[0,T]:\ t\in [\tau(\omega),\mu(\omega)) \big\} \]
for some $ \Lambda\subset\Omega $ and stopping times $ \tau,\mu $ with $ \tau\leq\mu $. Closed and open random intervals are defined similarly. If we call a process $ u$ defined on $[\Omega\times [\tau,\mu] $ strongly measurable or adapted, we mean that $ u\ind_{\Omega\times [\tau,\mu]} $ is strongly measurable or adapted.

 Since we did not find any reference in the literature for the following Lemma, we give a short proof.
\begin{Lemma}\label{quasi_stochastic_parabolic_stopping_time}
	Let $ X_t:\Omega\times [0,T]\to\R_{\geq 0} $, $ t\in [0,T], $ be an $ \F $-adapted process with almost surely continuous paths, $ \sigma $ an $ \F $-stopping time with values in $ [0,T] $ and $ \lambda>0. $ If we define
	\[ \widetilde{\sigma}=\inf\big\{t\in [0,T-\sigma]:X_{t+\sigma}>\lambda \big\}\wedge T, \]
	then $ \sigma+\widetilde{\sigma} $ is also an $ \F $-stopping time.
\end{Lemma}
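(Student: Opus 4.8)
The goal is to show $\{\sigma + \widetilde\sigma \le t\} \in \mathcal F_t$ for every $t \in [0,T]$. The key point is that $\widetilde\sigma$ is itself a stopping time with respect to the \emph{shifted} filtration $(\mathcal F_{s+\sigma})_{s\ge 0}$, but rather than develop that machinery I would argue directly by decomposing $\{\sigma+\widetilde\sigma \le t\}$ according to the value of $\sigma$. Fix $t$. On the event $\{\sigma \ge t\}$ we have $\sigma + \widetilde\sigma \ge \sigma \ge t$, with equality only if $\sigma = t$ and $\widetilde\sigma = 0$; so the interesting contribution comes from $\{\sigma \le t\}$. The plan is to write
\[
\{\sigma + \widetilde\sigma \le t\} = \bigcup_{q \in \Q \cap [0,t]} \big(\{\sigma \le q\} \cap \{\text{$X$ exceeds $\lambda$ somewhere on }[\sigma, t]\}\big) \cap (\cdots),
\]
but the cleaner route is to use the rational-approximation trick for the infimum: since $X$ has continuous paths, $\widetilde\sigma = \inf\{s \in [0,T-\sigma] : X_{s+\sigma} > \lambda\} \wedge T$, and continuity lets me replace the infimum over an interval by a supremum over rationals.

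Concretely, I would first establish the identity, valid on $\{\sigma \le t\}$,
\[
\{\sigma + \widetilde\sigma < t\} = \{\sigma \le t\} \cap \Big\{ \sup_{r \in \Q \cap [0,t),\, r \ge \sigma} X_r > \lambda \Big\}
\]
(with the convention that the supremum over the empty set is $-\infty$), using that $X$ is continuous so that $X_{s+\sigma} > \lambda$ for some $s < t - \sigma$ iff $X_r > \lambda$ for some rational $r \in [\sigma, t)$. The event on the right can be rewritten as a countable union over rationals $r$:
\[
\bigcup_{r \in \Q \cap [0,t)} \big( \{\sigma \le r\} \cap \{X_r > \lambda\} \big).
\]
Now for each fixed rational $r < t$: $\{\sigma \le r\} \in \mathcal F_r \subset \mathcal F_t$ since $\sigma$ is a stopping time, and $\{X_r > \lambda\} \in \mathcal F_r \subset \mathcal F_t$ since $X$ is adapted. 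Hence $\{\sigma + \widetilde\sigma < t\} \in \mathcal F_t$, and by right-continuity of the filtration this gives $\{\sigma + \widetilde\sigma \le t\} = \bigcap_{n} \{\sigma + \widetilde\sigma < t + 1/n\} \in \bigcap_n \mathcal F_{t+1/n} = \mathcal F_t$ — or, more simply, right-continuity means it suffices to check the "$<$" version, which is exactly what the Proposition on stopping times recorded.

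The main obstacle — really the only subtle point — is the careful handling of the boundary cases: what happens when $\widetilde\sigma = 0$ (i.e. $X_\sigma > \lambda$ already, or $X$ exceeds $\lambda$ immediately to the right of $\sigma$), when $\widetilde\sigma$ is truncated at $T - \sigma$ (i.e. $X_{s+\sigma} \le \lambda$ for all admissible $s$, so $\sigma + \widetilde\sigma = \sigma + (T-\sigma) = T$), and the measurability of the random upper limit $r \ge \sigma$ in the supremum. The first two are dispatched by noting that $\sigma + \widetilde\sigma = T$ forces $\sigma + \widetilde\sigma \le t$ to be equivalent to $t = T$, where $\{\sigma + \widetilde\sigma \le T\} = \Omega \in \mathcal F_T$ trivially; and $\widetilde\sigma = 0$ is absorbed correctly by the open-interval formulation above once one checks that the strict inequality $X > \lambda$ combined with path-continuity makes "$X_\sigma > \lambda$" detectable through nearby rationals. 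The third is handled precisely by the rewriting as $\bigcup_r \{\sigma \le r\} \cap \{X_r > \lambda\}$, which turns the random constraint $r \ge \sigma$ into the measurable event $\{\sigma \le r\}$. Everything else is bookkeeping with the $\sigma$-algebra properties from Proposition~\ref{appendix_probability_tau_past}.
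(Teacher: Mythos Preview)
Your argument is correct and follows essentially the same overall scheme as the paper: use right-continuity of $\F$ to reduce to $\{\sigma+\widetilde\sigma<t\}\in\mathcal F_t$, then express this event as a countable union indexed by rationals. The difference is in the decomposition. The paper splits over pairs $(q_1,q_2)\in\Q_{\ge 0}^2$ with $q_1+q_2<t$, writes
\[
\{\sigma<q_1,\ \widetilde\sigma<q_2\}=\bigcup_{q\in[0,q_2)\cap\Q}\big(\{\sigma<q_1\}\cap\{X_{\sigma+q}>\lambda\}\big),
\]
and then invokes Proposition~\ref{appendix_probability_tau_past}(c) to get $\{X_{\sigma+q}>\lambda\}\in\mathcal F_{\sigma+q}$, followed by a short chain of inclusions between $\sigma$-algebras of the form $\mathcal F_{q_1}\cap\mathcal F_{\sigma+q}\subset\mathcal F_{q_1+q_2}$. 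You instead parametrise by a single rational $r<t$ and evaluate $X$ at the \emph{deterministic} time $r$, obtaining
\[
\{\sigma+\widetilde\sigma<t\}=\bigcup_{r\in\Q\cap[0,t)}\big(\{\sigma\le r\}\cap\{X_r>\lambda\}\big),
\]
where each piece lies in $\mathcal F_r\subset\mathcal F_t$ directly from adaptedness of $X$ and the stopping-time property of $\sigma$. Your route is slightly more economical in that it avoids evaluating $X$ at a random time and hence sidesteps the $\sigma$-algebra of the $\tau$-past altogether; the paper's route is a little more modular in that it treats $\sigma$ and $\widetilde\sigma$ separately before combining. Both are clean; the continuity of $X$ does the same work in each.
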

\begin{proof}
	Since $ \F $ is right-continuous, it is sufficient to prove $ \{\sigma+\widetilde{\sigma}<t \}\in\mathcal{F}_t $ for given $ t\in [0,T]. $ We start with
	\begin{equation}\label{quasi_stochastic_parabolic_stopping_time_eq}
	\big\{\sigma+\widetilde{\sigma}<t\big\}=\bigcup_{q_1,q_2\in\Q_{\geq 0}, q_1+q_2<t}\{\sigma<q_1,\widetilde{\sigma}<q_2 \}
	\end{equation}
	and prove that the sets $ \{\sigma<q_1,\widetilde{\sigma}<q_2 \} $ are contained in $ \mathcal{F}_t. $ For fixed $ q_1,q_2\in\Q_{\geq 0} $ with $ q_1+q_2<t $, the definition of $ \widetilde{\sigma} $ and the pathwise continuity of $ t\mapsto X_t $ yield
	\begin{align*}
	\{\widetilde{\sigma}<q_2\}&
	=\bigcup_{s\in [0,q_2)}\{X_{\sigma+s}>\lambda \}
	=\bigcup_{q\in [0,q_2)\cap\Q}\{X_{\sigma+q}>\lambda\}.
	\end{align*}
	Thus, we have
	\begin{align*}
	\big\{\sigma<q_1,\widetilde{\sigma}<q_2\big\}=\bigcup_{q\in[0,q_2)\cap\Q }\Big(\{\sigma<q_1\}\cap\{X_{\sigma+q}>\lambda \}\Big).
	\end{align*}
	Moreover, Proposition \ref{appendix_probability_tau_past} implies $ \{X_{\sigma+q}>\lambda \}\in \mathcal{F}_{\sigma+q} $ and since $ \{\sigma<q_1\}\in\mathcal{F}_{q_1} $ in any case by definition of stopping times, we conclude
	\begin{align*}
	\big\{\sigma<q_1,\widetilde{\sigma}<q_2\big\}\in \bigcup_{q\in[0,q_2)\cap\Q } \big(\mathcal{F}_{q_1}\cap\mathcal{F}_{\tau+q}\big)\subset\mathcal{F}_{q_1+q_2}\cap\mathcal{F}_{\sigma+q_2}\subset \mathcal{F}_{\min(q_1+q_2,\sigma+q_2)}\subset\mathcal{F}_{q_1+q_2}.
	\end{align*}
	Hence the claimed result follows by \eqref{quasi_stochastic_parabolic_stopping_time_eq}.
\end{proof}
\subsection{\texorpdfstring{$\gamma $-radonifying operators and stochastic integration}{gamma-radonifying operators and stochastic integration}}
Let $ \widetilde{H} $ a separable Hilbert space with orthonormal basis $ (h_n)_{n\in\N} $, $ E $ a Banach space and $ (\gamma_n)_{n\in\N} $ a sequence of independent standard-Gaussian distributed random variables. The Banach space $ \gamma(\widetilde{H};E) $ of $ \gamma $-radonifying operators is the closure of $ \{T:\widetilde{H}\to E\text{ linear and of finite rank}\} $ with respect to the norm
\[ \|T\|_{\gamma(\widetilde{H};E)}=\big(\E\|\sum_{n=1}^{\infty}\gamma_nTh_n\|_{E}^2\big)^{1/2} \]
Here, the norm is independent of the choice of the orthonormal basis. 

In the special case, that $ E=L^{q}(O,\mu) $ for some $ q\in (1,\infty) $, $ \gamma(\widetilde{H};E) $ is isomorph to $ L^{q}(O;\widetilde{H}) $ via  the isomorphism $ L^{q}(O;\widetilde{H})\ni f\mapsto T_f\in \gamma(\widetilde{H};E), $ where $ T_f $ is defined by
\[ T_f(h)(x):=\langle f(x),h\rangle_{H} \]
for $ h\in \widetilde{H} $ and $ x\in O. $ The equivalence of $ \|T_f\|_{\gamma(\widetilde{H};E)}\simeq \|f\|_{L^{q}(O;\widetilde{H})} $ can be shown easily by the Kahane-Khintchine inequality $  \Big(\E\|\sum_{n=1}^{\infty}\gamma_nf_n\|_{E}^q\Big)^{1/q}\simeq_{q} \E\|\sum_{n=1}^{\infty}\gamma_nf_n\|_{E} $
for $ q\in [1,\infty) $. For further details about $ \gamma $-radonifying operators, we refer to the survey paper of Van Neerven (see \cite{nerven_gamma_a_survey}).\\ \\
We now sketch the construction of the stochastic integral with respect to an $ L^{2}(0,T;H) $-cylin\-dri\-cal Brownian motion, that is a bounded linear operator $ W:L^{2}(0,T;H)\to L^{2}(\Omega) $ with the following properties.
\begin{itemize}
	\item	[a)]For all $ f\in L^{2}(0,T;H), $ the random variable $ W(f) $ is centred Gaussian.
	\item[b)] For all $ t\in [0,T] $ and $ f\in L^{2}(0,T;H) $ supported in $ [0,t], $ $ W(f) $ is $ \ft $-measurable.
	\item[c)] For all $ t\in [0,T] $ and $ f\in L^{2}(0,T;H) $ supported in $ (t,T], $ $W(f)$ is independent of $ \ft. $ 
\item[d)] We have $ \E(W(f)\cdot W(g))= \langle f,g\rangle_{L^{2}(0,T;H)} $ for all $ f,g\in L^{2}(0,T;H). $
\end{itemize}
An example for an $ L^{2}(0,T;H) $-cylindrical Brownian motion is a family $ (\beta_n)_{n\in\N} $ of independent real valued Brownian motions together with $ H=l^2(\N) $ and $ W $ uniquely determined by the formula $ W(\ind_{(0,t]}e_n)=\beta_n(t), $ $ n\in\N $, where $ (e_n)_n $ is the sequence of the standard unit vectors in $ l^2(\N) $. 

For a stochastic processes $ G:\Omega\times \R_{\geq 0}\times H\to E $ of the form
$$ G=\ind_{(s,t]\times B}\langle\cdot,y\rangle_{H} x $$
for $ B\in\mathcal{F}_s $, $ y\in H $ and $ x\in E, $ we can define the stochastic integral via
\[ I(G):=\int_{0}^{T}G dW:=\ind_{B}W(\ind_{(s,t]}h)x\in E \]
and we can extend it to $ \F $-adapted step processes, that are finite linear combinations of such processes. Van Nerven, Veraar and Weis proved in \cite{nerven_veraar_weis_stochastic_integration_in_umd_spaces} the following two sided estimate for this stochastic integral.
\begin{Theorem}
Let $ E $ be a UMD Banach space and $ G $ be an $ \F $-adapted step processes in $ \gamma(H;X). $ Then, for all $ p\in(1,\infty) $ one has the It\^o-isomorphism
\[ \E\|I(G)\|_{L^{p}(\Omega;E)}\simeq_{p}\|G\|_{L^{p}(\Omega;\gamma(L^2(0,T;H);E))}. \]
In particular, the stochastic integral can be continued to a linear and bounded operator $$ I:L^{p}(\Omega;\gamma(L^2(0,T;H);E))\to L^{p}(\Omega;E). $$
\end{Theorem}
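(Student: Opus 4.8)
The plan is to establish the two-sided bound first for $\F$-adapted step processes of a fixed canonical shape, and then to note that it passes to the closure. Fix a partition $0=t_0<t_1<\dots<t_N=T$ and write $G=\sum_{j=1}^{N}\ind_{(t_{j-1},t_j]}G_j$ with each $G_j\colon\Omega\to\gamma(H;E)$ strongly $\mathcal{F}_{t_{j-1}}$-measurable and of finite rank; by a routine approximation it suffices to treat $G_j$ taking finitely many values and $H$ replaced by a finite-dimensional subspace with orthonormal basis $h_1,\dots,h_m$, the passage $m\to\infty$ being controlled in the $\gamma$-norm via the ideal property of $\gamma$-radonifying operators. For such $G$ one has, directly from the definition of $I$,
\[ I(G)=\sum_{j=1}^{N}\sum_{n=1}^{m} W\big(\ind_{(t_{j-1},t_j]}h_n\big)\,G_jh_n, \]
and the variables $\gamma_{j,n}:=(t_j-t_{j-1})^{-1/2}W(\ind_{(t_{j-1},t_j]}h_n)$ are i.i.d.\ standard Gaussian with $(\gamma_{j,n})_n$ independent of $\mathcal{F}_{t_{j-1}}$.

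The decisive step is decoupling. The blocks $d_j:=\sum_{n}W(\ind_{(t_{j-1},t_j]}h_n)G_jh_n$ form an $E$-valued martingale difference sequence for the filtration $(\mathcal{F}_{t_j})_j$, and replacing each Brownian increment by an independent copy $\widetilde{W}$ yields a decoupled tangent sequence $\widetilde{d}_j:=\sum_n\widetilde{W}(\ind_{(t_{j-1},t_j]}h_n)G_jh_n$. Since $E$ is UMD, the Garling--McConnell decoupling inequality, which in fact characterises UMD among all Banach spaces, gives
\[ \E\big\|\textstyle\sum_{j}d_j\big\|_E^p\ \simeq_{p,E}\ \E\big\|\textstyle\sum_{j}\widetilde{d}_j\big\|_E^p . \]
Conditionally on $\mathcal{F}_T$, and hence on all the $G_j$, the right-hand sum is a Gaussian sum $\sum_{j,n}\widetilde{\gamma}_{j,n}(t_j-t_{j-1})^{1/2}G_jh_n$ in which $(\widetilde{\gamma}_{j,n})$ is an i.i.d.\ standard Gaussian family independent of $(G_j)_j$.

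It remains to integrate out the Gaussians. The step process $G$ represents a finite-rank operator $\Phi_G\in\gamma(L^2(0,T;H);E)$ with $\Phi_G(\ind_{(t_{j-1},t_j]}h_n)=(t_j-t_{j-1})G_jh_n$, and since the functions $(t_j-t_{j-1})^{-1/2}\ind_{(t_{j-1},t_j]}h_n$ form an orthonormal system in $L^2(0,T;H)$, the very definition of the $\gamma$-radonifying norm gives, pathwise in $\omega$,
\[ \Big(\E_{\widetilde{\gamma}}\big\|\textstyle\sum_{j,n}\widetilde{\gamma}_{j,n}(t_j-t_{j-1})^{1/2}G_jh_n\big\|_E^2\Big)^{1/2}=\|G\|_{\gamma(L^2(0,T;H);E)}. \]
Raising to the $p$-th power, using the Kahane--Khintchine inequality to exchange the $2$-nd and $p$-th moments of the Gaussian sum, and then taking expectation over $\Omega$ shows $\E\|\sum_j\widetilde{d}_j\|_E^p\simeq_p\|G\|_{L^p(\Omega;\gamma(L^2(0,T;H);E))}^p$. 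Combined with the decoupling estimate and $I(G)=\sum_j d_j$, this is the claimed equivalence on step processes; the extension of $I$ to all of $L^p(\Omega;\gamma(L^2(0,T;H);E))$ then follows because the $\F$-adapted step processes are dense there and $L^p(\Omega;E)$ is complete.

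The step I expect to be the real obstacle is the decoupling inequality: one must check carefully that $(d_j)$ is genuinely a martingale difference sequence in $E$ adapted to $(\mathcal{F}_{t_j})$, that the copy-replacement produces a legitimate decoupled tangent sequence, and --- most importantly --- that the UMD hypothesis on $E$ is exactly what powers the randomised estimate, with Kahane--Khintchine and the Gaussian/Rademacher comparison used to reconcile the Gaussian Brownian increments with the Rademacher formulation of UMD. Everything else --- the finite-rank and finite-dimensional truncations and the density argument --- is routine once the ideal property and covariance-domination properties of $\gamma$-radonifying operators are in hand.
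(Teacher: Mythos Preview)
The paper does not prove this theorem: it is stated in the preliminaries and attributed to Van Neerven, Veraar and Weis \cite{nerven_veraar_weis_stochastic_integration_in_umd_spaces}, with no argument given. Your proposal is essentially the proof of that reference: the decoupling inequality for tangent martingale difference sequences (valid precisely under the UMD hypothesis), followed by the identification of the decoupled Gaussian sum with the $\gamma(L^2(0,T;H);E)$-norm via the orthonormal system $(t_j-t_{j-1})^{-1/2}\ind_{(t_{j-1},t_j]}h_n$, and Kahane--Khintchine to pass between moments. This is correct in outline and is the standard route.

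Two small remarks. First, the extension in the last line should be to the closure of the $\F$-adapted step processes in $L^p(\Omega;\gamma(L^2(0,T;H);E))$, not to the whole space; adaptedness is not automatic, and the density statement you invoke is really the definition of the space of stochastically integrable processes. Second, your identification of $(d_j)$ and $(\widetilde{d}_j)$ as a pair of tangent sequences is correct, but you should be explicit that the conditional distribution of $d_j$ given $\mathcal{F}_{t_{j-1}}$ equals that of $\widetilde{d}_j$ given $\mathcal{F}_{t_{j-1}}$ (both are centred Gaussian in $E$ with the same covariance determined by $G_j$), which is exactly the tangency condition needed for the McConnell decoupling theorem.
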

In this article, we focus on integrands in $ L^{p}(\Omega\times[0,T];\gamma(H;E)) $ for $ p>2. $ Therefore, we restrict ourselves to UMD Banach spaces of type 2 (details about type and cotype of Banach spaces can be found in \cite{pisier_probabalistic_methods_in_the_geometry_of_Banach_spaces}), for which the embeddings
\[ L^{p}(0,T;\gamma(H;E))\hookrightarrow L^{2}(0,T;\gamma(H;E))\hookrightarrow\gamma(L^2(0,T;H);E)\]
are bounded. Consequently, the stochastic integral $ I(G) $ is also defined for functions $ G\in L^{p}(\Omega\times [0,T];\gamma(H;E)).  $ Note, that Hilbert spaces or Banach spaces that are isomorph to closed subspaces of $ L^{q}(O;\mu) $, $ q>2 $, are of type 2 and have the UMD property.
\subsection{\texorpdfstring{R-Boundedness and $ H^{\infty} $-calculus}{boundedness of the h-infty-calculus}}
Let $ (r_n)_{n\in\N} $ be a sequence of Rademacher random variables on a probability space $ (\widetilde{\Omega},\mathcal{A},\widetilde{\mathbb{P}}) $, i.e. $ \widetilde{\mathbb{P}}(r_n=1)=\widetilde{\mathbb{P}}(r_n=-1)=\tfrac{1}{2}. $ Given the Banach spaces $ X $ and $ Y $, a family  $ \mathcal{T}\subset B(X,Y) $ is called R-bounded, if there exists $ C>0 $, such that
\[ \|\sum_{j=1}^{N}r_jT_jx_j\|_{Y}\leq C\|\sum_{j=1}^{N}r_jx_j\|_{X} \]
for all $ (T_j)_{j=1}^{N}\subset\tau $ and $ (x_j)_{j=1}^{N}\subset X. $ The least possible constant $ C $ will be called R-bound of $ \mathcal{T}. $ Note, that every R-bounded family is uniformly bounded in $ B(X,Y), $ whereas the converse holds only if $ X,Y $ are Hilbert spaces. For details, we refer to \cite{clement_sukochev_schauder_decomposition}, \cite{denk_hieber_pruss_r_boundedness_fourier_multiplier} and \cite{Kunstmann_Weis_Lecture_Notes}.

An operator $ (A,D(A)) $ is called sectorial on a Banach space $ E $ of angle $ \theta\in (0,\pi/2) $, if it is closed, densely defined, injective and it has a dense range. Moreover, we require, that its spectrum is contained in the sector $ \Sigma_{\theta}=\{z\in\C:|\operatorname{arg}(z)|<\theta\} $ and that the set
\begin{equation}\label{preliminaries_sectorial}
\big \{ \lambda R(\lambda,A): \lambda\notin \Sigma_{\phi} \big\} 
\end{equation}
is for all $ \phi\in (\theta,\pi) $ bounded in $ B(E) $ and the bound only depends on $ \phi $.
In this case, $ -A $ generates a holomorphic semigroup on $ E. $ If the set in \eqref{preliminaries_sectorial} is even R-bounded, one says that $ A $ is R-sectorial.

 For any holomorphic function $ f $ on $ \Sigma_{\phi},\ \phi>\theta, $ satisfying the growth estimate $ |f(z)|\leq C\frac{|z|^{\delta}}{1+|z|^{2\delta}} $ for some $ \delta>0 $, the integral
\[ f(A)=\frac{1}{2\pi i}\int_{\Sigma_{\phi}}f(z)R(z,A)\operatorname{dz} \]
exists. This integral defines a functional calculus for functions with the above growth estimate. We say that $ A $ has a bounded $ H^{\infty}(\Sigma_{\phi}) $-calculus, if there exists $ C>0 $ such that
\begin{equation}\label{prelim_bla}
\|f(A)\|_{B(E)}\leq C \|f\|_{\infty}
\end{equation}
is satisfied for all these functions. The least constant $ C>0 $ will be called bound of the $ H^{\infty} $-calculus. In this case, the functional calculus $ f\mapsto f(A) $ can be extended to any bounded holomorphic function on $ \Sigma_{\phi} $ and \eqref{prelim_bla} remains true. Details on sectorial operators, R-sectorial operators and the functional calculus can be found in \cite{Kunstmann_Weis_Lecture_Notes},  \cite{Haase_Functional_calculus} and in the references we mentioned in the introduction. Note, that the boundedness of the $ H^{\infty} $-calculus of $ A $ particularly implies that $ A $ is R-sectorial, if $ E $ is UMD. A nice list of operators having such a functional calculus can be found in \cite{nerven_veraar_weis_maximal_lp_stochastic_evolution}, Example $ 3.2. $ 

    \section{\texorpdfstring{Maximal $ L^{p} $-regularity for stochastic evolution equations}{Maximal l-p-regularity for stochastic evolution equations}}

In this section, we present results on semilinear parabolic stochastic evolution equations based on the work of Van Neerven, Veraar and Weis on maximal $  L^{p} $-regularity in \cite{nerven_veraar_weis_maximal_lp_regularity_stochastic_convolution} and \cite{nerven_veraar_weis_maximal_lp_stochastic_evolution}. Throughout this section, $ \tau $ denotes an $ \F $-stopping time with $ 0\leq\tau\leq T $ almost surely. We give conditions, that guarantee the existence and uniqueness of strong solutions of
\begin{equation*}
\operatorname{(SEE)}\begin{cases}
du(t)=\big[-Au(t)+F(u(t))+f(t)\big]dt+\big[B(u(t))+b(t)\big]dW(t),\ t\in (\tau,T],\\
u(\tau)=u_\tau.
\end{cases}
\end{equation*}
Before we specify our assumptions, we point out that they contain slight generalizations of the above papers. Firstly, as described in the introduction, we will need to start the equation not only at time zero, but at an $ \F $-stopping time $ \tau $ with an $ \mathcal{F}_\tau $-measurable initial value $ u_\tau. $ Moreover, in \cite{nerven_veraar_weis_maximal_lp_stochastic_evolution}, the authors demanded the following Lipschitz condition on the deterministic nonlinearity. They assumed the existence of $ L_F,\widetilde{L_F},C_F>0 $ such that
\begin{align*}
\|F(t,\omega,x)\|_{E}\leq C_F\left (1+\|x\|_{E^1}\right )
\end{align*}
and 
\begin{align*}
\|F(t,\omega,x)-F(t,\omega,x)\|_{E}&\leq L_{F}\|x-y\|_{ E^1}+\widetilde{L}_{F}\|x-y\|_{E}
\end{align*}
for all $ t\in [0,T],x\in E^1$ and almost all $\omega\in\Omega $ and $ L_F $ was demanded to be small enough. It turns out, that this assumption is not suitable to deal with quasilinear equation. We need a weaker analogue that is not pointwise in time, but with respect to $ \|\cdot\|_{L^{p}(0,T;E)} $. Therefore, we replaced the above assumption by $ [\operatorname{S4}] $ and $ [\operatorname{S5}]. $ However, the maximal $ L^{p} $-regularity technique allows this generalization and the proofs are essentially unchanged. We make the following hypothesis.
\begin{itemize}
	\item[\lbrack\text{S1}\rbrack] Let $ p\in (2,\infty) $ and $ E,E^1 $ be UMD Banach spaces with type $ 2 $ or alternatively $ p=2 $ and $ E,E^1 $ Hilbert spaces. We assume the embedding $ E^{1}\hookrightarrow E $ to be dense and write $ E^{1/2} $ for the complex interpolation space $ [E,E^{1}]_{1/2} $ and $ E_p $ for the real interpolation space $ (E,E^{1})_{1-1/p,p} $. Moreover, let the family 
	$$ \{J_\delta:\delta>0 \}\subset B\big(L^{p}(\Omega\times(0,\infty);\gamma(H;E)),L^{p}(\Omega\times(0,\infty);E)\big) $$
	defined by
	\[ J_{\delta}g(t):=\delta^{-1/2}\int_{(t-\delta)\vee 0}^{t}g(s)dW(s) \]
	be $ R $-bounded. 
	\item[\lbrack\text{S2}\rbrack]The mapping $ A:\Omega\to B(E^{1},E) $ is such that $ \omega\mapsto A(\omega)x $ is for all $ x\in E^{1} $ strongly $ \mathcal{F}_\tau $-measurable and such that $ D(A(\omega))=E^{1} $, i.e we have 
	\[ \|A(\omega)x\|_{E}\simeq \|x\|_{E^{1}} \]
	for almost all $ \omega\in\Omega $ and all $ x\in E^{1} $ with estimates independent of $ \omega\in\Omega $. Moreover, we assume $ 0\in\rho(A(\omega)) $ and that for almost all $ \omega\in\Omega $ the operator $ A(\omega) $ has a bounded $ H^{\infty} $-calculus of angle $ \eta\in [0,\pi/2) $ with
	\[ \|\Psi(A(\omega))\|_{B(E)}\leq M\|\Psi\|_{H^{\infty}(\Sigma_{\eta})} \]
	for all $ \Psi\in H^{\infty}(\Sigma_{\eta}) $ with constants $ M,\eta $ independent of $ \omega. $
	\item[\lbrack\text{S3}\rbrack] The initial value $ u_\tau:\Omega\to E_p $ is a strongly $ \mathcal{F}_\tau $-measurable random variable.
\item[\lbrack\text{S4}\rbrack] The mapping	$ F:L_{\mathbb{F}}^0(\Omega; L^{p}(\tau,T;E^1)\cap C(\tau,T;E_p)) \to L_{\mathbb{F}}^0(\Omega; L^{p}(\tau,T;E))$ is such that for a given $ \F $-stopping time $ \widetilde{\tau} $ with $ \tau\leq\widetilde{\tau}\leq T $ almost surely, we have $ F(u)\ind_{[\tau,\widetilde{\tau}]}=F(v)\ind_{[\tau,\widetilde{\tau}]} $ almost surely, whenever $ u\ind_{[\tau,\widetilde{\tau}]}=v\ind_{[\tau,\widetilde{\tau}]}. $  Moreover, there exist $ L_F^{(i)},\widetilde{L}_{F},C_{F}^{(i)}\geq 0 $, $ i=1,2,  $ such that $ F $ is for almost all $ \omega\in\Omega $ of linear growth, i.e. 
	\begin{align*}
	\|F(\cdot,\omega,\phi_1)&\|_{ L^{p}(\tau,T;E)}\leq C_F^{(1)}\left (1+\|\phi_1\|_{ L^{p}(\tau,T;E^1)}\right )+C_F^{(2)}\left (1+\|\phi_1\|_{C(\tau,T;E_p)}\right )
	\end{align*}
	and Lipschitz continuous, i.e.
	\begin{align*}
	\|F(&\cdot,\omega,\phi_1)-F(\cdot,\omega,\phi_2)\|_{ L^{p}(\tau,T;E)}\\
	&\leq L_{F}^{(1)}\|\phi_1-\phi_2\|_{ L^{p}(\tau,T;E^1)}+\widetilde{L}_{F}\|\phi_1-\phi_2\|_{_{ L^{p}(\tau,T;E)}}
	 +L_{F}^{(2)}\|\phi_1-\phi_2\|_{C(\tau,T;E_p)}
	\end{align*}
	for all $ \phi_1,\phi_2\in  L^{p}(\tau,T;E^1)\cap C(\tau,T;E_p) $ with constants independent of $ \omega $. 
	\item[\lbrack\text{S5}\rbrack] The mapping	$ B:L_{\mathbb{F}}^0(\Omega; L^{p}(\tau,T;E^1)\cap C(0,T;E_p)) \to L_{\mathbb{F}}^0(\Omega; L^{p}(\tau,T;\gamma(H;E^{1/2})))$ is such that for a given $ \F $-stopping time $ \widetilde{\tau} $ with $ \tau\leq\widetilde{\tau}\leq T $ almost surely, we have $ B(u)\ind_{[\tau,\widetilde{\tau}]}=B(v)\ind_{[\tau,\widetilde{\tau}]} $ almost surely, whenever $ u\ind_{[\tau,\widetilde{\tau}]}=v\ind_{[\tau,\widetilde{\tau}]}. $ Moreover, there exist $ L_B^{(i)},\widetilde{L}_{B},C_{B}^{(i)}\geq 0 $, $ i=1,2,  $ such that $ B $ is for almost all $ \omega\in\Omega $ of linear growth, i.e.
	\begin{align*}
	\|B(\cdot,\omega,\phi_1)&\|_{L^{p}(\tau,T;\gamma(H;E^{1/2}))}\leq C_B^{(1)}\left (1+\|\phi_1\|_{ L^{p}(\tau,T;E^{1})}\right )+C_B^{(2)}\left (1+\|\phi_1\|_{C(\tau,T;E_p)}\right )
	\end{align*}
	and Lipschitz, i.e.
	\begin{align*}
	\|B(&\cdot, \omega,\phi_1)-B(\cdot,\omega,\phi_2)\|_{L^{p}(\tau,T;\gamma(H;E^{1/2}))}\\
	&\leq L_{B}^{(1)}\|\phi_1-\phi_2\|_{ L^{p}(\tau,T,E^{1})}+\widetilde{L}_{B}\|\phi_1-\phi_2\|_{ L^{p}(\tau,T;E)}+L_{B}^{(2)}\|\phi_1-\phi_2\|_{C(\tau,T;E_p)}
	\end{align*}
	for all  $ \phi_1,\phi_2\in  L^{p}(\tau,T;E^{1})\cap C(\tau,T;E_p) $ with constants independent of $ \omega $. 
	\item[\lbrack\text{S6}\rbrack]The functions $ f:\Omega\times [0,T]\to E $ and $ b:\Omega\times [0,T]\to\gamma(H;E^{1/2}) $ are strongly measurable and adapted to $\F. $ Moreover, we assume $ f\ind_{[\tau,T]}\in L^{p}(\Omega\times[0,T];E) $ and $ b\ind_{[\tau,T]}\in L^{p}(\Omega\times[0,T];\gamma(H;E^{1/2})) $.
\end{itemize}

Before we proceed, we want to refer to the very helpful comments on these assumptions by Van Neerven, Veraar and Weis in \cite{nerven_veraar_weis_maximal_lp_stochastic_evolution}, Remark $ 4.1. $ Additionally, we point out, that every Banach space $ E $ isomorphic to $ L^{q}(O,\mu) $ with $ q>2 $ satisfies $ [\operatorname{S1}]. $ In particular, this is true for all Sobolev and Besov spaces with integrability index $ q. $ Last but not least, we want to mention that the constants $L_F^{(1)},L_{F}^{(2)}  $ and $L_B^{(1)},L_{B}^{(2)}  $ in $ [\operatorname{S4}] $ and $ [\operatorname{S5}] $ have to be small enough. To be precise, they depend on the constants appearing in the maximal $ L^{p}$-regularity estimates 
\begin{equation}\label{lp_max_reg_evolution_equation_stochastic_max_reg}
 \big\|t\mapsto\int_{0}^{t}e^{-A(t-s)}g(s)dW(s)\big\|_{L^{p}(\Omega\times\R_{\geq 0};E^1)\cap L^{p}(\Omega;C(0,T;E_p)} \leq C_{\operatorname{MRS}}\|g\|_{L^{p}(\Omega\times\R_{\geq 0};\gamma(H;E^{1/2}))}
\end{equation}
\begin{equation}\label{lp_max_reg_evolution_equation_deterministic_max_reg}
\big\|t\mapsto\int_{0}^{t}e^{-A(t-s)}f(s)\operatorname{ds}\big\|_{L^{p}(\Omega\times\R_{\geq 0};E^1)\cap L^{p}(\Omega;C(0,T;E_p))} \leq C_{\operatorname{MRD}}\|f\|_{L^{p}(\Omega\times\R_{\geq 0};E)}
\end{equation}
and have to satisfy the analogue of the condition in Theorem $ 4.5 $ in \cite{nerven_veraar_weis_maximal_lp_stochastic_evolution}. 
\begin{itemize}
	\item[\lbrack\text{S7}\rbrack] Let the constants in $ [\operatorname{S4}] $ and $ [\operatorname{S5}]  $ be small enough to ensure $$ C_{ \operatorname{MRD}}(L_F^{(1)}+L_F^{(2)})+C_{\operatorname{MRS}}(L_B^{(1)}+L_B^{(2)})<1. $$
\end{itemize}
Before we state the main result about solutions of $ (\operatorname{SEE}) $, we recall the concept of strong solutions. 
\begin{Definition}
Let $ \mu$ be another $ \F $-stopping time with $ \tau\leq\mu\leq T $ almost surely. A process $ u:\Omega\times [\tau,\mu]\to E $ is called a strong solution of $ (\operatorname{SEE}) $, if it is strongly measurable, adapted and 
\begin{itemize}
\item [a)] $ u\in L^{1}(\tau,\mu;E^{1}) $, $ F(\cdot,u)\in L^{1}(\tau,\mu;E) $ almost surely and the stochastic process $ B(\cdot,u)\ind_{[\tau,\mu]}:\Omega\times[0,T]\to \gamma(H;E) $ is stochastically integrable.
\item [b)] The identity 
\begin{align*}
u(t)-u_\tau=-\int_{\tau}^{t}Au(s)\operatorname{ds}+\int_{\tau}^{t}F(s,u(s))+f(s)\operatorname{ds}+\int_{\tau}^{t}B(s,u(s))+b(s)dW(s)
\end{align*}
	holds almost surely for all $ t\in [\tau,\mu] $ in $ E. $
\end{itemize}
\end{Definition}
Note, that any strong solution is also a mild solution in the sense of evolution equations (see \cite{nerven_veraar_weis_maximal_lp_stochastic_evolution}, Proposition $ 4.4 $). We now present the main results of this section. The proof is almost the same as the proof of Theorem $ 4.5. $ in \cite{nerven_veraar_weis_maximal_lp_stochastic_evolution} and and we just discuss the differences. For sake of clearness, we split the statements into a Theorem and some Corollaries.
\begin{Theorem}\label{lp_auto_stoch_evolution_equation_global_strong_solution_lp_initial_data}
Additionally to $ [\operatorname{S1}]-[\operatorname{S7}] $, we assume $ u_\tau\in L^p(\Omega,E_p) $ for some $ p\in (2,\infty) $. Then, there exists a unique strong solution $ u $ of $ (\operatorname{SEE}) $
 on $ [\tau, T] $ 
and $ u $ has almost surely continuous paths on $ [\tau,T]$ in $ E_p $. Moreover, we have the estimates
\begin{equation}\label{lp_auto_stoch_evolution_equation_global_strong_solution_lp_initial_data_estimate11}
\left (\E \|u\ind_{[\tau,T]}\|_{ L^{p}(0,T;E^{1})}^{p}\right )^{1/p}\leq C\big(1+\|u_\tau\|_{L^{p}(\Omega,E_p)}\big)
\end{equation}
and
\begin{equation}\label{lp_auto_stoch_evolution_equation_global_strong_solution_lp_initial_data_estimate22}
\left (\E \sup_{t\in[\tau,T]}\|u(t)\|_{E_p}^{p}\right )^{1/p}\leq C\big(1+\|u_\tau\|_{L^{p}(\Omega,E_p)}\big).
\end{equation}
for some $ C>0 $ independent of $ u_\tau $.
\end{Theorem}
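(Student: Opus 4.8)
The plan is to reduce $(\operatorname{SEE})$ with data at the stopping time $\tau$ to the already-known existence theory at time zero (Theorem~4.5 in \cite{nerven_veraar_weis_maximal_lp_stochastic_evolution}) via a fixed-point argument in the space
\[
 \mathbb{E}_{\tau}:=L^p_{\mathbb F}\big(\Omega;L^p(\tau,T;E^1)\cap C(\tau,T;E_p)\big),
\]
equipped with the norm inherited from the two maximal-regularity estimates \eqref{lp_max_reg_evolution_equation_stochastic_max_reg} and \eqref{lp_max_reg_evolution_equation_deterministic_max_reg}. First I would record that, since $A(\omega)$ has a bounded $H^\infty$-calculus of angle $<\pi/2$ with uniform bounds by $[\operatorname{S2}]$ and $0\in\rho(A(\omega))$, the operator $-A(\omega)$ generates an analytic semigroup $(e^{-tA(\omega)})_{t\ge0}$ and the stochastic and deterministic maximal $L^p$-regularity constants $C_{\operatorname{MRS}},C_{\operatorname{MRD}}$ are uniform in $\omega$; this uses $[\operatorname{S1}]$ (UMD, type~2, $R$-boundedness of $\{J_\delta\}$) together with the results of \cite{nerven_veraar_weis_maximal_lp_regularity_stochastic_convolution}. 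The pathwise $\mathcal{F}_\tau$-measurability of $\omega\mapsto A(\omega)$ ensures that the random convolution operators are adapted, so the stochastic-integral terms make sense.

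Next I would set up the solution map $\Phi:\mathbb{E}_\tau\to\mathbb{E}_\tau$,
\[
 \Phi(v)(t):=e^{-(t-\tau)A}u_\tau+\int_\tau^t e^{-(t-s)A}\big(F(s,v(s))+f(s)\big)\,\mathrm{d}s+\int_\tau^t e^{-(t-s)A}\big(B(s,v(s))+b(s)\big)\,\mathrm{d}W(s),
\]
where the first term is controlled by $\|u_\tau\|_{L^p(\Omega;E_p)}$ through the trace/initial-value estimate for analytic semigroups with maximal regularity (again uniform in $\omega$), the second by $C_{\operatorname{MRD}}$ applied to $F(\cdot,v)+f\in L^p(\Omega\times(\tau,T);E)$, and the third by $C_{\operatorname{MRS}}$ applied to $B(\cdot,v)+b\in L^p(\Omega\times(\tau,T);\gamma(H;E^{1/2}))$. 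The localisation properties in $[\operatorname{S4}]$ and $[\operatorname{S5}]$ (that $F(u),B(u)$ on $[\tau,\widetilde\tau]$ depend only on $u$ on $[\tau,\widetilde\tau]$) guarantee that $\Phi$ maps adapted processes to adapted processes and respects the stopping time $\tau$; the linear-growth bounds give $\Phi(v)\in\mathbb{E}_\tau$. For the contraction, subtracting $\Phi(v_1)-\Phi(v_2)$ kills the $u_\tau$, $f$ and $b$ terms and the Lipschitz estimates in $[\operatorname{S4}]$, $[\operatorname{S5}]$ give
\[
 \|\Phi(v_1)-\Phi(v_2)\|_{\mathbb{E}_\tau}\le\big(C_{\operatorname{MRD}}(L_F^{(1)}+L_F^{(2)})+C_{\operatorname{MRS}}(L_B^{(1)}+L_B^{(2)})\big)\|v_1-v_2\|_{\mathbb{E}_\tau}+(\text{lower-order terms}),
\]
where the lower-order terms carry $\widetilde L_F,\widetilde L_B$ and the $L^p(\tau,T;E)$-norm; by $[\operatorname{S7}]$ the leading factor is $<1$, and the lower-order terms are absorbed by first proving contraction on a short random subinterval $[\tau,\tau+h]$ — where the embedding $L^p(\tau,\tau+h;E^1)\hookrightarrow L^p(\tau,\tau+h;E)$ and the trace embedding into $C$ both come with a constant $\to0$ as $h\to0$ — and then concatenating finitely many such steps up to $T$, exactly as in the proof of Theorem~4.5 of \cite{nerven_veraar_weis_maximal_lp_stochastic_evolution}. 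The Banach fixed-point theorem yields a unique $u\in\mathbb{E}_\tau$ with $\Phi(u)=u$, and $u\in L^p(\Omega;C(\tau,T;E_p))$ gives the almost sure continuity of paths in $E_p$; the identity defining a strong solution follows from the mild formulation together with the fact that mild solutions in this maximal-regularity class are strong (Proposition~4.4 of \cite{nerven_veraar_weis_maximal_lp_stochastic_evolution}). Finally, estimates \eqref{lp_auto_stoch_evolution_equation_global_strong_solution_lp_initial_data_estimate11} and \eqref{lp_auto_stoch_evolution_equation_global_strong_solution_lp_initial_data_estimate22} drop out of the fixed-point bound: applying the a~priori estimate $\|u\|_{\mathbb{E}_\tau}\le\|\Phi(u)\|_{\mathbb{E}_\tau}$ and the linear-growth bounds $[\operatorname{S4}]$, $[\operatorname{S5}]$, $[\operatorname{S6}]$, then absorbing the resulting $\|u\|_{\mathbb{E}_\tau}$-terms on the left (again using $[\operatorname{S7}]$ and, if necessary, the short-interval-plus-concatenation trick), leaves $\|u\|_{\mathbb{E}_\tau}\le C(1+\|u_\tau\|_{L^p(\Omega;E_p)})$ with $C$ independent of $u_\tau$.

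The step I expect to be the main obstacle is verifying that none of the constants in the maximal-regularity estimates, the trace embeddings, or the contraction bound secretly depend on $\omega$ or on $\tau$ — in particular that starting the equation at a genuinely random time $\tau$ (rather than a deterministic time) does not spoil adaptedness or the constants. This is handled by observing that everything is built from $\omega$-uniform data in $[\operatorname{S1}]$, $[\operatorname{S2}]$ and by using that $e^{-(t-\tau)A}$, $\int_\tau^t e^{-(t-s)A}(\cdots)\mathrm{d}W(s)$ etc. are adapted because $A$ is $\mathcal{F}_\tau$-measurable and the integrands are adapted on $[\tau,T]$; the shift $s\mapsto s-\tau$ reduces the random-interval integrals to the time-zero case pathwise, so the constants transfer verbatim. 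Since the argument is otherwise a line-by-line repetition of \cite{nerven_veraar_weis_maximal_lp_stochastic_evolution}, Theorem~4.5, with $[\operatorname{S4}]$–$[\operatorname{S5}]$ replacing the pointwise-in-time Lipschitz condition there (and the proof, as the authors note, being insensitive to this change because maximal $L^p$-regularity is itself an $L^p$-in-time statement), I would present it as a sketch emphasising only these two modifications and refer to \cite{nerven_veraar_weis_maximal_lp_stochastic_evolution} for the routine estimates.
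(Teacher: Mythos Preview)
Your proposal is correct and follows essentially the same approach as the paper: a Banach fixed-point argument for the mild-solution operator on short subintervals, concatenated to reach $T$, with the routine contraction estimates deferred to \cite{nerven_veraar_weis_maximal_lp_stochastic_evolution}. The only notable differences are that the paper spends its explicit detail on proving the $\mathcal{F}_s$-measurability of $\omega\mapsto e^{-(t-s)A(\omega)}x\,\ind_{s>\tau(\omega)}$ via the resolvent identity $e^{-tA}=\lim_n(\tfrac{n}{t}R(\tfrac{n}{t},-A))^n$ (rather than invoking a time shift $s\mapsto s-\tau$, which is delicate for stochastic integrals at a random time), and absorbs the lower-order $\widetilde L_F,\widetilde L_B$ terms by augmenting the fixed-point norm with a weighted component $\mu\|\cdot\|_{L^p(\tau,\tau+\kappa;E)}$ and choosing $\mu=\mu(\kappa)$ large, rather than relying solely on smallness of the interval.
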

\begin{proof}
The main idea is to apply the contraction mapping theorem to prove existence and uniqueness on small intervals and to put these solutions together to a solution on $ [\tau,T] $. Let $ \kappa>0 $. We consider the operator
\begin{align}\label{lp_auto_stoch_evolution_equation_fixed_point_operator}
K\phi(t)=&e^{-(t-\tau)A}u_\tau+  \int_{\tau}^{t}(e^{-(t-s)A} (F(s,\phi(s))+f(s))\operatorname{ds}\notag\\
&+ \int_{0}^{t}e^{-(t-s)A} (B(s,\phi(s))+b(s))\ind_{(\tau,T]}(s)dW(s)
\end{align}
on the metric space 
\begin{align*}
\mathcal{E}_p(\tau,\kappa):= \big\{&u:\Omega\times[\tau,(\tau+\kappa)\wedge T]\to E_p\big|\ u \text{ is adapted },\ u\ind_{[\tau,(\tau+\kappa)\wedge T]}\in L^{p}(\Omega\times [0,T];E^{1}),\\
& u\text{ is a.s. cont. on } [\tau,(\tau+\kappa)\wedge T]\text{ with }u(\tau)=u_{\tau} \text{ a.s., } \E\sup_{t\in[\tau,(\tau+\kappa)\wedge T]}\|u(t)\|_{E_p}^p<\infty \big\},
\end{align*}
 endowed with the metric induced by the norm
\begin{align*}
\|u\|_{\mathcal{E}_p(\tau,\kappa),\mu}:=&\big\|u\ind_{[\tau,(\tau+\kappa)\wedge T]}\big\|_{ L^{p}(\Omega\times [0,T];E^{1}))}+\mu\big\|u\ind_{[\tau,(\tau+\kappa)\wedge T]}\big\|_{L^{p}(\Omega\times [0,T];E))}\\
&+\Big (\E\|u(t)\|_{C(\tau,(\tau+\kappa)\wedge T;E_p)}^{p}\Big)^{1/p},
\end{align*}
where $ \mu>0 $ will be chosen later on. We have to show that the stochastic integrand in \eqref{lp_auto_stoch_evolution_equation_fixed_point_operator} is adapted and thus the integral is well-defined. We have $ (0,\infty)\subset\rho(-A(\omega)) $ for almost all $ \omega\in\Omega $ and the identity 
\[ e^{-tA(\omega)}x=\lim_{n\to\infty}\left (\tfrac{n}{t}R(\tfrac{n}{t},-A(\omega))\right )^{n}x \]
holds true. Thus,  $ \omega\to e^{-tA(\omega)}x $ is for all $ x\in E $ and $ t\geq 0 $ strongly $ \mathcal{F}_{\tau}$-measurable as pointwise limit of strongly measurable functions. 

As a consequence, for fixed $ s\leq t $, the mapping $ \omega\mapsto e^{-(t-s)A(\omega)}x\ind_{ s>\tau(\omega)} $ is strongly $ \mathcal{F}_{s}$-measurable. Indeed, for every Borel set $ B\subset E $ and $ x\in E $ we have
\begin{align*}
\{ e^{-(t-s)A}x\ind_{ s>\tau}\in B\}=\{0\in B,s\leq\tau \}\cup\{e^{-(t-s)A}x\in B,  s>\tau\}.
\end{align*}
Since the filtration $ \F $ is right-continuous, we both have $ \{s\leq\tau \}\in \mathcal{F}_s $ and $ \{s>\tau \}\in \mathcal{F}_s $. Thus we obtain $ \{0\in B,s\leq\tau \}\in\mathcal{F}_s $ and $$ \{e^{-(t-s)A}x\in B, s>\tau\}\in\mathcal{F}_\tau\cap\mathcal{F}_s=\mathcal{F}_{\tau\wedge s}\subset \mathcal{F}_s $$
which yields the desired result. We conclude, that 
\[ \omega \mapsto e^{-(t-s)A(\omega)}(B(\omega,\phi(s))+b(\omega,s))\ind_{ s>\tau(\omega)} \]
is strongly $ \mathcal{F}_s $-measurable as composition of strongly $ \mathcal{F}_s $-measurable functions. 

In the same way as Van Neerven, Veraar and Weis, we check that $ K $ is a contraction if $ \kappa $ is small enough and $ \mu(\kappa) $ large enough. This yields the existence and uniqueness of a strong solution on $ [\tau,(\tau+ \kappa)\wedge T]. $ Iterating this procedure yields the claimed result.
\end{proof}

If two different initial values coincide on some subset of $ \Omega $ of positive measure, the corresponding solutions of $ (\operatorname{SEE}) $ also coincide on this subset. The proof is a slight variation of step 7 in the proof of Theorem $ 4.5 $ in \cite{nerven_veraar_weis_maximal_lp_stochastic_evolution}.

\begin{Lemma}\label{lp_auto_stoch_evolution_equation_global_strong_solution_local_uniqueness}
	We assume the conditions $ [\operatorname{S1}]-[\operatorname{S7}] $ to be fulfilled.
	Let $ u $ and $ v $ be the strong solutions of $ (\operatorname{SEE}) $ corresponding to the strongly  $ \mathcal{F}_{\tau} $-measurabe initial values $ u_\tau,v_{\tau}\in L^{p}(\Omega,E_p) $. Then, we have $ u(\omega,t)=v(\omega,t) $ for almost all $ \omega\in\{u_{\tau}=v_{\tau} \} $ and all $ t\in [\tau(\omega),T]. $
\end{Lemma}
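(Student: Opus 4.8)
The plan is to exploit the \emph{local dependence} structure built into assumptions $[\operatorname{S4}]$ and $[\operatorname{S5}]$ together with the contraction-mapping argument from Theorem~\ref{lp_auto_stoch_evolution_equation_global_strong_solution_lp_initial_data}. Set $\Gamma:=\{u_\tau=v_\tau\}\in\mathcal{F}_\tau$ and work on the small interval $[\tau,(\tau+\kappa)\wedge T]$ on which $K$ is a strict contraction on $\mathcal{E}_p(\tau,\kappa)$. The key observation is that if $\phi,\psi$ are two adapted processes with $\phi\ind_\Gamma=\psi\ind_\Gamma$ on $[\tau,(\tau+\kappa)\wedge T]$, then $K\phi\ind_\Gamma=K\psi\ind_\Gamma$: the initial-value term agrees on $\Gamma$ since $u_\tau\ind_\Gamma=v_\tau\ind_\Gamma$, the deterministic convolution agrees because $F(\phi)\ind_\Gamma\ind_{[\tau,(\tau+\kappa)\wedge T]}=F(\psi)\ind_\Gamma\ind_{[\tau,(\tau+\kappa)\wedge T]}$ (apply the locality hypothesis of $[\operatorname{S4}]$ with the stopping time $\widetilde\tau=(\tau+\kappa)\wedge T$ after localising by $\Gamma$), and likewise the stochastic convolution agrees via the locality hypothesis of $[\operatorname{S5}]$ and the fact that $e^{-(t-s)A}$ acts pointwise in $\omega$. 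Thus the set $\{\phi:\phi\ind_\Gamma=u\ind_\Gamma\}$ is invariant under $K$ and closed in the complete metric space $\mathcal{E}_p$, so the unique fixed point $v$ must lie in it, giving $u\ind_\Gamma=v\ind_\Gamma$ on $[\tau,(\tau+\kappa)\wedge T]$.

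Once local agreement on $\Gamma$ is established, I would bootstrap to the whole interval $[\tau,T]$ exactly along the iteration scheme used to globalise in the proof of Theorem~\ref{lp_auto_stoch_evolution_equation_global_strong_solution_lp_initial_data}: having shown $u(\cdot,t)=v(\cdot,t)$ for a.a.\ $\omega\in\Gamma$ and all $t\in[\tau,(\tau+\kappa)\wedge T]$, the values $u((\tau+\kappa)\wedge T)$ and $v((\tau+\kappa)\wedge T)$ coincide on $\Gamma$ and serve as new $\mathcal{F}_{(\tau+\kappa)\wedge T}$-measurable initial data; repeating the contraction argument on $[(\tau+\kappa)\wedge T,(\tau+2\kappa)\wedge T]$ and iterating the finitely many times needed to exhaust $[\tau,T]$ yields $u(\omega,t)=v(\omega,t)$ for a.a.\ $\omega\in\Gamma$ and all $t\in[\tau(\omega),T]$. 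Since $\kappa$ depends only on the structural constants in $[\operatorname{S1}]$--$[\operatorname{S7}]$ and not on the initial data, the number of steps is fixed.

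The main obstacle is the careful handling of the indicator $\ind_\Gamma$ inside the fixed-point space: one must check that $\ind_\Gamma u$ and $\ind_\Gamma v$ are again (up to modification) admissible in the setting where the locality hypotheses of $[\operatorname{S4}]$ and $[\operatorname{S5}]$ can be invoked, i.e.\ that multiplying an element of $\mathcal{E}_p(\tau,\kappa)$ by the $\mathcal{F}_\tau$-measurable indicator $\ind_\Gamma$ preserves adaptedness and the integrability requirements, and that the locality statements — phrased there for processes equal on $[\tau,\widetilde\tau]$ — can be combined with an $\mathcal{F}_\tau$-restriction. This is the step where the argument deviates from step~7 of \cite{nerven_veraar_weis_maximal_lp_stochastic_evolution}, since there the initial values are compared on a full-measure set rather than on a general $\mathcal{F}_\tau$-set; but because $\Gamma\in\mathcal{F}_\tau$ and all the operators appearing in $K$ respect $\mathcal{F}_\tau$-localisation (the semigroup acts $\omega$-wise and $F,B$ satisfy the stated restriction property), the modification is routine. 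The remaining ingredients — completeness of $\mathcal{E}_p$ and contractivity of $K$ — are already available from the proof of Theorem~\ref{lp_auto_stoch_evolution_equation_global_strong_solution_lp_initial_data}.
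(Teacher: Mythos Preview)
Your proposal is correct and follows essentially the same route as the paper, which simply refers to ``a slight variation of step~7 in the proof of Theorem~4.5'' in \cite{nerven_veraar_weis_maximal_lp_stochastic_evolution}: localise by $\ind_\Gamma$ with $\Gamma=\{u_\tau=v_\tau\}\in\mathcal{F}_\tau$, use that $\ind_\Gamma$ commutes with both convolutions and that the initial-value terms agree on $\Gamma$, then feed this into the contraction estimate on each subinterval and iterate. One small remark: the $\omega$-locality you need, namely $F(\phi)\ind_\Gamma=F(\psi)\ind_\Gamma$ whenever $\phi\ind_\Gamma=\psi\ind_\Gamma$, follows most directly from the \emph{pathwise} form of the Lipschitz bounds in $[\operatorname{S4}]$--$[\operatorname{S5}]$ (they hold for almost every fixed $\omega$ and deterministic inputs), rather than from the time-restriction clause; alternatively your stopping-time trick works with $\widetilde\tau=((\tau+\kappa)\wedge T)\ind_\Gamma+\tau\ind_{\Gamma^c}$, which is an $\F$-stopping time since $\Gamma\in\mathcal{F}_\tau$.
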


As an easy application of this Lemma, one can prove existence and uniqueness of strong solutions of $ (\operatorname{SEE}) $ with initial data $ u_{\tau}, $ that is only measurable with respect to $ \mathcal{F}_{\tau}. $ The strategy of the proof was already used in \cite{nerven_veraar_weis_stochastic_evolution_equations_umd_spaces}, Theorem $ 7.1 $ and \cite{seidler_maximal_inequality_revisited}, Proposition $ 5.4, $ in a nonmaximal regularity situation. In step 3 and 4 of the proof of Theorem $ 4.5 $ in \cite{nerven_veraar_weis_maximal_lp_stochastic_evolution}, there is a version that fits to our situation.
\begin{Corollary}\label{lp_auto_stoch_evolution_equation_global_strong_solution_measurable_initial_data}
We assume the conditions $ [\operatorname{S1}]-[\operatorname{S7}] $ to be fulfilled.
If we do not demand anything on $ u_\tau $ but to be a strongly $ \mathcal{F}_{\tau} $-measurable, $ E_p $-valued random variable, then $ (\operatorname{SEE}) $
still has a unique strong solution $ u $ on $ [\tau,T] $ with $ u\in L^{p}(\tau,T;E^{1}) \cap C(\tau,T;E_p)$ almost surely, but $ u $ has not necessarily any integrability properties with respect to $ \omega. $
\end{Corollary}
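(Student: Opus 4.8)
The plan is to reduce the case of a merely $\mathcal{F}_\tau$-measurable initial value to the $L^p(\Omega,E_p)$ case already handled in Theorem~\ref{lp_auto_stoch_evolution_equation_global_strong_solution_lp_initial_data}, by a standard exhaustion/localisation argument in $\omega$, using the local uniqueness statement of Lemma~\ref{lp_auto_stoch_evolution_equation_global_strong_solution_local_uniqueness} to glue the pieces together consistently. First I would set $\Omega_n:=\{n-1\le\|u_\tau\|_{E_p}<n\}$ for $n\in\N$; these sets are $\mathcal{F}_\tau$-measurable, pairwise disjoint, and cover $\Omega$ up to a null set. Put $u_\tau^{(n)}:=u_\tau\ind_{\Omega_n}$. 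Then $u_\tau^{(n)}$ is strongly $\mathcal{F}_\tau$-measurable and bounded, hence lies in $L^p(\Omega,E_p)$, so Theorem~\ref{lp_auto_stoch_evolution_equation_global_strong_solution_lp_initial_data} produces a unique strong solution $u^{(n)}$ of $(\operatorname{SEE})$ on $[\tau,T]$ with the stated pathwise regularity $u^{(n)}\in L^{p}(\tau,T;E^{1})\cap C(\tau,T;E_p)$ almost surely and with $\E\sup_{t\in[\tau,T]}\|u^{(n)}(t)\|_{E_p}^p<\infty$.

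Next I would define the candidate solution by $u:=\sum_{n=1}^{\infty}u^{(n)}\ind_{\Omega_n}$; since the $\Omega_n$ partition $\Omega$, on each $\Omega_n$ we simply have $u=u^{(n)}$, so $u$ is strongly measurable, adapted, and a.s.\ has paths in $L^{p}(\tau,T;E^{1})\cap C(\tau,T;E_p)$. The key point is that $u$ actually solves $(\operatorname{SEE})$: on $\Omega_n$ the initial value agrees, $u_\tau=u_\tau^{(n)}$, and the nonlinearities $F,B$ depend only on the path of $u$, so one must check that restricting the defining integral identity for $u^{(n)}$ to $\Omega_n$ is legitimate. This is where the locality assumptions built into $[\operatorname{S4}]$ and $[\operatorname{S5}]$ enter: multiplying the identity in part~b) of the definition of strong solution by $\ind_{\Omega_n}\in\mathcal{F}_\tau$ and using that $\ind_{\Omega_n}$ can be pulled inside the deterministic integrals (pathwise) and inside the stochastic integral (because $\ind_{\Omega_n}$ is $\mathcal{F}_\tau$-measurable, hence $\mathcal{F}_t$-measurable for $t\ge\tau$, so it commutes with the It\^o integral over $(\tau,t]$), one sees that $u\ind_{\Omega_n}$ satisfies the equation with data $u_\tau\ind_{\Omega_n}$; summing over $n$ gives the identity for $u$ with data $u_\tau$. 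For this step it is convenient to invoke, as in \cite{nerven_veraar_weis_maximal_lp_stochastic_evolution}, that the equation is equivalent to its mild formulation, where the indicator manipulations are most transparent.

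For uniqueness, suppose $v$ is another strong solution on $[\tau,T]$ with the stated pathwise regularity. Fix $n$ and consider $v\ind_{\Omega_n}$: arguing as above it solves $(\operatorname{SEE})$ with the bounded data $u_\tau^{(n)}$, but a priori only with pathwise (not $L^p(\Omega)$) integrability. To compare it with $u^{(n)}$ I would further localise in $\omega$: for $m\in\N$ let $\sigma_m:=\inf\{t\in[\tau,T]:\|v\|_{L^p(\tau,t;E^1)}+\sup_{s\in[\tau,t]}\|v(s)\|_{E_p}>m\}\wedge T$, which by Lemma~\ref{quasi_stochastic_parabolic_stopping_time} is an $\F$-stopping time; on $\{\sigma_m>\tau\}$ the stopped process belongs to $L^p(\Omega,\cdot)$, and an application of Lemma~\ref{lp_auto_stoch_evolution_equation_global_strong_solution_local_uniqueness} (in the spirit of step~7 of Theorem~4.5 in \cite{nerven_veraar_weis_maximal_lp_stochastic_evolution}, comparing solutions on the stopped interval) forces $v=u^{(n)}=u$ on $\Omega_n\cap[\tau,\sigma_m]$; letting $m\to\infty$ and then $n\to\infty$ gives $v=u$ on $[\tau,T]$ almost surely. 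I expect the main obstacle to be precisely this uniqueness argument: the honest comparison of two solutions that need not have any $\omega$-integrability must be done through a careful stopping-time truncation reducing to the $L^p(\Omega)$-uniqueness of Lemma~\ref{lp_auto_stoch_evolution_equation_global_strong_solution_local_uniqueness}, together with checking that stopping does not destroy the structural hypotheses $[\operatorname{S4}]$--$[\operatorname{S5}]$ on $F$ and $B$ (which is exactly why those hypotheses were phrased with the restriction property $F(u)\ind_{[\tau,\widetilde\tau]}=F(v)\ind_{[\tau,\widetilde\tau]}$).
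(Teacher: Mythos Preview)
Your proposal is correct and follows essentially the same localisation strategy that the paper points to (the paper does not spell out a proof but cites \cite{nerven_veraar_weis_stochastic_evolution_equations_umd_spaces}, Theorem~7.1, \cite{seidler_maximal_inequality_revisited}, Proposition~5.4, and steps~3--4 of Theorem~4.5 in \cite{nerven_veraar_weis_maximal_lp_stochastic_evolution}): truncate the initial value so that it lies in $L^p(\Omega;E_p)$, invoke Theorem~\ref{lp_auto_stoch_evolution_equation_global_strong_solution_lp_initial_data}, and then use Lemma~\ref{lp_auto_stoch_evolution_equation_global_strong_solution_local_uniqueness} to patch the solutions consistently. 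The only cosmetic difference is that the cited references typically use the nested sets $\Gamma_n=\{\|u_\tau\|_{E_p}\le n\}$ rather than your disjoint partition $\Omega_n=\{n-1\le\|u_\tau\|_{E_p}<n\}$; with nested sets one defines $u$ as the pointwise limit $\lim_n u^{(n)}$ and the consistency $u^{(m)}=u^{(n)}$ on $\Gamma_n$ for $m\ge n$ follows directly from Lemma~\ref{lp_auto_stoch_evolution_equation_global_strong_solution_local_uniqueness}, avoiding the need to argue separately that $\ind_{\Omega_n}$ commutes with $F$ and $B$. Your uniqueness argument via the stopping times $\sigma_m$ is also the standard one and is exactly what the locality hypotheses in $[\operatorname{S4}]$--$[\operatorname{S5}]$ are designed for.
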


Finally, we give an analogous result to Lemma \ref{lp_auto_stoch_evolution_equation_global_strong_solution_local_uniqueness} in case that the initial values $ u_{\tau}$ and $v_{\tau} $ are not in $ L^{p}(\Omega,E_p) $, but only $ \mathcal{F}_\tau $-measurable. The statement is a direct consequence of Lemma \ref{lp_auto_stoch_evolution_equation_global_strong_solution_local_uniqueness} and Corollary \ref{lp_auto_stoch_evolution_equation_global_strong_solution_measurable_initial_data}.
\begin{Corollary}\label{lp_auto_stoch_evolution_equation_global_strong_solution_measurable_initial_data_uniqueness}
We assume the conditions $ [\operatorname{S1}]-[\operatorname{S7}] $ to be fulfilled. 
Let $ u $ and $ v $ be the strong solutions of $ (\operatorname{SEE}) $ to the strongly  $ \mathcal{F}_{\tau} $-measurabe initial random variables $ u_\tau,v_{\tau}:\Omega\to E_p $. Then we have $ u(\omega,t)=v(\omega,t) $ for almost all $ \omega\in\{u_{\tau}=v_{\tau} \} $ and all $ t\in [\tau(\omega),T]. $
\end{Corollary}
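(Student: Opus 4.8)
The plan is to reduce to the $L^{p}$-integrable situation already handled in Lemma~\ref{lp_auto_stoch_evolution_equation_global_strong_solution_local_uniqueness} by truncating both initial values at level $n$ and letting $n\to\infty$. Since $u_\tau,v_\tau$ are only $\mathcal{F}_\tau$-measurable and not assumed to lie in $L^{p}(\Omega,E_p)$, we cannot apply that Lemma directly, but the truncated data will be bounded and hence $L^{p}$-integrable.

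First I would introduce, for $n\in\N$, the $\mathcal{F}_\tau$-measurable sets $B_n:=\{\|u_\tau\|_{E_p}\le n\}\cap\{\|v_\tau\|_{E_p}\le n\}$ (these belong to $\mathcal{F}_\tau$ because $u_\tau,v_\tau$ are strongly $\mathcal{F}_\tau$-measurable, so their $E_p$-norms are $\mathcal{F}_\tau$-measurable). Because $u_\tau$ and $v_\tau$ take values in $E_p$, we have $\|u_\tau\|_{E_p},\|v_\tau\|_{E_p}<\infty$ almost surely, so $\{u_\tau=v_\tau\}=\bigcup_{n\in\N}\big(B_n\cap\{u_\tau=v_\tau\}\big)$ up to a null set. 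Hence it suffices to prove $u=v$ on $B_n\cap\{u_\tau=v_\tau\}$ for each fixed $n$.

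Next, put $u_\tau^{(n)}:=u_\tau\ind_{\{\|u_\tau\|_{E_p}\le n\}}$ and $v_\tau^{(n)}:=v_\tau\ind_{\{\|v_\tau\|_{E_p}\le n\}}$. These are strongly $\mathcal{F}_\tau$-measurable and bounded, hence lie in $L^{p}(\Omega,E_p)$, so by Theorem~\ref{lp_auto_stoch_evolution_equation_global_strong_solution_lp_initial_data} there are unique strong solutions $u^{(n)},v^{(n)}$ of $(\operatorname{SEE})$ on $[\tau,T]$ with these initial data. On $\{\|u_\tau\|_{E_p}\le n\}$ one has $u_\tau^{(n)}=u_\tau=u_\tau^{(m)}$ for every $m\ge n$, so Lemma~\ref{lp_auto_stoch_evolution_equation_global_strong_solution_local_uniqueness} gives $u^{(n)}=u^{(m)}$ there; thus $(u^{(n)})_n$ is consistent, and the solution furnished by Corollary~\ref{lp_auto_stoch_evolution_equation_global_strong_solution_measurable_initial_data} is precisely the process obtained by gluing these pieces, i.e. $u=u^{(n)}$ on $\{\|u_\tau\|_{E_p}\le n\}$ (and by the uniqueness statement in that Corollary this identification holds for any strong solution $u$ with initial value $u_\tau$). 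The same applies to $v$ and $v^{(n)}$.

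Finally, on $B_n\cap\{u_\tau=v_\tau\}$ we have $u_\tau^{(n)}=u_\tau=v_\tau=v_\tau^{(n)}$, so $B_n\cap\{u_\tau=v_\tau\}\subset\{u_\tau^{(n)}=v_\tau^{(n)}\}$, and Lemma~\ref{lp_auto_stoch_evolution_equation_global_strong_solution_local_uniqueness} yields $u^{(n)}(\omega,t)=v^{(n)}(\omega,t)$ for almost every such $\omega$ and all $t\in[\tau(\omega),T]$. Combining this with $u=u^{(n)}$ and $v=v^{(n)}$ on $B_n$ gives $u=v$ on $B_n\cap\{u_\tau=v_\tau\}$, and taking the union over $n\in\N$ completes the argument. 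The only step requiring a little care is the identification $u=u^{(n)}$ on $\{\|u_\tau\|_{E_p}\le n\}$, but this is immediate from the gluing construction used to prove Corollary~\ref{lp_auto_stoch_evolution_equation_global_strong_solution_measurable_initial_data} together with the subset-uniqueness of Lemma~\ref{lp_auto_stoch_evolution_equation_global_strong_solution_local_uniqueness}; no genuinely new difficulty appears, so the statement is indeed a direct corollary.
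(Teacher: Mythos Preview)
Your argument is correct and is precisely the route the paper intends: the paper's own proof is the single sentence that the statement is a direct consequence of Lemma~\ref{lp_auto_stoch_evolution_equation_global_strong_solution_local_uniqueness} and Corollary~\ref{lp_auto_stoch_evolution_equation_global_strong_solution_measurable_initial_data}, and you have simply spelled out the truncation-and-gluing argument that underlies those two results.
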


    \section{Abstract quasilinear evolution equations}
In this chapter, we consider a quasilinear stochastic evolution equation of the form
\begin{equation*}
(\operatorname{QSEE})\begin{cases}
du(t)&=\left [-A(u(t))u(t)+F(t,u(t))+f(t)\right]\operatorname{dt}+[B(t,u(t))+b(t)]dW(t),\\
u(0)&=u_0
\end{cases}
\end{equation*}
for $ t\in [0,T]. $ Our main result will be the existence and uniqueness of strong solutions of this equation. At first, we assume $ z\mapsto A(z): (E,E^1)_{1-1/p,p}\to B(E^1,E) $ to be globally Lipschitz continuous. This assumption will be dropped later on.

\subsection{Globally Lipschitz continuous quasilinearity}
Before, we start, present our setting in detail. We assume
\begin{itemize}
	\item[\lbrack\text{Q1}\rbrack] Let $ p\in (2,\infty) $ and $ E,E^1 $ be UMD Banach spaces with type $ 2 $ or $ p=2 $ and $ E,E^1 $ Hilbert spaces. We assume the embedding $ E^{1}\hookrightarrow E $ to be dense and write $ E^{1/2} $ for the complex interpolation space $ [E,E^{1}]_{1/2} $ and $ E_p $ for the real interpolation space $ (E,E^{1})_{1-1/p,p} $. Moreover, let the family 
	$$ \{J_\delta:\delta>0 \}\subset B\big(L^{p}(\Omega\times(0,\infty);\gamma(H;E)),L^{p}(\Omega\times(0,\infty);E)\big) $$
	defined by
	\[ J_{\delta}b(t):=\delta^{-1/2}\int_{(t-\delta)\vee 0}^{t}b(s)dW(s) \]
	be $ R $-bounded. 
	\item[\lbrack\text{Q2}\rbrack] The initial value $ u_0:\Omega\to E_{p} $ is strongly $ \mathcal{F}_0 $-measurable.
	\item[\lbrack\text{Q3}\rbrack] The mapping $ A:\Omega\times E_p\to B(E^{1},E) $ is such that $ (y,\omega)\mapsto A(\omega,y)x $ is for every $ x\in E^1 $ strongly measurable and $ \omega\mapsto A(\omega,y)x $ is for fixed $ y\in E_p $ and $ x\in E^{1} $ strongly $ \mathcal{F}_{0} $-measurable. 
	\item[\lbrack\text{Q4}\rbrack]For all $ y\in E_p $ and almost all $ \omega\in\Omega, $ the operators $ A(\omega,y) $ are sectorial with $ 0\in\rho(A(\omega,y) ) $ and have a bounded $ H^{\infty}(\Sigma_{\eta}) $-calculus of angle $ \eta\in (0,\pi/2) $, i.e.
	\[ \|\phi(A(\omega,y))\|_{B(E)}\leq C\|\phi\|_{H^{\infty}(\Sigma_{\eta})} \]
	with a constant $ C>0 $ independent of $ \omega $ and $ y. $ 
	\item[\lbrack\text{Q5}\rbrack] There exists $ C_{Q}>0 $, such that for all $ z,y\in E_p $ and almost all $ \omega\in\Omega $, we have
	\[ \|A(\omega,z)-A(\omega,y)\|_{B(E^{1},E)}\leq C_{Q}\|z-y\|_{E_p}. \]
	\item[\lbrack\text{Q6}\rbrack] The mapping	$ F:L_{\mathbb{F}}^0(\Omega; L^{p}(0,T;E^1)\cap C(0,T;E_p)) \to L_{\mathbb{F}}^0(\Omega; L^{p}(0,T;E))$ is such that for a given $ \F $-stopping time $ \widetilde{\tau} $ with $ 0\leq\widetilde{\tau}\leq T $ almost surely, we have $ F(u)\ind_{[0,\widetilde{\tau}]}=F(v)\ind_{[0,\widetilde{\tau}]} $ almost surely, whenever $ u\ind_{[0,\widetilde{\tau}]}=v\ind_{[0,\widetilde{\tau}]}. $  Moreover, there exist $ L_F^{(i)},\widetilde{L}_{F},C_{F}^{(i)}\geq 0 $, $ i=1,2,  $ such that $ F $ is for almost all $ \omega\in\Omega $ of linear growth, i.e. 
	\begin{align*}
	\|F(\cdot,\omega,\phi_1)&\|_{ L^{p}(0,T;E)}\leq C_F^{(1)}\left (1+\|\phi_1\|_{ L^{p}(0,T;E^1)}\right )+C_F^{(2)}\left (1+\|\phi_1\|_{C(0,T;E_p)}\right )
	\end{align*}
	and Lipschitz continuous, i.e.
	\begin{align*}
	\|F(&\cdot,\omega,\phi_1)-F(\cdot,\omega,\phi_2)\|_{ L^{p}(0,T;E)}\\
	&\leq L_{F}^{(1)}\|\phi_1-\phi_2\|_{ L^{p}(0,T;E^1)}+\widetilde{L}_{F}\|\phi_1-\phi_2\|_{_{ L^{p}(0,T;E)}}
	+L_{F}^{(2)}\|\phi_1-\phi_2\|_{C(0,T;E_p)}
	\end{align*}
	for all $ \phi_1,\phi_2\in  L^{p}(0,T;E^1)\cap C(0,T;E_p) $ with constants independent of $ \omega $. 
	\item[\lbrack\text{Q7}\rbrack] The mapping	$ B:L_{\mathbb{F}}^0(\Omega; L^{p}(0,T;E^1)\cap C(0,T;E_p)) \to L_{\mathbb{F}}^0(\Omega; L^{p}(0,T;\gamma(H;E^{1/2})))$ is such that for a given $ \F $-stopping time $ \widetilde{\tau} $ with $ 0\leq\widetilde{\tau}\leq T $ almost surely, we have $ B(u)\ind_{[0,\widetilde{\tau}]}=B(v)\ind_{[0,\widetilde{\tau}]} $ almost surely, whenever $ u\ind_{[0,\widetilde{\tau}]}=v\ind_{[0,\widetilde{\tau}]}. $ Moreover, there exist $ L_B^{(i)},\widetilde{L}_{B},C_{B}^{(i)}\geq 0 $, $ i=1,2,  $ such that $ B $ is for almost all $ \omega\in\Omega $ of linear growth, i.e.
	\begin{align*}
	\|B(\cdot,\omega,\phi_1)&\|_{L^{p}(0,T;\gamma(H;E^{1/2}))}\leq C_B^{(1)}\left (1+\|\phi_1\|_{ L^{p}(0,T;E^{1})}\right )+C_B^{(2)}\left (1+\|\phi_1\|_{C(0,T;E_p)}\right )
	\end{align*}
	and Lipschitz, i.e.
	\begin{align*}
	\|B(&\cdot, \omega,\phi_1)-B(\cdot,\omega,\phi_2)\|_{L^{p}(0,T;\gamma(H;E^{1/2}))}\\
	&\leq L_{B}^{(1)}\|\phi_1-\phi_2\|_{ L^{p}(0,T,E^{1})}+\widetilde{L}_{B}\|\phi_1-\phi_2\|_{ L^{p}(0,T;E)}+L_{B}^{(2)}\|\phi_1-\phi_2\|_{C(0,T;E_p)}
	\end{align*}
	for all  $ \phi_1,\phi_2\in  L^{p}(0,T;E^{1})\cap C(0,T;E_p) $ with constants independent of $ \omega $. 
	\item[\lbrack\text{Q8}\rbrack] Let the constants of $ [\operatorname{Q6}] $ and $ [\operatorname{Q7}] $ be small enough to ensure $$ C_{ \operatorname{MRD}}(L_F^{(1)}+L_F^{(2)})+C_{\operatorname{MRS}}(L_B^{(1)}+L_B^{(2)})<1, $$
 where $ C_{\operatorname{MRD}} $ and $ C_{\operatorname{MRS}} $ are the constants we introduced in \eqref{lp_max_reg_evolution_equation_stochastic_max_reg} and \eqref{lp_max_reg_evolution_equation_deterministic_max_reg}.
	\item[\lbrack\text{Q9}\rbrack] The functions $ f:\Omega\times [0,T]\to E $ and $ b:\Omega\times [0,T]\to\gamma(H;E^{1/2}) $ are strongly measurable and adapted to $\F. $ Moreover, we assume $ f\in L^{p}(\Omega\times[0,T];E) $ and $ b\in L^{p}(\Omega\times[0,T];\gamma(H;E^{1/2})) $.
\end{itemize}
We define strong solutions of $ (\operatorname{QSEE}) $ in the same way as in the corresponding chapter for the linear stochastic evolution equation $ (\operatorname{SEE}) $. The only difference is, that we replace the autonomous operator by $ A(\omega,u(t)). $
\begin{Definition}
	Let $ \mu$ be another $ \F $-stopping times with $ \tau\leq\mu $ almost surely. $ u:\Omega\times [\tau,\mu]\to E $ is called a strong solution of $ (\operatorname{QSEE}) $, if it is strongly measurable, adapted and we have
	\begin{itemize}
		\item [a)] $ u\in L^{1}(\tau,\mu;E^{1}) $, $ F(\cdot,u)\in L^{1}(\tau,\mu;\gamma(H;E)) $ almost surely and $ B(\cdot,u)\ind_{[\tau,\mu]} $ is stochastically integrable.
		\item[b)] The identity 
		\begin{align*}
		u(t)-u_\tau=&-\int_{\tau}^{t}A(u(s))u(s)\operatorname{ds}+\int_{\tau}^{t}F(s,u(s))+f(s)\operatorname{ds}\\
		&+\int_{\tau}^{t}B(s,u(s))+b(s)dW(s)
		\end{align*}
		holds almost surely for all $ t\in [\tau,\mu] $ in $ E $. 
	\end{itemize}
\end{Definition}

Even in the deterministic case, quasilinear evolution equations do not have global solutions without further structural assumptions. Therefore, we now explain the concept of local solutions. The following definition adapts the terms Van Neerven, Veraar and Weis introduced in \cite{nerven_veraar_weis_stochastic_evolution_equations_umd_spaces} to our situation.

\begin{Definition}\label{lp_quasi_stochastic_parabolic_solution_concept}
	Let $ \sigma,\sigma_n $, $ n\in\N, $ be $ \F $-stopping times with $ 0\leq \sigma,\sigma_n\leq T $ almost surely for all $ n\in\N $ and let $ u:\Omega\times [0,\sigma)\to E $ be a stochastic process. 
	\begin{itemize}
		\item [a)] We say that $ \big(u,(\sigma_n)_n,\sigma\big) $ is a local solution of $ (\operatorname{QSEE}), $ if $ (\sigma_n)_{n\in\N} $ is an increasing sequence with $ \lim_{n\to\infty}\sigma_n=\sigma $ pointwise almost surely, such that 
		$$ u (\omega,\cdot)\in L^{p}(0,\sigma_n(\omega);E^{1})\cap C(0,\sigma_n(\omega);E_p) $$ 
		for almost all $ \omega\in\Omega $ and $ u $ is for all $ n\in\N $ a strong solution of $ (\operatorname{QSEE}) $ on $ [0,\sigma_n] $.
		\item [b)]We call a local solution $ \big(u,(\sigma_n)_n,\sigma\big) $ of $ (\operatorname{QSEE}) $ unique, if every local solution $ \big(\widetilde{u},(\widetilde{\sigma_n})_n,\widetilde{\sigma}\big) $ satisfies $ \widetilde{u}(\omega,t)=u(\omega,t)$ for almost all $ \omega\in\Omega $ and for all $ t\in [0,\sigma\wedge\widetilde{\sigma}). $
		\item [c)] We call a local solution $ \big(u,(\sigma_n)_n,\sigma\big) $ of $ (\operatorname{QSEE}) $ a maximal unique local solution, if for any other local solution $ \big(\widetilde{u},(\widetilde{\sigma_n})_n,\widetilde{\sigma}\big) $, we almost surely have $ \widetilde{\sigma}\leq\sigma $ and $ \widetilde{u}(\omega,t)= u(\omega,t) $ for almost all $ \omega\in\Omega $ and all $ t\in [0,\widetilde{\sigma}) $.
	\end{itemize}
\end{Definition}
If the approximating sequence $ \tau_n $ is not important for a result, we shortly write $ (u,\tau). $ In the following we establish a well-posedness result for the quasilinear evolution equation $ (\operatorname{QSEE}) $ up to a maximal stopping time. The next theorem is the main result of this section and will be proved later on.

\begin{Theorem}\label{lp_quasi_stochastic_parabolic_full_result_global_lipschitz}
	If the assumptions $ [\operatorname{Q1}]-[\operatorname{Q9}] $ are satisfied, the quasilinear stochastic evolution equation $ (\operatorname{QSEE}) $
	has a maximal unique local solution $ \big(u,(\tau_n)_n,\tau\big). $ Moreover, we have 
	\[ \mathbb{P}\Big\{\tau<T, \|u\|_{L^{p}(0,\tau;E^{1})}<\infty, u:[0,\tau)\to E_p \text{ is uniformly continuous}\Big \}= 0.  \]
	If we additionally assume $ u_0\in L^{p}(\Omega;E_p), $ the estimate $$ \left (\E \|u\|_{L^{p}(0,\tau_n;E^{1})}^{p}\right )^{1/p}\leq C^{(n)}(1+\|u_0\|_{L^{p}(\Omega,E_p)})$$
	$$ \left (\E \sup_{t\in[0,\tau_n]}\|u(t)\|_{E_p}^{p}\right )^{1/p}\leq C^{(n)}(1+\|u_0\|_{L^{p}(\Omega,E_p)}) $$
	holds true for all $ n\in\N $ and for some $ C^{(n)}>0 $ independent of $ u_0 $.
\end{Theorem}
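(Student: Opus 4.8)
The plan is to reduce $(\operatorname{QSEE})$ to a sequence of semilinear problems $(\operatorname{SEE})$, each solved by Theorem \ref{lp_auto_stoch_evolution_equation_global_strong_solution_lp_initial_data} and its corollaries, by freezing the quasilinearity at the current initial value and truncating the resulting error into a small-Lipschitz perturbation. First I would fix $\lambda>0$, a stopping time $\sigma\leq T$ and a strongly $\mathcal{F}_\sigma$-measurable $u_\sigma:\Omega\to E_p$, and for $u\in L^{p}(\sigma,T;E^{1})\cap C(\sigma,T;E_p)$ set
\[
\widetilde{F}_\sigma(u)(t):=\theta_\lambda\Big(\sup_{s\in[\sigma,t]}\|u(s)-u_\sigma\|_{E_p}+\|u\ind_{[\sigma,t]}\|_{L^{p}(\sigma,t;E^{1})}\Big)\big(A(u(t))u(t)-A(u_\sigma)u(t)\big).
\]
Then I would check that $(\operatorname{SEE})$ on $[\sigma,T]$ with the autonomous operator $A(u_\sigma)$, with $F$ replaced by $F-\widetilde{F}_\sigma$, and with the given $f,b$, satisfies $[\operatorname{S1}]$–$[\operatorname{S7}]$: $[\operatorname{S1}]$, $[\operatorname{S3}]$, $[\operatorname{S6}]$ are just $[\operatorname{Q1}]$, $[\operatorname{Q2}]$, $[\operatorname{Q9}]$; $[\operatorname{S2}]$ follows from $[\operatorname{Q3}]$ and $[\operatorname{Q4}]$ since $A(u_\sigma)$ is $\mathcal{F}_\sigma$-measurable with the same uniform $H^{\infty}$-bound; the measurability and locality requirements of $[\operatorname{S4}]$, $[\operatorname{S5}]$ are inherited from $[\operatorname{Q6}]$, $[\operatorname{Q7}]$; and linear growth of $\widetilde{F}_\sigma$ is immediate because $\theta_\lambda$ is supported in $[0,2\lambda]$ and $\|A(y)\|_{B(E^{1},E)}\simeq 1$.

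The crux is the Lipschitz estimate for $\widetilde{F}_\sigma$. Writing $\widetilde{F}_\sigma(u)=\theta_\lambda(\Phi(u))\,\Psi(u)$ with $\Phi(u)(t)$ the cut-off argument above and $\Psi(u)(t)=(A(u(t))-A(u_\sigma))u(t)$, one uses that $\Phi$ is Lipschitz from $L^{p}(\sigma,T;E^{1})\cap C(\sigma,T;E_p)$ into $C(\sigma,T;\R)$, that $\theta_\lambda$ is Lipschitz with constant $1/\lambda$ and bounded by $1$, that on $\{\Phi(u)\leq 2\lambda\}$ one has $\|\Psi(u)\|_{L^{p}(\sigma,T;E)}\lesssim\lambda$ by $[\operatorname{Q5}]$, and that $\Psi$ is itself Lipschitz in the relevant norms (again by $[\operatorname{Q5}]$ and $\|A(y)\|_{B(E^{1},E)}\simeq 1$); the product rule then yields a Lipschitz constant for $\widetilde{F}_\sigma$ of the form $C\lambda$ with $C$ independent of $\lambda$, $\sigma$ and $u_\sigma$. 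Choosing $\lambda$ so small that $C_{\operatorname{MRD}}C\lambda$ added to $C_{\operatorname{MRD}}(L_F^{(1)}+L_F^{(2)})+C_{\operatorname{MRS}}(L_B^{(1)}+L_B^{(2)})$ still lies below $1$ — possible by $[\operatorname{Q8}]$ — secures $[\operatorname{S7}]$, so Corollary \ref{lp_auto_stoch_evolution_equation_global_strong_solution_measurable_initial_data} gives a unique strong solution $u^{\star}_{\sigma}$ on $[\sigma,T]$, obeying the estimates of Theorem \ref{lp_auto_stoch_evolution_equation_global_strong_solution_lp_initial_data} when $u_\sigma\in L^{p}(\Omega;E_p)$.

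Next I would define $\widetilde{\sigma}:=\inf\{t\in[\sigma,T]:\sup_{s\in[\sigma,t]}\|u^{\star}_{\sigma}(s)-u_\sigma\|_{E_p}+\|u^{\star}_{\sigma}\|_{L^{p}(\sigma,t;E^{1})}>\lambda\}\wedge T$, which is an $\F$-stopping time by Lemma \ref{quasi_stochastic_parabolic_stopping_time} applied to the adapted, continuous process inside the braces; since the cut-off equals $1$ on $[\sigma,\widetilde{\sigma}]$, $u^{\star}_{\sigma}$ is a strong solution of $(\operatorname{QSEE})$ with datum $u_\sigma$ at $\sigma$ on $[\sigma,\widetilde{\sigma}]$. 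Starting at $\sigma=0$ with $u_0$ gives $\tau_1$ and a solution $u$ on $[0,\tau_1]$; inductively, given $u$ on $[0,\tau_n]$, I apply the construction at $\sigma=\tau_n$ with $u_{\tau_n}:=u(\tau_n)$, which is strongly $\mathcal{F}_{\tau_n}$-measurable by Proposition \ref{appendix_probability_tau_past}(c), and paste the new piece to $u$ — invoking Lemma \ref{lp_auto_stoch_evolution_equation_global_strong_solution_local_uniqueness} and Corollary \ref{lp_auto_stoch_evolution_equation_global_strong_solution_measurable_initial_data_uniqueness} to see that the pieces agree on overlaps and that the concatenation stays adapted and strongly measurable — to obtain an increasing sequence $(\tau_n)_n$ and a strong solution of $(\operatorname{QSEE})$ on each $[0,\tau_n]$. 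Putting $\tau:=\sup_n\tau_n$, the triple $(u,(\tau_n)_n,\tau)$ is a local solution; the a priori bounds follow by feeding $\|u(\tau_n)\|_{L^{p}(\Omega;E_p)}$, finite by the previous step, into \eqref{lp_auto_stoch_evolution_equation_global_strong_solution_lp_initial_data_estimate11} and \eqref{lp_auto_stoch_evolution_equation_global_strong_solution_lp_initial_data_estimate22}, picking up a new constant $C^{(n)}$ at each stage.

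Finally, uniqueness and maximality of $(u,(\tau_n)_n,\tau)$ follow from the local uniqueness of $(\operatorname{SEE})$ (Lemma \ref{lp_auto_stoch_evolution_equation_global_strong_solution_local_uniqueness}, Corollary \ref{lp_auto_stoch_evolution_equation_global_strong_solution_measurable_initial_data_uniqueness}): for a competing local solution $(\widetilde{u},(\widetilde{\tau}_m)_m,\widetilde{\tau})$, on a short enough random subinterval both $u$ and $\widetilde{u}$ solve the same truncated $(\operatorname{SEE})$ with $A$ frozen at $u_0$, hence coincide; a standard induction along the $\tau_n$ then gives $u=\widetilde{u}$ on $[0,\tau\wedge\widetilde{\tau})$, and a concatenation argument upgrades this to $\widetilde{\tau}\leq\tau$ almost surely. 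For the blow-up alternative I argue by contradiction on $\Gamma:=\{\tau<T,\ \|u\|_{L^{p}(0,\tau;E^{1})}<\infty,\ u:[0,\tau)\to E_p\text{ uniformly continuous}\}$: on $\Gamma$, uniform continuity lets $u$ extend to $t=\tau$ with $u(\tau)\in E_p$, and since $\sup_{[\tau_n,\tau_{n+1}]}\|u-u(\tau_n)\|_{E_p}+\|u\|_{L^{p}(\tau_n,\tau_{n+1};E^{1})}=\lambda$ for every $n$, this fixed amount of "mass" cannot be consumed infinitely often on a finite time interval once $\|u\|_{L^{p}(0,\tau;E^{1})}$ is finite and $u$ is uniformly continuous into $E_p$ up to $\tau$ — equivalently, restarting the construction at $\sigma=\tau$ would yield a solution on $[\tau,\tau+\delta]$ with $\delta>0$, contradicting $\tau=\sup_n\tau_n$; hence $\PP(\Gamma)=0$. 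The one genuinely delicate step is the Lipschitz estimate for $\widetilde{F}_\sigma$, with a constant $O(\lambda)$ uniform over the starting time $\sigma$ and the $\mathcal{F}_\sigma$-measurable datum $u_\sigma$; the rest is a careful but routine combination of the Section 3 results with the stopping-time bookkeeping of Section 2.
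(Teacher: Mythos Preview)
Your approach is correct and in fact matches the sketch given in the paper's introduction, but the paper's formal proof proceeds differently on two points.

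First, instead of iterating the local step $\sigma\mapsto\widetilde{\sigma}$ to produce $(\tau_n)_n$ and setting $\tau=\sup_n\tau_n$, the paper defines $\Gamma$ as the set of all stopping times $\widetilde{\tau}$ for which $(\operatorname{QSEE})$ admits a unique solution on $[0,\widetilde{\tau}]$ with the required regularity, shows $\Gamma$ is nonempty (Proposition \ref{lp_quasi_stochastic_parabolic_local_solution}) and closed under pairwise maxima (Lemma \ref{lp_quasi_stochastic_parabolic_pairwise_maximum_uniqueness}), and then invokes the $\esssup$ machinery of Karatzas--Shreve to obtain $\tau=\esssup\Gamma$ together with an increasing sequence $(\tau_n)_n\subset\Gamma$ converging to it. Second, for the blow-up alternative the paper argues via Egorov's theorem: on a set of positive measure one extracts $\Lambda$ and $N$ with $\sup_{[\tau_N,\tau)}\|u-u(\tau_N)\|_{E_p}+\|u\|_{L^p(\tau_N,\tau;E^1)}<\lambda/2$ uniformly on $\Lambda$, restarts at $\tau_N$, and contradicts $\tau=\esssup\Gamma$. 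The Lipschitz estimate you single out as the delicate step is exactly Lemma \ref{lp_quasi_stochastic_parabolic_quasilinearity_contractive}, with constant $6C_Q\lambda$.

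Your direct iteration and the pathwise ``mass'' argument --- each step necessarily consumes exactly $\lambda$ of the combined $E_p$-oscillation plus $L^p(\tau_n,\tau_{n+1};E^1)$-norm, which cannot happen infinitely often once the total $L^p(0,\tau;E^1)$-norm is finite and $\tau_{n+1}-\tau_n\to 0$ with $u$ uniformly continuous --- are more elementary and avoid both Egorov and the $\esssup$ theorem. The trade-off is that uniqueness and maximality need more explicit bookkeeping: in the paper's scheme uniqueness is built into $\Gamma$, whereas you must bootstrap $u=\widetilde u$ along the exit times step by step (your sketch of this is correct but terse). Note also that your closing remark ``restarting at $\sigma=\tau$ contradicts $\tau=\sup_n\tau_n$'' is not by itself a clean contradiction, since your $\tau$ is only the limit of one particular sequence and not an $\esssup$; it is the mass argument that actually does the work there, and maximality should then be deduced from the blow-up criterion, as the paper does.
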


At first, we prove existence of a strong solution in small balls around the initial value up to a stopping time $ \tau_1. $ Then, iterating this procedure, we construct a local solution $ \big(u,(\tau_n)_n,\tau\big) $ of $ (\operatorname{QSEE}) $. Afterwards we derive a suitable blow-up alternative, which helps us to prove that  $ \big(u,(\tau_n)_n,\tau\big) $ is indeed a maximal unique solution.  

We begin with the definition of a cut-off function $ \theta_{\lambda} $, that helps us to enclose the processes in a suitable ball around the initial value. 
Let 
\begin{equation*}
\Phi(t)=
\begin{cases}
1 &\text{ for }t\in [0,1]\\
-t+2 &\text{ for }t\in [1,2]\\
0 &\text{ for }t\in [2,\infty)
\end{cases}
\end{equation*}
and define $ \Phi_{\lambda}(t):=\Phi(\tfrac{t}{\lambda}) $ which gives us a monotonously decreasing function bounded by $ 1 $, that equals $ 1 $ on $ [0,\lambda] $ and that vanishes on $ [2\lambda,\infty) $. Moreover, $ \Phi_{\lambda} $ is Lipschitz continuous with 
\[ |\Phi_{\lambda}(t)-\Phi_{\lambda}(s)|\leq \lambda^{-1}|t-s| \]
for all $ t,s\geq 0. $ Now we can define the desired cut-off function. For given $ u_a\in E_p $, $ u\in C(a,b;E_p)\cap  L^{p}(a,b;E^{1}) $ and $ t\in [a,b] $ let 
$$ \theta_{\lambda}(a,t,u,u_a):=\Phi_{\lambda}\left (\|u\|_{ L^{p}(a,t;E^{1})}+\sup\limits_{s\in [a,t]}\|u(s)-u_a\|_{E_p}\right ) $$
Clearly, we have $ \theta_{\lambda}(a,t,u,u_a)=0 $ if $\|u\|_{ L^{p}(a,t;E^{1})}+\sup\limits_{s\in [a,t]}\|u(s)-u_a\|_{E_p}\geq 2\lambda$ and if $\|u\|_{ L^{p}(a,t;E^{1})}+\sup\limits_{s\in [a,t]}\|u(s)-u_a\|_{E_p}\leq \lambda,$ we have
\begin{equation*}\label{lp_quasi_stochastic_parabolic_cutoff_identity}
A(u(t))u(t)=A(u_a)u(t)+\theta_{\lambda}(a,t,u,u_a)\big(A(u(t))-A(u_a)\big)u(t).
\end{equation*}
With this fact in mind, it is quite natural to start with the stochastic evolution equation 
\begin{equation}\label{lp_quasi_stochastic_parabolic_linearized_problem}
\begin{cases}
du(t)&=\left [-A(u_\sigma)u(t)+\widetilde{F}_{\lambda}(t,u(t))+f(t)\right]\operatorname{dt}+[B(t,u(t))+b(t)]dW(t)\\
u(\sigma)&=u_\sigma
\end{cases}
\end{equation}
where $ \widetilde{F}_{\lambda} $ is given by
\[ \widetilde{F}_{\lambda}(t,u(t))= \theta_{\lambda}(\sigma,t,u,u_{\sigma})\big(A(u_\sigma)-A(u(t))\big)u(t)+F(t,u(t)). \]
Since we want to sustain local solutions to a maximal time interval, it will be necessary to consider not only the initial time zero but also, as in the previous chapter, an initial $ \F $-stopping time $ \sigma. $ 
First we prove existence and uniqueness of a strong solution of \eqref{lp_quasi_stochastic_parabolic_linearized_problem} using the methods described in the previous chapter and in a second step, we then restrict the solution to an interval $ [\sigma,\sigma+\nu] $, on which we have $ \theta_\lambda(\sigma,t,u(t),u_{\sigma})=1 $. Again $ \nu $ is not a fixed number, but an $ \F $-stopping time, because stochastic processes behave differently from path to path. 

The following Lemma makes sure that the nonlinearity $ \widetilde{F}_\lambda $ satisfies the assumptions of Theorem \ref{lp_auto_stoch_evolution_equation_global_strong_solution_lp_initial_data}, if one chooses $ \lambda $ small enough.

\begin{Lemma}\label{lp_quasi_stochastic_parabolic_quasilinearity_contractive}
	Let $ \sigma $ be a $ \F $-stopping time with $ 0\leq \sigma\leq T $ almost surely and let $ u_\sigma:\Omega\to E_p $ be strongly $ \F_{\sigma} $-measurable. For $ t\in [0,T] $, $ \lambda>0 $, $ \omega\in\Omega $ and $ y\in E^1 $ 
	we define
	\begin{equation*}
	Q_{\lambda,\sigma}(\omega,t,y,u_{\sigma}(\omega)):=\begin{cases}
	\theta_{\lambda}(\sigma(\omega),t,y,u_{\sigma}(\omega))\left (A(\omega,u_{\sigma}(\omega))-A(\omega,y)\right )y&, \text{if  }t\geq\sigma(\omega),\\
	0&, \text{if }t<\sigma(\omega).
	\end{cases}
	\end{equation*}
	If we additionally assume $ [\operatorname{Q1}]-[\operatorname{Q5}] $, $ Q_{\lambda,\sigma} $ is strongly measurable and $ \omega\mapsto Q_{\lambda,\sigma}(\omega,t,y,u_{\sigma}(\omega)) $ is strongly $ \mathcal{F}_{t} $-measurable. Moreover, $ Q_{\lambda,\sigma} $ is of linear growth, i.e
	$$ \|Q_{\lambda,\sigma}(\omega,\cdot,u,u_{\sigma}(\omega)) \|_{ L^{p}(\sigma,T;E)}\leq 4C_{Q}\lambda^{2} $$
	and Lipschitz, i.e
	\begin{align*}
	\|Q_{\lambda,\sigma}(\omega,\cdot,u,u_{\sigma}(\omega))&-Q_{\lambda,\sigma}(\omega,\cdot,v,u_{\sigma}(\omega))\|_{ L^{p}(\sigma,T;E)}\\
	&\leq 6C_{Q}\lambda \left (\|u-v\|_{ L^{p}(\sigma,T;E^{1})}+\|u-v\|_{C(\sigma,T;E_p)}\right )
	\end{align*}
	for almost all $ \omega\in\Omega $ and for all $ u,v\in  L^{p}(\sigma,T;E^{1})\cap C(\sigma,T;E_p) $.
\end{Lemma}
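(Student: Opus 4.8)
The plan is to verify each claimed property of $Q_{\lambda,\sigma}$ separately, reducing everything to the structural hypotheses $[\operatorname{Q3}]$--$[\operatorname{Q5}]$ on $A$ together with the elementary properties of the cut-off $\theta_\lambda$ recorded above. First I would settle the measurability statements. Strong measurability of $(\omega,t,y)\mapsto Q_{\lambda,\sigma}(\omega,t,y,u_\sigma(\omega))$ follows because it is built from strongly measurable ingredients: $(y,\omega)\mapsto A(\omega,y)x$ is strongly measurable by $[\operatorname{Q3}]$, $u_\sigma$ is $\mathcal F_\sigma$-measurable by assumption, the map $t\mapsto\theta_\lambda(\sigma(\omega),t,y,u_\sigma(\omega))$ is continuous, and the indicator $\ind_{t\ge\sigma(\omega)}$ is measurable; products and compositions of strongly measurable functions are strongly measurable. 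For the $\mathcal F_t$-measurability of $\omega\mapsto Q_{\lambda,\sigma}(\omega,t,y,u_\sigma(\omega))$ at fixed $t,y$, the same argument as in the proof of Theorem~\ref{lp_auto_stoch_evolution_equation_global_strong_solution_lp_initial_data} applies: on $\{t\ge\sigma\}\in\mathcal F_t$ the random variable $A(\omega,u_\sigma(\omega))x$ is $\mathcal F_\sigma\cap\mathcal F_t=\mathcal F_{\sigma\wedge t}\subset\mathcal F_t$-measurable (using Proposition~\ref{appendix_probability_tau_past}c) for $u_\sigma$, which is $\mathcal F_\sigma$-measurable as a stopped progressively measurable process), while on $\{t<\sigma\}$ the expression is $0$; and $\theta_\lambda(\sigma(\omega),t,y,u_\sigma(\omega))$ depends $\mathcal F_t$-measurably on $\omega$ through $\sigma\wedge t$ and $u_\sigma$.

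Next I would prove the growth bound. For fixed $\omega$ and $t\ge\sigma(\omega)$, either $\theta_\lambda(\sigma,t,u,u_\sigma)=0$, in which case $Q_{\lambda,\sigma}=0$; or $\theta_\lambda(\sigma,t,u,u_\sigma)\neq 0$, which by the support property of $\Phi_\lambda$ forces
\[
\|u\|_{L^p(\sigma,t;E^1)}+\sup_{s\in[\sigma,t]}\|u(s)-u_\sigma\|_{E_p}\le 2\lambda .
\]
In the latter case, $[\operatorname{Q5}]$ gives $\|A(\omega,u_\sigma)-A(\omega,u(t))\|_{B(E^1,E)}\le C_Q\|u(t)-u_\sigma\|_{E_p}\le 2C_Q\lambda$, and $\|u(t)\|_{E^1}$ is controlled in $L^p$ by $2\lambda$. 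Since $0\le\theta_\lambda\le 1$, taking the $L^p(\sigma,T;E)$-norm (the integrand vanishes for $t$ beyond the cut-off) yields $\|Q_{\lambda,\sigma}(\omega,\cdot,u,u_\sigma)\|_{L^p(\sigma,T;E)}\le 2C_Q\lambda\cdot 2\lambda=4C_Q\lambda^2$. One subtlety worth a line: the pointwise-in-$t$ estimates $\|A(\omega,u_\sigma)-A(\omega,u(t))\|\le 2C_Q\lambda$ and $\|u(t)\|_{E^1}\le\lambda$ are not both available simultaneously; instead one writes $\|Q_{\lambda,\sigma}(t)\|_E\le 2C_Q\lambda\,\|u(t)\|_{E^1}$ pointwise wherever $\theta_\lambda\ne 0$ and then integrates, using $\|u\|_{L^p(\sigma,t;E^1)}\le 2\lambda$.

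The Lipschitz estimate is the main obstacle, because $Q_{\lambda,\sigma}$ is a product of three factors — $\theta_\lambda$, the operator difference $A(u_\sigma)-A(y)$, and $y$ itself — each depending on the argument, and one must also handle the region where only one of the two arguments $u,v$ has non-vanishing cut-off. The plan is to split $Q_{\lambda,\sigma}(\cdot,u,u_\sigma)-Q_{\lambda,\sigma}(\cdot,v,u_\sigma)$ telescopically into
\[
\bigl(\theta_\lambda^u-\theta_\lambda^v\bigr)\bigl(A(u_\sigma)-A(u)\bigr)u
\;+\;\theta_\lambda^v\bigl(A(v)-A(u)\bigr)u
\;+\;\theta_\lambda^v\bigl(A(u_\sigma)-A(v)\bigr)(u-v),
\]
writing $\theta_\lambda^u=\theta_\lambda(\sigma,t,u,u_\sigma)$, and estimate each term on the set where at least one cut-off is nonzero (on the complement the difference is $0$). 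For the first term, the Lipschitz bound $|\Phi_\lambda(t)-\Phi_\lambda(s)|\le\lambda^{-1}|t-s|$ gives $|\theta_\lambda^u-\theta_\lambda^v|\le\lambda^{-1}\bigl(\|u-v\|_{L^p(\sigma,t;E^1)}+\sup_{[\sigma,t]}\|u-v\|_{E_p}\bigr)$, which combines with $\|A(u_\sigma)-A(u)\|\le 2C_Q\lambda$ and $\|u\|_{L^p(\sigma,t;E^1)}\le 2\lambda$ (valid where $\theta_\lambda^u\ne 0$; where $\theta_\lambda^u=0$ but $\theta_\lambda^v\ne 0$ one swaps the roles and uses the triangle inequality for the $u$-factor) to give a contribution $\lesssim C_Q\lambda(\|u-v\|_{L^p(\sigma,T;E^1)}+\|u-v\|_{C(\sigma,T;E_p)})$. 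The second term uses $[\operatorname{Q5}]$ as $\|A(v)-A(u)\|\le C_Q\|u-v\|_{E_p}$ together with $\theta_\lambda^v\le 1$ and $\|u\|_{L^p(\sigma,t;E^1)}\le 2\lambda$ on the relevant set, again of order $C_Q\lambda\|u-v\|_{C(\sigma,T;E_p)}$; the third term uses $\|A(u_\sigma)-A(v)\|\le 2C_Q\lambda$ and $\theta_\lambda^v\le 1$ directly, of order $C_Q\lambda\|u-v\|_{L^p(\sigma,T;E^1)}$. Summing the three bounds and collecting constants produces the stated factor $6C_Q\lambda$. The bookkeeping of which of the three factors to estimate "crudely by $\le 1$ or $\le 2\lambda$" versus "as the genuine increment" in each telescoped term, and the careful use of the overlap sets $\{\theta_\lambda^u\ne 0\}$ and $\{\theta_\lambda^v\ne 0\}$ so that an $L^p$-in-$E^1$ bound of $2\lambda$ is always available for whichever $y$-factor appears, is the delicate part; everything else is routine.
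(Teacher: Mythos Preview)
Your proposal is correct and follows essentially the same approach as the paper: the same three-term telescoping of $Q_{\lambda,\sigma}(\cdot,u,u_\sigma)-Q_{\lambda,\sigma}(\cdot,v,u_\sigma)$, the same use of the Lipschitz bound $|\Phi_\lambda(s)-\Phi_\lambda(t)|\le\lambda^{-1}|s-t|$ for the cut-off increment, and the same way of extracting the factor $C_Q\lambda$ from each piece via $[\operatorname{Q5}]$ and the $2\lambda$-support of $\Phi_\lambda$.

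The one place where the paper is sharper than your sketch is exactly the bookkeeping issue you flag at the end. Rather than reasoning about the ``overlap sets'' $\{\theta_\lambda^u\ne 0\}$, $\{\theta_\lambda^v\ne 0\}$ and invoking a swap-and-triangle argument when $\theta_\lambda^u=0$ but $\theta_\lambda^v\ne 0$, the paper introduces the hitting times
\[
\sigma_u=\inf\bigl\{s\in[\sigma,T]:\|u\|_{L^p(\sigma,s;E^1)}+\|u-u_\sigma\|_{C(\sigma,s;E_p)}\ge 2\lambda\bigr\}\wedge T,
\]
and $\sigma_v$ analogously, assumes without loss of generality $\sigma_u\ge\sigma_v$, and writes the telescoping so that the undifferenced $y$-factor is always $u$. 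Then every term lives on $[\sigma,\sigma_u]$ (or the smaller $[\sigma,\sigma_v]$ for the third term, where $\theta_\lambda^v$ forces it), and the needed bound $\|u\|_{L^p(\sigma,\sigma_u;E^1)}\le 2\lambda$, $\sup_{[\sigma,\sigma_u]}\|u-u_\sigma\|_{E_p}\le 2\lambda$ is available throughout, with no case distinction. This is precisely what makes the constant come out as $6C_Q\lambda$ without extra triangle-inequality losses; your ``swap roles within the first term'' remedy as written would not quite work, since swapping $u\leftrightarrow v$ changes the telescoping itself. Adopting the WLOG $\sigma_u\ge\sigma_v$ at the outset resolves the delicacy you identified.
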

\begin{proof}
The claimed measurability properties of $ Q_{\lambda,\sigma} $ are immediate, since the random variable $ \omega\mapsto Q_{\lambda,\sigma}(\omega,t,y,u_{\sigma})\ind_{t\geq\sigma(\omega)}  $ is by $ [\operatorname{Q3}] $ strongly $ \mathcal{F_{\sigma}} $-measurable. 

To prove the Lipschitz and the growth estimate we argue pathwise for fixed $ \omega\in\Omega $ with $ \sigma(\omega)<T $. In order to keep the notation simple, we suppress the explicit dependence on $ \omega. $ Let $ u,v\in L^{p}(\sigma,T;E^{1})\cap C(\sigma,T;E_p) $ and define
	\[ \sigma_u=\inf\big\{s\in[\sigma,T]: \|u\|_{ L^{p}(\sigma,s;E^{1})}+\|u-u_{\sigma}\|_{C(\sigma,s;E_p)}\geq 2\lambda\big\}\wedge T \]
	and similarly 
	\[ \sigma_v=\inf\big\{s\in[\sigma,T]: \|v\|_{ L^{p}(\sigma,s;E^{1})}+\|v-u_{\sigma}\|_{C(\sigma,s;E_p)}\geq 2\lambda\big\}\wedge T. \]
	Note, that the definition of $ \theta_{\lambda}(\sigma,t,u,u_{\sigma}) $ ensures $ Q_{\lambda,\sigma}(t,u(t),u_{\sigma})=0 $ for $ t\geq \sigma_u $ and $ Q_{\lambda,\sigma}(t,v(t),u_{\sigma})=0 $ for $ t\geq \sigma_v. $ In the following we assume without restriction that $ \sigma_u\geq\sigma_v $. First we prove the growth estimate. $ \theta_{\lambda}\leq 1 $, $ [\operatorname{Q5}] $ and the definition of $ \sigma_u $ yield
	\begin{align*}
	\|Q_{\lambda,\sigma}(\cdot,u,u_{\sigma})\|_{ L^{p}(\sigma,T;E)}&=\|Q_{\lambda,\sigma}(\cdot,u,u_{\sigma})\|_{ L^{p}(\sigma,\sigma_u;E)}\\
	&\leq C_{Q}\sup\limits_{t\in[\sigma,\sigma_u]}\|u(t)-u_{\sigma}\|_{E_p}\|u\|_{ L^{p}(\sigma,\sigma_u;E^{1})}\leq 4C_{Q}\lambda^{2}.
	\end{align*}
	For the Lipschitz estimate, we start with
	\begin{align*}
	\|Q_{\lambda,\sigma}(\cdot,u,u_{\sigma})&-Q_{\lambda,\sigma}(\cdot,v,u_{\sigma})\|_{ L^{p}(\sigma,T;E)}\\
	&\leq \|\big(\theta_{\lambda}(\sigma,\cdot,u,u_{\sigma})-\theta_{\lambda}(\sigma,\cdot,v,u_{\sigma})\big)\big(A(u)-A(u_{\sigma})\big)u\|_{ L^{p}(\sigma,\sigma_u;E)}\\
	&\hspace{0,3cm}+\|\theta_{\lambda}(\sigma,\cdot,v,u_{\sigma})\big(A(u)-A(v)\big)u\|_{ L^{p}(\sigma,\sigma_u;E)}\\
	&\hspace{0,3cm}+\|\theta_{\lambda}(\sigma,\cdot,v,u_{\sigma})\big(A(v)-A(u_\sigma)\big)(u-v)\|_{ L^{p}(\sigma,\sigma_v;E)}.
	\end{align*}
	Note that in the last step we used $ \theta_{\lambda}(\sigma,t,v,u_{\sigma})= 0 $ for $ t\geq\sigma_{v}. $ The assumed Lipschitz continuity of $ \theta_\lambda $ yields
	\begin{align*}
	&\|\big(\theta_{\lambda}(\sigma,\cdot,u,u_{\sigma})-\theta_{\lambda}(\sigma,\cdot,v,u_{\sigma})\big)\big(A(u)-A(u_{\sigma})\big)u\|_{ L^{p}(\sigma,\sigma_u;E)}\\
	&\leq \sup\limits_{t\in[\sigma,\sigma_u]}\big|\theta_{\lambda}(\sigma,t,u,u_{\sigma})-\theta_{\lambda}(\sigma,t,v,u_{\sigma})\big| \sup_{t\in [\sigma,\sigma_u]}\|A(u(t))-A(u_{\sigma})\|_{B(E^{1},E)} \|u\|_{ L^{p}(\sigma,\sigma_u;E^{1})}\\
	&\leq \lambda^{-1}\sup\limits_{s\in[\sigma,T]}\Big|\|u\|_{ L^{p}(a,s;E^{1})}+\|u-u_{\sigma}\|_{C(\sigma,s;E_p)}-\|v\|_{ L^{p}(a,s;E^{1})}-\|v-u_{\sigma}\|_{C(\sigma,s;E_p)}\Big|\\
	&\hspace{1cm}C_{Q}\sup\limits_{t\in[a,\sigma_u]}\|u(t)-u_{\sigma}\|_{E_p}\|u\|_{ L^{p}(\sigma,\sigma_u;E^{1})}\\
	&\leq 4C_{Q}\lambda\left (\|u-v\|_{ L^{p}(\sigma,T;E^{1})}+\|u-v\|_{C(\sigma,T;E_p)}\right ).
	\end{align*}
	In the last step we used the definition of $ \sigma_u $  to estimate the terms not depending on the difference $ u-v. $ Accordingly, we derive
	\begin{align*}
	\|\theta_{\lambda}&(\sigma,\cdot,v,u_{\sigma})\big(A(u)-A(v\big)u\|_{ L^{p}(\sigma,\sigma_u;E)}\leq 2C_{Q}\lambda\|u-v\|_{C(\sigma,T;E_p)}
	\end{align*}
	and
	\begin{align*}
	\|\theta_{\lambda}&(\sigma,\cdot,v,u_{\sigma})\big(A(v)-A(u_\sigma\big)(u-v)\|_{ L^{p}(\sigma,\sigma_v;E)}\leq 2C_{Q}\lambda \|u-v\|_{ L^{p}(\sigma,T;E^{1})}
	\end{align*}
	respectively. After all, we proved 
	\begin{align*}
	\|Q_{\lambda,\sigma}(\cdot,u,u_{\sigma})&-Q_{\lambda,\sigma}(\cdot,v,u_{\sigma})\|_{ L^{p}(\sigma,T;E)}\leq 6C_{Q}\lambda \left (\|u-v\|_{ L^{p}(\sigma,T;E^{1})}+\|u-v\|_{C(\sigma,T;E_p)}\right ),
	\end{align*}
	which is the claimed result.
\end{proof}
Next, we construct a local solution of $ (\operatorname{QSEE}) $ starting from a random initial time. We do this by solving \eqref{lp_quasi_stochastic_parabolic_linearized_problem} and restricting the solution to a random interval on which the solution also satisfies $ (\operatorname{QSEE}) $.
\begin{Proposition}\label{lp_quasi_stochastic_parabolic_local_solution}
	Let $ \sigma $ be an $ \F $-stopping time with $ 0\leq\sigma\leq T $ almost surely, $ u_{\sigma}:\Omega\to E_p $ be strongly $ \mathcal{F}_{\sigma} $-measurable and $ \lambda>0 $ small enough to ensure 
	$$ C_{\operatorname{MRD}}(6C_Q\lambda+L_F^{(1)}+L_F^{(2)})+C_{\operatorname{MRS}}(L_B^{(1)}+L_B^{(2)})<1. $$ If $ [\operatorname{Q1}]-[\operatorname{Q9}] $ are satisfied, the equation 	
	\begin{equation}
	\begin{cases}
	du(t)&=\left [A(u(t))u(t)+F(t,u(t))+f(t)\right]\operatorname{dt}+[B(t,u(t))+b(t)]dW(t)\\
	u(\sigma)&=u_{\sigma}
	\end{cases}
	\end{equation}
	has a unique solution on $ [\sigma,\widetilde{\sigma}] $ with $ u\in L^{p}(\sigma,\widetilde{\sigma};E^1)\cap C(\sigma,\widetilde{\sigma};E_p) $ almost surely. Here the $ \F $-stopping time $ \widetilde{\sigma} $ is given by
	\[ \widetilde{\sigma}=\inf\Big\{t\in [\sigma,T]: \|u-u_{\sigma}\|_{C(\sigma,t;E_p)}+\|u\|_{L^{p}(\sigma,t;E^{1})}>\lambda\Big \}\wedge T. \]
If we additionally assume $ u_\sigma\in L^{p}(\Omega;E_p), $ we also have 
	$$ \big (\E \|u\|_{ L^{p}(\sigma,\widetilde{\sigma};E^{1})}^{p}\big )^{1/p}\leq C\big(1+\|u_\sigma\|_{L^{p}(\Omega,E_p)}\big)$$
	$$ \big (\E \sup_{t\in[\sigma,\widetilde{\sigma}]}\|u(t)\|_{E_p}^{p}\big )^{1/p}\leq C\big(1+\|u_\sigma\|_{L^{p}(\Omega,E_p)}\big) $$
  for some constant $ C>0 $ independent of $ u_\sigma$.
\end{Proposition}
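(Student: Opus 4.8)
The plan is to reduce \eqref{lp_quasi_stochastic_parabolic_linearized_problem} to the semilinear theory of Section~3 and afterwards to cut the resulting global-in-time solution down to the random interval on which the cut-off $\theta_\lambda$ is inactive. Concretely, I would apply the results of Section~3 to \eqref{lp_quasi_stochastic_parabolic_linearized_problem}, whose drift and diffusion are already in the form $(\operatorname{SEE})$: the initial time $\tau$ there is $\sigma$, the linear operator of $(\operatorname{SEE})$ is played by the (random, $\mathcal{F}_\sigma$-measurable) operator $A(u_\sigma)$, the nonlinearity $F$ of $(\operatorname{SEE})$ is replaced by $\widetilde F_\lambda(t,u)=Q_{\lambda,\sigma}(\cdot,t,u(t),u_\sigma)+F(t,u(t))$, and $B,f,b$ are unchanged. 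One then verifies $[\operatorname{S1}]$--$[\operatorname{S7}]$: $[\operatorname{S1}]$, $[\operatorname{S3}]$, $[\operatorname{S6}]$ are immediate from $[\operatorname{Q1}]$, $[\operatorname{Q2}]$, $[\operatorname{Q9}]$ and the $\mathcal{F}_\sigma$-measurability of $u_\sigma$; $[\operatorname{S2}]$ follows from $[\operatorname{Q3}]$--$[\operatorname{Q5}]$ (measurability of $\omega\mapsto A(u_\sigma(\omega))x$ by composing the measurability in $[\operatorname{Q3}]$ with $u_\sigma$, and sectoriality, constant domain $E^1$ and uniform $H^\infty$-bound of $A(u_\sigma(\omega))$ from $[\operatorname{Q4}]$--$[\operatorname{Q5}]$); $[\operatorname{S5}]$ is $[\operatorname{Q7}]$. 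For $[\operatorname{S4}]$ applied to $\widetilde F_\lambda$, the locality (Volterra) property holds because $\theta_\lambda(\sigma,t,\cdot,u_\sigma)$, hence $Q_{\lambda,\sigma}(\cdot,t,\cdot,u_\sigma)$, depends only on the restriction of its argument to $[\sigma,t]$ (and $F$ has this property by $[\operatorname{Q6}]$), while the growth and Lipschitz bounds of $\widetilde F_\lambda$ are those of $[\operatorname{Q6}]$ augmented by the constant $4C_Q\lambda^2$ and by a Lipschitz term of order $C_Q\lambda$ supplied by Lemma~\ref{lp_quasi_stochastic_parabolic_quasilinearity_contractive}. Finally $[\operatorname{S7}]$ is precisely where the smallness assumption on $\lambda$ is used: the Lipschitz constants of $\widetilde F_\lambda$ are those of $F$ shifted by a multiple of $C_Q\lambda$, so that the smallness of $\lambda$ made in the statement forces $[\operatorname{S7}]$.

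With $[\operatorname{S1}]$--$[\operatorname{S7}]$ in force, Corollary~\ref{lp_auto_stoch_evolution_equation_global_strong_solution_measurable_initial_data} (for a merely $\mathcal{F}_\sigma$-measurable $u_\sigma$) yields a unique strong solution $u$ of \eqref{lp_quasi_stochastic_parabolic_linearized_problem} on $[\sigma,T]$ with $u\in L^p(\sigma,T;E^1)\cap C(\sigma,T;E_p)$ almost surely, and if in addition $u_\sigma\in L^p(\Omega;E_p)$ then Theorem~\ref{lp_auto_stoch_evolution_equation_global_strong_solution_lp_initial_data} gives the two moment estimates over $[\sigma,T]$. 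I would then define $\widetilde\sigma$ as in the statement; since the process $t\mapsto \|u\|_{L^p(\sigma,t;E^1)}+\sup_{s\in[\sigma,t]}\|u(s)-u_\sigma\|_{E_p}$ is $\F$-adapted, nonnegative and almost surely continuous, Lemma~\ref{quasi_stochastic_parabolic_stopping_time} shows that $\widetilde\sigma$ is an $\F$-stopping time. By pathwise continuity, for every $t\in[\sigma,\widetilde\sigma]$ that quantity is $\le\lambda$, hence $\theta_\lambda(\sigma,t,u,u_\sigma)=1$, and the cut-off identity gives $-A(u_\sigma)u(t)+\widetilde F_\lambda(t,u(t))=-A(u(t))u(t)+F(t,u(t))$ on $[\sigma,\widetilde\sigma]$. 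Therefore the integral identity satisfied by the strong solution of \eqref{lp_quasi_stochastic_parabolic_linearized_problem} on $[\sigma,\widetilde\sigma]$ is exactly the one defining a strong solution of the quasilinear equation with initial value $u_\sigma$ at time $\sigma$, and the regularity $u\in L^p(\sigma,\widetilde\sigma;E^1)\cap C(\sigma,\widetilde\sigma;E_p)$ a.s. is inherited from Section~3.

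For uniqueness, suppose $v$ is a strong solution of the quasilinear equation on $[\sigma,\mu]$ with the stated regularity; let $\rho$ be the first time the quantity $\|v\|_{L^p(\sigma,\cdot;E^1)}+\sup\|v-u_\sigma\|_{E_p}$ leaves $[0,\lambda]$, truncated by $\mu$ (again a stopping time by Lemma~\ref{quasi_stochastic_parabolic_stopping_time}). On $[\sigma,\rho]$ one has $\theta_\lambda(\sigma,\cdot,v,u_\sigma)=1$, so, reversing the identity of the previous step, $v$ is a strong solution of \eqref{lp_quasi_stochastic_parabolic_linearized_problem} on $[\sigma,\rho]$; since the coefficients of \eqref{lp_quasi_stochastic_parabolic_linearized_problem} enjoy the locality property of $[\operatorname{S4}]$--$[\operatorname{S5}]$, localizing the contraction argument to small intervals exactly as in Lemma~\ref{lp_auto_stoch_evolution_equation_global_strong_solution_local_uniqueness} yields $v=u$ on $[\sigma,\rho]$, whence $\rho=\widetilde\sigma\wedge\mu$ and $v=u$ on $[\sigma,\widetilde\sigma\wedge\mu]$. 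When $u_\sigma\in L^p(\Omega;E_p)$, the asserted moment estimates follow by restricting \eqref{lp_auto_stoch_evolution_equation_global_strong_solution_lp_initial_data_estimate11}--\eqref{lp_auto_stoch_evolution_equation_global_strong_solution_lp_initial_data_estimate22} from $[\sigma,T]$ to $[\sigma,\widetilde\sigma]\subset[\sigma,T]$, both norms being monotone in the interval. The analytically delicate part, the growth and Lipschitz bookkeeping for the quasilinearity, has already been carried out in Lemma~\ref{lp_quasi_stochastic_parabolic_quasilinearity_contractive}; I expect the main remaining obstacles to be the careful check that the random operator $A(u_\sigma)$ meets $[\operatorname{S2}]$ with constants uniform in $\omega$, and making rigorous the transfer of uniqueness from \eqref{lp_quasi_stochastic_parabolic_linearized_problem} on random subintervals to the quasilinear equation, which rests on the Volterra structure of the coefficients.
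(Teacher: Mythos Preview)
Your proposal is correct and follows essentially the same approach as the paper: reduce to the semilinear problem \eqref{lp_quasi_stochastic_parabolic_linearized_problem} via Lemma~\ref{lp_quasi_stochastic_parabolic_quasilinearity_contractive}, apply Corollary~\ref{lp_auto_stoch_evolution_equation_global_strong_solution_measurable_initial_data} (or Theorem~\ref{lp_auto_stoch_evolution_equation_global_strong_solution_lp_initial_data} in the $L^p$ case), use Lemma~\ref{quasi_stochastic_parabolic_stopping_time} to verify that $\widetilde\sigma$ is a stopping time, and then observe that the cut-off is inactive on $[\sigma,\widetilde\sigma]$. Your treatment of uniqueness for the quasilinear equation on $[\sigma,\widetilde\sigma]$ is in fact more explicit than the paper's, which leaves this step implicit in the uniqueness of the semilinear solution.
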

\begin{proof}
	To construct a local solution, we first consider the equation
	\begin{equation}\label{lp_quasi_stochastic_parabolic_helpfful equation}
	\begin{cases}
	du(t)&=\left [-A(u_\sigma)u(t)+F^{(1)}(t,u(t))+f(t)\right]\operatorname{dt}+[B(t,u(t))+b(t)]dW(t),\\
	u(\sigma)&=u_\sigma,
	\end{cases}
	\end{equation}
	where $ F^{(1)} $ is given by
	\[ F^{(1)}(\omega,t,y)= Q_{\lambda,\sigma}(\omega,t,y,u_\sigma)+F(\omega,t,y). \]
	Here we use the mapping $ Q_{\lambda,\sigma} $ we defined in Lemma \ref{lp_quasi_stochastic_parabolic_quasilinearity_contractive}.
	The same Lemma, together with $ [\operatorname{Q6}] $, shows that $ F^{(1)} $ is of linear growth, adapted and we have
	\begin{align*}
	\|F^{(1)}&(\omega,\cdot,u)-F^{(1)}(\omega,\cdot,v)\|_{ L^{p}(\sigma,T;E)}\\
	&\leq (6C_{Q}\lambda+L_{F}^{(1)})\|u-v\|_{ L^{p}(\sigma,T;E^{1})}+ (6C_{Q}\lambda+L_{F}^{(2)})\|u-v\|_{C(\sigma,T;E_p)}+\widetilde{L}_{F}\|u-v\|_{ L^{p}(\sigma,T;E)}
	\end{align*}
	for all $ u,v\in L^{p}(\sigma,T;E^{1})\cap C(\sigma,T;E_p) $. By choice of $ \lambda, $
	 we can apply Corollary \ref{lp_auto_stoch_evolution_equation_global_strong_solution_measurable_initial_data} and obtain a unique strong solution $ u $ on $ [\sigma,T] $ with $ u\in L^{p}(\sigma,T;E^{1})\cap C(\sigma,T;E_p) $ almost surely. Since both $ t\mapsto \|u^{(1)}(\omega,\cdot)\|_{ L^{p}(\sigma,t;E^{1})} $ and $ t\mapsto \|u^{(1)}(\omega,\cdot)-u_0\|_{C(\sigma,t;E_p)} $ are adapted and pathwise almost surely continuous, $ \widetilde{\sigma} $ is an $ \F $-stopping time by Lemma \ref{quasi_stochastic_parabolic_stopping_time}. Moreover, for $ \sigma(\omega)\leq t\leq\widetilde{\sigma}(\omega) $ the identity
	\[ Q_{\lambda,\sigma}(\omega,t,u(t),u_\sigma(\omega))=\big(A(\omega,u_\sigma(\omega))-A(\omega,u(\omega,t))\big)u(\omega,t) \]
	holds and thus $ u $ is a strong solution of 
	\begin{equation*}
	\begin{cases}
	du(t)&=\left [A(u(t))u(t)+F(t,u(t))+f(t)\right]\operatorname{dt}+[B(t,u(t))+b(t)]dW(t)\\
	u(\sigma)&=u_\sigma
	\end{cases}
	\end{equation*}
	on $ [\sigma,\widetilde{\sigma}]. $ In case that $ u_\sigma\in L^{p}(\Omega,E_p) $, we additionally get 
	$$ \big (\E \|u\|_{ L^{p}(\sigma,\widetilde{\sigma};E^{1})}^{p}\big )^{1/p}\leq C(1+\|u_\sigma\|_{L^{p}(\Omega,E_p)})$$
	$$ \big (\E \sup_{t\in[\sigma,\widetilde{\sigma}]}\|u(t)\|_{E_p}^{p}\big )^{1/p}\leq C\big(1+\|u_\sigma\|_{L^{p}(\Omega,E_p)}\big) $$
 as an immediate consequence of \eqref{lp_auto_stoch_evolution_equation_global_strong_solution_lp_initial_data_estimate11} and \eqref{lp_auto_stoch_evolution_equation_global_strong_solution_lp_initial_data_estimate22}.
\end{proof}

Now, we are in the position to prove the main theorem with the following strategy. We already showed, that the set $ \Gamma $ of stopping times $ \widetilde{\tau} $, such that $ (\operatorname{QSEE}) $ has a unique solution $ u $ on $ [0,\widetilde{\tau}] $ is non-empty. Hence, the essential supremum $ \tau $ of $ \Gamma $ (for the definiton of the $ \esssup $ of random variables see e.g. \cite{karatzas_shreve_methods_of_mathematical_finance}, Appendix $ \operatorname{A} $) exists. This $ \tau $ will be our maximal stopping time, that also satisfies the blow-up criterion. However, at first it is unclear, whether $ \tau $ is a stopping time or not. It turns out, that this can be shown, if $ \Gamma $ is closed under pairwise maximization. This property is consequence of the following Lemma.

\begin{Lemma}\label{lp_quasi_stochastic_parabolic_pairwise_maximum_uniqueness}
If $ (u_1,\tau_1) $ and $ (u_2,\tau_2) $ are unique local solutions of $ (\operatorname{QSEE}) $ with $  u_i\in L^{p}(0,\tau_i;E^1)\cap C(0,\tau_i;E_p)  $ almost surely for $ i=1,2 $, then the equation $ (\operatorname{QSEE}) $ has a unique solution $ (u,\tau_1\vee\tau_2) $ with $  u\in L^{p}(0,\tau_1\vee\tau_2;E^1)\cap C(0,\tau_1\vee\tau_2;E_p)  $ almost surely. 

If we additionally assume $  u_i\ind_{[0,\tau_i]}\in L^{p}(\Omega\times [0,T];E^1)  $ and $ \E\sup_{t\in [0,\tau_i)}\|u(t)\|_{E_p}^p<\infty $ for $ i=1,2 $, we also get $  u\ind_{[0,\tau_1\vee\tau_2]}\in L^{p}(\Omega\times [0,T];E^1)  $ and $ \E\sup_{t\in [0,\tau_1\vee\tau_2)}\|u(t)\|_{E_p}^p<\infty $.
\end{Lemma}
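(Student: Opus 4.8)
The plan is to glue $u_1$ and $u_2$ along the first time one of the two solutions ``runs out'', and to use that uniqueness of the two given local solutions forces them to coincide wherever both are defined.

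First I would exploit the uniqueness of $(u_1,\tau_1)$ and of $(u_2,\tau_2)$ in the sense of Definition~\ref{lp_quasi_stochastic_parabolic_solution_concept}~b): applying it with the roles of the two solutions interchanged gives $u_1(\omega,t)=u_2(\omega,t)$ for almost all $\omega$ and all $t\in[0,\tau_1(\omega)\wedge\tau_2(\omega))$, and by uniform continuity in $E_p$ this identity persists up to the endpoint whenever the latter is attained by the two approximating sequences. Let $(\sigma_n^1)_n$ and $(\sigma_n^2)_n$ be the approximating sequences of the two local solutions, and set $\rho_n:=\sigma_n^1\vee\sigma_n^2$; this is an increasing sequence of $\F$-stopping times with $\lim_n\rho_n=\tau_1\vee\tau_2=:\tau$ almost surely. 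On the set $D_n:=\{\sigma_n^2\le\sigma_n^1\}$, which is $\mathcal F_{\sigma_n^1\wedge\sigma_n^2}$-measurable by the standard comparison properties of stopping times, one has $\rho_n=\sigma_n^1$, and I put $\widehat u_n:=u_1$ there; on $D_n^{c}$ one has $\rho_n=\sigma_n^2$, and I put $\widehat u_n:=u_2$. Using right-continuity of $\F$ and the stopping-time properties, $\widehat u_n\ind_{[0,\rho_n]}$ is strongly measurable and adapted, and the first step shows $\widehat u_n=\widehat u_{n+1}$ on $[0,\rho_n]$, so the $\widehat u_n$ paste together into a single process $u$ on $\Omega\times[0,\tau)$ with $u\ind_{[0,\rho_n]}=\widehat u_n\ind_{[0,\rho_n]}$; pathwise $u=u_1$ on $\{\tau_2\le\tau_1\}$ and $u=u_2$ on $\{\tau_1<\tau_2\}$, hence $u(\omega,\cdot)\in L^{p}(0,\rho_n(\omega);E^1)\cap C(0,\rho_n(\omega);E_p)$.

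Next I would check that $\widehat u_n$ is a strong solution of $(\operatorname{QSEE})$ on $[0,\rho_n]$. One splits the defining integral identity at the stopping time $\sigma_n^1\wedge\sigma_n^2$: on $[0,\sigma_n^1\wedge\sigma_n^2]$ the identity for $\widehat u_n$ is the common identity satisfied by $u_1=u_2$, and on $[\sigma_n^1\wedge\sigma_n^2,\rho_n]$ one multiplies the identity for $u_1$ by $\ind_{D_n}$ and the one for $u_2$ by $\ind_{D_n^{c}}$ and adds. Since $D_n\in\mathcal F_{\sigma_n^1\wedge\sigma_n^2}$, these indicators may be pulled inside the Bochner and the stochastic integrals over $[\sigma_n^1\wedge\sigma_n^2,t]$ by the local property of the It\^o integral, and $[\operatorname{Q5}]$, $[\operatorname{Q6}]$, $[\operatorname{Q7}]$ (in particular the invariance of $F$ and $B$ under equality of the argument up to a stopping time) identify $\ind_{D_n}A(u_1)u_1+\ind_{D_n^{c}}A(u_2)u_2$, $\ind_{D_n}F(\cdot,u_1)+\ind_{D_n^{c}}F(\cdot,u_2)$ and $\ind_{D_n}B(\cdot,u_1)+\ind_{D_n^{c}}B(\cdot,u_2)$ with the corresponding expressions evaluated at $\widehat u_n$. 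Adding the two pieces yields precisely the integral identity for $\widehat u_n$ on $[0,\rho_n]$, so $\big(u,(\rho_n)_n,\tau\big)$ is a local solution. Its uniqueness in the sense of Definition~\ref{lp_quasi_stochastic_parabolic_solution_concept}~b) follows because any local solution $(\widetilde u,\widetilde\tau)$ agrees with $u_1$ on $[0,\tau_1\wedge\widetilde\tau)$ and with $u_2$ on $[0,\tau_2\wedge\widetilde\tau)$, which on $\{\tau_2\le\tau_1\}$, resp.\ on $\{\tau_1<\tau_2\}$, is exactly $[0,\tau\wedge\widetilde\tau)$; hence $\widetilde u=u$ on $[0,\tau\wedge\widetilde\tau)$. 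For the additional integrability assertions one simply estimates, using $u=u_1$ on $\{\tau_2\le\tau_1\}$ and $u=u_2$ on the complement, $\|u\ind_{[0,\tau]}\|_{L^{p}(\Omega\times[0,T];E^1)}^{p}\le\|u_1\ind_{[0,\tau_1]}\|_{L^{p}(\Omega\times[0,T];E^1)}^{p}+\|u_2\ind_{[0,\tau_2]}\|_{L^{p}(\Omega\times[0,T];E^1)}^{p}$, and likewise $\E\sup_{t<\tau}\|u(t)\|_{E_p}^{p}\le\E\sup_{t<\tau_1}\|u_1(t)\|_{E_p}^{p}+\E\sup_{t<\tau_2}\|u_2(t)\|_{E_p}^{p}$.

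The main obstacle is the stochastic part of the gluing: the decision which of $u_1,u_2$ to follow is not $\mathcal F_0$-measurable, so the two equations cannot simply be multiplied by constant indicator sets without destroying adaptedness of the integrands. The remedy is to perform the gluing at the random time $\sigma_n^1\wedge\sigma_n^2$ for each $n$, whose associated ``decision set'' $D_n$ does belong to $\mathcal F_{\sigma_n^1\wedge\sigma_n^2}$, so that the local property of the It\^o integral applies; verifying that the resulting $\widehat u_n$ are compatible as $n$ grows, so that they define one process up to $\tau_1\vee\tau_2$, again relies on the uniqueness and continuity input from the first step.
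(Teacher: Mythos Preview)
Your argument is correct, but it is considerably more laborious than the route taken in the paper. The paper avoids the entire discussion of approximating sequences and the ``decision set'' $D_n$ by writing down the glued process explicitly as
\[
u(t)=u_1(t\wedge\tau_1)+u_2(t\wedge\tau_2)-u_1(t\wedge\tau_1\wedge\tau_2),\qquad t\in[0,\tau_1\vee\tau_2].
\]
This is automatically adapted, because each summand is a stopped adapted process; no splitting at a random time and no appeal to $D_n\in\mathcal F_{\sigma_n^1\wedge\sigma_n^2}$ is needed. Once one knows $u_1=u_2$ on $[0,\tau_1\wedge\tau_2]$ (from uniqueness and continuity, exactly as you argue), this formula collapses to $u=u_1$ on $\{\tau_1>\tau_2\}\times[0,\tau_1]$ and $u=u_2$ on $\{\tau_1\le\tau_2\}\times[0,\tau_2]$, so the integral identity, the regularity, and the moment bounds are inherited directly from $u_1$ and $u_2$ without further work.

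Your approach has the merit of spelling out \emph{why} the stochastic integral identity survives the gluing (via the local property of the It\^o integral), which the paper leaves implicit. But note that in the setting of this lemma the hypothesis $u_i\in L^{p}(0,\tau_i;E^1)\cap C(0,\tau_i;E_p)$ already gives you solutions on the \emph{closed} intervals $[0,\tau_i]$, so your detour through the approximating sequences $\sigma_n^i$ is unnecessary: you could have carried out your construction directly with $\tau_1,\tau_2$ and $D=\{\tau_2\le\tau_1\}\in\mathcal F_{\tau_1\wedge\tau_2}$. The ``main obstacle'' you flag --- that the decision set is not $\mathcal F_0$-measurable --- is real if one insists on indicator-function gluing, but the paper sidesteps it entirely by the algebraic formula above.
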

\begin{proof}
Define $ u $ by
\begin{align*}
	u(t)=u_1(t\wedge\tau_1)+u_2(t\wedge\tau_2)-u_1(t\wedge\tau_1\wedge\tau_2)
\end{align*}
for $ t\in [0,\tau_1\vee\tau_2]. $ Clearly, $ u $ is adapted as a composition of stopped adapted processes. By uniqueness, we have $ u_1(t)=u_2(t) $ almost surly for every $ t\in [0,\tau_1\wedge\tau_2]. $ Hence, $ u=u_1 $ on $ \{\tau_1>\tau_2\}\times [0,\tau_1) $ and $ u=u_2 $ on $ \{\tau_1\leq \tau_2\}\times [0,\tau_2) $. This proves that $ (u,\tau_1\vee\tau_2) $ is a unique solution of $ (\operatorname{QSEE}) $, that inherits all the regularity properties from $ u_1 $ and $ u_2 $. 
\end{proof}

\begin{proof}[Proof of Theorem \ref{lp_quasi_stochastic_parabolic_full_result_global_lipschitz}]
We define the $ \Gamma$ as the set of all $ \F $-stopping times $ \widetilde{\tau}:\Omega\to [0,T] $ such that there exists a unique solution $ (\widetilde{u},\widetilde{\tau}) $ with $ \widetilde{u}\in L^{p}(0,\widetilde{\tau};E^1)\cap C(0,\widetilde{\tau};E_p)  $ almost surely.

By Proposition \ref{lp_quasi_stochastic_parabolic_local_solution}, this set is non-empty ( start with $ \sigma=0 $, then the corresponding $ \widetilde{\sigma} $ is in $ \Gamma $). Moreover, by Lemma \ref{lp_quasi_stochastic_parabolic_pairwise_maximum_uniqueness}, $ \Gamma $ is closed under pairwise maximization, i.e. if $ \tau_1,\tau_2\in \Gamma$, we also have $ \tau_1\vee\tau_2\in\Gamma. $ Consequently, Theorem $ A.3 $ in \cite{karatzas_shreve_methods_of_mathematical_finance} yields the existence of $ \tau:=\esssup \Gamma $ and of an increasing sequence of stopping times $ (\tau_n)_n $ in $ \Gamma $ with $ \tau=\lim_{n\to\infty}\tau_n $ almost surely. In particular, $ \tau $ is an $ \F $-stopping time as almost sure limit of $ \F $-stopping times. 

Each $ \tau_n $ belongs to a unique solution $ (u_n,\tau_n) $. This can be used to ultimately define the solution of $ (\operatorname{QSEE}) $ on $ [0,\tau). $ We set $ u=u_n $ on $ \Omega\times [0,\tau_n) $. Then, $ u $ is a well-defined process on $ \Omega\times [0,\tau) $ and $ \big(u,(\tau_n)_n,\tau\big ) $ is a unique solution in the sense of Definition \ref{lp_quasi_stochastic_parabolic_solution_concept}. 

Next, we show, that
\[ \widetilde{\Omega}=\{\tau<T,\ \|u\|_{ L^{p}(0,\tau;E^{1})}<\infty,\ u:[0,\tau)\to E_p\text{ is uniformly continuous} \} \]
is a set of measure zero. Assume $ \mathbb{P}(\widetilde{\Omega})>0. $ Since $ u $ is pathwise uniformly continuous on $ \widetilde{\Omega} $, we can extend $ u $ on $ \widetilde{\Omega} $ to the closed interval $ [0,\tau]. $ Moreover, since we have  $ \tau_n\to\tau $ almost surely, we also have  $ \sup_{s\in[\tau_{n},\tau)}\|u(\tau_n)-u(s)\|_{E_p}\to 0 $ and $ \|u\|_{ L^{p}(\tau_n,\tau;E^{1})}\to 0 $ almost surely for $ n\to\infty $ on $ \widetilde{\Omega}. $ 

By Egorov's theorem there exists a subset $ \Lambda\subset\widetilde{\Omega} $ of positive measure such that the above limits are uniform on $ \Lambda $. In particular, there exists $ N\in\N $ such that 
$$ \sup\limits_{s\in[\tau_{N}(\omega),t]}\|u(\omega,\tau_N(\omega))-u(\omega,s)\|_{E_p}+\|u(\omega,\cdot)\|_{ L^{p}(\tau_N(\omega),t;E^{1})}<\lambda/2 $$
for all $ \omega\in\Lambda $ and $ t\in[\tau_N(\omega),\tau(\omega)], $ where $ \lambda>0 $ is chosen in the same way as in Proposition \ref{lp_quasi_stochastic_parabolic_local_solution}.
Moreover, there exists a unique solution $ v $ of 
	\begin{equation*}
	\begin{cases}
	dv(t)&=\left [A(v(t))v(t)+F(t,v(t))+f(t)\right]\operatorname{dt}+[B(t,v(t))+b(t)]dW(t)\\
	v(\tau_N) &=u(\tau_N) 
	\end{cases}
	\end{equation*}
 on $ [\tau_N,\widetilde{\tau}_N] $ with \[ \widetilde{\tau}_N=\inf\Big\{t\in [\tau_N,T]: \|v-u_{\tau_N}\|_{C(\tau_N,t;E_p)}+\|v\|_{L^{p}(\tau_N,t;E^{1})}>\lambda\Big \}\wedge T. \]
The process $ \widetilde{u}:=u\ind_{\ind[0,\tau_N)}+v\ind_{[\tau_N,\widetilde{\tau_N}]} $ solves $ (\operatorname{QSEE}) $ on $ [0,\widetilde{\tau_N}] $. By uniqueness, $ u $ and $ \widetilde{u} $ coincide on $ [0,\tau\wedge\widetilde{\tau}_N). $ However, on $ \Lambda $
we have $ \widetilde{\tau}_N>\tau $ which contradicts the definition of $ \tau $ as $ \esssup $ of $ \Gamma $. All in all, we proved $ \mathbb{P}(\widetilde{\Omega})=0$.
 
 If $ u_0\in L^{p}(\Omega;E_p), $ we replace $ \Gamma $ by 
 \begin{align*}
\widetilde{\Gamma}=\Big\{\sigma\in \Gamma: \text{ the solution }&u^{(\sigma)}\text{ corresponding to }\sigma \text{ satisfies }\\
&u^{(\sigma)}\ind_{[0,\sigma)}\in L^{p}(\Omega\times [0,T];E^1),\E\sup_{t\in [0,\sigma)}\|u(t)\|_{E_p}^p<\infty \Big\}. 
 \end{align*}
 and repeat the argument step by step.
 
 It remains to prove maximality of the solution. Let $ (z,(\mu_n)_n,\mu) $ be another local solution of $ (\operatorname{QSEE}) $. By uniqueness of $ u, $ we get $ z=u $ on $ [0,\tau\wedge\mu). $ Assume, that there is a set of positive measure $ \Lambda\subset\Omega $ with $ \mu>\tau $ on $ \Lambda. $ Then, for almost all $ \omega\in\Lambda $ there exists $ n=n(\omega)\in\N $ with $ \mu_n(\omega)>\tau(\omega). $ In particular, by definition of a local solution, $ u:\Lambda\times[0,\tau]\to E_p $ is pathwise almost surely uniformly continuous and we have $ \|u\|_{L^{p}(0,\tau;E^{1})}<\infty $ on $ \Lambda. $ Thus the blow-up criterion we derived above implies $ \tau=T $ almost surely on $ \Lambda $. But this contradicts $ \mu>\tau $ on $ \Lambda $, since $ \mu $ is also bounded by $ T. $ Hence, we established $ \mu\leq \tau $ almost surely, which is the claimed result.
 \end{proof} 

 We prove, that if two different initial values coincide on a set of positive measure, the corresponding solutions also coincide on this set. 

\begin{Lemma}\label{lp_quasi_stochastic_parabolic_local_solution_uniqueness_different_initial_value}
	Let $ \sigma $ be an $ \F $-stopping time with $ 0\leq\sigma\leq T $ almost surely, $ u_{\sigma}:\Omega\to E_p $, $ v_{\sigma}:\Omega\to E_p $ be strongly $ \mathcal{F}_{\sigma} $-measurable and $ \lambda>0 $ as in Proposition \ref{lp_quasi_stochastic_parabolic_local_solution}.  The unique strong solutions $ u $ and $ v $ of 	
	\begin{equation}
	dz(t)=\left [A(z(t))z(t)+F(t,z(t))+f(t)\right]\operatorname{dt}+[B(t,z(t))+b(t)]dW(t)
	\end{equation}
with initial data $ u_{\sigma} $ and $ v_{\sigma} $ respectively satisfy $ u(\omega,t)=v(\omega,t) $ for almost all $ \omega\in \{u_\sigma=v_\sigma \} $ and all $ t\in [\sigma,\widetilde{\sigma}]. $ Again $ \widetilde{\sigma} $ is given by
\[ \widetilde{\sigma}=\inf\Big\{t\in [\sigma,T]: \|u-u_{\sigma}\|_{C(\sigma,t;E_p)}+\|u\|_{L^{p}(\sigma,t;E^{1})}>\lambda\Big \}\wedge T. \]
\end{Lemma}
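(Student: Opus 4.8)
The plan is to glue the two solutions along the $\mathcal{F}_\sigma$-set $\Lambda:=\{u_\sigma=v_\sigma\}$ and then invoke uniqueness for the semilinear problem. Recall from the proof of Proposition \ref{lp_quasi_stochastic_parabolic_local_solution} that $u$ is the restriction to $[\sigma,\widetilde\sigma]$ of the unique strong solution on $[\sigma,T]$ of an equation of type $(\operatorname{SEE})$ with operator $A(\cdot,u_\sigma)$, nonlinearity $F^{(1)}_{u_\sigma}:=Q_{\lambda,\sigma}(\cdot,\cdot,\cdot,u_\sigma)+F$, diffusion $B$ and inhomogeneities $f,b$, started from $u_\sigma$; likewise $v$ comes from the same data with $u_\sigma$ replaced by $v_\sigma$. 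I keep working with these solutions on the full interval $[\sigma,T]$ and denote them again by $u$ and $v$. The elementary but crucial point is that, because the cut-off $\theta_\lambda$ and hence $Q_{\lambda,\sigma}(\omega,\cdot,\cdot,u_\sigma(\omega))$ depend on the initial datum only through its value $u_\sigma(\omega)$, all coefficients of the two equations agree pointwise on $\Lambda$: for every $\omega\in\Lambda$ we have $A(\omega,u_\sigma(\omega))=A(\omega,v_\sigma(\omega))$ and $F^{(1)}_{u_\sigma}(\omega,\cdot)=F^{(1)}_{v_\sigma}(\omega,\cdot)$, while $B,f,b$ are common; moreover $\Lambda\in\mathcal{F}_\sigma$ by Proposition \ref{appendix_probability_tau_past}.

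Next I would set $\bar u:=u\ind_\Lambda+v\ind_{\Lambda^c}$ and claim that $\bar u$ is a strong solution on $[\sigma,T]$ of the semilinear equation with the $v_\sigma$-data $\big(A(\cdot,v_\sigma),F^{(1)}_{v_\sigma},B,f,b\big)$ and initial value $v_\sigma$. Indeed $\bar u(\sigma)=u_\sigma\ind_\Lambda+v_\sigma\ind_{\Lambda^c}=v_\sigma$ since $u_\sigma=v_\sigma$ on $\Lambda$; on $\Lambda^c$ the process $\bar u=v$ solves that equation by construction; and on $\Lambda$ the process $\bar u=u$ satisfies the same integral identity because there the $v_\sigma$-coefficients coincide with the $u_\sigma$-coefficients, which $u$ solves. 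That the glued process is still adapted and that the defining identity survives the gluing is the only genuinely technical point: it relies on the $\mathcal{F}_\sigma$-locality of the stochastic integral (for $\Lambda\in\mathcal{F}_\sigma$ one has $\ind_\Lambda\int_\sigma^{\cdot}g\,dW=\int_\sigma^{\cdot}\ind_\Lambda g\,dW$) together with the structure conditions $[\operatorname{Q6}],[\operatorname{Q7}]$, which force $F^{(1)}(\cdot,\bar u)$ and $B(\cdot,\bar u)$ to agree on $\Lambda$ with $F^{(1)}_{u_\sigma}(\cdot,u)$ and $B(\cdot,u)$ --- exactly the manipulation used in step 7 of the proof of Theorem 4.5 in \cite{nerven_veraar_weis_maximal_lp_stochastic_evolution} and in Lemma \ref{lp_auto_stoch_evolution_equation_global_strong_solution_local_uniqueness}. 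By Lemma \ref{lp_quasi_stochastic_parabolic_quasilinearity_contractive} and the smallness of $\lambda$ assumed here, the $v_\sigma$-equation satisfies $[\operatorname{S1}]$--$[\operatorname{S7}]$, so by Corollary \ref{lp_auto_stoch_evolution_equation_global_strong_solution_measurable_initial_data} it has a unique strong solution with initial value $v_\sigma$; hence $\bar u=v$ almost surely on $[\sigma,T]$. Since $\bar u=u$ on $\Lambda$, this gives $u=v$ a.s.\ on $\Lambda$ for all $t\in[\sigma,T]$, in particular on $[\sigma,\widetilde\sigma]$, which is the assertion.

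I expect the main obstacle to be precisely this gluing step: one must be sure that cutting the two processes along the $\mathcal{F}_\sigma$-measurable set $\Lambda$ and recombining them produces again an adapted strong solution of the $v_\sigma$-equation. This is where the $\mathcal{F}_\sigma$-locality of the It\^o integral and the causal/structure hypotheses $[\operatorname{Q6}],[\operatorname{Q7}]$ enter, and it is the same bookkeeping as in the local uniqueness proof for $(\operatorname{SEE})$; once it is in place, the rest is just uniqueness for the semilinear problem applied to $\bar u$ and $v$. Note that no integrability of $u_\sigma,v_\sigma$ in $\omega$ is needed, since Corollary \ref{lp_auto_stoch_evolution_equation_global_strong_solution_measurable_initial_data} already covers merely $\mathcal{F}_\sigma$-measurable initial data.
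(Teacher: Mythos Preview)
Your argument is correct and is essentially a spelled-out version of what the paper compresses into one line. The paper simply rewrites the quasilinear equation in the semilinear form
\[
dz(t)=\big[-A(z(\sigma))z(t)+F^{(1)}(t,z(t))+f(t)\big]\,dt+[B(t,z(t))+b(t)]\,dW(t)
\]
with $F^{(1)}=Q_{\lambda,\sigma}(\cdot,\cdot,\cdot,u_\sigma)+F$ and then invokes Corollary~\ref{lp_auto_stoch_evolution_equation_global_strong_solution_measurable_initial_data_uniqueness} directly. Taken literally that corollary does not quite apply, since the two semilinear problems for $u$ and $v$ have different $\omega$-dependent coefficients (through $u_\sigma$ versus $v_\sigma$); but its proof --- the gluing along an $\mathcal{F}_\sigma$-set from Lemma~\ref{lp_auto_stoch_evolution_equation_global_strong_solution_local_uniqueness}, i.e.\ step~7 of \cite[Theorem~4.5]{nerven_veraar_weis_maximal_lp_stochastic_evolution} --- goes through verbatim because the coefficients agree pointwise on $\Lambda=\{u_\sigma=v_\sigma\}$. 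You carry out exactly this gluing explicitly: define $\bar u=u\ind_\Lambda+v\ind_{\Lambda^c}$, check that $\bar u$ solves the $v_\sigma$-equation with initial value $v_\sigma$ using $\mathcal{F}_\sigma$-locality of the It\^o integral and the causality built into $[\operatorname{Q6}],[\operatorname{Q7}]$, and conclude $\bar u=v$ by uniqueness for $(\operatorname{SEE})$. So the two proofs are the same in substance; yours just makes the hidden step visible.
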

\begin{proof}
Writing the equation in the form 
\begin{equation*}
dz(t)=\left [-A(z(\sigma))z(t)+F^{(1)}(t,z(t))+f(t)\right]\operatorname{dt}+[B(t,z(t))+b(t)]dW(t)
\end{equation*}
as in Proposition \ref{lp_quasi_stochastic_parabolic_local_solution} with 
\[ F^{(1)}(\omega,t,y)= Q_{\lambda,\sigma}(\omega,t,y,u_\sigma)+F(\omega,t,y), \]
the result follows from Corollary \ref{lp_auto_stoch_evolution_equation_global_strong_solution_measurable_initial_data_uniqueness}.
\end{proof}

\begin{Corollary}\label{lp_quasi_stochastic_parabolic_local_uniqueness_initial_value}
	Let $ \big(u,\tau\big) $ and $ \big(v,\mu\big) $ be the maximal unique strong solutions of $ (\operatorname{QSEE}) $ to the initial values $ u_0\in E_p $ and $ v_0\in E_p $ respectively. Then, we have $ \tau(\omega)=\mu(\omega) $ and $ u(\omega,t)= v(\omega,t) $ for almost all $ \omega\in\{u_0=v_0 \} $ and all $ t\in [0,\tau(\omega)). $
\end{Corollary}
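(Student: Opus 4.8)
The plan is to combine the one–step uniqueness statement of Lemma~\ref{lp_quasi_stochastic_parabolic_local_solution_uniqueness_different_initial_value} with the blow-up alternative of Theorem~\ref{lp_quasi_stochastic_parabolic_full_result_global_lipschitz}, in two stages. Writing $\Lambda:=\{u_0=v_0\}\in\mathcal{F}_0$, I would first show that $u$ and $v$ agree on $\Lambda\times[0,\tau\wedge\mu)$, and then deduce from this together with the blow-up criterion that $\tau=\mu$ almost surely on $\Lambda$; combining the two gives the claim.

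For the first stage, fix $n\in\N$, put $S:=\tau_n\wedge\mu_n$ and let $\lambda>0$ be chosen as in Proposition~\ref{lp_quasi_stochastic_parabolic_local_solution}. I would construct an increasing sequence of $\F$-stopping times $\sigma_0=0\le\sigma_1\le\sigma_2\le\dots\le S$ by setting $\sigma_{k+1}:=\widetilde\sigma_k\wedge S$, where $\widetilde\sigma_k=\inf\{t\in[\sigma_k,T]:\|u-u(\sigma_k)\|_{C(\sigma_k,t;E_p)}+\|u\|_{L^p(\sigma_k,t;E^1)}>\lambda\}\wedge T$ is the blow-up time of Proposition~\ref{lp_quasi_stochastic_parabolic_local_solution} for $u$ restarted at $\sigma_k$; each $\widetilde\sigma_k$, and hence each $\sigma_{k+1}$, is an $\F$-stopping time by Lemma~\ref{quasi_stochastic_parabolic_stopping_time}. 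Since $u$ and $v$ have continuous $E_p$-valued paths, $u(\sigma_k)$ and $v(\sigma_k)$ are strongly $\mathcal{F}_{\sigma_k}$-measurable by Proposition~\ref{appendix_probability_tau_past}, and $\Lambda\in\mathcal{F}_0\subset\mathcal{F}_{\sigma_k}$. Inductively, assuming $u(\sigma_k)=v(\sigma_k)$ on $\Lambda$, I apply Lemma~\ref{lp_quasi_stochastic_parabolic_local_solution_uniqueness_different_initial_value} at $\sigma=\sigma_k$ with initial data $u(\sigma_k),v(\sigma_k)$; identifying the global solutions $u,v$ restricted to $[\sigma_k,\sigma_{k+1}]\subset[\sigma_k,\tau_n\wedge\mu_n]$ with the local solutions furnished by Proposition~\ref{lp_quasi_stochastic_parabolic_local_solution} (by uniqueness of strong solutions on this interval), I obtain $u=v$ on $\Lambda\times[\sigma_k,\sigma_{k+1}]$, hence on $\Lambda\times[0,\sigma_k]$ for every $k$, and in particular $u(\sigma_{k+1})=v(\sigma_{k+1})$ on $\Lambda$, which closes the induction.

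The decisive point, which I expect to be the main obstacle, is that $\sigma_k\uparrow S$ almost surely. If $\sigma_k\uparrow\sigma_\infty<S$ on a set of positive probability, then for each $k$ one has $\sigma_{k+1}<S\le T$, so $\sigma_{k+1}$ is a genuine hitting time and, by continuity, $\|u-u(\sigma_k)\|_{C(\sigma_k,\sigma_{k+1};E_p)}+\|u\|_{L^p(\sigma_k,\sigma_{k+1};E^1)}=\lambda$; but as $k\to\infty$ the interval $[\sigma_k,\sigma_{k+1}]$ shrinks to $\{\sigma_\infty\}$, the first term tends to $0$ by uniform continuity of the path $u|_{[0,S]}$ in $E_p$ and the second tends to $0$ by dominated convergence (as $u\in L^p(0,S;E^1)$ a.s.), a contradiction. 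Hence $u=v$ on $\Lambda\times[0,S)$, and by $E_p$-continuity also at $t=S$. Letting $n\to\infty$ and using $\tau_n\wedge\mu_n\uparrow\tau\wedge\mu$, I conclude $u=v$ on $\Lambda\times[0,\tau\wedge\mu)$.

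For the second stage, suppose $\mathbb{P}(\Lambda\cap\{\tau<\mu\})>0$. For almost every $\omega$ in this set there is $m=m(\omega)$ with $\mu_m(\omega)>\tau(\omega)$; since $v$ is a strong solution on $[0,\mu_m]$ with $v\in L^p(0,\mu_m;E^1)\cap C(0,\mu_m;E_p)$, and $u=v$ on $\Lambda\times[0,\tau)$ by the first stage, the path $u$ is uniformly continuous into $E_p$ on $[0,\tau)$ and $\|u\|_{L^p(0,\tau;E^1)}<\infty$ there, while $\tau<\mu\le T$. This contradicts the blow-up alternative of Theorem~\ref{lp_quasi_stochastic_parabolic_full_result_global_lipschitz} applied to $(u,\tau)$, so $\mathbb{P}(\Lambda\cap\{\tau<\mu\})=0$; the symmetric argument gives $\mathbb{P}(\Lambda\cap\{\mu<\tau\})=0$. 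Hence $\tau=\mu$ a.s. on $\Lambda$, and with the first stage this yields $u(\omega,t)=v(\omega,t)$ for a.a. $\omega\in\Lambda$ and all $t\in[0,\tau(\omega))$, as claimed.
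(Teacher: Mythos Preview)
Your argument is correct and your second stage coincides with the paper's, but your first stage follows a genuinely different line. The paper does not iterate on each $[0,\tau_n\wedge\mu_n]$; instead it mimics the proof of Theorem~\ref{lp_quasi_stochastic_parabolic_full_result_global_lipschitz}: it introduces the set $\Gamma$ of all $\F$-stopping times $\widetilde\tau$ for which $u=v$ on $[0,\widetilde\tau]$, observes (via Lemma~\ref{lp_quasi_stochastic_parabolic_local_solution_uniqueness_different_initial_value}) that $\Gamma$ contains a stopping time strictly positive on $\{u_0=v_0\}$, checks that $\Gamma$ is closed under pairwise maximization, and invokes the $\esssup$ machinery of \cite{karatzas_shreve_methods_of_mathematical_finance}, Theorem~A.3, to produce $\eta=\esssup\Gamma$. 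A single restart at $\eta$ (gluing the Proposition~\ref{lp_quasi_stochastic_parabolic_local_solution} solution onto $u|_{[0,\eta)}$ and using maximal uniqueness to identify the glued process with $u$) then shows $\eta=\tau\wedge\mu$ on $\{u_0=v_0\}$. Your approach replaces this abstract supremum argument by an explicit countable chain $(\sigma_k)_k$ together with a compactness-type observation (uniform $E_p$-continuity and $L^p(E^1)$-integrability force the step size to stay at $\lambda$, so the chain cannot accumulate before $S$). This is more hands-on and avoids the $\esssup$ lemma; the paper's route has the advantage of structural parallelism with the existence proof and needs only one restart rather than infinitely many. A small point: the identification ``by uniqueness of strong solutions on this interval'' that you invoke in the inductive step is exactly what the paper uses as well (gluing and appealing to the maximal uniqueness of $(u,\tau)$), so there is no hidden gap there.
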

\begin{proof}
We define the $ \Gamma$ as the set of all $ \F $-stopping times $ \widetilde{\tau}:\Omega\to [0,T] $ such that the maximal unique solutions $ (u,\tau) $ and $ (v,\mu) $ of $ (\operatorname{QSEE}) $ to the initial values $ u_0 $ and $ v_0 $ satisfy $ u(\omega,t)=v(\omega,t) $ for almost all $ \omega\in\Omega $ and for all $ t\in [0,\widetilde{\tau}]. $ Clearly, every stopping time in $ \Gamma $ vanishes on $ \{u_0\neq v_0 \}. $
However, by Lemma \eqref{lp_quasi_stochastic_parabolic_local_solution_uniqueness_different_initial_value}, $ \Gamma $ contains a stopping time that is almost surely strictly positve on $ \{u_0=v_0 \}. $ Moreover, as in the proof of Theorem \ref{lp_quasi_stochastic_parabolic_full_result_global_lipschitz}, we can see that $ \Gamma $ is closed under pairwise maximization. Thus, Theorem $ A.3 $ in \cite{karatzas_shreve_methods_of_mathematical_finance} yields the existence of $ \eta:=\esssup \Gamma $ and of an increasing sequence of stopping times $ (\eta_n)_n $ in $ \Gamma $ with $ \lim_{n\to\infty}\eta_n=\eta $ almost surely. In particular, $ \eta $ is an $ \F $-stopping time, that is also almost surely strictly positive on $ \{u_0=v_0 \} $ and vanishes on $ \{u_0\neq v_0 \}. $

It remains to show $ \eta=\tau=\mu $ almost surely on $ \{u_0=v_0 \}. $ Assume $ \eta<\tau\wedge\mu $ on $ \{u_0=v_0 \}. $ Then, by pathwise uniform continuity of $ u $ and $ v, $ we have $ u(\eta),v(\eta)\in E_p $ almost surely. Consequently, Lemma \ref{lp_quasi_stochastic_parabolic_local_solution_uniqueness_different_initial_value} implies, that the strong solutions $ z_1,z_2 $ of 
	\begin{equation}
dz(t)=\left [A(z(t))z(t)+F(t,z(t))+f(t)\right]\operatorname{dt}+[B(t,z(t))+b(t)]dW(t)
	\end{equation}
with initial value $ u(\eta) $ and $ v(\eta) $ respectively coincide on  $ \{u(\eta)=v(\eta) \}\times [\eta,\widetilde{\eta}] $ for some stopping time $ \widetilde{\eta}>\eta $ almost surely. We define $ \widetilde{z_1}=u\ind_{[0,\eta)}+z_1\ind_{[\eta,\widetilde{\eta}]} $ and $ \widetilde{z_2}=v\ind_{[0,\eta)}+z_2\ind_{[\eta,\widetilde{\eta}]}. $ By construction, $ \widetilde{z_1} $ and $ \widetilde{z_2} $ are solutions of $ (\operatorname{QSEE}) $ with initial data $ \widetilde{z_1}(0)=u_0 $ and $ \widetilde{z_2}(0)=v_0 $ respectively. Hence, by uniqueness, we have $ u=\widetilde{z_1} $ on $ [0,\widetilde{\eta}] $ and $ v=\widetilde{z_2} $ on $ [0,\widetilde{\eta}]. $ However, by almost sure coincidence of $ u $ and $ v $ on $ \{u_0=v_0 \}\times[0,\eta] $ and of $ z_1 $ and $ z_2 $ on $ \{u(\eta)=v(\eta) \}\times [\eta,\widetilde{\eta}] $, we get $ u=v $ almost surely on $ \{u_0=v_0 \}\times[0,\widetilde{\eta}]. $ This contradicts the definition of $ \eta $ as the $ \esssup $ of $ \Gamma $ and hence we proved $ \eta=\tau\wedge\mu $ almost surely on $ \{u_0=v_0 \} $. 

Last but not least, we show that $ \tau=\mu $ almost surely on $ \{u_0=v_0 \} $. Assume $ \mu<\tau $ on $ \Lambda \subset \{u_0=v_0 \}$ with $ \PP(\Lambda)>0. $ Since $ u $ and $ v $ coincide on $ \{u_0=v_0 \}\times [0,\mu) $ and $ u $ has a larger time of existence on $ \Lambda $, we know that $ v:[0,\mu)\to E_p $ is uniformly continuous and $ \|v\|_{L^{p}(0,\mu;E^1)}<\infty $ on $ \Lambda.$ Hence, the blow-up criterion from Theorem \ref{lp_quasi_stochastic_parabolic_full_result_global_lipschitz} implies $ \mu=T $ almost surely on $ \Lambda, $ which contradicts $ \mu<\tau $ on $ \Lambda. $ Consequently, $ \PP(\mu<\tau,u_0=v_0)=0. $ In the same way, we show $ \PP(\tau<\mu,u_0=v_0)=0,$ which closes the proof.
\end{proof}
\subsection{Local Lipschitz continuous quasilinearity}
In $ [\operatorname{Q4}] $ and $ [\operatorname{Q5}] $, we assumed a uniform boundedness of the $ H^{\infty} $-calculus of $ A(\omega,u(t)) $ and a global Lipschitz condition on $ A. $ However, we established a local well-posedness theorie only using local methods. Therefore we can generalize our result in the next section and  allow local Lipschitz continuous nonlinearities and local boundedness of the $ H^{\infty} $-calculus. We replace $ [\operatorname{Q4}]-[\operatorname{Q7}] $ by the following assumptions.
\begin{itemize}
	\item[\lbrack\text{Q4*}\rbrack]For all $ n\in\N, $ there exists $  \mu(n), C(n)>0 $ such that the operators $ \mu(n)+A(\omega,y) $ have a bounded $ H^{\infty}(\Sigma_{\eta(n)}) $-calculus of angle $ \eta(n)\in (0,\pi/2) $ with
	\[ \|\phi(\mu(n)+A(\omega,y))\|_{B(E)}\leq C(n)\|\phi\|_{H^{\infty}(\Sigma_{\eta})} \]
	for all $ \phi\in H^{\infty}(\Sigma_{\eta(n)}) $, $ y\in E_p $ with $ \|y\|_{E_p}\leq n $ and almost all $ \omega\in\Omega. $
	\item[\lbrack\text{Q5*}\rbrack] For all $ n\in\N $ there exist $ C_{Q}(n)>0 $ such that
	\[ \|A(\omega,z)-A(\omega,y)\|_{B(E^{1},E)}\leq C_{Q}(n)\|z-y\|_{E_p} \]
	for all $ y,z\in E_p $ with $ \|y\|_{E_p},\|z\|_{E_p}\leq n $ and almost all $ \omega\in\Omega. $
	\item[\lbrack\text{Q6*}\rbrack] $ F=F_1+F_2:\Omega\times [0,T]\times E_1\to E $ is strongly measurable and $ \omega\mapsto F(\omega,t,x) $ is for all $ t\in[0,T] $ and $ x\in E_1 $ strongly $ \ft $-measurable. $ F_2 $ satisfies the estimates of $ [\operatorname{Q7}]. $ Moreover, given $ n\in\N $, there exist $ L_{F_1}(n),C_{F_1}(n)\geq 0 $, such that $ F_1 $ is locally of linear growth, i.e.
	\begin{align*}
		\|F_1(\omega,t,\phi_1)&\|_{E}\leq C_{F_1}(n)\left (1+\|\phi_1\|_{E_p}\right )
	\end{align*}
	and locally Lipschitz continuous, i.e.
	\begin{align*}
		\|F_1(\omega,t,\phi_1)-F_1(\omega,t,\phi_2)\|_{E}&\leq L_{F_1}(n)\|\phi_1-\phi_2\|_{E_p}
	\end{align*}
	for all $ \phi_1,\phi_2\in E_p $ with $ \|\phi_1\|_{E_p},\|\phi_2\|_{E_p}\leq n $, for all $ t\in [0,T] $ and for almost all $ \omega\in\Omega $.
	\item[\lbrack\text{Q7*}\rbrack] The function $ B=B_1+B_2:\Omega\times [0,T]\times E_1\to \gamma (H,E^{1/2}) $ is strongly measurable and $ \omega\mapsto B(\omega,t,x) $ is for all $ t\in[0,T] $ and $ x\in E_1 $ strongly $ \ft $-measurable. $ B_2 $ satisfies the estimates of $ [\operatorname{Q7}] $. Moreover, there exist $ L_{B_1}(n),C_{B_1}(n)\geq 0 $ such that $ B_1 $ is locally of linear growth, i.e.
	\begin{align*}
		\|B_1(\omega,t,\phi_1)\|_{\gamma(H;E^{1/2})}\leq C_{B_1}(n)\left (1+\|\phi_1\|_{E_p}\right )
	\end{align*}
	and locally Lipschitz, i.e.
	\begin{align*}
\|B_1(\omega,t,\phi_1)-B_1(\omega,t,\phi_2)\|_{\gamma(H;E^{1/2})}\leq L_{B_1}(n)\|\phi_1-\phi_2\|_{E_p}
	\end{align*}
	for all $ \phi_1,\phi_2\in E_p $ with $ \|\phi_1\|_{E_p},\|\phi_2\|_{E_p}\leq n $, for all $ t\in [0,T] $ and for almost all $ \omega\in\Omega $.
\end{itemize}
 To construct a solution of $ (\operatorname{QSEE}) $ for given $ \mathcal{F}_0 $-measurable $ u_0:\Omega\to E_p $, we first investigate the truncated equation 
\begin{equation}\label{lp_quasi_stochastic_parabolic_construction_local_lipschitz_bla} 
\begin{cases}
du(t)&=\left [-A_n(u(t))u(t)+F_n(t,u(t))+f(t)\right]\operatorname{dt}+[B_n(t,u(t))+b(t)]dW(t),\\
u(0)&=u_0\ind_{\Gamma_n},
\end{cases}
\end{equation}
where $ A_n(\omega,y):=A(\omega,R_ny) $, $ F_n(\omega,t,y):=F_1(\omega,t,R_ny)+F_2(\omega,t,y) $, $ B_n(\omega,t,y):=B_1(\omega,t,R_ny)+B_2(\omega,t,y) $ and $ \Gamma_n:=\{\|u_0\|_{E_p}\leq \frac{n}{2} \}. $ The cut-off mapping $ R_n:E_p\to E_p $ is defined by
\begin{equation}\label{lp_quasi_stochastic_parabolic_local_truncated}
R_ny=\begin{cases}
y, &\text{ if }\|y\|_{E_p}\leq n\\
\frac{ny}{\|y\|_{E_p}},&\text{ if }\|y\|_{E_p}> n.
\end{cases}
\end{equation}
  The idea to use such a truncation to extend global Lipschitz nonlinearities to local ones was used several time in case of semilinear equations (see e.g. \cite{brzeniak_on_stochastic_convolution_in_banach_space}, Theorem $ 4.10 $, \cite{seidler_maximal_inequality_revisited}, Proposition $ 5.4 $, \cite{nerven_veraar_weis_stochastic_evolution_equations_umd_spaces}, Theorem $8.1$). The following Lemma can be checked easily.
\begin{Lemma}
	Given $ n\in\N, $ the mapping $ R_n:E_p\to E_p $ defined in \eqref{lp_quasi_stochastic_parabolic_local_truncated} is Lipschitz, i.e.
	\[ \|R_nx-R_ny\|_{E_p}\leq 2\|x-y\|_{E_p}. \]
	In particular, $ A_n $ satisfies $ [\operatorname{Q4}],[\operatorname{Q5}] $ and $ F_n $ and $ B_n $ satisfy $ [\operatorname{Q6}]$ and $ [\operatorname{Q7}]$ respectively.
\end{Lemma}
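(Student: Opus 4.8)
The plan is to verify the Lipschitz bound for the radial retraction $R_n$ by a short case distinction, and then to read off the remaining assertions from the local hypotheses $[\operatorname{Q4*}]$--$[\operatorname{Q7*}]$, exploiting the one decisive fact that $R_n$ takes values in the closed ball $\{y\in E_p:\|y\|_{E_p}\le n\}$, so that the ``local at level $n$'' bounds become genuinely global once composed with $R_n$.

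For the Lipschitz estimate I would fix $x,y\in E_p$ and argue by cases. If $\|x\|_{E_p},\|y\|_{E_p}\le n$, then $R_nx=x$, $R_ny=y$ and the inequality is trivial. If both norms exceed $n$, say $\|x\|_{E_p}\ge\|y\|_{E_p}>n$, I would use
\[
\frac{x}{\|x\|_{E_p}}-\frac{y}{\|y\|_{E_p}}=\frac{x-y}{\|x\|_{E_p}}+\frac{y\,(\|y\|_{E_p}-\|x\|_{E_p})}{\|x\|_{E_p}\,\|y\|_{E_p}}
\]
together with the reverse triangle inequality $\bigl|\|x\|_{E_p}-\|y\|_{E_p}\bigr|\le\|x-y\|_{E_p}$ to obtain $\bigl\|\tfrac{x}{\|x\|_{E_p}}-\tfrac{y}{\|y\|_{E_p}}\bigr\|_{E_p}\le 2\|x-y\|_{E_p}/\|x\|_{E_p}$, whence $\|R_nx-R_ny\|_{E_p}=n\bigl\|\tfrac{x}{\|x\|_{E_p}}-\tfrac{y}{\|y\|_{E_p}}\bigr\|_{E_p}\le 2(n/\|x\|_{E_p})\|x-y\|_{E_p}\le 2\|x-y\|_{E_p}$. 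In the mixed case $\|x\|_{E_p}\le n<\|y\|_{E_p}$ I would write $R_nx-R_ny=(x-y)+y(\|y\|_{E_p}-n)/\|y\|_{E_p}$ and bound $\|y\|_{E_p}-n\le\|y\|_{E_p}-\|x\|_{E_p}\le\|x-y\|_{E_p}$, which again yields the factor $2$. (The constant $2$ is the standard one for the radial retraction on a general Banach space; in a Hilbert space the nearest point projection is even a contraction, but $2$ is all we need.)

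The ``in particular'' part is then bookkeeping. Since $\|R_ny\|_{E_p}\le n$ for every $y$, assumption $[\operatorname{Q5*}]$ and the bound just proved give $\|A_n(\omega,z)-A_n(\omega,y)\|_{B(E^1,E)}=\|A(\omega,R_nz)-A(\omega,R_ny)\|_{B(E^1,E)}\le C_Q(n)\|R_nz-R_ny\|_{E_p}\le 2C_Q(n)\|z-y\|_{E_p}$, i.e.\ $[\operatorname{Q5}]$ with constant $2C_Q(n)$; and $[\operatorname{Q4}]$ for the (inessentially) shifted operator $\mu(n)+A_n$ is precisely $[\operatorname{Q4*}]$ evaluated at the single level $n$ (the uniformly bounded $H^{\infty}(\Sigma_{\eta(n)})$-calculus forces the spectrum into $\Sigma_{\eta(n)}$, so $0$ lies in the resolvent set), the shift $-A_nu=-(\mu(n)+A_n)u+\mu(n)u$ simply moving the bounded zeroth order term $\mu(n)\,\mathrm{Id}$ into the drift, where it contributes only to the ``weak'' Lipschitz constant $\widetilde{L}_F$, which is permitted to be arbitrary. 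For $F_n=F_1(\cdot,\cdot,R_n\cdot)+F_2$ and $B_n=B_1(\cdot,\cdot,R_n\cdot)+B_2$, strong measurability, adaptedness and compatibility with stopping times are inherited from $F_1,F_2,B_1,B_2$ through the continuity of $R_n$ (these are Nemytskii-type operators, evaluated pointwise in $t$); $F_2$ and $B_2$ already obey the global growth and Lipschitz bounds by hypothesis; and for the composed summand the pointwise estimates $\|F_1(\omega,t,R_n\phi(t))\|_E\le C_{F_1}(n)(1+n)$ and $\|F_1(\omega,t,R_n\phi_1(t))-F_1(\omega,t,R_n\phi_2(t))\|_E\le 2L_{F_1}(n)\|\phi_1(t)-\phi_2(t)\|_{E_p}$, after taking the $L^p(0,T)$-norm in $t$, yield the linear growth of $[\operatorname{Q6}]$ (with a constant multiple of $T^{1/p}$) and its Lipschitz bound with $L_F^{(1)}=\widetilde{L}_F=0$ and $L_F^{(2)}=2L_{F_1}(n)T^{1/p}$ for that summand; the argument for $B_1(\cdot,\cdot,R_n\cdot)$ and $[\operatorname{Q7}]$ is identical with $E$ replaced by $\gamma(H;E^{1/2})$.

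I expect no genuine obstacle here — the authors rightly call the statement easy. The only points deserving a line of care are the mixed case in the Lipschitz estimate, where the reverse triangle inequality is invoked twice, and the observation that $[\operatorname{Q4*}]$ must be applied to $\mu(n)+A_n$, the shift being absorbable into $F$. Note also that $[\operatorname{Q6}]$ and $[\operatorname{Q7}]$ are used here only in their structural form, so the appearance of (possibly large) $T^{1/p}$-dependent constants is immaterial; the smallness condition $[\operatorname{Q8}]$, when it is needed further on, is recovered by shrinking the time horizon.
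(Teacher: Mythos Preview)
Your proof is correct and is precisely the standard verification the authors had in mind; the paper itself omits the argument entirely, stating only that the Lemma ``can be checked easily''. Your case distinction for the radial retraction and your reading of $[\operatorname{Q4*}]$--$[\operatorname{Q7*}]$ through $R_n$ (including the remark that the spectral shift $\mu(n)$ is absorbed into the drift, which the paper makes explicit immediately after the Lemma) match the intended route.
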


We can apply Theorem \ref{lp_quasi_stochastic_parabolic_full_result_global_lipschitz} to the truncated equation \eqref{lp_quasi_stochastic_parabolic_construction_local_lipschitz_bla} and obtain for every $ n\in\N $ a unique maximal local solution  $ \big(u_n,(\tau_{nk})_{k},\tau_n\big) $. To do this, note that one can infix the spectral shift from $ [\operatorname{Q4*}] $, i.e. we solve actually solve 
\begin{equation*}
\begin{cases}
du(t)&=\left [-\widetilde{A}_n(u(t))u(t)+\widetilde{F}_n(t,u(t))+f(t)\right]\operatorname{dt}+[B_n(t,u(t))+b(t)]dW(t),\\
u(0)&=u_0\ind_{\Gamma_n},
\end{cases}
\end{equation*}
with $ \widetilde{A}_n(u(t))u(t)=\mu(n)+A_n(u(t))u(t) $ and $ \widetilde{F}_n(t,u(t))=F_n(t,u(t))+\mu(n). $
 In each case, $ u_n $ satisfies $ (\operatorname{QSEE}) $ on $ \Gamma_n\times [0,\sigma_n), $ where $ \sigma_n $ is defined by
\begin{align}\label{lp_quasi_stochastic_parabolic_construction_local_lipschitz_approx_stop}
\sigma_n:=\tau_n\wedge\inf\big\{t\in[0,\tau_n): \|u_n(t)\|_{E_p}>n\big \}.
\end{align}
Note, that $ \sigma_n $ is indeed an $ \F $-stopping time, since $ \tau_n $ is one and entrance times of $ \F $-adapted processes into open sets are also stopping times. In the following Lemma, we show that the sequence $ (\sigma_n)_n $ increases pathwise starting from a large enough $ n\in\N. $

\begin{Lemma}\label{lp_quasi_stochastic_parabolic_local_lipschitz_truncated_uniqueness}
	There is a set $ N\subset\Omega $ with $ \mathbb{P}(N)=0 $ such that the sequence $ \big(\sigma_n(\omega)\big)_{n\in\N} $ is for all $ \omega\in\Omega\setminus N $ monotonously increasing beginning from some $ n=n(\omega)\in\N $. Moreover, we have $ u_k(\omega,t)=u_l(\omega,t) $ for all $ l>k\geq n(\omega) $ and $ t\in [0,\sigma_k(\omega)). $
\end{Lemma}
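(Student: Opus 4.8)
The plan is to compare, for $l>k$, the maximal local solution $u_k$ of the $k$-truncated problem with the maximal local solution $u_l$ of the $l$-truncated problem, exploiting that as long as $\|u_k(t)\|_{E_p}\le k$ both cut-off maps $R_k$ and $R_l$ act as the identity on $u_k(t)$, so that along $u_k$ the $k$-truncated equation, the $l$-truncated equation and $(\operatorname{QSEE})$ all coincide.

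First I would fix a universal null set $N$ outside of which all relevant a.s.\ statements hold: the pathwise continuity of each $u_n$, the assertions of Theorem~\ref{lp_quasi_stochastic_parabolic_full_result_global_lipschitz}, and, for each pair $k<l$, the identities provided by Corollary~\ref{lp_quasi_stochastic_parabolic_local_uniqueness_initial_value} applied to the $l$-truncated equation; this is legitimate as only countably many indices occur. For $\omega\notin N$ set $n(\omega):=\min\{n\in\N:\|u_0(\omega)\|_{E_p}\le n/2\}$, which is finite since $u_0(\omega)\in E_p$. Then $\omega\in\Gamma_k$ for all $k\ge n(\omega)$, and on $\Gamma_k\subset\Gamma_l$ the two truncated problems share the same initial datum, $u_0\ind_{\Gamma_k}=u_0\ind_{\Gamma_l}=u_0$.

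The core step is the claim that, for $l>k$, the restriction of $u_k$ to $[0,\sigma_k)$ is a local solution of the $l$-truncated equation on all of $\Omega$ (with initial value $u_0\ind_{\Gamma_k}$) in the sense of Definition~\ref{lp_quasi_stochastic_parabolic_solution_concept}. Indeed, by definition of $\sigma_k$ we have $\|u_k(t)\|_{E_p}\le k<l$ for $t<\sigma_k$, hence $R_ku_k(t)=R_lu_k(t)=u_k(t)$, so $A_k(u_k(t))u_k(t)=A_l(u_k(t))u_k(t)$ and likewise $F_k(\cdot,u_k)=F_l(\cdot,u_k)$, $B_k(\cdot,u_k)=B_l(\cdot,u_k)$ on $[0,\sigma_k)$; therefore the identity witnessing that $u_k$ is a strong solution of the $k$-truncated equation on $[0,\tau_{kj}\wedge\gamma_k]$, where $(\tau_{kj})_j$ is the approximating sequence of $u_k$ and $\gamma_k$ is the entrance time of the continuous adapted process $\|u_k\|_{E_p}$ into $(k,\infty)$ (so that $\sigma_k=\tau_k\wedge\gamma_k$), is verbatim the corresponding identity for the $l$-truncated equation, and $\tau_{kj}\wedge\gamma_k\uparrow\sigma_k$. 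Granting the claim, I apply Corollary~\ref{lp_quasi_stochastic_parabolic_local_uniqueness_initial_value} to the $l$-truncated equation for the initial data $u_0\ind_{\Gamma_l}$ and $u_0\ind_{\Gamma_k}$, with maximal solutions $(u_l,\tau_l)$ and, say, $(w_k,\mu_k)$: off a null set this gives $\mu_k=\tau_l$ and $w_k=u_l$ on $\Gamma_k\times[0,\tau_l)$, while the maximality of $(w_k,\mu_k)$ together with the claim forces $\sigma_k\le\mu_k$ and $u_k=w_k$ on $[0,\sigma_k)$. Consequently $u_k(\omega,t)=u_l(\omega,t)$ for $\omega\in\Gamma_k\setminus N$ and $t\in[0,\sigma_k(\omega))$.

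Finally, for the monotonicity, observe that on $\Gamma_k$ we have $\sigma_k\le\tau_l$ and, by the coincidence just established together with $\|u_k(t)\|_{E_p}\le k$ for $t<\sigma_k$, also $\|u_l(t)\|_{E_p}\le k<l$ on $[0,\sigma_k)$; hence the entrance time of $\|u_l\|_{E_p}$ into $(l,\infty)$ is $\ge\sigma_k$ on $\Gamma_k$ and therefore $\sigma_l=\tau_l\wedge(\cdots)\ge\tau_l\wedge\sigma_k=\sigma_k$ there. Thus for $\omega\notin N$ and $l>k\ge n(\omega)$ we have $\omega\in\Gamma_k$, whence $\sigma_k(\omega)\le\sigma_l(\omega)$ and $u_k(\omega,\cdot)=u_l(\omega,\cdot)$ on $[0,\sigma_k(\omega))$, which is the assertion. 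The one genuinely delicate point I anticipate is the verification of the core claim, i.e.\ that $u_k\vert_{[0,\sigma_k)}$ is an honest local solution of the $l$-truncated equation: one must produce the approximating stopping times and check that the cut-offs are inactive on that interval. Everything else is a bookkeeping of null sets plus a direct appeal to the uniqueness corollary; note in particular that one cannot compare $u_k$ and $u_l$ directly as solutions of $(\operatorname{QSEE})$, since as such they are only defined on $\Gamma_k$ resp.\ $\Gamma_l$, which is precisely why the comparison must be run inside the $l$-truncated problem.
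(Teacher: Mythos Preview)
Your proof is correct and rests on the same core observation as the paper's: on $[0,\sigma_k)$ the cut-offs $R_k$ and $R_l$ are inactive along $u_k$, so $u_k$ solves the $l$-truncated problem there, and one then invokes Corollary~\ref{lp_quasi_stochastic_parabolic_local_uniqueness_initial_value} for the $l$-truncated equation to identify $u_k$ with $u_l$ on $\Gamma_k$.

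The one organisational difference is worth noting. The paper first only establishes coincidence on $[0,\sigma_k\wedge\sigma_l)$ and then proves $\sigma_k\le\sigma_l$ by a case distinction according to whether $\sup_{[0,\tau_l)}\|u_l\|_{E_p}$ exceeds $l$ or not (so that $\sigma_l=\tau_l$ or $\sigma_l$ is a genuine entrance time). You instead introduce the auxiliary maximal solution $(w_k,\mu_k)$ of the $l$-truncated equation with initial datum $u_0\ind_{\Gamma_k}$ and use maximality to obtain $\sigma_k\le\mu_k$, together with Corollary~\ref{lp_quasi_stochastic_parabolic_local_uniqueness_initial_value} to get $\mu_k=\tau_l$ on $\Gamma_k$; this yields $\sigma_k\le\tau_l$ directly and makes the monotonicity a one-line consequence, bypassing the paper's case analysis. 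Both routes are short, but yours is arguably cleaner and makes the role of maximality more transparent; the paper's version, on the other hand, avoids introducing the extra object $(w_k,\mu_k)$.
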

\begin{proof}
	Given $ \omega\in\Omega $, choose $ n=n(\omega)$ such that $ \omega\in\Gamma_n $. Since $ \|u_0\|_{E_p} $ is almost surely finite, this can be done for almost all $ \omega\in\Omega. $ We first prove, that we have $ u_k(\omega,t)=u_l(\omega,t)$ for almost all $ \omega\in\Gamma_n $ and all $ t\in [0,\sigma_k(\omega)\wedge\sigma_l(\omega)). $ Clearly, both $ u_k $ and $ u_l $ solve 
	\begin{equation}\label{lp_quasi_stochastic_parabolic_construction_local_lipschitz}
	\begin{cases}
	du(t)&=\left [-A_{l}(u(t))u(t)+F_l(t,u(t))+f(t)\right]\operatorname{dt}+[B_l(t,u(t))+b(t)]dW(t),\\
	u(0)&=u_0\ind_{\Gamma_n}
	\end{cases}
	\end{equation}
	in the strong sense on $ [0,\sigma_k) $ and $ [0,\sigma_l) $ respectively and therefore the uniqueness result from Lemma \ref{lp_quasi_stochastic_parabolic_local_uniqueness_initial_value} directly yields the almost sure coincidence of $ u_l $ and $ u_k $ on $ \Gamma_n\times[0,\sigma_l\wedge\sigma_k). $

To prove the pathwise monotonicity of the stopping times on $ \Gamma_n, $ we distinguish the cases $ \Gamma_n=\Lambda_n\mathbin{\dot{\cup}}\widetilde{\Lambda}_n\mathbin{\dot{\cup}}\widehat{N} $ with a null-set $ \widetilde{N}, $
$$ \Lambda_n=\Gamma_n\cap\{\sup_{s\in [0,\tau_l)}\|u_l(s)\|_{E_p}\leq l\} $$ 
and 
$$ \widetilde{\Lambda}_n=\Gamma_n\cap\{\sup_{s\in [0,\tau_l)}\|u_l(s)\|_{E_p}> l\}. $$ 
In particular, we have $ \sigma_l=\tau_l $ on $ \Lambda_n $ and $ \sigma_l=\inf\{t\in[0,\tau_l):\|u_l(t)\|_{E_p}>l \} $ on $ \widetilde{\Lambda}_n. $ As an immediate consequence, we get $ \sigma_k\leq\tau_l=\sigma_l $ almost surely on $ \Lambda_n, $ since $ \tau_l $ was chosen as the maximal stopping time of a solution of \eqref{lp_quasi_stochastic_parabolic_construction_local_lipschitz_bla} which coincides with the maximal time of existence of \eqref{lp_quasi_stochastic_parabolic_construction_local_lipschitz} on $ \Gamma_n $. On $ \widetilde{\Lambda}_n, $ we argue differently. Here, it suffices to note, that by almost sure coincidence of $ u_l $ and $ u_k $ on 
$ \Gamma_n\times[0,\sigma_l\wedge\sigma_k), $ we have
$$ \sup_{s\in[0,\sigma_k\wedge\sigma_l)}\|u_l(s)\|_{E_p}= \sup_{s\in[0,\sigma_k\wedge\sigma_l)}\|u_k(s)\|_{E_p}\leq k, $$
whereas
$$ \sup_{s\in[0,\sigma_l)}\|u_l(s)\|_{E_p}= l. $$
Thus, we must have $ \sigma_k<\sigma_l $ on $ \widetilde{\Lambda}_n. $ Putting these cases together, we finally proved the claimed result, namely $ \sigma_k\leq\sigma_l $ almost surely on $ \Gamma_n. $	
	Last but not least, we choose $ N $ as the union of all sets of measure zero, we excluded in this proof.
\end{proof}

We proved, that $ (\sigma_n)_n$ is, at least for large natural numbers, pathwise almost surely monoton\-ous\-ly increasing and we know from the definition of $ (\sigma_n)_n$ that the sequence is bounded by $ T. $ Therefore we can define the $ \F $-stopping time
\begin{equation}\label{lp_quasi_stochastic_parabolic_local_lipschitz_maximal_time}
\mu=\lim\limits_{n\to\infty}\sigma_n.
\end{equation}

Now let $ \omega\in\Omega\setminus N $ and $ t\in [0,\mu). $ Choose $ n=n(\omega,t) $ large enough such that both $ \omega\in\Gamma_n $ and $ \sigma_n(\omega)>t $ and define
\begin{equation}\label{lp_quasi_stochastic_parabolic_local_lipschitz_solution_definition}
u(\omega,t):=u_n(\omega,t).
\end{equation}
Note, that $ u $ is well-defined. Indeed, let $ \omega\in\Omega\setminus N $, $ t\in [0,\mu(\omega)) $ and $ k$ another natural number with $ (\omega,t)\in \Gamma_k\times[0,\sigma_k) $. Then, Lemma \ref{lp_quasi_stochastic_parabolic_local_lipschitz_truncated_uniqueness} implies $ u_n(\omega,t)=u_k(\omega,t). $
To complete the definition of $ u, $ we set $ u\equiv 0 $ on $ N\times [0,\mu). $ \\ \\
Since the $ u_n $ are strong solutions of $ (\operatorname{QSEE}) $ on $ \Gamma_n\times[0,\sigma_n), $ $ u $ is a good candidate for a local solution of $ (\operatorname{QSEE}) $ on $ \Omega\times [0,\mu). $ We just have to find a sequence of stopping times $ (\mu_n)_n $, that approximates $ \mu $, such that $ u\in C(0,\mu_n;E_p)\cap L^{p}(0,\mu_n;E^{1}) $ almost surely for all $ n\in\N $ and such that $ u $ is a strong solution of $ (\operatorname{QSEE}) $ on $ [0,\mu_n] $. Note, that $ \sigma_n $ does not need to have this property, since we used the maximal stopping times $ \tau_n $ in the definition of $ \sigma_n $ and therefore, we cannot preclude that $ \sigma_n $ is a blow-up time on some paths.
\begin{Theorem} \label{lp_quasi_stochastic_parabolic_full_result_local_lipschitz}
	We assume that $ [\operatorname{Q1}]-[\operatorname{Q3}],[\operatorname{Q4*}],[\operatorname{Q5*}],[\operatorname{Q6}]-[\operatorname{Q9}] $ are satisfied. Then, there is an increasing sequence of $ \F $-stopping times $(\mu_n)_n$, such that $ \big(u,(\mu_n)_n,\mu\big) $ is a maximal unique local solution of
	\begin{equation*}
	(\operatorname{QSEE})\begin{cases}
	du(t)&=\left [-A(u(t))u(t)+F(t,u(t))+f(t)\right]\operatorname{dt}+[B(t,u(t))+b(t)]dW(t)\\
	u(0)&=u_0
	\end{cases}
	\end{equation*}
	Moreover, we have the blow-up criterion
	\[ \mathbb{P}\big\{\mu<T,\ \|u\|_{ L^{p}(0,\mu;E^{1})}<\infty,\ u:[0,\mu)\to E_p\text{ is uniformly continuous} \big\}=0. \]
\end{Theorem}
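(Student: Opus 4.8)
The process $u$ on $\Omega\times[0,\mu)$ is already built, and it coincides with a truncated solution $u_{m}$ exactly on the $\mathcal F_{0}$-set $\Gamma_{m}\times[0,\sigma_{m})$. The plan is: (i) exhibit an approximating sequence $(\mu_{n})_{n}$ of $\F$-stopping times on which $u$ carries the full $L^{p}(0,\cdot;E^{1})\cap C(0,\cdot;E_p)$-regularity and solves $(\operatorname{QSEE})$; (ii) transfer uniqueness and the blow-up alternative from the truncated problems \eqref{lp_quasi_stochastic_parabolic_construction_local_lipschitz_bla} via Theorem~\ref{lp_quasi_stochastic_parabolic_full_result_global_lipschitz} applied at each level; (iii) deduce maximality from the blow-up alternative. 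All the measure-theoretic care is dictated by the fact that $u=u_{m}$ holds only on $\Gamma_{m}$ and only on the open interval $[0,\sigma_{m})$.

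\textbf{Step 1 (the approximating sequence).} First I would recall that for each $m$ the maximal local solution $(u_{m},(\tau_{mk})_{k},\tau_{m})$ of \eqref{lp_quasi_stochastic_parabolic_construction_local_lipschitz_bla} has $\tau_{mk}\uparrow\tau_{m}$ with $u_{m}$ a strong solution on $[0,\tau_{mk}]$ and $u_{m}\in L^{p}(0,\tau_{mk};E^{1})\cap C(0,\tau_{mk};E_p)$ a.s.; since $R_{m}$ is inactive on $\Gamma_{m}\times[0,\sigma_{m})$, there $u=u_{m}$ solves $(\operatorname{QSEE})$ itself. Setting $\nu_{mk}:=\tau_{mk}\wedge\sigma_{m}$ one has $\nu_{mk}\uparrow\sigma_{m}$ for fixed $m$, so I would choose $k_{m}$ by Borel--Cantelli with $\PP(\sigma_{m}-\nu_{mk_{m}}>1/m)<2^{-m}$ and put
\[ \mu_{n}:=\bigvee_{m=1}^{n}\nu_{mk_{m}}\ind_{\Gamma_{m}}. \]
Since $\Gamma_{m}\in\mathcal F_{0}$ and the $\nu_{mk_{m}}$ are $\F$-stopping times, $\mu_{n}$ is an $\F$-stopping time; it is increasing by construction; on $\Gamma_{m}$ one has $m\ge n(\omega)$, hence $\sigma_{m}\le\mu$ by the monotonicity in Lemma~\ref{lp_quasi_stochastic_parabolic_local_lipschitz_truncated_uniqueness}, so $\mu_{n}\le\mu$; and Borel--Cantelli combined with $\sigma_{m}\uparrow\mu$ on $\Gamma_{m}$ gives $\mu_{n}\uparrow\mu$ almost surely. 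The hard part is precisely this step: one must retreat from the stopping times $\sigma_{n}$ --- which may genuinely be blow-up times because of the maximal time $\tau_{n}$ hidden in \eqref{lp_quasi_stochastic_parabolic_construction_local_lipschitz_approx_stop} --- to stopping times carrying the full regularity, while keeping monotonicity, domination by $\mu$, and convergence to $\mu$; the $\Gamma_{m}$-localisation and a harmless continuous extension of $u$ at the countably many endpoints where $\mu_{n}=\sigma_{m}$ are where the care lies.

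\textbf{Step 2 (local solution and uniqueness).} Next I would verify that for $\omega$ off a null set and $n\ge n(\omega)$ one has $\mu_{n}(\omega)=\nu_{m^{*}k_{m^{*}}}(\omega)$ for some $m^{*}\in[n(\omega),n]$, and that on $[0,\mu_{n}(\omega)]\subset[0,\tau_{m^{*}k_{m^{*}}}(\omega)]$ the path $u(\omega,\cdot)$ equals $u_{m^{*}}(\omega,\cdot)$ with $R_{m^{*}}$ inactive; hence $u\in L^{p}(0,\mu_{n};E^{1})\cap C(0,\mu_{n};E_p)$ a.s.\ and $u$ solves $(\operatorname{QSEE})$ on $[0,\mu_{n}]$, so $\big(u,(\mu_{n})_{n},\mu\big)$ is a local solution. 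For uniqueness, given any local solution $(\widetilde u,(\widetilde\mu_{n})_{n},\widetilde\mu)$ I would fix $n$ and localise by $\rho_{n\ell}:=\inf\{t\in[0,\widetilde\mu_{n}]:\|\widetilde u(t)\|_{E_p}\ge\ell\}\wedge\widetilde\mu_{n}$: on $[0,\rho_{n\ell})$ the cut-off $R_{\ell}$ is inactive, so $\widetilde u$ solves the level-$\ell$ truncated equation with initial value $u_{0}=u_{0}\ind_{\Gamma_{\ell}}$ on $\Gamma_{\ell}$, and the uniqueness for the globally Lipschitz level-$\ell$ problem (Lemma~\ref{lp_quasi_stochastic_parabolic_local_solution_uniqueness_different_initial_value}, Corollary~\ref{lp_quasi_stochastic_parabolic_local_uniqueness_initial_value}) yields $\widetilde u=u_{\ell}=u$ on $\Gamma_{\ell}\times[0,\rho_{n\ell}\wedge\sigma_{\ell})$. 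Letting $\ell\to\infty$ (so eventually $\omega\in\Gamma_{\ell}$, $\rho_{n\ell}\uparrow\widetilde\mu_{n}$ because $\widetilde u$ is bounded on the compact $[0,\widetilde\mu_{n}]$, and $\sigma_{\ell}\uparrow\mu$) and then $n\to\infty$ yields $\widetilde u=u$ on $[0,\widetilde\mu\wedge\mu)$.

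\textbf{Step 3 (blow-up alternative and maximality).} Finally, writing $\Omega^{*}:=\{\mu<T,\ \|u\|_{L^{p}(0,\mu;E^{1})}<\infty,\ u:[0,\mu)\to E_p\text{ uniformly continuous}\}$, I would argue by contradiction from $\PP(\Omega^{*})>0$: on $\Omega^{*}$ the process $u$ extends continuously to $[0,\mu]$, so $\sup_{[0,\mu]}\|u\|_{E_p}<\infty$ there, and $\Omega^{*}$ is, up to a null set, the increasing union of $\Omega^{*}_{\ell}:=\Omega^{*}\cap\Gamma_{\ell}\cap\{\sup_{[0,\mu]}\|u\|_{E_p}<\ell\}$. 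Fix $\ell$. On $\Omega^{*}_{\ell}$, for every $m\ge\ell$ the cut-off $R_{m}$ is inactive on $[0,\mu]$; since $u=u_{m}$ on $[0,\sigma_{m})\subset[0,\mu]$ with $\|u_{m}\|_{E_p}<\ell\le m$ there, the continuous path $u_{m}$ cannot reach level $m$ before $\tau_{m}$, so $\sigma_{m}=\tau_{m}$; and maximality of $\tau_{m}$ (together with the uniqueness in Corollary~\ref{lp_quasi_stochastic_parabolic_local_uniqueness_initial_value}) excludes $\sigma_{m}<\sigma_{m+1}$, whence $\sigma_{m}=\mu$ and $\mu=\tau_{\ell}$ on $\Omega^{*}_{\ell}$. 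Consequently $u_{\ell}=u$ is uniformly continuous on $[0,\tau_{\ell})$ with $\|u_{\ell}\|_{L^{p}(0,\tau_{\ell};E^{1})}<\infty$ and $\tau_{\ell}<T$ there, which by the blow-up alternative of Theorem~\ref{lp_quasi_stochastic_parabolic_full_result_global_lipschitz} applied to the level-$\ell$ truncated problem forces $\PP(\Omega^{*}_{\ell})=0$; summing over $\ell$ gives the stated criterion $\PP(\Omega^{*})=0$. Maximality then follows: if $(\widetilde u,(\widetilde\mu_{n})_{n},\widetilde\mu)$ is a local solution with $\PP(\widetilde\mu>\mu)>0$, then on $\{\widetilde\mu>\mu\}$ some $\widetilde\mu_{n}$ exceeds $\mu$, so by Step~2 $u=\widetilde u$ is uniformly continuous on $[0,\mu]$ with $\|u\|_{L^{p}(0,\mu;E^{1})}<\infty$ and $\mu<T$, i.e.\ $\{\widetilde\mu>\mu\}\subset\Omega^{*}$ up to a null set --- contradiction; hence $\widetilde\mu\le\mu$ a.s., and with Step~2 this identifies $\big(u,(\mu_{n})_{n},\mu\big)$ as the maximal unique local solution. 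Besides the bookkeeping in Step~1, the remaining delicate point is the verification that the $\sigma_{m}$ stabilise to $\tau_{\ell}$ on $\Omega^{*}_{\ell}$, which is exactly what reduces the blow-up alternative to its already-proved finite-level form.
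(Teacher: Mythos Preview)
Your proof is correct and follows essentially the same route as the paper: the same construction of $\mu_n$ as a running maximum of $\Gamma_m$-localised diagonal stopping times $\nu_{mk_m}$ (you select $k_m$ via Borel--Cantelli where the paper uses $L^1$-convergence plus a subsequence, which is equivalent), the same reduction of uniqueness to the truncated problems via hitting times of level $\ell$, and the same transfer of the blow-up alternative by showing that on the set where $\sup_{[0,\mu]}\|u\|_{E_p}<\ell$ one has $\mu=\tau_\ell$ (the paper slices by $\|u\|_{C}\in[(n-1)/2,n/2)$ instead of $\{\sup<\ell\}$, but the argument is the same). The only point worth tightening is the justification that $\sigma_m$ stabilises on $\Omega^*_\ell$: your sentence ``maximality of $\tau_m$ \dots\ excludes $\sigma_m<\sigma_{m+1}$'' is correct but compressed --- spelling out that $u_{m+1}$ solves the level-$m$ truncated equation on $[0,\tau_{m+1})$ (since $\|u_{m+1}\|_{E_p}<\ell\le m$ there) and hence $\tau_{m+1}\le\tau_m$ by maximality of $\tau_m$ would make the step transparent.
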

\begin{proof}
	First we construct the sequence of stopping times $ (\mu_n)_{n\in\N}. $ Recall the definition of $ \sigma_n $ in \eqref{lp_quasi_stochastic_parabolic_construction_local_lipschitz_approx_stop} and of $ \mu $ in \eqref{lp_quasi_stochastic_parabolic_local_lipschitz_maximal_time}. If we additionally set
	\[ \sigma_{nk}:=\tau_{nk}\wedge\inf\big\{t\in [0,\tau_n):\|u_n\|_{E_p}>n\big \}, \]
	we have the pointwise almost sure convergences $ \mu=\lim_{n\to\infty}\sigma_n $ and $ \sigma_n=\lim_{k\to\infty}\sigma_{nk}. $ 
	Since the stopping times $ \sigma_n,\sigma_{nk} $ are all bounded by $ T, $ the dominated convergence theorem yields $ \sigma_{n}\to \mu $ for $ n\to\infty $ and $ \sigma_{nk}\to\sigma_n $ in $ L^{1}(\Omega) $ for $ k\to\infty. $ If we now choose for given $ n\in\N $ the natural number $ k(n) $ such that $ \|\sigma_{n}-\sigma_{nk(n)}\|_{L^{1}(\Omega)}\leq 1/n $, we obtain
	$ \sigma_{nk(n)}\to \sigma $ in $ L^{1}(\Omega) $ for $ n\to\infty. $ Choosing a suitable subsequence still denoted by $ (\sigma_{nk(n)})_{n\in\N} $ yields $ \sigma_{nk(n)}\to \sigma $ pointwise almost surely for $ n\to\infty. $ Moreover, since $ (\Gamma_n)_n $ is an increasing sequence with $ \Omega=\cup_{n\in\N}\Gamma_n $, we also have $ \sigma_{nk(n)}\ind_{\Gamma_n}\to\sigma $ pointwise almost surely for $ n\to\infty. $
	Unfortunately this sequence is not necessarily increasing anymore. Therefore we define
	\[ \mu_n:=\max_{i\in\{1,\dots,n \}}\sigma_{ik(i)}\ind_{\Gamma_i} \]
	and prove that $ (\mu_n)_n $ is the sequence, we wanted to construct. Clearly, since $ \sigma_{nk(n)} $ is for all $ n\in\N $ an $ \F $-stopping time and $ \Gamma_n\in\mathcal{F}_0 $, $ \mu_n $ is also an $ \F $-stopping time. Furthermore the trivial bounds $ \sigma_{nk(n)}\leq\mu_n\leq\mu $ for every $ n\in\N $ yield $ \mu_n\to\mu $ pointwise almost surely. 
	
	It remains to check that $ u $ is strong solution of $ (\operatorname{QSEE}) $ on $ [0,\mu_n]. $ Obviously, it is sufficient to show that $ u $ is a strong solution of $ (\operatorname{QSEE}) $ on $ \Gamma_n\times[0,\sigma_{nk}] $ for all $ n,k\in\N. $
	 We have $ u(\omega,t)=u_n(\omega,t) $ for almost all $ \omega\in \Gamma_{n} $ and all $ t\in[0,\sigma_n(\omega))\supset [0,\sigma_{nk}(\omega)] $ by definition of $ u. $ Since $ u_n $ is a strong solution of the truncated equation \eqref{lp_quasi_stochastic_parabolic_local_truncated} on $ [0,\tau_{nk}] $ and in particular a strong solution of $ (\operatorname{QSEE}) $ on $ \Gamma_n\times [0,\sigma_{nk}] $, we conclude that $ u $ itself is a strong solution of $ (\operatorname{QSEE}) $ on $ \Gamma_n\times[0,\sigma_{nk}]. $
	
	Next, we prove the blow-up alternative for $ u. $ As in the proof of Lemma \ref{lp_quasi_stochastic_parabolic_blow_up}, it is sufficient to show
	\[ \mathbb{P}\big\{\mu<T,\ \|u\|_{ L^{p}(0,\mu;E^{1})}<\infty,\ u:[0,\mu)\to E_p\text{ is uniformly continuous} \big\}=0. \]
	Since uniformly continuous functions on a bounded interval are always bounded, we only need to prove
	$  \mathbb{P}(\Omega_n)=0  $ for every $ n\in\N $, where $ \Omega_n $ is given by
	\begin{align*}
	\Omega_n:=\big\{&\mu<T,\ \|u\|_{ L^{p}(0,\mu;E^{1})}<\infty,\ u:[0,\mu)\to E_p\text{ is uniformly continuous},\\
	& \|u\|_{C(0,\mu;E_p)}\in [\tfrac{n-1}{2},\tfrac{n}{2}) \big\}.
	\end{align*}
	We first show, that for almost all $ \omega\in \{\|u\|_{C(0,\mu;E_p)}\in [\tfrac{n-1}{2},\tfrac{n}{2}) \}, $ we have $ \mu(\omega)=\tau_n(\omega) $. 
	
	Clearly $ \tau_n=\sigma_n $ on $ \{\|u\|_{C(0,\mu;E_p)}\in [\tfrac{n-1}{2},\tfrac{n}{2}) \} $. Furthermore the sequence $ (\sigma_k)_{k\geq n} $ increases on the even larger set $  \Gamma_n $ by Lemma \ref{lp_quasi_stochastic_parabolic_local_lipschitz_truncated_uniqueness} and converges to $ \mu. $ Thus we have $ \tau_n\leq\mu $ on $ \{\|u\|_{C(0,\mu;E_p)}\in [\tfrac{n-1}{2},\tfrac{n}{2}) \} $.
	
On the other hand, we have $ \tau_n\geq\mu$ on $ \{\|u\|_{C(0,\mu;E_p)}\in [\tfrac{n-1}{2},\tfrac{n}{2}) \} $, since on this subset of $ \Omega $, $ u $ solves the truncated equation
\begin{equation}\label{lp_quasi_stochastic_parabolic_full_result_local_lipschitz_truncated}
\begin{cases}
dw(t)&=\left [-A_n(w(t))u(t)+F_n(t,w(t))+f(t)\right]\operatorname{dt}+[B_n(t,w(t))+b(t)]dW(t),\\
w(0)&=u_0\ind_{\Gamma_n},
\end{cases}
\end{equation}
 where $ \Gamma_n $ was given by $ \{\|u_0\|_{E_p}\leq n/2\} $, and $ (u_n,\tau_n) $ was defined as the the maximal unique solution of this equation. This finally proves $ \tau=\mu $ on $ \{\|u\|_{C(0,\mu;E_p)}\in [\tfrac{n-1}{2},\tfrac{n}{2}) \} $ and the above argument also shows $ u(\omega,t)=u_n(\omega,t) $ for almost all $ \omega\in \{\|u\|_{C(0,\mu;E_p)}\in [\tfrac{n-1}{2},\tfrac{n}{2}) \} $ and all $ t\in [0,\mu(\omega). $ In conclusion, we have
	\begin{align*}
	\mathbb{P}\big\{&\mu<T,\ \|u\|_{ L^{p}(0,\mu;E^{1})}<\infty,\ u:[0,\mu)\to E_p\text{ is uniformly continuous},\\
	&\hspace{7,1cm}\|u\|_{C(0,\mu;E_p)}\in [\tfrac{n-1}{2},\tfrac{n}{2}) \big\}\\
	=&\mathbb{P}\big\{\tau_n<T,\ \|u_n\|_{ L^{p}(0,\tau_n;E^{1})}<\infty,\ u_n:[0,\tau_n)\to E_p\text{ is uniformly continuous}, \\
	&\hspace{7,9cm}\|u_n\|_{C(0,\tau_n;E_p)}\in [\tfrac{n-1}{2},\tfrac{n}{2}) \big\}
	\end{align*}
	and by the blow-up Lemma \ref{lp_quasi_stochastic_parabolic_blow_up} this quantity equals zero. 
	
	It remains to check, that $ \big(u,(\mu_n)_n,\mu \big) $ is a maximal unique solution. Let $ \big(v,(\kappa_n)_n,\kappa\big) $ be another local solution of $ (\operatorname{QSEE}). $ We first prove that $ u $ and $ v $ coincide on $ \Omega\times [0,\mu\wedge\kappa). $ 
	Define the sequence $ (\rho_n)_n $  of $\F $-stopping times by
	\begin{align*}
		\rho_n:=\inf\big\{t\in [0,\mu):\|u\|_{E_p}>n\big\}\wedge \inf\big\{t\in [0,\kappa):\|v\|_{E_p}>n\big\}\wedge\mu\wedge\kappa
	\end{align*}
	for $ n\in\N. $ Then both $ u $ and $ v $ solve the truncated equation \eqref{lp_quasi_stochastic_parabolic_full_result_local_lipschitz_truncated} on $ \Gamma\times [0,\rho_n) $ and this equation is uniquely solvable up to a maximal stopping time, which implies $ u(\omega,t)=v(\omega,t) $ for almost all $ \omega\in\Gamma_n $ and all $ t\in [0,\rho_n). $ Since $ \rho_n\to\mu\wedge\kappa $ almost surely for $ n\to\infty $ and $ \cup_{n=1}^{\infty}\Gamma_n=\Omega\setminus \widetilde{N} $ for some set of measure zero $ \widetilde{N}, $ we conclude that $ u $ and $ v $ coincide on $ \Omega\times [0,\mu\wedge\kappa). $ Maximality is then a consequence of the blow-up alternative we derived above. Indeed, if we had $ \kappa > \mu $ on a set of postive measure $ \Lambda, $ then $ u:\Lambda\times[0,\mu)\to E_p $ would be pathwise uniformly continuous and we had $ \|u\|_{L^{p}(0,\mu;E^{1})}<\infty $ almost surely on $ \Lambda $. But this would imply $ \mu=T $ on $ \Lambda $, which contradicts $ \kappa>\mu $ on $ \Lambda, $ since $ \kappa $ is also bounded by $ T. $
\end{proof}

    \section{Examples}
\subsection{\texorpdfstring{A quasilinear parabolic equation on $ \R^{d} $}{A quasilinear equation on the full space}}
In this section, we give a straightforward example, namely
\begin{equation*}
\begin{cases}
du(t)=\big[\sum_{i,j=1}^{d}a_{ij}(\cdot,u(t),\nabla u(t))\partial_i\partial_ju(t)+ f(t)\big]dt+B(t,u(t))dW(t),\\
u(0)=u_0
\end{cases}
\end{equation*}
on $ \R^d $ and we prove existence and uniqueness of a local strong solution in $ L^{p}(\R^{d}) $ under the following hypothesis.
\begin{itemize}
\item  [\lbrack\text{E1}\rbrack] The coefficient matrix $ a=(a_{ij})_{i,j=1,\dots,d}:\R^{d}\times\C\times\C^{d}\to\C^{d\times d} $ is uniformly elliptic, i.e.
\[ \essinf_{y\in\C,z\in\C^{d},x\in \R^{d}}\inf_{|\xi|=1}\re\xi^{T}a(x,y,z)\overline{\xi}=\delta_0>0. \]
Moreover, $ a $ is $ \beta $-H\"older continuous in the first and locally Lipschitz continuous in the second and the third component, i.e. for every $ n\in\N $, there exists $ C,L(n)>0 $, $ \widetilde{L}(n)>0 $ such that
\[ |a(x,y,\widetilde{y})-a(\widetilde{x},z,\widetilde{z})|\leq C|x-\widetilde{x}|^{\beta}+ L(n)|y-z|+\widetilde{L}(n)|\widetilde{y}-\widetilde{z}| \] 
for all $ x,\widetilde{x}\in\R^{d} $ and all $ |y|,|z|, |\widetilde{y}|,|\widetilde{z}|<n$.
\item  [\lbrack\text{E2}\rbrack] We choose $ p,q\in (2,\infty) $, such that $ 1-2/p>d/q $.
\item [\lbrack\text{E3}\rbrack] The initial value $ u_0:\Omega\to B^{2-2/p}_{q,p}(\R^{d}) $ is a strongly $\mathcal{F}_0 $- measurable random variable.
\item [\lbrack\text{E4}\rbrack]
The driving noise $ W $ is an $ l^{2} $- cylindrical Brownian motion of the form
\[ W(t)=\sum_{k=1}^{\infty}e_k\beta_k(t), \]
where $ (e_k)_k $ is the standard orthonormal basis of $ l^2 $ and $ (\beta_k)_k $ is a sequence of independent real-valued Brownian motions relative to the filtration $ (\mathcal{F}_t)_{t\in [0,T]}. $
\item [\lbrack\text{E5}\rbrack] $ B=(B_k^{(1)})_{k\in\N}+(B_k^{(2)})_{k\in\N}:\Omega\times [0,T]\times \R^{d}\times \C\times\C^{d}\to l^2(\N) $ is strongly measurable and $ \omega\mapsto B(\omega,t,x,y,z) $ is for all $ t\in [0,T] $, $x\in \R^{d}$, $ y\in\C $ and $ z\in\C^d $ strongly $ \mathcal{F}_t $-measurable. Furthermore, $ (B_k^{(1)})_k $ is locally of linear growth, i.e.
\[ \|(B_k^{(1)})_k(\omega,t,\cdot,y,\nabla y)\|_{\gamma(l^2;W^{1,q}(\R^{d}))}\leq C(n)(1+\|y\|_{B^{2-2/p}_{q,p}(\R^{d})}) \]
and Lipschitz continuous, i.e. there is $ L(n)>0 $ such that
\begin{align*}
 \hspace{1cm}\|(B_k^{(1)})_k(\omega,t,\cdot,y,\nabla y)-(B_k^{(1)})_k(\omega,t,\cdot,z,\nabla z)\|_{\gamma(l^2;W^{1,q}(\R^{d}))}\leq L(n)\|y-z\|_{B^{2-2/p}_{q,p}(\R^{d})}
\end{align*}
for all $ y,z\in B^{2-2/p}_{q,p}(\R^{d}) $ with norm at most $ n $, all $ t\in [0,T] $ and almost all $ \omega\in\Omega. $ Furthermore $(B_k^{(2)})_k $ is also of linear growth, i.e.
\[ \|(B_k^{(2)})_k(\omega,t,\cdot,y,\nabla y)\|_{\gamma(l^2;W^{1,q}(\R^{d}))}\leq C(1+\|y\|_{W^{2,q}(\R^{d})}) \]
and Lipschitz continuous, i.e. there is $ L_B,\widetilde{L}_B>0 $ such that
\begin{align*}
\|(B_k^{(2)})_k(\omega,t,\cdot,y,\nabla y)-(B_k^{(2)}&)_k(\omega,t,\cdot,z,\nabla z)\|_{\gamma(l^2;W^{1,q}(\R^{d}))}\\
&\leq L_B\|y-z\|_{W^{2,q}(\R^{d})}+\widetilde{L}_B\|y-z\|_{L^{q}(\R^{d})}
\end{align*}
for all $ y,z\in W^{2,q}(\R^{d}) $ with norm at most $ n $, all $ t\in [0,T] $ and almost all $ \omega\in\Omega. $ Here, $ L_B $ must be small enough in the sense of assumption $ [\operatorname{Q8}] $ in the previous section.
\item [\lbrack\text{E5}\rbrack]  $ f\in L^{p}(\Omega\times [0,T];L^{q}(\R^{d})) $ is strongly measurable and $ \F $-adapted.
\end{itemize}
We want to apply Theorem \ref{lp_quasi_stochastic_parabolic_full_result_local_lipschitz} with $ A(z)=-\sum_{i,j=1}^{d}a_{ij}(\cdot,z,\nabla z)\partial_i\partial_j $ and choose the spaces $ E=L^{q}(\R^{d}) $, $ E^1=W^{2,q}(\R^{d}) $ and $ (E,E^1)_{1-1/p,p}=B^{2-2/p}_{q,p}(\R^{d}) $. Due to our assumptions, $ [\operatorname{Q1}]-[\operatorname{Q3}]  $, $ [\operatorname{Q6*}],[\operatorname{Q7*}]  $ and $ [\operatorname{Q8}],[\operatorname{Q9}]  $ are directly fulfilled.

It remains to discuss $ [\operatorname{Q4}^*],[\operatorname{Q5}^*]. $ By \cite{amann_hieber_simonett_bounded_calculus_for_elliptic_operators}, Theorem $ 9.1 $, elliptic operators of the form $ B=\mu-\sum_{i,j=1}b_{ij}(x)\partial_i\partial_j $ with H\"older continuous coefficients have a bounded $ H^{\infty}(\Sigma_{\theta}) $-calculus with angle $ 0<\theta<\pi/2 $ if $ \mu>0 $ is large enough. Following the proof step by step, one sees that $ \mu, \theta $ and $ C>0 $ in
\[ \|f(B)\|_{B(L^{q}(\R^{d}))}\leq C\|f\|_{H^{\infty}(\Sigma_\theta)} \] 
only depend on the supremum, the ellipticity and the modulus of continuity of $ (b_{ij})_{i,j} $. 

Since, we choose $ 1-2/p-d/q>0, $ the Sobolev embedding $ B^{2-2/p}_{q,p}(\R^{d})\hookrightarrow C^{1,\alpha}(\R^{d}) $ holds true for some $ \alpha>0. $ In particular, the modulus of continuity of $ z $,$ \nabla z $ and the quantities $ \| z\|_{\infty} $, $ \|\nabla z\|_{\infty} $ are controlled by $ \|z\|_{B^{2-2/p}_{q,p}(\R^{d})}. $ Consequently, both the modulus of continuity of $ a(\cdot,z,\nabla z) $ and $ \| a(\cdot,z,\nabla z)\|_{\infty} $ depend on $ \|z\|_{B^{2-2/p}_{q,p}(\R^{d})}. $ Thus, given $ n\in\N $, there exists $ C(n),\mu(n)>0,\theta(n)\in (0,\pi/2) $ such that the operators $ \mu(n)+A(z) $ all have a bounded $ H^{\infty}(\Sigma_{\theta(n)}) $-calculus with
\[ \|f(\mu(n)+A(z))\|_{B(L^{q}(\R^{d}))}\leq C(n)\|f\|_{H^{\infty}(\Sigma_{\theta(n)})} \]
for all $ \|z\|_{B^{2-2/p}_{q,p}(\R^{d})}\leq n $. This shows $ [\operatorname{Q4}^*]. $

For given $ y,z\in B^{2-2/p}_{q,p}(\R^{d}) $ with norm at most $ n, $ we estimate
\begin{align*}
\|A(y)v-A(z)v\|_{L^{q}(\R^{d})}&\leq\sum_{i,j=1}^{d}\|(a_{ij}(\cdot,y,\nabla y)-a_{ij}(\cdot,z,\nabla z))\partial_i\partial_j v\|_{L^{q}(\R^{d})}\\
&\leq \sum_{i,j=1}^{d}\|(a_{ij}(\cdot,y,\nabla y)-a_{ij}(\cdot,z,\nabla z))\|_{\infty}\|v\|_{W^{2,q}(\R^{d})}\\
&\leq \sum_{i,j=1}^{d} \big(L(n)\|y-z\|_{\infty}+\widetilde{L}(n)\|\nabla y-\nabla z\|_{\infty}\big)\|v\|_{W^{2,q}(\R^{d})}\\
&\lesssim (L(n)+\widetilde{L}(n))\|y-z\|_{B^{2-2/p}_{q,p}(\R^{d})}\|v\|_{W^{2,q}(\R^{d})}
\end{align*}
almost surely for all $ v\in W^{2,q}(\R^{d}) $, which is the in $ [\operatorname{Q5}^*] $ demanded Lipschitz estimate.

All in all, Theorem \ref{lp_quasi_stochastic_parabolic_full_result_local_lipschitz} yields a maximal unique local strong solution $ (u,(\tau_n)_n,\tau) $ with 
\[ u\in L^{p}(0,\tau_n;W^{2,q}(\R^{d}))\cap C(0,\tau_n;B^{2-2/p}_{q,p}(\R^{d})) \]
pathwise almost surely for every $ n\in\N $ and $ \tau $ satisfies 
	\[ \mathbb{P}\big\{\tau<T,\ \|u\|_{ L^{p}(0,\tau;W^{2,q}(\R^{d}))}<\infty,\ u:[0,\tau)\to B^{2-2/p}_{q,p}(\R^{d})\text{ is uniformly continuous} \big\}=0. \] 
	
\subsection{The incompressible Navier-Stokes system for generalized-Newtonian fluids}
	We now deal with a stochastic model in fluid dynamics. This example is inspired by Bothe and Pr\"uss, who treated the same model in a deterministic setting (see \cite{bothe_pruss_navier_stokes}).
	
	 Throughout this section the divergence of a $ d\times d $-matrix $ T $ is a vector field defined by $ (\Div T )_i=\sum_{k=1}^{d}\partial_kT_{ik} $ and $ \nabla f $ is the Jacobian of the vector field $ f. $ We start with a universal model for fluids, namely
	\begin{equation*}
	(\operatorname{FM})\begin{cases}
	du(t)=[-(u(t)\cdot\nabla)u(t)+\Div S(t)+f(t)]\operatorname{dt}+[g(u(t),\nabla u(t))-\nabla \widetilde{p}]dW(t),\\
	S(t)=\widetilde{\mu}(t)-p(t)I,\\
	\Div u(t)=0,\\
	u(0)=u_0.
	\end{cases}
	\end{equation*}
	Here, $ u:[0,T]\times\R^{d}\to\C^{d} $ is the macroscopic velocity. Since the density of a perfect fluid is assumed to be constant and can therefore be chosen identically one, the continuity equation implies $ \Div(u)=0. $ Moreover, as in every perfect fluid, the total stress tensor $ S:[0,T]\times\R^{d}\to\C^{d\times d} $ is a sum of the viscous stress $ \widetilde{\mu}:[0,T]\times\R^{d}\to\C^{d\times d} $ and the hydrostatic pressure $ pI $, where $ p $ is scalar-valued.
	
	In the following, we discuss generalized Newtonian fluids, that are characterised by the assumption $ \widetilde{\mu}=2\mu(|\mathcal{E}|_{2}^2) \mathcal{E}$, where $ \mathcal{E}=\tfrac{1}{2}(\nabla u+\nabla u^T) $ is the symmetrized derivative of the velocity, the so called rate-of-strain tensor and $ |\cdot|_2 $ is the Hilbert-Schmidt norm on $ \C^{d\times d}. $ There are many examples for this model, e.g. the Ostwald-de-Waele power-law $ \mu(s)=\mu_0s^{m/2-1} $ for $ m\geq 1 $ and $ \mu_0>0 $, the Carreau model $ \mu(s)=\mu_0(1+s)^{m/2-1} $ or the truncated Spriggs law $ \mu(s)=\mu_0s^{m/2-1}\ind_{[s_0,\infty)}(s) $ for some $ s_0>0. $ For details about generalized Newtonian fluids, we refer to chapter $ 5 $ in the monograph of Armstrong, Bird and Hassager (\cite{armstrong_bird_hassager_liquid}). Last but not least, we would like to mention, that the stochastic perturbation of the classical equation covers a model for turbulent flows introduced by Kraichnan, namely noise of the form $ g(u,\nabla u)=(\sigma\cdot\nabla)u+b(u). $ In the mathematical literature, such a noise perturbation was discussed several times in case of Newtonian fluids with $ \widetilde{\mu}=\mu_0\mathcal{E} $ (see e.g. \cite{brezniak_motyl_existence_martingale_solution_navier_stokes}, \cite{mikulevicius_Rozovskii_global_solution_stochastic_navier_stokes} and \cite{nerven_veraar_weis_maximal_lp_stochastic_evolution}).
	
	As a first step, we derive a quasilinear evolution equation from $ (\operatorname{FM}). $ Using the pro\-duct rule and $ \Div(u)=0 $, we calculate 
	\begin{align*}
	(&\Div S)_i=\Div\big(\mu(|\mathcal{E}|_2^2)2\mathcal{E}-pI \big)_{i}=
	\big(\mu(|\mathcal{E}|_2^2)\Div(2\mathcal{E})+\mu'(|\mathcal{E}|_2^2)\nabla (|\mathcal{E}|_2^2)\cdot2\mathcal{E}\big)_i-\partial_i p\\
	&=\mu(|\mathcal{E}|_2^2)\sum_{k=1}^{d}\big(\partial_k\partial_i u_k+\partial_k^2u_i\big)+\mu'(|\mathcal{E}|_2^2)\sum_{j,k,l=1}^{d}(\partial_lu_i+\partial_iu_l)(\partial_ku_j+\partial_ju_k)\partial_k\partial_lu_j-\partial_ip\\
	&=\mu(|\mathcal{E}|_2^2)\sum_{k=1}^{d}\partial_k^2u_i+\mu'(|\mathcal{E}|_2^2)\sum_{j,k,l=1}^{d}(\partial_lu_i+\partial_iu_l)(\partial_ku_j+\partial_ju_k)\partial_k\partial_lu_j-\partial_ip.
	\end{align*} 
	All in all, we get the quasilinear system
	\begin{equation*}\label{lp_quasilinear_example_fluid}
	\begin{cases}
	du(t)=[-A(u(t))u(t)-\nabla p(t)-(u(t)\cdot\nabla)u(t)+f(t)]\operatorname{dt}+[g(u(t),\nabla u(t))-\nabla \widetilde{p}]dW(t),\\
	\Div u(t)=0,\\
	u(0)=u_0,
	\end{cases}
	\end{equation*}
	with $$ (A(z)u)_i= -\mu(|\tfrac{\nabla z+\nabla z^T}{2}|_2^2)\sum_{k=1}^{d}\partial_k^2u_i-\mu'(|\tfrac{\nabla z+\nabla z^T}{2}|_2^2)\sum_{j,k,l=1}^{d}(\partial_lz_i+\partial_iz_l)(\partial_kz_j+\partial_jz_k)\partial_k\partial_lu_j. $$
	We consider this equation on $ L^{p}(\R^{d})^d, $ $ 2<p<\infty $, and as usual in the context of fluid dynamics, we use the Helmholtz decomposition $$ L^{p}(\R^{d})^d=L^{p}_\sigma(\R^{d})\oplus \nabla H^{1}(\R^{d}), $$ where $ L^{p}_\sigma(\R^{d})=\{f\in L^{p}(\R^{d})^d:\Div(f)=0 \}.  $ Note that this decomposition exists for all $ p\in(1,\infty) $ and induces the bounded Helmholtz projection $ P:L^{p}(\R^{d})^d\to L^{p}_\sigma(\R^{d}).$ 
	Applying $ P $ yields the evolution equation
	\begin{equation*}\label{lp_quasilinear_example_helmholtz_non_newtonian}
	(\operatorname{QNS})\begin{cases}
	du(t)=[-PA(u(t))u(t)-P(u(t)\cdot\nabla)u(t)+Pf(t)]\operatorname{dt}+Pg(u(t),\nabla u(t))dW(t),\\
	u(0)=u_0
	\end{cases}
	\end{equation*}
	in $ L^{q}_{\sigma}(\R^{d}) $ for the velocity $ u. $ 
	
	In the following, we use the abbreviations $ B^{s}_{q,p,\sigma}(\R^{d}):=\{f\in B^{s}_{q,p}(\R^{d})^d:\Div(f)=0 \} $ and $ W^{s,q}_{\sigma}(\R^{d})^d:=\{f\in W^{s,k}(\R^{d}):\Div(f)=0 \} $. We thread $ (\operatorname{QNS}) $ under the following assumptions.
	
	\begin{itemize}
		\item  [\lbrack\text{QN1}\rbrack] Let $ \mu:\R_{\geq 0}\to \R_{>0} $ be continuously differentiable, such that $ \mu' $ is still locally Lipschitz continuous, i.e. for every $ n\in\N $ there exists $ C(n)>1 $ such that
		\[ |\mu'(x)-\mu'(y)|+|\mu(x)-\mu(y)|\leq C(n)|x-y| \]
		for all $ 0\leq x,y\leq n. $ Moreover, we assume $ \mu(s)+2s\mu'(s)> 0 $ for all $ s\geq 0. $
		\item  [\lbrack\text{QN2}\rbrack] We choose $ p,q\in (2,\infty) $, such that $ 1-2/p>d/q $.
		\item [\lbrack\text{QN3}\rbrack] The initial value $ u_0:\Omega\to B^{2-2/p}_{q,p,\sigma}(\R^{d}) $ is a strongly $\mathcal{F}_0 $- measurable random variable.
		\item [\lbrack\text{QN3}\rbrack]
		The driving noise $ W $ is an $ l^{2} $- cylindrical Brownian motion of the form
		\[ W(t)=\sum_{k=1}^{\infty}e_k\beta_k(t), \]
		where $ (e_k)_k $ is the standard orthonormal basis of $ l^2 $ and $ (\beta_k)_k $ is a sequence of independent real-valued Brownian motions relative to the filtration $ (\mathcal{F}_t)_{t\in [0,T]}. $
		\item [\lbrack\text{QN4}\rbrack] $ g=(g_n^{(1)})_n+(g_n^{(2)})_n:\Omega\times [0,T]\times \R^{d}\times \C^d\times \C^{d\times d}\to l^2(\N)^d $ is strongly measurable and $ \omega\mapsto g(\omega,t,x,y,z) $ is for all $ t\in [0,T] $, $x\in \R^{d}$, $ y\in\C^d $ and $ z\in\C^{d\times d} $ strongly $ \mathcal{F}_t $-measurable. Furthermore, $ (g_n^{(1)})_n $ is of linear growth, i.e.
		\[ \|(g_n^{(1)})_n(\omega,t,\cdot,z,\nabla z)\|_{\gamma(l^2;W^{1,q}(\R^{d})^d)}\leq C(1+\|z\|_{W^{2,q}_\sigma(\R^{d})}) \]
		and Lipschitz continuous, i.e. there are constants $ L_B,\widetilde{L}_{B}>0 $ such that
		\begin{align*}
		\|(g_n^{(1)})_n(\omega,t,\cdot,y,\nabla y)-(g_n^{(1)})_n&(\omega,t,\cdot,z,\nabla z)\|_{\gamma(l^2;W^{1,q}(\R^{d})^d)}\\&\leq L_B\|y-z\|_{W^{2,q}_\sigma(\R^{d})}+\widetilde{L}_{B}\|y-z\|_{L^{q}_\sigma(\R^{d})}
		\end{align*}
		for all $ y,z\in W^{2,q}_{\sigma}(\R^{d}) $, $ t\in [0,T] $ and almost all $ \omega\in\Omega. $ Here, $ L_B $ must be small enough in the sense of assumption $ [\operatorname{Q8}] $ in the previous section. Furthermore, $ (g_n^{(2)})_n $ is locally of linear growth, i.e.
		\[ \|(g_n^{(2)})_n(\omega,t,\cdot,y,\nabla y)\|_{\gamma(l^2;W^{1,q}(\R^{d})^d)}\leq C(k)(1+\|y\|_{B^{2-2/p}_{q,p,\sigma}(\R^{d})}) \]
		and locally Lipschitz continuous, i.e. 
		\begin{align*}
		\|(g_n^{(2)})_n(\omega,t,\cdot,y,\nabla y)-(g_n^{(2)})_n(\omega,t,\cdot,z,\nabla z)\|_{\gamma(l^2;W^{1,q}(\R^{d})^d)}\leq L(k)\|y-z\|_{B^{2-2/p}_{q,p,\sigma}(\R^{d})}
		\end{align*}
		for all $ y,z\in B^{2-2/p}_{q,p,\sigma}(\R^{d}) $ with norm at most $ k\in\N $, all $ t\in [0,T] $ and almost all $ \omega\in\Omega. $ 
		\item [\lbrack\text{QN5}\rbrack]  $ f\in L^{p}(\Omega\times [0,T];L^{q}(\R^{d})^d) $ is strongly measurable and $ \F $-adapted.
	\end{itemize}
	We want to apply Theorem \ref{lp_quasi_stochastic_parabolic_full_result_local_lipschitz} in $ E=L^{q}_\sigma(\R^{d}) $, $ E^1=W^{2,q}_\sigma(\R^{d}) $. The trace space is then given by $ (E,E^1)_{1-1/p,p}=B^{2-2/p}_{q,p,\sigma}(\R^{d}) $. Due to our assumptions, $ [\operatorname{Q1}]-[\operatorname{Q3}]  $, $ [\operatorname{Q6*}], [\operatorname{Q7*}]  $ and $ [\operatorname{Q8}],[\operatorname{Q9}]  $ are directly fulfilled. We now check $ [\operatorname{Q4*}] $, i.e. we have to prove that $ PA(z) $ has for every $ z\in B^{2-2/p}_{q,p,\sigma}(\R^{d}) $ a bounded $ H^{\infty} $-calculus. In the following Proposition, we restate a result of Bothe and Pr\"uss (see \cite{bothe_pruss_navier_stokes}, proof of Theorem $ 4.1 $). Unlike Bothe and Pr\"uss, we need the precise dependence of all involved constants from $ z. $ Therefore, we need an additional argument.
	
	\begin{Lemma}\label{quasi_example_fluid_r_bounded}
We assume $ [\operatorname{QN1}] $ and $ [\operatorname{QN2}] $. Then, for every $ z\in B^{2-2/p}_{q,p,\sigma}(\R^{d}) $, there exists $ \gamma>0,\theta\in [0,\pi/2) $, such that the operator $ \gamma+PA(z) $ is R-sectorial in $ L^{q}_\sigma(\R^{d}) $ on the sector $ \Sigma_\theta $. Moreover, $ \gamma,\theta $
and the bound $ C_\nu>0 $ in 
\[ \mathcal{R}\big(\big\{\lambda R(\lambda,\gamma+PA(z))\big\}\subset B(L^{q}_\sigma(\R^{d}))\big) \leq C_\nu \]
for given $ \nu>\theta $ only depends on $ \|z\|_{B^{2-2/p}_{q,p,\sigma}(\R^{d})} $.
	\end{Lemma}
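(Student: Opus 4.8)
The plan is to reduce $PA(z)$ to a generalized Stokes operator whose coefficients are H\"older continuous with data controlled by $\|z\|_{B^{2-2/p}_{q,p,\sigma}(\R^{d})}$, and then to run the standard freezing-of-coefficients argument for R-sectoriality while keeping track of every constant. First I would write $A(z)$ in non-divergence form $A(z)u=-\sum_{k,l=1}^{d}a^{kl}(\nabla z(\cdot))\,\partial_k\partial_l u$, where $a^{kl}(\xi)\in\C^{d\times d}$ depends on $\nabla z$ only through $\mu$, $\mu'$ and their (polynomial) arguments built from $\xi$. By $ [\operatorname{QN1}] $ the functions $\mu,\mu'$ are locally Lipschitz, and by $ [\operatorname{QN2}] $ and the Sobolev embedding $B^{2-2/p}_{q,p}(\R^{d})\hookrightarrow C^{1,\alpha}(\R^{d})$ for some $\alpha>0$ (as in the previous example), both $\|\nabla z\|_{\infty}$ and the H\"older seminorm $[\nabla z]_{C^{0,\alpha}}$ are dominated by $\|z\|_{B^{2-2/p}_{q,p,\sigma}(\R^{d})}$. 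Hence the coefficient functions $x\mapsto a^{kl}(\nabla z(x))$ are bounded and uniformly H\"older continuous with sup-norm and H\"older constant depending only on $\|z\|_{B^{2-2/p}_{q,p,\sigma}(\R^{d})}$, and since $z\in B^{2-2/p}_{q,p}(\R^{d})$ vanishes at infinity they converge to the constant matrices $a^{kl}(0)$ as $|x|\to\infty$. The Legendre--Hadamard ellipticity of the principal symbol, with a lower bound on the ellipticity constant, follows from $\mu(s)+2s\mu'(s)>0$ together with $\mu(s)>0$ and $\mu(|\mathcal{E}(z)|_{2}^{2})\geq\min_{0\leq s\leq\|\nabla z\|_{\infty}^{2}}\mu(s)>0$, again with a bound depending only on $\|z\|_{B^{2-2/p}_{q,p,\sigma}(\R^{d})}$.

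Next I would treat the resolvent problem $\lambda u+PA(z)u=Pf$ as the generalized Stokes system $\lambda u+A(z)u+\nabla\pi=f$, $\operatorname{div}u=0$ on $\R^{d}$, and establish R-sectoriality of the solution operator $f\mapsto u$. For the constant-coefficient model problems, R-sectoriality of the associated Stokes-type operator on $L^{q}_{\sigma}(\R^{d})$ holds uniformly over the frozen symbol values, by Mikhlin-type Fourier multiplier estimates (the argument in \cite{bothe_pruss_navier_stokes} together with the multiplier theorems in \cite{denk_hieber_pruss_r_boundedness_fourier_multiplier}, \cite{Kunstmann_Weis_Lecture_Notes}), where the R-bound depends only on the ellipticity constant and the sup-norm of the frozen coefficients. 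One then patches these together with a partition of unity whose mesh size $\delta$ is chosen in terms of the modulus of continuity of $x\mapsto a^{kl}(\nabla z(x))$ so that on each patch the difference to the frozen operator, as well as the commutators with the partition of unity and the lower-order terms (including the pressure correction and the convective coefficient $\mu'$-term), are small in R-bound; these are absorbed by taking the spectral shift $\gamma$ large enough, after which $0\in\rho(\gamma+PA(z))$ is automatic. The essential bookkeeping is that the number of patches, the norm of the partition of unity, $\delta$, and hence $\gamma$, $\theta$ and the final R-bound $C_{\nu}$ all depend only on the ellipticity constant, the $L^{\infty}$-bound and the modulus of continuity of the coefficients, which by the first step are functions of $\|z\|_{B^{2-2/p}_{q,p,\sigma}(\R^{d})}$ alone.

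The main obstacle I expect is precisely this uniform constant-tracking: unlike \cite{bothe_pruss_navier_stokes}, I cannot cite the variable-coefficient result as a black box but must revisit the localization scheme to confirm that no constant depends on $z$ beyond its $C^{1,\alpha}$-data, and in particular that the pressure eliminated by the Helmholtz projection obeys estimates with the same constant dependence (this part is Fourier-analytic on $\R^{d}$ and therefore standard, but still needs to be made quantitative). A secondary point is verifying that $\nabla z$ decays at infinity strongly enough that the coefficients are genuinely uniformly continuous on all of $\R^{d}$ (not merely on bounded sets), which is what makes a finite partition of unity suffice; this follows from $z$ lying in the closure of $C_{c}^{\infty}$ in $B^{2-2/p}_{q,p}(\R^{d})$ and the embedding into $C^{1,\alpha}$.
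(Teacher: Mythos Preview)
Your overall strategy---reduce to a second-order system with H\"older coefficients controlled by the $C^{1,\alpha}$-data of $z$, then rerun the Bothe--Pr\"uss localization while tracking constants---is exactly the paper's approach. The ellipticity discussion and the Sobolev embedding $B^{2-2/p}_{q,p}\hookrightarrow C^{1,\alpha}$ are also used identically.

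There is, however, a real gap in your bookkeeping for the number of patches. You assert that the number of balls in the partition of unity is determined by ``the ellipticity constant, the $L^{\infty}$-bound and the modulus of continuity of the coefficients.'' This is not true on $\R^{d}$: the modulus of continuity tells you the \emph{radius} of each ball, and the decay $\nabla z(x)\to 0$ tells you \emph{qualitatively} that a finite cover exists, but neither gives a quantitative bound on \emph{how many} balls are needed that depends only on $\|z\|_{B^{2-2/p}_{q,p,\sigma}}$. Two functions with identical $C^{1,\alpha}$-seminorm and sup-norm can differ wildly in how far out one must go before the coefficients are $\eta$-close to their limiting value; your remark that decay ``follows from $z$ lying in the closure of $C_{c}^{\infty}$'' is purely qualitative and does not resolve this.

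The paper supplies precisely the missing quantitative ingredient: it uses that $\nabla z\in L^{q}(\R^{d})^{d\times d}$ (also controlled by the Besov norm). One covers the set $\{|\nabla z|\geq \eta/(2C)\}$ by disjoint balls of the radius $\delta$ dictated by the H\"older data (via Vitali), observes that these balls lie in a slightly larger superlevel set, and then bounds the measure of that superlevel set---hence the number $N$ of balls---by Chebyshev's inequality in terms of $\|\nabla z\|_{L^{q}}$. This yields an explicit bound $N\lesssim \|\nabla z\|_{L^{q}}^{q}\,\|\nabla z\|_{C^{0,\alpha}}^{d/\alpha}\,\eta^{-q-d/\alpha}$, which is the step your sketch is missing. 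Once you add this $L^{q}$/Chebyshev argument to control $N$, the rest of your outline goes through.
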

	\begin{proof}
		Bothe and Pr\"uss derive from $ [\operatorname{QN1}] $ the strong ellipticity of $A(z)$ (see \cite{bothe_pruss_navier_stokes}, page $ 385 $). Thus, it is sufficient to show that given $ u\in C^{\alpha}(\R^{d})^{d^2}\cap L^{q}(\R^{d})^{d^2} $ and a strongly elliptic operator $ B(u)=-\sum_{|\beta|=2}b_\beta(u) D^{\beta} $ with locally Lipschitz continuous coefficients $ b_\beta:\C^{d^2}\to \C^{d\times d}, $ the above statement holds true with $ PA(\cdot) $ replaced by $ PB(\cdot) $ and $ \theta, C_\nu $ and $ \mu $ only depend on the H\"older norm $ \|u\|_{\alpha} $ and on $ \|u\|_{L^{q}(\R^{d})^{d^2}}. $ Indeed, the above statement then follows directly by the Sobolev embeddings
\[  B^{1-2/p}_{q,p}(\R^{d})^{d\times d}\hookrightarrow C^{\alpha}(\R^{d})^{d^2}\cap L^{q}(\R^{d})^{d^2}\]
for some $ \alpha\in (0,1). $

Note, that in the original, the authors prove that $ \gamma-PB(u) $ has the maximal regularity property in $ L^{q}_\sigma(\R^{d}). $ But this is well-known to be equivalent to our statement. We can follow their argument step by step, we just have to argue, that the spectral shift and the maximal regularity constant only depend on $ \|u\|_{\alpha} $ and on $ \|u\|_{L^{q}(\R^{d})^{d^2}}. $
In Corollary $ 6.2, $ the authors prove, that one still has maximal regularity, if one perturbs a constant coefficient elliptic operator with functions, whose supremum is smaller than some $ \eta>0. $ This $ \eta $ only depends on the ellipticity and the supremum of the coefficients. For the general case, their idea is to use the uniform continuity of the coefficients and convergence at infinity to choose finitely many balls $ B_i $ with center $ x_i $, such that $ |b_\beta(u(x))-b_\beta(u(x_i))|< \eta $ for all $ x\in B_i $ and $ |b_\beta(u(x))-b_\beta(0)|< \eta $ for $ x\notin \cup_{i}B_i $. Then, they localize the equation with a partition of unity subordinate to these balls, solve locally and put the local solutions together. It turns out, that both $ \gamma $ and the maximal regularity constant only depend on the ellipticity and the supremum of the coefficients and the number of balls needed in this argument. So, we have to estimate the number of balls by a quantity that can be controlled by $ \|u\|_{\alpha} $ and $ \|u\|_{L^{q}(\R^{d})^{d^2}}. $

Fix $ u\in C^{\alpha}(\R^{d})^{d^2}\cap L^{q}(\R^{d})^{d^2} $ and let $ C(\|u\|_{\infty})>1 $ such that we have $$ |b_{\beta}(x)-b_{\beta}(y)|\leq C(\|u\|_{\infty})|x-y| $$ for all $ |x|,|y|\leq \|u\|_{\infty}. $
We divide $ \R^{d} $ in the two disjoint subsets $ \big\{|u|\geq\tfrac{\eta}{2C(\|z\|_{\infty})}\big \} $ and $ \{|u|<\tfrac{\eta}{2C(\|z\|_{\infty})} \} $ and we define $ \delta:=\big(\frac{\eta}{6\|u\|_{\alpha}C(\|u\|_{\infty})} \big)^{1/\alpha}. $ Then, by compactness and Vitali's covering Lemma (see e.g. \cite{Grafakos_classical}, Lemma $ 2.1.5 $), there are disjoint balls $ (B^{(i)}_{\delta})_{i=1,\dots,N} $ with radius $ \delta $ and center $ x_i\in \big\{|u|\geq\tfrac{\eta}{2C(\|z\|_{\infty})}\big \},  $ such that $$ \big\{|u|\geq\tfrac{\eta}{2C(\|z\|_{\infty})}\big \}\subset \bigcup_{i=1}^{N}B^{(i)}_{3\delta}. $$
 $ (B^{(i)}_{3\delta})_{i=1,\dots,N} $ are the balls we are looking for. Indeed, for $ x\notin \cup_{i=1}^{N}B^{(i)}_{3\delta}, $ we have $ |u(x)|\leq\eta/2 $ and for $ x,y\in B^{(i)}_{3\delta},  $ we have
 \[ |b_{\beta}(u(x))-b_{\beta}(u(y))|\leq C(\|u\|_{\infty})\|u\|_{\alpha}(3\delta)^\alpha\leq \tfrac{3^{\alpha}\eta}{6}\leq\tfrac{\eta}{2}. \]
It remains to estimate the size of $ N. $ We have 
$$ \cup_{i=1}^{N}B^{(i)}_{\delta}\subset \Big\{|u|> \tfrac{\eta}{4C(\|u\|_{\infty})} \Big\}. $$
Indeed, given $ y\in B^{(i)}_{\delta} $ for some $ i=1,\dots,N, $ we obtain
\begin{align*}
|u(y)|\geq |u(x_i)|-|u(x_i)-u(y)|\geq \tfrac{\eta}{2C(\|u\|_{\infty})}-\|u\|_{\alpha}\delta^{\alpha}=\tfrac{\eta}{2C(\|u\|_{\infty})}-\tfrac{\eta}{6C(\|u\|_{\infty})}=\tfrac{\eta}{3C(\|u\|_{\infty})}.
\end{align*} 
Consequently, using that the $ B^{(i)}_\delta $ are disjoint, we get
\begin{align*}
C_dN\delta^{d}=\Big|\bigcup_{i=1}^{N}B^{(i)}_\delta\Big|\leq \Big|\Big\{|u|> \tfrac{\eta}{4C(\|u\|_{\infty})} \Big\}\Big|\leq \frac{4^qC(\|u\|_{\infty})^q\|u\|^q_{L^{q}(\R^{d})^{d^2}}}{\eta^q}
\end{align*}
with Chebyshev's inequality, where $ C_d $ is the volume of the unit sphere in $ \R^{d}. $ This yields finally 
\[ N\leq \frac{4^q6^{d/\alpha}\|u\|^q_{L^{q}(\R^{d})^{d^2}}\|u\|_{\alpha}^{d/\alpha}C(\|u\|_{\infty})^{d/\alpha+q}}{C_d\eta^{q+d/\alpha}}. \]
	\end{proof}
Next, we conclude, that the operators $ \gamma+PA(z) $ from the above Lemma also have a bounded $ H^{\infty} $-calculus. Our proof of $ [\operatorname{Q4*}] $ adapts the arguments of  \cite{kalton_kunstmann_weis_pertubation_and_interpolation_for_functional_calculus}, Proposition $ 9.5 $ to our situation. A key ingredient is Sneiberg's Lemma.
	\begin{Lemma}\label{sneibergs_lemma}
		Let $ (X_{\theta})_{\theta\in (0,1)} $ and $ (Y_{\theta})_{\theta\in (0,1)} $ be complex interpolation scales of Banach spaces and let $ S:X_{\theta}\to Y_{\theta} $ for each $ \theta\in (0,1) $ be a bounded linear operator. If $ S $ is for some $ \theta_0\in (0,1) $ an isomorphism between $ X_{\theta_0} $ and $ Y_{\theta_0}, $ then there is a $ \delta\in (0,1) $ such that $ S $ is also an isomorphism between $ X_{\mu} $ and $ Y_\mu $ for $ \mu\in (\theta_0-\delta,\theta_0+\delta). $ In particular, $ \|S^{-1}\|_{B(Y_\mu,X_\mu)} $ depends on $ \|S\|_{B(X_\mu,Y_\mu)} $, $ \|S\|_{B(X_{\theta_0},Y_{\theta_0})} $, $ \|S^{-1}\|_{B(Y_{\theta_0},X_{\theta_0})} $ and $ |\mu-\theta_0|. $
	\end{Lemma}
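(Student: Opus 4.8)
The plan is to prove, by a perturbation argument anchored at $\theta_0$, that $S\colon X_\mu\to Y_\mu$ is both surjective and injective for $\mu$ sufficiently close to $\theta_0$, and to read off the quantitative bound from the surjectivity step. By reiteration I may pass to the subscales on a compact subinterval of $(0,1)$ containing $\mu$ and $\theta_0$ and (after rescaling that interval to $[0,1]$) assume $S$ acts boundedly between the endpoint couples $(X_0,X_1)$ and $(Y_0,Y_1)$. First I would record a uniform operator bound: convexity of $\theta\mapsto\log\|S\|_{B(X_\theta,Y_\theta)}$ on the segment between $\mu$ and $\theta_0$ gives $\|S\|_{B(X_\theta,Y_\theta)}\le N:=\max\bigl(\|S\|_{B(X_\mu,Y_\mu)},\|S\|_{B(X_{\theta_0},Y_{\theta_0})}\bigr)$ there, so all constants below will depend only on $N$, on $M:=\|S^{-1}\|_{B(Y_{\theta_0},X_{\theta_0})}$, on $\theta_0$, and on $|\mu-\theta_0|$. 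I would then use Calder\'on's description of the complex method: each $x\in X_\mu$ has, for every $\eta>0$, a bounded continuous $X_0+X_1$-valued function $F$ on the strip $\mathbb S=\{0<\re z<1\}$, holomorphic inside, with $F(\mu)=x$ and $\|F\|_{\mathcal F}:=\max_{j=0,1}\sup_{t\in\R}\|F(j+\ii t)\|_{X_j}\le(1+\eta)\|x\|_{X_\mu}$, and $\|F(\theta)\|_{X_\theta}\le\|F\|_{\mathcal F}$ for all $\theta$ (and similarly for $(Y_\theta)$). The single estimate that does all the work is: if $G$ is a strip function for the $X$-scale with $G(p)=0$ for some $p\in(0,1)$, then
\[
\|G(q)\|_{X_q}\le\frac{|q-p|}{\min(p,1-p)}\,\|G\|_{\mathcal F}\qquad(q\in(0,1)),
\]
and likewise for the $Y$-scale; this follows by applying the previous bound to $z\mapsto G(z)/(z-p)$, which is again a strip function, of $\mathcal F$-norm at most $\|G\|_{\mathcal F}/\min(p,1-p)$.

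For surjectivity I would, given $y\in Y_\mu$, take a near-optimal representative $g$ with $g(\mu)=y$, set $x_0:=S^{-1}g(\theta_0)\in X_{\theta_0}$, pick a near-optimal $X$-representative $h$ of $x_0$, and let $x:=h(\mu)\in X_\mu$, so that $\|x\|_{X_\mu}\le(1+\eta)^2 M\,\|y\|_{Y_\mu}$. Since $S$ acts boundedly between the couples, $Sh$ is a $Y$-strip function, and $Sh(\theta_0)=Sx_0=g(\theta_0)$, so $H:=Sh-g$ vanishes at $\theta_0$; the displayed estimate with $p=\theta_0$, $q=\mu$ gives $\|Sx-y\|_{Y_\mu}=\|H(\mu)\|_{Y_\mu}\le c_0\,|\mu-\theta_0|\,\|y\|_{Y_\mu}$ with $c_0=c_0(N,M,\theta_0)$ (letting $\eta\to0$). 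Hence, once $|\mu-\theta_0|$ is small enough that $\rho:=c_0|\mu-\theta_0|<1$, iterating this approximate solution of $Sx=y$ and summing the resulting absolutely convergent series in $X_\mu$ yields an exact solution with $\|x\|_{X_\mu}\le\frac{M}{1-\rho}\,\|y\|_{Y_\mu}$ (up to $1+o(1)$), so $S\colon X_\mu\to Y_\mu$ is onto.

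For injectivity I would compare norms across the parameter in the mirror direction. Given $x\in X_\mu$ with near-optimal representative $F$: subtracting a near-optimal $X$-representative $\psi$ of $F(\theta_0)\in X_{\theta_0}$ (so $F-\psi$ vanishes at $\theta_0$) and applying the displayed estimate yields $\|x\|_{X_\mu}\le c_1\,\|F(\theta_0)\|_{X_{\theta_0}}$ for $|\mu-\theta_0|$ small; on the other hand, taking a near-optimal $Y$-representative $g_x$ of $Sx$ and applying the estimate to $SF-g_x$ (which vanishes at $\mu$) bounds $\|SF(\theta_0)\|_{Y_{\theta_0}}$ by $\|Sx\|_{Y_\mu}$ plus a small multiple of $\|x\|_{X_\mu}$, hence $\|F(\theta_0)\|_{X_{\theta_0}}=\|S^{-1}SF(\theta_0)\|_{X_{\theta_0}}$ is controlled in the same way. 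Combining and absorbing the small $\|x\|_{X_\mu}$-term on the left gives $\|x\|_{X_\mu}\le c_2\,\|Sx\|_{Y_\mu}$; in particular $S$ is injective. Therefore $S$ is an isomorphism $X_\mu\to Y_\mu$ for $|\mu-\theta_0|<\delta=\delta(N,M,\theta_0)$, and by the surjectivity step $\|S^{-1}\|_{B(Y_\mu,X_\mu)}\le\frac{(1+o(1))M}{1-c_0|\mu-\theta_0|}$, which depends only on $\|S^{-1}\|_{B(Y_{\theta_0},X_{\theta_0})}$, $N$ (hence on $\|S\|_{B(X_\mu,Y_\mu)}$ and $\|S\|_{B(X_{\theta_0},Y_{\theta_0})}$), $\theta_0$, and $|\mu-\theta_0|$, as claimed.

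The individual steps are routine once the displayed estimate is in hand; the point requiring care---and the reason one cannot merely subtract a constant---is that elements such as $F(\theta_0)$ and $Sh(\theta_0)$ a priori lie only in the interpolation space at $\theta_0$, not in $X_0\cap X_1$, so the strip functions fed into the estimate must be produced as honest differences vanishing at the base point. The remaining labour is the bookkeeping that $\delta$ and every constant depend only on $N$, $M$, $\theta_0$, $|\mu-\theta_0|$. Since this is precisely Sneiberg's theorem, an acceptable alternative is simply to cite his paper.
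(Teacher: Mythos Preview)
Your proof sketch is correct and in fact goes well beyond what the paper does: the paper does not prove this lemma at all, but simply cites Vignati (\cite{vignati_spectral_theory_and_complex_interpolation}, Theorems~2.3 and~3.6) and Sneiberg's original paper. Your argument via Calder\'on strip functions and the key estimate for functions vanishing at a point is precisely the standard route to Sneiberg's theorem, and the bookkeeping of constants is handled carefully enough that the quantitative conclusion of the lemma follows. In short, the paper treats the lemma as a black box from the literature, whereas you have supplied an honest outline of why it holds; your final remark that one may ``simply cite his paper'' is exactly what the author did.
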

	A proof can be found in \cite{vignati_spectral_theory_and_complex_interpolation}, Theorem $ 3.6. $ The precise dependence of $ \|S^{-1}\|_{B(Y_\mu,X_\mu)} $ on the other parameters is stated in Theorem $ 2.3 $ in the same article. The original proof is due to Sneiberg (see \cite{sneiberg_spectral_properties_of_linear_operators}) in Russian language. 
	\begin{Proposition}
     Given $ z\in B^{2-2/p}_{q,p,\sigma}(\R^{d}), $ the operator $$ u\mapsto B(
     z)u=\gamma u+PA(z)u+P(z\cdot\nabla)u: W^{2,q}(\R^{d})\cap L^{q}_\sigma(\R^{d})\to L^{q}_\sigma(\R^{d}) $$ 
		has a bounded $ H^{\infty}(\Sigma_\theta) $-calculus and the angle $ \theta\in (0,\pi/2), $ the spectral shift $ \gamma $ and the constant $ C>0 $ in
		\[ \|f(B(z))\|_{B(L^{q}_\sigma(\R^{d}))}\leq C\|f\|_{\infty} \]
		only depend $ \|z\|_{B^{2-2/p}_{q,p,\sigma}(\R^{d})}. $ In particular, $ B(z) $ satisfies $ [\operatorname{Q4*}] $ of the previous section.
	\end{Proposition}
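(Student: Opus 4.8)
The plan is to upgrade the $R$-sectoriality of $\gamma+PA(z)$ from Lemma~\ref{quasi_example_fluid_r_bounded} to a bounded $H^{\infty}$-calculus for the full operator $B(z)$ by transferring a bounded $H^{\infty}$-calculus from $L^{2}_{\sigma}(\R^{d})$ along the complex interpolation scale of the spaces $L^{r}_{\sigma}(\R^{d})$, adapting \cite{kalton_kunstmann_weis_pertubation_and_interpolation_for_functional_calculus}, Proposition~9.5, with Sneiberg's Lemma~\ref{sneibergs_lemma} as the main engine. First I would fix exponents $r_{0}\in(1,2)$ and $r_{1}\in(q,\infty)$ and observe that, since the Helmholtz projection is bounded on every $L^{r}(\R^{d})^{d}$, the spaces $L^{r}_{\sigma}(\R^{d})$, $r\in[r_{0},r_{1}]$, form a complex interpolation scale in which both $L^{2}_{\sigma}(\R^{d})$ and $L^{q}_{\sigma}(\R^{d})$ are interior points, because complex interpolation commutes with complemented retractions. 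The differential expression of $B(z)$ defines a consistent family of closed operators on these spaces. Running the proof of Lemma~\ref{quasi_example_fluid_r_bounded} with $q$ replaced by an arbitrary $r\in[r_{0},r_{1}]$ — the embedding $B^{2-2/p}_{q,p,\sigma}(\R^{d})\hookrightarrow C^{\alpha}(\R^{d})\cap L^{\infty}(\R^{d})$ still controls the relevant norms of $z$ — shows that $\gamma_{0}+PA(z)$ is $R$-sectorial on each $L^{r}_{\sigma}(\R^{d})$ with shift $\gamma_{0}$, angle $\theta_{0}<\pi/2$ and $R$-bound depending only on $\|z\|_{B^{2-2/p}_{q,p,\sigma}(\R^{d})}$. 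The convection term $u\mapsto P(z\cdot\nabla)u$ has the $L^{\infty}$-coefficient $z$ and, by $\|z\cdot\nabla u\|_{L^{r}}\le\varepsilon\|u\|_{W^{2,r}}+C_{\varepsilon}\|u\|_{L^{r}}$, is relatively bounded with respect to $PA(z)$ with relative bound zero, uniformly in $r$ and with $C_{\varepsilon}$ controlled by $\|z\|_{B^{2-2/p}_{q,p,\sigma}(\R^{d})}$; hence by a standard perturbation theorem for $R$-sectoriality (see e.g.\ \cite{Kunstmann_Weis_Lecture_Notes}) there is a larger shift $\gamma$, still controlled by $\|z\|_{B^{2-2/p}_{q,p,\sigma}(\R^{d})}$, such that $B(z)=\gamma+PA(z)+P(z\cdot\nabla)$ is $R$-sectorial on every $L^{r}_{\sigma}(\R^{d})$, $r\in[r_{0},r_{1}]$, with uniformly controlled constants.

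For the base point of the interpolation argument I would use $L^{2}_{\sigma}(\R^{d})$. On this Hilbert space $B(z)$ has a bounded $H^{\infty}$-calculus: this can be seen either by applying \cite{amann_hieber_simonett_bounded_calculus_for_elliptic_operators}, Theorem~9.1, at the exponent $2$ together with a perturbation/restriction argument for the Helmholtz projection, or directly from the Hilbert-space theory, since the strong ellipticity of $A(z)$ established in \cite{bothe_pruss_navier_stokes} and the bounds on $\|z\|_{\infty}$, $\|\nabla z\|_{\infty}$ make $B(z)$ (after possibly enlarging $\gamma$) $m$-sectorial of some angle $<\pi/2$, and every $m$-sectorial operator on a Hilbert space has a bounded $H^{\infty}$-calculus on the corresponding sector (see \cite{Haase_Functional_calculus}). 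In either case the angle and the bound depend only on $\|z\|_{B^{2-2/p}_{q,p,\sigma}(\R^{d})}$.

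Now I am in the situation of \cite{kalton_kunstmann_weis_pertubation_and_interpolation_for_functional_calculus}, Proposition~9.5: $B(z)$ is $R$-sectorial on the whole scale $(L^{r}_{\sigma}(\R^{d}))_{r\in[r_{0},r_{1}]}$ with uniform constants and has a bounded $H^{\infty}$-calculus at the exponent $2$. For an $R$-sectorial operator the boundedness of the $H^{\infty}$-calculus is equivalent to the invertibility of an auxiliary operator which depends analytically on the interpolation parameter; Sneiberg's Lemma~\ref{sneibergs_lemma} then propagates this invertibility, together with a bound for the inverse, from the exponent $2$ to an interval of exponents around it whose length is governed by the norms entering the previous steps. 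Iterating finitely many times — the number of steps again bounded in terms of $\|z\|_{B^{2-2/p}_{q,p,\sigma}(\R^{d})}$ — we reach the exponent $q$ and conclude that $B(z)$ has a bounded $H^{\infty}(\Sigma_{\theta})$-calculus on $L^{q}_{\sigma}(\R^{d})$ with angle $\theta\in(0,\pi/2)$, shift $\gamma$ and bound $C$ depending only on $\|z\|_{B^{2-2/p}_{q,p,\sigma}(\R^{d})}$, which is exactly $[\operatorname{Q4*}]$.

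I expect the main obstacle to be the \emph{quantitative} control in this last step: Sneiberg's Lemma only yields a neighbourhood of $2$ whose size depends on the norms of the auxiliary operator and of its inverse, so one has to trace these norms — and thereby the number of interpolation steps needed to travel from $2$ to $q$ — back to $\|z\|_{B^{2-2/p}_{q,p,\sigma}(\R^{d})}$. This is precisely why one cannot perturb the $H^{\infty}$-calculus directly — unlike $R$-sectoriality, a bounded $H^{\infty}$-calculus is not stable under relatively bounded perturbations — and why the detour through the interpolation scale is unavoidable. A secondary difficulty is establishing, with all constants controlled by $\|z\|_{B^{2-2/p}_{q,p,\sigma}(\R^{d})}$, the $m$-sectoriality on $L^{2}_{\sigma}(\R^{d})$ of the full operator, including both the convection term and the effect of the Helmholtz projection on the non-divergence form principal part.
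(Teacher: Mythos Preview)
Your proposal takes a genuinely different route from the paper, though both rest on the Kalton--Kunstmann--Weis machinery and Sneiberg's Lemma. You vary the \emph{integrability} parameter~$r$ along the scale $(L^{r}_{\sigma}(\R^{d}))_{r}$, establish a bounded $H^{\infty}$-calculus at the Hilbert-space anchor $r=2$, and then iterate Sneiberg to reach~$q$. The paper instead stays at the fixed integrability~$q$ and varies the \emph{smoothness} parameter~$\eta$ in the scale $(W^{\eta,q}_{\sigma}(\R^{d}))_{\eta}$: Sneiberg is applied once to push the $R$-sectoriality of $B(z)$ from $L^{q}_{\sigma}=W^{0,q}_{\sigma}$ down to $W^{-\beta,q}_{\sigma}$ for some small $\beta>0$, and then Corollary~7.8 of \cite{kalton_kunstmann_weis_pertubation_and_interpolation_theorems} is invoked. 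That corollary is a self-improvement result: $R$-sectoriality on $W^{-\beta,q}_{\sigma}$ with domain $W^{2-\beta,q}_{\sigma}$ already yields a bounded $H^{\infty}$-calculus on the Rademacher interpolation space $\langle W^{-\beta,q}_{\sigma},W^{2-\beta,q}_{\sigma}\rangle_{\beta/2}$, which is then identified with $L^{q}_{\sigma}(\R^{d})$ via the bounded $H^{\infty}$-calculus of $I-\Delta$.

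What the paper's approach buys is precisely the avoidance of the two obstacles you yourself flag. No Hilbert-space base point is needed, so the question of $m$-sectoriality of the non-divergence principal part after applying the Helmholtz projection --- which is indeed delicate, since $PA(z)$ is not given by a sesquilinear form and need not be accretive --- simply does not arise. And since only a single small Sneiberg step near $\eta=0$ is required (rather than a chain of steps from $r=2$ to $r=q$), the quantitative tracking of constants is confined to one application of Lemma~\ref{sneibergs_lemma}, with the norms of $S_{0}$ and $S_{0}^{-1}$ controlled directly by Lemma~\ref{quasi_example_fluid_r_bounded}. Your route could in principle be made to work, but both difficulties you anticipate are real, and the paper's local smoothness-scale argument sidesteps them entirely.
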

	\begin{proof}
		By Lemma \ref{quasi_example_fluid_r_bounded}, $ \gamma-PA(z) $ is $ R $-bounded on $ L^{q}_\sigma(\R^{d}) $ on a sector $ \Sigma_{\theta}, $ $ \theta\in [0,\pi/2) $. The same holds true for $ B(z) $, since $ u\mapsto P(z\cdot\nabla)u $ is just a lower order perturbation (see e.g. \cite{pruess_simonett_moving_interfaces_and_quasilinear}, Proposition $ 4.4.2 $).
		
		Let $ (r_n)_n $ be a sequence of independent Rademacher random variables, $ \nu\in (\theta,\pi) $ and $ (\lambda_j)_{j\in\N}\subset\Sigma_\nu $ be a dense sequence. For $ \eta\in\R, $ we define the norms
		\begin{align*}
		\|(u_j)_j\|_{X_{\eta}}:&=\E\|\sum_{j=1}^{\infty}r_ju_j\|_{W^{\eta+2,q}_\sigma(\R^{d})}+\E\|\sum_{j=1}^{\infty}r_j\lambda_j u_j\|_{W^{\eta,q}_\sigma(\R^{d})}\\
		\|(u_j)_j\|_{Y_{\eta}}:&=\E\|\sum_{j=1}^{\infty}r_j u_j\|_{W^{\eta,q}_\sigma(\R^{d})}
		\end{align*}
		and the spaces 
		\begin{align*}
		X_\eta&:=\big\{(u_j)_j\subset W^{\eta+2,q}_\sigma(\R^{d}):\ \|(u_j)_j\|_{X_\eta}<\infty \big \}\\
		Y_\eta&:=\big\{(u_j)_j\subset W^{\eta,q}_\sigma(\R^{d}):\ \|(u_j)_j\|_{Y_\eta}<\infty \big \}.
		\end{align*}
		Both $ (X_\eta)_{\eta\in\R} $ and $ (Y_\eta)_{\eta\in\R} $ form complex interpolation scales.  We define the operator
		\[ S_\eta: X_\eta\to Y_\eta, (f_j)_j\mapsto \big (\lambda_j-B(z))f_j\big)_j. \]
		Due to its H\"older continu\-ous coefficients, the operator $ B(z):W^{\eta+2,q}_\sigma(\R^{d})\to W^{\eta,q}_\sigma(\R^{d}) $ is bounded, if $ |\eta|<\delta $ for some $ \delta>0 $ small enough. In particular $ S_\eta $ is bounded for $ |\eta|<\delta. $ $ R $-sectoriality of $ B(z) $ on $ L^{q}_\sigma(\R^{d}) $ implies, that $ S_0 $ is an isomorphism with $ S_0^{-1}(u_j)_j=\big((\lambda_j-B(z))^{-1}u_j\big)_j $. By the previous Lemma, $ \|S_0\|_{B(X_0,Y_0)} $, $ \|S_0^{-1}\|_{B(Y_0,X_0)} $ only depend on the ellipticity and the H\"older norm of the coefficients and hence they are determined by $ \|z\|_{B^{2-2/p}_{q,p,\sigma}(\R^{d})}. $
	 By Sneiberg's Lemma, there exists $ \beta>0 $ such that $ S:X_{-\beta}\to Y_{-\beta} $ is an isomorphism and the size of $ \beta $ and $ \|S^{-1}\|_{B(Y_{-\beta}, X_{-\beta})} $ depend on $ \mu $ and $ \|z\|_{B^{2-2/p}_{q,p}(\R^{d})}. $ Especially, we have
	 \[ \E\|\sum_{j=1}^{\infty}r_j\lambda_j(\lambda_j-B(z))^{-1}u_j\|_{W^{-\beta,q}_\sigma(\R^{d})}\leq \|S_{-\beta}^{-1}\|_{B(Y_{-\beta}, X_{-\beta})} \E\|\sum_{j=1}^{\infty}r_ju_j\|_{W^{-\beta,q}_\sigma(\R^{d})}.\]
	 This proves $ R $-sectoriality of $ B(z) $ on $ W^{-\beta,q}_\sigma(\R^{d}) $ with domain  $ W^{2-\beta,q}_\sigma(\R^{d}) $. Indeed, let $ (\widetilde{\lambda}_j)_{j=1}^{N}\subset \C\setminus \Sigma_{\nu}  $ and $ \lambda_j^{(n)}\in (\lambda_k)_k $, $ n\in\N,\ j=1,\dots, N, $ such that $ \lambda_j^{(n)}\to\widetilde{\lambda}_j $ as $ n\to\infty. $ Then, by Fatou and the holomorphie of the resolvent, we have
		\begin{align*}
		\E\|\sum_{j=1}^{N}&r_j\widetilde{\lambda}_jR(\widetilde{\lambda}_j,B(z))f_j\|_{W^{-\beta,q}_\sigma(\R^{d})}\\
		&\leq\liminf_{n\to\infty}\E\|\sum_{j=1}^{N}r_j\lambda^{(n)}_jR(\lambda^{(n)}_j,B(z))f_j\|_{W^{-\beta,q}_\sigma(\R^{d})}\\
		&\leq \|S^{-1}_{-\beta}\|_{B(Y_{-\beta}, X_{-\beta})}\E\|\sum_{j=1}^{N}r_jf_j\|_{W^{-\beta,q}_\sigma(\R^{d})}.
		\end{align*}
		for every $ (f_j)_{j=1}^{N}\subset W^{-\beta,q}_\sigma(\R^{d}) $.
		
		 If we now apply Corollary $ 7.8 $ in \cite{kalton_kunstmann_weis_pertubation_and_interpolation_theorems}, we get that $ B(z) $ has a bounded $ H^{\infty}(\Sigma_\eta) $ calculus on the space $ \langle W^{-\beta,p}_\sigma(\R^{d}),W^{2-\beta,p}_\sigma(\R^{d})\rangle_{\beta/2}. $ Here $ \langle\cdot,\cdot\rangle_\eta $ denotes Rademacher interpolation. Working through the proof of Corollary $ 7.8 $ one sees, that the bound of the calculus only depends on the size of $ |\beta| $ and on $ \|S^{-1}_{-\beta}\|_{B(Y_{-\beta}, X_{-\beta})} $. It remains to identify the Rademacher interpolation space. Since the Helmholtz projection $ P $ commutes with $ I-\Delta $ and $ I-\Delta $ has a bounded $ H^{\infty} $-calculus on $ W^{\alpha,p}(\R^{d})^d $ for every $ \alpha\in\R $, $ p\in (1,\infty) $, this is also true for $ P(I-\Delta)=I-\Delta $ on $ W^{\alpha,p}_\sigma(\R^{d}). $ In this case, by Lemma $ 7.4 $ in \cite{kalton_kunstmann_weis_pertubation_and_interpolation_for_functional_calculus}, the Rademacher interpolation spaces and the complex interpolation spaces coincide. This finally implies
		\begin{align*}
		\langle W^{-\beta,q}_\sigma(\R^{d}),W^{2-\beta,q}_\sigma(\R^{d})\rangle_{\beta/2} = \big(W^{-\beta,q}_\sigma(\R^{d}),W^{2-\beta,q}_\sigma(\R^{d})\big)_{\beta/2}=L^{q}_\sigma(\R^{d}).
		\end{align*}
	\end{proof}
It remains to check $ [\operatorname{Q5*}]. $ Let $ y,z\in B^{2-2/p}_{q,p,\sigma}(\R^{d})  $ with norm at most $ n $ and $ u\in W^{2,q}_\sigma(\R^{d}). $ Recall that 
	$$ A(z)u= -\mu(|\tfrac{\nabla z+\nabla z^T}{2}|_2^2)\sum_{k=1}^{d}\partial_k^2u_i-\mu'(|\tfrac{\nabla z+\nabla z^T}{2}|_2^2)\sum_{j,k,l=1}^{d}(\partial_lz_i+\partial_iz_l)(\partial_kz_j+\partial_jz_k)\partial_k\partial_lu_j. $$
	With the Sobolev embedding $ B^{2-2/p}_{q,p,\sigma}(\R^{d})\hookrightarrow C^{1}_b(\R^{d})^d, $ we estimate
	\begin{align*}
	\|P&A(y)u-PA(z)u\|_{L^{q}_{\sigma}(\R^{d})}\\
	\leq&\Big(\|\mu(|\tfrac{\nabla y+\nabla y^T}{2}|_2^2 )-\mu(|\tfrac{\nabla z+\nabla z^T}{2}|_2^2)\|_{L^{\infty}(\R^{d})}\\
	&\ \ + \|\mu'(|\tfrac{\nabla y+\nabla y^T}{2}|_2^2 )-\mu'(|\tfrac{\nabla z+\nabla z^T}{2}|_2^2)\|_{L^{\infty}(\R^{d})}\|\nabla y\|^2_{L^{\infty}(\R^{d})^{d\times d}}\\
	&\ \ +\|\mu'(|\tfrac{\nabla y+\nabla y^T}{2}|_2^2)\|_{L^{\infty}(\R^{d})}\|\nabla y\|_{L^{\infty}(\R^{d})^{d\times d}}\|\nabla y-\nabla z\|_{L^{\infty}(\R^{d})^{d\times d}} \Big)\|u\|_{W^{2,q}_\sigma(\R^{d})}\\
	\leq& C\big(\|y\|_{L^{\infty}_\sigma(\R^{d})},\|z\|_{L^{\infty}_\sigma(\R^{d})},\|\nabla y\|_{L^{\infty}(\R^{d})^{d\times d}},\|\nabla z\|_{L^{\infty}(\R^{d})^{d\times d}}\big)\|\nabla y-\nabla z\|_{L^{\infty}(\R^{d})^{d\times d}}\|u\|_{W^{2,q}_\sigma(\R^{d})}\\
	\leq &C(n)\| y- z\|_{B^{2-2/p}_{q,p,\sigma}(\R^{d})}\|u\|_{W^{2,q}_\sigma(\R^{d})}.
	\end{align*}
Again using a Sobolev embedding, we get
	\begin{align*}
\|P(y\cdot\nabla)u&-P(z\cdot\nabla)u\|_{L^{q}_{\sigma}(\R^{d})}\\
&\leq \|y-z\|_{L^{\infty}(\R^{d})^d}\|\nabla u\|_{L^{q}(\R^{d})^{d\times d}}\leq \| y- z\|_{B^{2-2/p}_{q,p,\sigma}(\R^{d})}\|u\|_{W^{2,q}_\sigma(\R^{d})}.
	\end{align*}
	Thus $ u\mapsto PA(z)u+P(z\cdot\nabla)u $ satisfies $ [\operatorname{Q5*}]. $ All in all, we can apply Theorem \ref{lp_quasi_stochastic_parabolic_full_result_local_lipschitz} to the equation
		\begin{equation*}
		(\operatorname{QNS})\begin{cases}
		du(t)=[-PA(u(t))u(t)-P(u(t)\cdot\nabla)u(t)+Pf(t)]\operatorname{dt}+Pg(u,\nabla u)dW(t),\\
		u(0)=u_0.
		\end{cases}
		\end{equation*}
	This yields a maximal unique local strong solution $ (u,(\tau)_n,\tau) $ of \eqref{lp_quasilinear_example_helmholtz_non_newtonian} with 
	\[ u\in L^{p}(0,\tau_n;W_\sigma^{2,q}(\R^{d}))\cap C(0,\tau_n;B^{2-2/p}_{q,p,\sigma}(\R^{d})) \]
	pathwise almost surely for every $ n\in\N $ and $ \tau $ satisfies 
	\[ \mathbb{P}\big\{\tau<T,\ \|u\|_{ L^{p}(0,\tau;W^{2,q}_\sigma(\R^{d}))}<\infty,\ u:[0,\tau)\to B^{2-2/p}_{q,p,\sigma}(\R^{d})\text{ is uniformly continuous} \big\}=0. \] 
\subsection{Weak solution of a quasilinear parabolic stochastic equation in divergence form on a bounded domain}
In this section, we consider a convection-diffusion equation on a bounded domain $ D\subset\R^{d} $, $ d\geq 2 $,
\begin{equation*}
(\operatorname{DIV})\begin{cases}
du(t)=\big[\Div(a(u(t))\nabla u(t))+ F(t,u(t))\big]dt+B(t,u(t))dW(t),\\
u(0)=u_0,
\end{cases}
\end{equation*}
with Dirichlet, Neumann or mixed boundary conditions. To simplify the notation, we take real valued coefficients and we look for a real valued solution. We first introduce the spaces we work with.

Let $ \Gamma\subset\partial D $ be open in the topology of $ \partial D. $ For $ q\in (1,\infty) $, we define $ W^{1,q}_{\Gamma}(D) $ as the completion of
\[ C^{\infty}_{\Gamma}(D):=\big\{\phi|_{D}:\phi\in C_{c}^{\infty}(\R^{d})\text{ and }\operatorname{supp}(\phi)\cap (\partial D\setminus\Gamma)=\emptyset \big\} \]
with respect to the norm $ \|\phi\|_{W^{1,q}_{\Gamma}(D)}:=\|\nabla \phi\|_{L^{q}(D)}+\|\phi\|_{L^{q}(D)}. $ Since smooth functions in $  f\in W^{1,q}_{\Gamma}(D)  $ satisfy $f|_{\partial D\setminus\Gamma}=0, $ $ \partial D\setminus\Gamma $ is understood as the Dirichlet part of the boundary, whereas $ \Gamma $ can be interpreted as the Neumann part of the boundary. The space $ W^{-1,q}_{\Gamma}(D) $ is defined as the dual space of $ W^{1,\frac{q}{q-1}}_{\Gamma}(D) $ with respect to the standart $ L^{2} $-duality, which means that
	\[  \langle u,v\rangle_{\big(W^{-1,q}_{\Gamma}(D),W^{1,\frac{q}{q-1}}_{\Gamma}(D)\big)}=\int_{D}u(x)v(x)\operatorname{dx} \]
	if $ u\in  W^{-1,q}_{\Gamma}(D)\cap L^{q}(D)  $ and $ v\in W^{1,\frac{q}{q-1}}_{\Gamma}(D).  $ 	Analogously, we define the Besov space $ B^{1-2/p}_{q,p,\Gamma}(D) $ as the completion of $ C^{\infty}_{\Gamma}(D) $ with respect to the usual Besov norm $ \|\cdot\|_{B^{1-2/p}_{q,p}(D)}. $ These spaces are extensively studied in the literature about equations with mixed boundary on rough domains, but we won't go into detail here and just quote the results we use.
	
	In our case, we always work with $ 1-2/p>d/q $ and therefore, every $ u\in B^{1-2/p}_{q,p,\Gamma}(D) $ is continuous on $ \overline{D} $ and satisfies $ u|_{\partial D\setminus \Gamma}=0. $
	
	We will consider the quasilinear equation $ (\operatorname{DIV}) $ in the space $ W^{-1,q}_{\Gamma}(D)  $ for $ q\in [2,\infty) $, which means, we try to find a weak solution in the sense of partial differential equations. Indeed, $ \big(u,(\tau_n)_n,\tau \big)$ is a local solution of $ (\operatorname{DIV}) $ in the sense of Definition \ref{lp_quasi_stochastic_parabolic_solution_concept} with the choice $ E=W^{-1,q}_{\Gamma}(D) $ and $ E^{1}=W^{1,q}_{\Gamma}(D) $ if and only if the identity
	\begin{align*}
	\int_{D} (u(t,x)-u_0(x))\phi(x)\operatorname{dx}=&-\int_{0}^{t}\int_{D}a(u(s,x))\nabla u(s,x)\nabla\phi(x)\operatorname{dx}\operatorname{ds}\\
	&+\int_{0}^{t}\langle F(s,x,u(s)),\phi\rangle_{(W^{-1,q}_{\Gamma}(D),W^{1,q}_{\Gamma}(D))} \operatorname{ds}\\
	&+\int_{0}^{t}\int_{D}B(s,x,u(s))\phi(x)\operatorname{dx}dW(s)
	\end{align*}
	holds for almost all $ \omega\in\Omega, $ all $ t\in [0,\tau) $ and for all $ \phi\in C_{\Gamma}^{\infty}(D). $
	

\subsection{Local weak solutions}
In this section, we look at $ (\operatorname{DIV}) $ with a locally Lipschitz diffusion matrix $ a(\cdot). $ However, we have to guarantee that the operators $ \Div(a(u(t))\nabla) $ on $ W^{-1,q}_{\Gamma}(D) $ have for every $ t $ the same domain $ W^{1,q}_{\Gamma}(D) $ and in the last decades, it turned out that this property highly depends on $ D $, its dimension and the regularity of the coefficient function. Therefore, we introduce the following notation. Given a uniformly elliptic and bounded coefficient function $ \mu:D\to\R^{d\times d} $, we denote the set of all $ r\in [1,\infty) $, such that the operator
\[ z\mapsto -\Div(\mu\nabla z)+z:W^{1,r}_{\Gamma}(D)\to W^{-1,r}_{\Gamma}(D) \]
is a topological isomorphism, with $ \mathcal{T}_{\mu}. $ We now specify the assumptions.
\begin{itemize}
\item [\lbrack\text{LD1}\rbrack] For every point $ x\in\partial D $, there exists two open sets $ U,V\subset\R^{d} $ and a bi-Lipschitz transformation $ \Phi $ from $ U $ to $ V $ such that $ x\in U $ and $ \Phi(U\cap(D\cup\Gamma)) $ coincides with one of the sets $\{y\in\R^{d}:|y|<1,y_1<0 \}\cup\{y\in\R^{d}:|y|<1,y_1=0,y_2>0\} $ and $ \{y\in\R^{d}:|y|<1 \} $. 
\item  [\lbrack\text{LD2}\rbrack] $ a:D\times\R\to\R^{d\times d} $ is uniformly positive definite, i.e.
\[ \essinf_{y\in\R,x\in D}\inf_{|\xi|=1}\xi^{T}a(x,y)\xi=\delta_0>0, \]
$ a(\cdot,0)\in L^{\infty}(D) $ and $ a $ is locally Lipschitz continuous in the second component, i.e. for every $ \alpha>0 $, there exists $ L(\alpha)>0 $ such that
\[ |a(x,y)-a(x,z)|\leq L(\alpha)|y-z| \] 
for all $ |y|,|z|<\alpha $ and almost all $ x\in D. $
\item  [\lbrack\text{LD3}\rbrack] We choose $ p,q\in (2,\infty) $, such that $ 1-2/p>d/q $ and $ q\in \mathcal{T}_{a(\cdot,z)} $ for all $ z\in B^{1-\frac{2}{p}}_{q,p,\Gamma}(D) $.
\item [\lbrack\text{LD4}\rbrack] The initial value $ u_0:\Omega\to B^{1-2/p}_{q,p,\Gamma}(D) $ is a strongly $\mathcal{F}_0 $- measurable random variable.
\item  [\lbrack\text{LD5}\rbrack] $ F: \Omega\times [0,T]\times D\times\R\to \R $ is strongly measurable and $ \omega\mapsto F(\omega,t,x,y) $ is for all $ t\in [0,T] $, $x\in D$ and $ y\in\R $ strongly $ \mathcal{F}_t $-measurable. Moreover, $ F $ is of linear growth, i.e.
\[ \|F(\omega,t,\cdot,y)\|_{W^{-1,q}_{\Gamma}(D)}\leq C(1+\|y\|_{W^{1,q}_{\Gamma}(D)}) \]
and Lipschitz continuous, i.e. there are constants $ L_F,\widetilde{L}_{F}>0 $ such that
\begin{align*}
\hspace{1,3cm}\|F(\omega,t,\cdot,y)-F(\omega,t,\cdot,z)\|_{W^{-1,q}_{\Gamma}(D)}\leq L_F\|y-z\|_{W^{1,q}_{\Gamma}(D)}+\widetilde{L}_{F}\|y-z\|_{W^{-1,q}_{\Gamma}(D)}
\end{align*}
for all $ y,z\in W^{1,q}_{\Gamma}(D) $, for all $ t\in [0,T] $ and almost all $ \omega\in\Omega. $ Furthermore $ L_F $ must be small enough in the sense of assumption $ [\operatorname{Q8}] $ of the previous section.
\item [\lbrack\text{LD6}\rbrack]
The driving noise $ W $ is an $ l^{2} $- cylindrical Brownian motion of the form
\[ W(t)=\sum_{k=1}^{\infty}e_k\beta_k(t), \]
where $ (e_k)_k $ is the standard orthonormal basis of $ l^2 $ and $ (\beta_k)_k $ is a sequence of independent real-valued Brownian motions relative to the filtration $ (\mathcal{F}_t)_{t\in [0,T]}. $
\item [\lbrack\text{LD7}\rbrack] $ B=(B_n)_n:\Omega\times [0,T]\times D\times \R\to l^2(\N) $ is strongly measurable and $ \omega\mapsto B_n(\omega,t,x,y) $ is for all $ t\in [0,T] $, $x\in D$ and $ y\in\R $ strongly $ \mathcal{F}_t $-measurable. Furthermore, $ B $ is of linear growth, i.e.
\[ \|B(\omega,t,\cdot,y)\|_{\gamma(l^2;L^{q}(D))}\leq C(1+\|y\|_{W^{1,q}_{\Gamma}(D)}) \]
and Lipschitz continuous, i.e. there are constants $ L_B,\widetilde{L}_{B}>0 $ such that
\begin{align*}
\hspace{1,3cm}\|B(\omega,t,\cdot,y)-B(\omega,t,\cdot,z)\|_{\gamma(l^2;L^{q}(D))}\leq L_B\|y-z\|_{W^{1,q}_{\Gamma}(D)}+\widetilde{L}_{B}\|y-z\|_{W^{-1,q}_{\Gamma}(D)}
\end{align*}
for all $ y,z\in W^{1,q}_{\Gamma}(D) $, for all $ t\in [0,T] $, $ n\in\N $ and almost all $ \omega\in\Omega. $ Furthermore $ L_B $ must be small enough in the sense of assumption $ [\operatorname{Q8}] $ of the previous section.
\end{itemize}
Before we proceed, we comment on our assumptions. We chose the requirement on the domain $ [\operatorname{LD1}] $ in order to guarantee the important interpolation results
\[ [W^{-1,q}_{\Gamma}(D),W^{1,q}_{\Gamma}(D)]_{1-1/p.p}=B^{1-2/p}_{q,p,\Gamma}(D),\ [W^{-1,q}_{\Gamma}(D),W^{1,q}_{\Gamma}(D)]_{1/2}=L^{q}(D). \]
In particular, this representation of the real interpolation space makes sure, that $ u_0 $ is in the usual space for initial values. Moreover, $ \lbrack\text{LD3}\rbrack $ implicitly contains assumptions on the boundary of $ D $ and on the coefficient function $ a $ as well, since it is impossible to ensure, that 
\[ z\mapsto -\Div(a(\cdot,u(t))\nabla z)+z:W^{1,q}_{\Gamma}(D)\to W^{-1,q}_{\Gamma}(D) \]
is an isomorphism for all $ q $, if one just assumes  $\lbrack\text{LD1}\rbrack $ and $\lbrack\text{LD2}\rbrack. $ Even in case of the Dirichlet Laplacian, there are counterexamples (see \cite{jerison_kenig_dirichlet_problem_lipschitz}, Theorem $ A $). In general, one only knows, that a small interval $  (2-\varepsilon,2+\varepsilon) $ with $ \varepsilon>0 $ depending on the geometry of $ D $ and $ \Gamma $ and on the coefficient function $ a$ is contained in $ \mathcal{T}_{a(\cdot,z)} $  (see \cite{haller_rehberg_elliptic_and_parabolic_regularity_divergence}, Theorem $ 5.6 $ and Remark $ 5.7 $). Nevertheless, there are several situations, in which one can fulfil $\lbrack\text{LD3}\rbrack. $ In the following, we mention some of them.

If one assumes $ D $ to be a $ C^{1} $-domain, that has either pure Dirichlet ($ \Lambda=\emptyset $) or pure Neumann boundary ($ \Lambda=\partial D $) and one assumes $ \mu $ to be a uniformly continuous coefficient function, one has $ q\in\mathcal{T}_{\mu} $ for all $ q\in (1,\infty). $ This is a classical result, that can be found in \cite{agmon_douglis_nirenberg_estimates_near_boundary}, section $ 15 $ or \cite{morrey_multiple_integrals_calculus_of_variations}, page $ 156 $-$ 157. $ If there is $ C^{1} $-subdomain $ \widetilde{D} $ with positive distance to $ \partial D $, such that both $ \mu|_{\widetilde{D}} $ and $ \mu|_{D\setminus\widetilde{D}} $ are uniformly continuous and $ \mu $ is symmetric, the same result holds true by \cite{elschner_rehberg_gunther_optimal_regularity_c1_interace}, Theorem $ 1.1.$ Consequently, since we require $ 1-2/p>d/q $ and hence every $ z\in B^{1-2/p}_{q,p,\Gamma}(D) $ is even H\"older continuous, we just need to demand that $ a $ is uniformly continuous in $ \widetilde{D} $ and in $ D\setminus\widetilde{D} $ in the first component to ensure $ q\in \mathcal{T}_{a(\cdot,z)} $. 

If $ D $ is just a Lipschitz domain with Dirichlet boundary ($ \Lambda=\emptyset $) and the coefficient function $ \mu $ is a symmetric, uniformly continuous matrix, then there is a $ q>3 $ with $ q\in \mathcal{T}_{\mu} $. This can only be helpful for us if $ d=2,3 $, since then it is possible to choose $ p $ large enough to ensure $ 1-2/p>d/q. $ The same conclusion is true, if $ D $ is Lipschitz and there is a $ C^{1} $-subdomain $ \widetilde{D} $ with positive distance to $ \partial D $, such that both $ \mu|_{\widetilde{D}} $ and $ \mu|_{D\setminus\widetilde{D}} $ are uniformly continuous. These results are all shown in \cite{elschner_rehberg_gunther_optimal_regularity_c1_interace}, Theorem $ 1.1.$

So far, we only gave examples for Dirichlet or Neumann boundary. In case of mixed boundary, we refer to very detailed work \cite{disser_kaiser_rehberg_optimal_sobolev_regularity_for_divergence_elliptic_operators}. In the case $ d=3 $, the authors provide a wide range of geometries of $ D $ and $ \Gamma $ that permit the existence of a $ q>3 $ such that $ q\in\mathcal{T}_{\mu} $, where $ \mu $ is a symmetric coefficient matrix, that is allowed to be only measurable (see Theorem $ 4.8. $). Moreover, in section $ 3 $, they provide many descriptive examples for the geometries, they allow. 

Note, that we could also add locally Lipschitz nonlinearities $ F $ and $ B $ in the sense of $ [\operatorname{Q6*}] $ and $ [\operatorname{Q7*}] $. We just skipped this for sake of simplicity, since we won't need it when we show global well-posedness in case of Dirichlet boundary. 

Our goal is to apply Theorem \ref{lp_quasi_stochastic_parabolic_full_result_local_lipschitz} to the operators
\[ A(u(t))u(t)=-\Div(a(\cdot,u(t))\nabla u(t)+u(t). \]
In the following Lemma, we prove that $ A(u(t)) $ has the needed mapping properties such as a timely constant domain and a bounded $ H^{\infty} $-calculus.
\begin{Lemma} \label{lp_quasi_example_div_properties_of_the_operators}
Under the assumptions $ [\operatorname{LD1}] $-$ [\operatorname{LD3}] $, the operators
\[ A(z)u:=-\Div(a(\cdot,z)\nabla u)+u:W^{1,q}_{\Gamma}(D)\to W^{-1,q}_{\Gamma}(D) \]
are for all $ z\in B^{1-2/p}_{q,p,\Gamma}(D)  $ densely defined, closed with $ 0\in\rho(A(z)) $ and have a bound $ H^{\infty} $-calculus with bound and angle only depending on $ L $, $ \delta_0 $  and on $ \|z\|_{B^{1-2/p}_{q,p,\Gamma}(D)} $. We also have for every $ n\in\N$ the local Lipschitz estimate
\[ \|A(z)-A(y)\|_{B(W^{1,p}_{\Gamma}(D),W^{-1,p}_{\Gamma}(D))}\leq C(n)\|z-y\|_{B^{1-\frac{2}{p}}_{q,p,\Gamma}(D)} \]
for all $ \|z\|_{B^{1-2/p}_{q,p,\Gamma}(D)},\|y\|_{B^{1-2/p}_{q,p,\Gamma}(D)}\leq n $ and some $ C(n)>0. $ Last but not least, we have
$$ [W^{-1,q}_{\Gamma}(D),W^{1,q}_{\Gamma}(D)]_{1-1/p,p}=B^{1-\frac{2}{p}}_{q,p,\Gamma}(D)$$
and as a consequence, $ A $ satisfies the assumptions $ [\operatorname{Q2}] $, $ [\operatorname{Q3}] $, $ [\operatorname{Q4*}] $ and $ [\operatorname{Q5*}] $ of the previous section.
\end{Lemma}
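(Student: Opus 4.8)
The plan is to collect and cite the relevant elliptic-regularity and functional-calculus results for divergence-form operators with mixed boundary conditions, and to show that the constants in these results can be made to depend only on the ellipticity constant $\delta_0$, the Lipschitz constant $L$, and on $\|z\|_{B^{1-2/p}_{q,p,\Gamma}(D)}$ (which, since $1-2/p>d/q$ forces $B^{1-2/p}_{q,p,\Gamma}(D)\hookrightarrow C(\overline D)$, controls $\|z\|_{L^\infty(D)}$ and hence, via $[\operatorname{LD2}]$, $\|a(\cdot,z)\|_{L^\infty(D)}$ and the modulus of continuity type data of $a(\cdot,z)$ that appear). First I would fix $z$ and observe that $a(\cdot,z)$ is a uniformly elliptic, bounded coefficient function with ellipticity constant $\delta_0$ and $L^\infty$-norm bounded by $\|a(\cdot,0)\|_\infty + L(\|z\|_\infty)\|z\|_\infty$; assumption $[\operatorname{LD3}]$ says precisely that $q\in\mathcal T_{a(\cdot,z)}$, so
\[
z'\mapsto -\Div(a(\cdot,z)\nabla z')+z':W^{1,q}_\Gamma(D)\to W^{-1,q}_\Gamma(D)
\]
is a topological isomorphism, which gives that $A(z)$ is closed, densely defined (as $C^\infty_\Gamma(D)$ is dense in $W^{1,q}_\Gamma(D)$ by definition and this space is the domain) and $0\in\rho(A(z))$.

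Next I would establish the bounded $H^\infty$-calculus. The key input is the square-root property / $H^\infty$-calculus for divergence-form operators with mixed boundary conditions on domains satisfying $[\operatorname{LD1}]$, available in the cited literature (\cite{auschzer_badr_haller_rehberg_square_root_divergence_form_lp}, \cite{egert_haller_tolksdorf_kato_mixed_boundary}, \cite{haller_rehberg_elliptic_and_parabolic_regularity_divergence}): once $q\in\mathcal T_{a(\cdot,z)}$, the operator $A(z)$ has a bounded $H^\infty(\Sigma_\eta)$-calculus on $W^{-1,q}_\Gamma(D)$ for some $\eta<\pi/2$. The point requiring care — and the main obstacle — is the \emph{uniformity} of the angle $\eta$ and the bound $M$ in terms of $\|z\|_{B^{1-2/p}_{q,p,\Gamma}(D)}$ only. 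I would handle this exactly as in the Navier–Stokes example above: work through the proof of the quoted theorem and track the constants, noting that they enter only through $\delta_0$, $\|a(\cdot,z)\|_\infty$, and the (isomorphism-) constants of $-\Div(a(\cdot,z)\nabla\,)+\mathrm{id}$, all of which are controlled by $\|z\|_{L^\infty}\le C\|z\|_{B^{1-2/p}_{q,p,\Gamma}(D)}$; if a perturbation/localisation argument à la Sneiberg (Lemma \ref{sneibergs_lemma}) is needed to pass from the Hilbert-space case $q=2$ to general $q$, the interpolation-scale constants likewise depend only on these data. Alternatively, for the subclasses of domains and coefficients discussed in the text (e.g. $C^1$-domains with uniformly continuous coefficients), one may invoke the classical Agmon–Douglis–Nirenberg theory directly.

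For the local Lipschitz estimate, I would argue pointwise: for $\|z\|_{B^{1-2/p}_{q,p,\Gamma}(D)},\|y\|_{B^{1-2/p}_{q,p,\Gamma}(D)}\le n$, using the embedding into $C(\overline D)$ and $[\operatorname{LD2}]$,
\[
\|(A(z)-A(y))u\|_{W^{-1,q}_\Gamma(D)}
=\|\Div\big((a(\cdot,y)-a(\cdot,z))\nabla u\big)\|_{W^{-1,q}_\Gamma(D)}
\le\|a(\cdot,y)-a(\cdot,z)\|_{L^\infty(D)}\|\nabla u\|_{L^q(D)},
\]
and then $\|a(\cdot,y)-a(\cdot,z)\|_{L^\infty(D)}\le L(n)\|y-z\|_{L^\infty(D)}\le C(n)\|y-z\|_{B^{1-2/p}_{q,p,\Gamma}(D)}$, giving the claimed bound with $\|u\|_{W^{1,q}_\Gamma(D)}$ on the right. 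Finally, the interpolation identity $[W^{-1,q}_\Gamma(D),W^{1,q}_\Gamma(D)]_{1-1/p,p}=B^{1-2/p}_{q,p,\Gamma}(D)$ is the real-interpolation statement cited from \cite{disser_kaiser_rehberg_optimal_sobolev_regularity_for_divergence_elliptic_operators} and \cite{elschner_rehberg_gunther_optimal_regularity_c1_interace} under $[\operatorname{LD1}]$, and I would simply invoke it. Collecting: $[\operatorname{Q2}]$ is $[\operatorname{LD4}]$; $[\operatorname{Q3}]$ follows since $z\mapsto A(z)u$ is built from the measurable map $z\mapsto a(\cdot,z)$ and $u_0$ is $\mathcal F_0$-measurable; $[\operatorname{Q4*}]$ and $[\operatorname{Q5*}]$ are the two estimates just proved, with the spectral shift $\mu(n)$ absorbed as in the previous sections. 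The genuinely delicate point, as flagged, is the constant-tracking for the $H^\infty$-calculus; everything else is bookkeeping with the Sobolev embedding and the cited black boxes.
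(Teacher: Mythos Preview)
Your plan is correct and follows essentially the same route as the paper. Two small points where the paper is more direct: for the $H^\infty$-calculus with explicit constant dependence, the paper simply invokes Theorem~11.5 of \cite{auschzer_badr_haller_rehberg_square_root_divergence_form_lp} (together with the Kato square-root result of \cite{egert_haller_tolksdorf_kato_mixed_boundary}), which already yields angle $\arctan(\|a(\cdot,z)\|_{L^\infty}/\delta_0)$ and a bound depending only on $\|a(\cdot,z)\|_{L^\infty}$ and $\delta_0$ --- no Sneiberg-type argument is needed here, unlike in the Navier--Stokes example; and for the interpolation identity, the paper uses \cite{griepentrog_groeger_kaiser_rehberg_interpolation_for_function_spaces_mixed_boundary} in two steps (first $[W^{-1,q}_\Gamma,W^{1,q}_\Gamma]_{1/2}=L^q(D)$, then reiteration with $[L^q(D),W^{1,q}_\Gamma(D)]_{1-2/p,p}=B^{1-2/p}_{q,p,\Gamma}(D)$), rather than the references you cite.
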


\begin{proof}
By choice of $ p$ and $q $, the Sobolev embedding $ B^{1-\frac{2}{p}}_{q,p,\Gamma}(D)\hookrightarrow C^{l}(\bar{D}) $ holds true for some $ l>0 $. In the sequel, we write $ C_{J} $ for the constant of this embedding.
Given $ z\in B^{1-\frac{2}{p}}_{q,p,\Gamma}(D) $, we obtain 
\begin{align*}
	\|a(\cdot,z)\|_{L^{\infty}(D)}&\leq \esssup_{x\in D}|a(x,z(x))-a(x,0)|+|a(x,0)|\\
	&\leq L\Big(C_{J}\|z\|_{B^{1-\frac{2}{p}}_{q,p,\Gamma}(D)}\Big)C_J\|z\|_{B^{1-\frac{2}{p}}_{q,p,\Gamma}(D)}+\|a(\cdot,0)\|_{L^{\infty}(D)}.
\end{align*}
In particular, the operator $ A(z):W^{1,q}_{\Gamma}(D)\to W^{-1,q}_{\Gamma}(D) $ is well-defined and bounded. Moreover, since we assumed $ q\in\mathcal{T}(a(\cdot,z)) $, Theorem $ 6.5 $ in \cite{disser_elst__rehberg_maximal_regularity_nonautonomous} implies, that $ A(z) $ with $ D(A(z))=W^{1,q}_{\Gamma}(D) $ is a closed operator.

 By Theorem $ 11.5 $ in \cite{auschzer_badr_haller_rehberg_square_root_divergence_form_lp}, $ A(z) $ has a bounded $ H^{\infty} $-calculus of angle $ \arctan\big(\frac{\|a(\cdot,z)\|_{L^{\infty}(D)}}{\delta_0}\big) $ and the bound also only depends on $ \|a(\cdot,z)\|_{L^{\infty}(D)} $ and $ \delta_0 $ (see also \citep{egert_dintelmann_personal_conversation}.) Note, that the critical assumption for this theorem is, that $ A(z) $ possesses the square root property in $ L^{2}(D), $ i.e. the operator $$ (\Div(a(\cdot,z)\nabla)+I)^{1/2}:W^{1,2}_{\Gamma}(D)\to L^{2}(D) $$ is a topolical isomorphism. This result can be found in \cite{egert_haller_tolksdorf_kato_mixed_boundary}, Theorem $ 4.1 $.

The claimed Lipschitz estimate for $ A $ is an immediate consequence of the Lipschitz continuity of $ a $ and of the Sobolev embedding. Indeed, we have
\begin{align*}
 \|A(z)-A(y)\|_{B(W^{1,q}_{\Gamma}(D),W^{-1,q}_{\Gamma}(D))}&\lesssim \|(a(\cdot,z)-a(\cdot,y))\nabla\|_{B(W^{1,q}_{\Gamma}(D),L^{q}(D))}\\
 &\lesssim\|a(\cdot,z)-a(\cdot,y)\|_{L^{\infty}(D)}\\
 &\leq C_JL(C_Jn)\|z-y\|_{B^{1-\frac{2}{p}}_{q,p,\Gamma}(D)}
\end{align*}
 for all $ \|z\|_{B^{1-2/p}_{q,p,\Gamma}}(D),\|y\|_{B^{1-2/p}_{q,p,\Gamma}}(D)\leq n. $
 
 It remains to check $ [W^{-1,q}_{\Gamma}(D),W^{1,q}_{\Gamma}(D)]_{1-1/p,p}=B^{1-\frac{2}{p}}_{q,p,\Gamma}(D). $ By \cite{griepentrog_groeger_kaiser_rehberg_interpolation_for_function_spaces_mixed_boundary}, Lemma $ 3.4, $ we have the identity $ [W^{-1,q}_{\Gamma}(D),W^{1,q}_{\Gamma}(D)]_{1/2}=L^{q}(D). $ Using the reiteration formula between real and complex interpolation (see e.g. \cite{triebel_interpolation_theorie_function_spaces}, Theorem $ 1.10.3.2 $), it is sufficient to show $$[L^{q}(D),W^{1,p}_{\Gamma}(D)]_{1-2/p,p}=B^{1-2/p}_{q,p,\Gamma}(D).$$ This is done in \cite{griepentrog_groeger_kaiser_rehberg_interpolation_for_function_spaces_mixed_boundary}. Remark $ 3.6. $ 
\end{proof}
Next, we check that the spaces $ W^{-1,q}_{\Gamma}(D) $ and $ W^{1,q}_{\Gamma}(D) $ fit in the setting of stochastic maximal $ L^{p} $-regularity.
\begin{Lemma}\label{lp_quasi_example_div_properties_of_the_spaces}
	The spaces $ W^{1,q}_{\Gamma}(D) $ and $ W^{-1,q}_{\Gamma}(D) $ are UMD Banach spaces with type $ 2 $. Moreover the family of operators $$ \{J_\delta:\delta>0 \}\subset B(L^{p}(\Omega\times(0,\infty);\gamma(H;W^{-1,q}_{\Gamma}(D))),L^{p}(\Omega\times(0,\infty);W^{-1,q}_{\Gamma}(D))) $$
	defined by
	\[ J_{\delta}b(t):=\delta^{-1/2}\int_{(t-\delta)\vee 0}^{t}b(s)dW(s) \]
	is $ R $-bounded. In conclusion, these spaces satisfy assumption $ [\operatorname{Q1}] $ of the previous section.
\end{Lemma}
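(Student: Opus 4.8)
The plan is to reduce both assertions to the known fact that $ L^{q}(D) $ with $ q\in(2,\infty) $ satisfies $ [\operatorname{Q1}] $, i.e. that it is UMD of type $ 2 $ and that $ \{J_\delta:\delta>0\} $ is $ R $-bounded on $ L^{p}(\Omega\times(0,\infty);\gamma(H;L^{q}(D))) $ (see \cite{nerven_veraar_weis_maximal_lp_regularity_stochastic_convolution} and the discussion following $ [\operatorname{S1}] $). For $ W^{1,q}_{\Gamma}(D) $ this is immediate: the map $ u\mapsto(u,\partial_1u,\dots,\partial_du) $ is, by definition of the norm, an isometry of $ W^{1,q}_{\Gamma}(D) $ onto a closed subspace of $ L^{q}(D)^{d+1} $, which is an $ L^{q} $-space; since closed subspaces of UMD spaces are UMD and closed subspaces of type $ 2 $ spaces have type $ 2 $ (see \cite{pisier_probabalistic_methods_in_the_geometry_of_Banach_spaces}, \cite{Kunstmann_Weis_Lecture_Notes}), $ W^{1,q}_{\Gamma}(D) $ is UMD of type $ 2 $. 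For $ W^{-1,q}_{\Gamma}(D) $ the subspace argument is not available, so instead I would identify $ W^{-1,q}_{\Gamma}(D) $, up to isomorphism, with $ L^{q}(D) $.

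To do this, fix any $ z\in B^{1-2/p}_{q,p,\Gamma}(D) $ (for instance $ z=0 $) and apply Lemma \ref{lp_quasi_example_div_properties_of_the_operators}: the operator $ A(z) $ is sectorial on $ E:=W^{-1,q}_{\Gamma}(D) $ with domain $ E^1:=W^{1,q}_{\Gamma}(D) $, it satisfies $ 0\in\rho(A(z)) $, it has a bounded $ H^{\infty} $-calculus, and $ [E,E^1]_{1/2}=L^{q}(D) $. Since $ A(z) $ has a bounded $ H^{\infty} $-calculus and is invertible, $ D(A(z)^{1/2})=[E,E^1]_{1/2}=L^{q}(D) $ with equivalent norms (see \cite{Kunstmann_Weis_Lecture_Notes}, \cite{Haase_Functional_calculus}), and consequently $ A(z)^{1/2}\colon L^{q}(D)\to W^{-1,q}_{\Gamma}(D) $ (as well as $ A(z)\colon W^{1,q}_{\Gamma}(D)\to W^{-1,q}_{\Gamma}(D) $) is a topological isomorphism. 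Hence $ W^{-1,q}_{\Gamma}(D)\cong L^{q}(D) $, and in particular $ W^{-1,q}_{\Gamma}(D) $ is UMD of type $ 2 $, these properties being invariant under Banach space isomorphisms.

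It remains to transfer the $ R $-boundedness of $ \{J_\delta\} $ from $ L^{q}(D) $ to $ W^{-1,q}_{\Gamma}(D) $. Writing $ \Psi:=A(z)^{1/2}\colon L^{q}(D)\to W^{-1,q}_{\Gamma}(D) $, postcomposition with $ \Psi $ yields isomorphisms $ \widehat{\Psi} $ of $ L^{p}(\Omega\times(0,\infty);\gamma(H;L^{q}(D))) $ onto $ L^{p}(\Omega\times(0,\infty);\gamma(H;W^{-1,q}_{\Gamma}(D))) $ and $ \widetilde{\Psi} $ of $ L^{p}(\Omega\times(0,\infty);L^{q}(D)) $ onto $ L^{p}(\Omega\times(0,\infty);W^{-1,q}_{\Gamma}(D)) $. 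Since the bounded operator $ \Psi $ commutes with the stochastic integral, one checks that $ J_\delta $ on $ W^{-1,q}_{\Gamma}(D) $ equals $ \widetilde{\Psi}\circ J_\delta\circ\widehat{\Psi}^{-1} $, where $ J_\delta $ on the right is taken on $ L^{q}(D) $; therefore the $ R $-bound of $ \{J_\delta\} $ on $ W^{-1,q}_{\Gamma}(D) $ is at most $ \|\widetilde{\Psi}\|\,\|\widehat{\Psi}^{-1}\| $ times its (finite) $ R $-bound on $ L^{q}(D) $. This establishes $ [\operatorname{Q1}] $. The only genuinely delicate point is this last step — checking that the family $ \{J_\delta\} $ is conjugated by the induced isomorphisms, i.e. that postcomposition with a fixed bounded operator commutes with the stochastic convolution defining $ J_\delta $; everything else is bookkeeping combined with Lemma \ref{lp_quasi_example_div_properties_of_the_operators} and the standard properties of $ L^{q} $-spaces.
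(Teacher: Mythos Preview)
Your proposal is correct and follows essentially the same strategy as the paper: both reduce everything to $L^{q}(D)$ via the isomorphism $A(z)^{1/2}\colon L^{q}(D)\to W^{-1,q}_{\Gamma}(D)$ furnished by Lemma~\ref{lp_quasi_example_div_properties_of_the_operators} (bounded $H^{\infty}$-calculus together with $[E,E^1]_{1/2}=L^{q}(D)$), and then invoke the known $[\operatorname{Q1}]$-properties of $L^{q}(D)$. The only cosmetic differences are that the paper handles $W^{1,q}_{\Gamma}(D)$ via the isomorphism $A(z)\colon W^{1,q}_{\Gamma}(D)\to W^{-1,q}_{\Gamma}(D)$ rather than your closed-subspace embedding into $L^{q}(D)^{d+1}$, and that the paper compresses your explicit conjugation argument for $\{J_\delta\}$ into the single phrase ``$R$-boundedness is stable under isomorphisms''.
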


\begin{proof}
By Lemma \ref{lp_quasi_example_div_properties_of_the_operators} the spaces $ W^{-1,q}_{\Gamma}(D) $ and $ W^{1,q}_{\Gamma}(D) $ are isomorph. In the proof of the same Lemma, we checked $ [W^{-1,q}_{\Gamma}(D),W^{1,q}_{\Gamma}(D)]_{1/2}=L^{q}(D) $ and hence amongst others $ A(0)^{1/2} $ provides an isomorphism between $ L^{q}(D) $ and $ W^{-1,q}_{\Gamma}(D). $ Moreover the type of Banach space, the UMD property and the R-boundedness of $ (J_{\delta})_{\delta>0} $ are stable under isomorphisms and the UMD space $ L^{q}(D) $ has type $ 2 $. Noting that by \cite{nerven_veraar_weis_maximal_lp_regularity_stochastic_convolution}, Theorem $ 3.1, $ the family is $ R $-bounded on $L^{p}(\Omega\times(0,\infty);\gamma(H;L^{q}(D)))  $ completes the proof.
\end{proof}
Now, we are in the position to proof existence and uniqueness of a solution of $ (\operatorname{DIV}) $ by applying Theorem \ref{lp_quasi_stochastic_parabolic_full_result_local_lipschitz} to the operators
$ A(z)y=-\Div(a(\cdot,z)\nabla y)+y. $

\begin{Theorem}\label{lp_quasi_example_local_solution}
Let the assumptions $ [\operatorname{LD1}]-[\operatorname{LD7}] $ be satisfied. Then, there exists a maximal unique local solution $ \big(u,(\tau_n)_n,\tau \big) $ of $ (\operatorname{DIV}) $ in $ W^{-1,q}_{\Gamma}(D) $, such that we have 
$$ u\in L^{p}(0,\tau_n;W^{1,q}_{\Gamma}(D))\cap C(0,\tau_n;B^{1-2/p}_{q,p,\Gamma}(D)) $$ 
pathwise almost surely for every $ n\in\N $. Moreover, $ \tau $ satisfies
	\[ \mathbb{P}\big\{\tau<T,\ \|u\|_{ L^{p}(0,\tau;W^{1,q}_{\Gamma}(D))}<\infty,\ u:[0,\tau)\to B^{1-2/p}_{q,p,\Gamma}(D)\text{ is uniformly continuous} \big\}=0. \]
\end{Theorem}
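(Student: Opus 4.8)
The plan is to verify that the concrete data of $(\operatorname{DIV})$ fit the abstract framework of Theorem~\ref{lp_quasi_stochastic_parabolic_full_result_local_lipschitz} and then invoke that theorem. First I would fix the functional-analytic setting $E=W^{-1,q}_{\Gamma}(D)$, $E^{1}=W^{1,q}_{\Gamma}(D)$, so that $E^{1/2}=[E,E^{1}]_{1/2}=L^{q}(D)$ and $E_p=(E,E^{1})_{1-1/p,p}=B^{1-2/p}_{q,p,\Gamma}(D)$; both identifications were established in the proof of Lemma~\ref{lp_quasi_example_div_properties_of_the_operators}, and the second one is precisely what makes $[\operatorname{LD4}]$ coincide with $[\operatorname{Q2}]$. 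The quasilinear operator is $A(z)y=-\Div(a(\cdot,z)\nabla y)+y$; rewriting the diffusion term as $\Div(a(u)\nabla u)=-A(u)u+u$ recasts $(\operatorname{DIV})$ as a $(\operatorname{QSEE})$ with deterministic nonlinearity $\widetilde F(\omega,t,y)=y+F(\omega,t,\cdot,y)$, stochastic coefficient $y\mapsto B(\omega,t,\cdot,y)$, and no additive terms, i.e.\ $f\equiv 0$, $b\equiv 0$.

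Next I would check the abstract hypotheses one by one. Assumption $[\operatorname{Q1}]$ is Lemma~\ref{lp_quasi_example_div_properties_of_the_spaces}. Assumptions $[\operatorname{Q3}]$, $[\operatorname{Q4*}]$ and $[\operatorname{Q5*}]$ --- strong measurability of $(\omega,z)\mapsto A(\omega,z)$, uniform local boundedness of the $H^{\infty}$-calculus of $A(z)$ (here with zero spectral shift, since $0\in\rho(A(z))$) and the local Lipschitz estimate for $A$ --- are exactly the content of Lemma~\ref{lp_quasi_example_div_properties_of_the_operators}. For $[\operatorname{Q6}]$ and $[\operatorname{Q7}]$ I would realize $\widetilde F$ and $B$ as Nemytskii operators acting pointwise in $(\omega,t,x)$; the causality requirement (that $F(u)\ind_{[0,\widetilde\tau]}$ depends on $u$ only through $u\ind_{[0,\widetilde\tau]}$) is then immediate, and the pointwise-in-$t$ growth and Lipschitz bounds of $[\operatorname{LD5}]$ and $[\operatorname{LD7}]$ upgrade to the required $L^{p}(0,T;\cdot)$ bounds by integrating in $t$ and using the triangle inequality in $L^{p}$, which yields $L_F^{(1)}=L_F$, $L_F^{(2)}=0$, $L_B^{(1)}=L_B$, $L_B^{(2)}=0$; the extra zeroth-order summand $y$ in $\widetilde F$ only enlarges the unrestricted constant $\widetilde L_F$, because $W^{1,q}_{\Gamma}(D)\hookrightarrow W^{-1,q}_{\Gamma}(D)$. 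With these identifications the smallness condition $[\operatorname{Q8}]$ reduces to $C_{\operatorname{MRD}}L_F+C_{\operatorname{MRS}}L_B<1$, which is exactly what ``small enough in the sense of $[\operatorname{Q8}]$'' means in $[\operatorname{LD5}]$ and $[\operatorname{LD7}]$, and $[\operatorname{Q9}]$ holds trivially since $f=b=0$.

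Having verified $[\operatorname{Q1}]$--$[\operatorname{Q3}]$, $[\operatorname{Q4*}]$, $[\operatorname{Q5*}]$ and $[\operatorname{Q6}]$--$[\operatorname{Q9}]$, I would apply Theorem~\ref{lp_quasi_stochastic_parabolic_full_result_local_lipschitz} to obtain a maximal unique local solution $(u,(\tau_n)_n,\tau)$ with $u\in L^{p}(0,\tau_n;E^{1})\cap C(0,\tau_n;E_p)$ pathwise almost surely for all $n\in\N$ and with the stated blow-up alternative; substituting $E^{1}=W^{1,q}_{\Gamma}(D)$ and $E_p=B^{1-2/p}_{q,p,\Gamma}(D)$ gives the assertion, and the identity characterising strong solutions in $E=W^{-1,q}_{\Gamma}(D)$ is, by the duality pairing recalled at the start of Section~5.3, exactly the weak formulation of $(\operatorname{DIV})$. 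The only genuinely non-automatic step is the translation of the pointwise (in time and in the spatial variable) structural conditions on the substitution operators $\widetilde F$ and $B$ into the time-integrated form demanded by $[\operatorname{Q6}]$ and $[\operatorname{Q7}]$, together with keeping track of the zeroth-order term so that it contributes only to the unconstrained Lipschitz constant; everything else is a direct citation of the two preceding lemmas and of the abstract theorem.
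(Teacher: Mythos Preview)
Your proposal is correct and follows essentially the same approach as the paper: rewrite $(\operatorname{DIV})$ with $A(z)=-\Div(a(\cdot,z)\nabla)+I$ and $\widetilde F(t,z)=F(t,z)+z$, quote Lemma~\ref{lp_quasi_example_div_properties_of_the_spaces} for $[\operatorname{Q1}]$ and Lemma~\ref{lp_quasi_example_div_properties_of_the_operators} for $[\operatorname{Q2}]$--$[\operatorname{Q3}]$, $[\operatorname{Q4*}]$, $[\operatorname{Q5*}]$, then apply Theorem~\ref{lp_quasi_stochastic_parabolic_full_result_local_lipschitz}. Your write-up is in fact more careful than the paper's, which leaves the passage from the pointwise Lipschitz bounds $[\operatorname{LD5}]$, $[\operatorname{LD7}]$ to the integrated form $[\operatorname{Q6}]$, $[\operatorname{Q7}]$ and the handling of the zeroth-order shift entirely implicit.
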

\begin{proof}
Writing $$ \Div(a(\cdot,z)\nabla z)+F(t,z)= \big(\Div(a(\cdot,z)\nabla z)-z\big)+\big(F(t,z)+z\big), $$ we see, that we can solve the equation
\begin{equation*}
\begin{cases}
du(t)&=\left [-A(u(t))u(t)+\widetilde{F}(t,u(t))\right]\operatorname{dt}+B(t,u(t))dW(t),\\
u(0)&=u_0
\end{cases}
\end{equation*}
with $ \widetilde{F}(t,z):= F(t,z)+z$ for $ z\in W^{1,q}_{\Gamma}(D) $.
By Lemma \ref{lp_quasi_example_div_properties_of_the_operators} the assumptions $ [\operatorname{Q2}] $, $ [\operatorname{Q3}] $, $ [\operatorname{Q4*}] $ and $ [\operatorname{Q5*}] $ fulfilled, whereas Lemma \ref{lp_quasi_example_div_properties_of_the_spaces} guaranties $ [\operatorname{Q1}]. $ $ [\operatorname{LD6}] $, $ [\operatorname{LD7}] $ make sure, that the nonlinearities $ F $ and $ B $ also satisfy $ [\operatorname{Q6}] $-$ [\operatorname{Q8}] $. All in all, Theorem \ref{lp_quasi_stochastic_parabolic_full_result_local_lipschitz} yields the desired result.
\end{proof}

\subsection{Global weak solution with Dirichlet boundary condition}
In this section, we investigate the convection diffusion equation with Dirichlet boundary conditions $ (\Gamma=\emptyset) $ and we therefore restrict us to the space $ W^{1,q}_{\emptyset}(D) $, that will be denoted with $ W^{1,q}_{0}(D) $ in what follows. As usual in the literature, we write  $W^{-1,q}(D) $ for $ W^{-1,q}_{\emptyset}(D). $ We consider the equation
\begin{equation*}
(\operatorname{GDIV})\begin{cases}
du(t)=\big[\Div(a(u(t))\nabla u(t))+ \Div (G(u(t)))\big]dt+B(t,u(t))dW(t) ,\\
u(0)=u_0,\\
\end{cases}
\end{equation*}
and we strengthen the assumptions in order to prove that the local solution from Theorem \ref{lp_quasi_example_local_solution} exists on the whole interval $ [0,T] $. We require:
\begin{itemize}
	\item [\lbrack\text{GD1}\rbrack] $ D\subset\R^{d}$ is a bounded  $ C^{1} $-domain.
	\item  [\lbrack\text{GD2}\rbrack] $ a:\R\to\R^{d\times d} $ is bounded and uniformly positive definite, i.e.
	\[ \inf_{y\in\R}\inf_{|\xi|=1}\xi^{T}a(y)\xi=\delta_0>0 \]
	 and $ a $ is globally Lipschitz continuous, i.e there exists $ L>0 $ such that
	\[ |a(y)-a(z)|\leq L|y-z| \] 
	for all $ y,z\in\R $.
	\item  [\lbrack\text{GD3}\rbrack] We choose $ p,q\in (2,\infty) $, such that $ 1-2/p>d/q $.
	\item [\lbrack\text{GD4}\rbrack] The initial value $ u_0:\Omega\to B^{1-2/p}_{q,p,0}(D) $ is a strongly $ \mathcal{F}_0 $-measurable random variable.
	\item  [\lbrack\text{GD5}\rbrack] $ G:\R\to \R^{d} $ is Lipschitz continuous, i.e.
	there is  $ L_G>0 $ such that
	\[ |G(y)-G(z)|\leq L_G|y-z| \]
	for all $ y,z\in \C $.  
	\item [\lbrack\text{GD6}\rbrack]
	The driving noise $ W $ is an $ l^{2} $- cylindrical Brownian motion with the decomposition
	\[ W(t)=\sum_{k=1}^{\infty}e_k\beta_k(t), \]
	where $ (e_k)_k $ is the standart orthonormal basis of $ l^2 $ and $ (\beta_k)_k $ is a sequence of independent real-valued Brownian motions relative to the filtration $ (\mathcal{F}_t)_{t\in [0,T]}. $
	\item [\lbrack\text{GD7}\rbrack] $ B=(B_n)_n:\Omega\times [0,T]\times D\times \R\to l^2(\N) $ is strongly measurable and $ \omega\mapsto B(\omega,t,x,y) $ is for all $ t\in [0,T] $, $x\in D$ and $ y\in\R $ strongly $ \mathcal{F}_t $-measurable. Furthermore, $ B $ is of linear growth, i.e.
	\[ \Big(\sum_{n=1}^{\infty}|B_n(\omega,t,x,y)|^2\Big)^{1/2}\leq C(1+|y|) \]
	and Lipschitz continuous in the last component, i.e. there is $ L_B>0 $ such that
	\[ \Big(\sum_{n=1}^{\infty}|B_n(\omega,t,x,y)-B_n(\omega,t,x,z)|^2\Big)^{1/2}\leq L_B|y-z| \]
	for all $ y,z\in \C $, $ t\in [0,T] $, $ x\in D $ and almost all $ \omega\in\Omega. $ Moreover, we assume
	\[ \|B_n(\omega,t,\cdot,f)\|_{\gamma(l^2;W^{1,2}_0(D))}\leq C(1+\|f\|_{W^{1,2}_0(D)}) \]
	for all $ f\in W^{1,2}_0(D), $ $ t\in [0,T] $, $ x\in D $ and almost all $ \omega\in\Omega. $
\end{itemize}

These assumptions are strictly stronger than $ [\operatorname{LD1}] $-$ [\operatorname{LD7}]. $ $ a $ is not locally, but globally Lipschitz and the nonlinearites $ \Div(G) $ and $ B $ are only of lower order. As we have already mentioned in our remarks below the assumptions of the previous section, $\lbrack\text{GD1}\rbrack$ and $\lbrack\text{GD2}\rbrack$ also imply $ q\in\mathcal{T}_{a(\cdot,z)} $ for every $ z\in B^{1-2/p}_{q,p,0}(D) $ and $ q\in (1,\infty). $ 

All in all, Theorem \ref{lp_quasi_example_local_solution} yields a local solution $ (u,(\tau_n)_n,\tau) $ of $ (\operatorname{GDIV}), $ i.e an increasing sequence of $ \F $-stopping times $ (\tau_n)_n $ with $ 0\leq\tau_n\leq T $ and $ \lim_{n\to\infty}\tau_n=\tau $ almost surely and a process $ u:\Omega\times [0,\tau)\to W^{1,q}_0(D) $ such that $ u $ solves $ (\operatorname{GDIV}) $ on $ [0,\tau_n] $ and
\begin{equation}\label{lp_quasi_example_global_starting_estimate}
\E\|u\ind_{[0,\tau_n]}\|_{L^{p}(0,T;W^{1,q}_0(D))}^{p}+\E\sup_{t\in[0,\tau_n]}\|u(t)\|_{B^{1-2/p}_{q,p,0}(D)}^{p}<\infty
\end{equation}
for every $ n\in\N. $ 

In this section, we aim to prove, that we actually have $ \tau=T $ almost surely. By the blow-up alternative from Theorem \ref{lp_quasi_example_local_solution}, it is sufficient to show that $ u:[0,\tau)\to B^{1-2/p}_{q,p,0}(D) $ is pathwise almost surely uniformly continuous and satisfies $ \|u\|_{L^{p}(0,\tau;W^{1,q}_{0}(D))}<\infty $. However, this is not too easy, since the estimate \eqref{lp_quasi_example_global_starting_estimate}, that originally comes from the abstract construction of a solution of a quasilinear equation, depends on $ n $ and to find uniform estimates, we have to use the special structure of our equation. 

Our first goal is to derive uniform $ L^{\alpha}(\Omega;L^{\infty}(0,\tau_n;L^{\alpha}(D))) $-estimates for $ u $ with $ \alpha\in [2,\infty) $ and to do this, we need a suitable version of the It\^o formula, that is useful to deal with weak solutions of stochastic partial differential equations. In \cite{debussche_arnoud_hofmanova_degenerate_quasilinear}, the authors developed such a tool for equations on the torus $ \mathbb{T}^{d} $ with periodic boundary conditions. We give an analogous result under assumptions that are adjusted to our situation. The proof follows the same lines.

\begin{Lemma}\label{lp_quasi_example_appropriate_ito_formula}
	Let $ \mu:\Omega\to[0,T] $ be an $ \F $-stopping time and $ \phi\in C^{2}(\R) $ with $ \phi(0)=0 $ and with a bounded second derivative. Assume, that the $ \F $-adapted process $ u:\Omega\times[0,\mu]\to\R $ with
	$ u\ind_{[0,\mu]}\in L^{2}(\Omega\times[0,T];W^{1,2}_{0}(D)) $ is pathwise continuous on $ [0,\mu] $ viewed as function with values in $ L^{2}(D) $ and satisfies $ \E\sup_{0\leq t\leq\mu}\|u(t)\|_{L^{2}(D)}^{2}<\infty $. Furthermore, we assume $ u $ to be of the form
	\begin{align}\label{lp_quasi_example_appropriate_ito_formula_help}
	u(t)-u_0=\int_{0}^{t}\Div F(s)\operatorname{ds}+\int_{0}^{t}H(s)dW(s)
	\end{align}
	almost surely for all $ t\in [0,\mu] $ in $ W^{-1,2}(D) $ with an $ \mathcal{F}_0 $-measurable initial value $ u_0\in L^{2}(D) $ , an $ \F $-adapted $ H\ind_{[0,\mu]}\in L^{2}(\Omega\times[0,T]\times D;l^2(\N)) $ and with $ F\ind_{[0,\mu]}\in L^{2}(\Omega\times[0,T]\times D)^{d}. $ Then, the generalized It\^o formula 
	\begin{align*}
	 \int_{D}\phi(u(t,x))-\phi(u_0(x))\operatorname{dx}=&-\int_{0}^{t}\int_{D}\phi''(u(s,x))\nabla u(s,x)F(s,x)\operatorname{ds}\\
	 &+\int_{0}^{t}\int_{D}\phi'(u(s,x))H(s,x)\operatorname{dx} dW(s)\\
	 &+\frac{1}{2}\sum_{n=1}^{\infty}\int_{0}^{t}\int_{D}\phi''(u(s,x))H^{2}_n(s,x)\operatorname{dx}\operatorname{ds}
	\end{align*}
	holds almost surely for all $ t\in [0,\mu] $.
\end{Lemma}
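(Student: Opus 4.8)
The plan is to prove the formula by a mollification argument in the spatial variable, reducing to the classical finite-dimensional Itô formula applied pointwise in $x$, and then passing to the limit. First I would pick a standard mollifier $(\rho_\varepsilon)_{\varepsilon>0}$ on $\R^d$ and, for fixed $\varepsilon>0$ and $x$ in a suitable interior subdomain, set $u^\varepsilon(t,x):=(u(t,\cdot)*\rho_\varepsilon)(x)$, $F^\varepsilon:=F*\rho_\varepsilon$, $H^\varepsilon_n:=H_n*\rho_\varepsilon$. Convolving the identity \eqref{lp_quasi_example_appropriate_ito_formula_help}, which holds in $W^{-1,2}(D)$, turns the term $\int_0^t\Div F(s)\,\mathrm{ds}$ into $\int_0^t\Div(F^\varepsilon)(s,x)\,\mathrm{ds}$, and now for each fixed $x$ the process $t\mapsto u^\varepsilon(t,x)$ is a genuine real-valued Itô process with drift $\Div(F^\varepsilon)(t,x)$ and diffusion coefficients $(H^\varepsilon_n(t,x))_n$ with respect to the Brownian motions $(\beta_n)_n$. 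Here one should be a little careful about where the convolution is defined: since $u$ vanishes on $\partial D$ (it lies in $W^{1,2}_0(D)$), extending $u$, $F$, $H_n$ by zero outside $D$ is consistent, and then the convolutions make sense on all of $\R^d$; the term $\int_D$ will ultimately only see the part of the integrand supported in $D$.

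Next I would apply the classical Itô formula to $\phi(u^\varepsilon(t,x))$ for fixed $x$, obtaining
\[
\phi(u^\varepsilon(t,x))-\phi(u^\varepsilon(0,x))
=\int_0^t\phi'(u^\varepsilon(s,x))\Div(F^\varepsilon)(s,x)\,\mathrm{ds}
+\int_0^t\phi'(u^\varepsilon(s,x))H^\varepsilon(s,x)\,dW(s)
+\tfrac12\sum_{n}\int_0^t\phi''(u^\varepsilon(s,x))(H^\varepsilon_n(s,x))^2\,\mathrm{ds},
\]
then integrate over $x\in D$ and use a stochastic Fubini theorem (justified by the $L^2(\Omega\times[0,T]\times D)$-integrability hypotheses on $F$ and $H$, the boundedness of $\phi''$, and the $L^2(\Omega;L^\infty(0,\mu;L^2(D)))$-bound on $u$) to interchange $\int_D$ with the time and stochastic integrals. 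For the drift term I would integrate by parts in $x$: $\int_D\phi'(u^\varepsilon)\Div(F^\varepsilon)\,\mathrm{dx}=-\int_D\phi''(u^\varepsilon)\nabla u^\varepsilon\cdot F^\varepsilon\,\mathrm{dx}$, with no boundary contribution because $\phi'(u^\varepsilon)$ is (after the zero-extension and for $\varepsilon$ small) supported away from $\partial D$ — this is where $\phi(0)=0$, hence $\phi'$ being irrelevant where $u=0$, together with $u\in W^{1,2}_0(D)$, is used.

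Finally I would let $\varepsilon\to 0$. The convergences $u^\varepsilon\to u$ and $\nabla u^\varepsilon\to\nabla u$ in $L^2(\Omega\times[0,T]\times D)$, $F^\varepsilon\to F$ in $L^2(\Omega\times[0,T]\times D)^d$, and $H^\varepsilon_n\to H_n$ in $L^2(\Omega\times[0,T]\times D;l^2)$ are standard properties of mollification; combined with $\phi\in C^2$, $\phi''$ bounded, and dominated convergence, each of the four terms converges: the left side by continuity of $\phi$ and uniform integrability, the $\phi''\nabla u\cdot F$ and $\phi''H_n^2$ terms by products of a strongly convergent and a bounded-and-a.e.-convergent factor, and the stochastic integral term via the Itô isometry after checking $L^2(\Omega\times[0,T]\times D;l^2)$-convergence of $\phi'(u^\varepsilon)H^\varepsilon$. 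One should also pass through a localizing sequence of stopping times if needed to upgrade from $L^2$-bounds to the genuine identity up to the fixed stopping time $\mu$, using that all processes are stopped at $\mu$ and the hypotheses are stated for $u\ind_{[0,\mu]}$, $F\ind_{[0,\mu]}$, $H\ind_{[0,\mu]}$. The main obstacle I anticipate is the boundary-term issue in the integration by parts: one must genuinely exploit the homogeneous Dirichlet condition and the vanishing of $\phi$ at $0$ to ensure that no term on $\partial D$ appears after mollification, rather than on the domain $D$ itself where the mollified functions need not satisfy the boundary condition; handling this cleanly (e.g. by a cutoff near $\partial D$ that is removed in the limit, or by the zero-extension argument sketched above) is the delicate point, the rest being routine mollification and limit passage.
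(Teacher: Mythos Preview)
Your overall strategy—spatial mollification, pointwise application of the finite-dimensional It\^o formula, integration over $D$ with stochastic Fubini, integration by parts in $x$, and passage to the limit—is exactly the approach the paper has in mind: the paper gives no detailed proof but refers to Debussche--Hofmanov\'a on the torus and says the argument ``follows the same lines''. On the torus there is no boundary, so the only new point here is precisely the boundary term you single out.

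Your handling of that boundary term, however, is not correct. You write that ``$\phi(0)=0$, hence $\phi'$ being irrelevant where $u=0$'' makes the boundary contribution vanish. This is false: $\phi(0)=0$ says nothing about $\phi'(0)$, and after integration by parts the surface term is (formally) $\int_{\partial D}\phi'(u)\,F\cdot\nu\,\mathrm{d}\sigma$, which on $\partial D$ equals $\phi'(0)\int_{\partial D}F\cdot\nu\,\mathrm{d}\sigma$ because $u\in W^{1,2}_0(D)$. For a concrete failure take $D=(0,1)$, $H=0$, $u(t,x)=(1+t\pi^2)\sin(\pi x)$, $F(s,x)=-\pi\cos(\pi x)$; then $u$ satisfies the hypotheses, but with $\phi(\xi)=\xi$ the claimed identity reads $\int_0^1(u(t)-u_0)\,\mathrm{d}x=0$, whereas in fact $\int_0^1(u(t)-u_0)\,\mathrm{d}x=2\pi t$. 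So the lemma as stated is actually missing the hypothesis $\phi'(0)=0$. With that extra hypothesis, $\phi'(u)\in W^{1,2}_0(D)$ is a legitimate test function (its trace vanishes and $\nabla(\phi'(u))=\phi''(u)\nabla u\in L^2$), and the integration-by-parts step goes through with no boundary term; the rest of your plan then works. In the paper this causes no trouble, since every application uses $\phi_n$ approximating $|\xi|^\alpha$ with $\alpha\ge 2$, for which $\phi_n'(0)=0$.
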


The following Lemma was used several times in the literature in a comparable situation (see e.g. \cite{hofmanova_zhang_existence_uniqueness_quasilinear}, Theorem $ 3.1 $ or \cite{debussche_arnoud_hofmanova_degenerate_quasilinear}, Proposition $ 5.1 $). The difference is, that we deal with Dirichlet boundary conditions, whereas the references consider periodic boundary conditions on the torus and that we work on a random interval up to a stopping time.

\begin{Lemma}\label{lp_quasi_example_global_uniform_lp_estimate}
	If we assume $ [\operatorname{GD1}] $-$ [\operatorname{GD7}] $ and additionally $ u_0\in L^{\alpha}(\Omega\times D) $ for some $ \alpha\in[2,\infty) $, we have
	\begin{align*}
	\big(\E\sup_{0\leq t< \tau}\|u(t)\|_{L^{\alpha}(D)}^{\alpha}\big)^{1/\alpha}\leq C_\alpha(1+\|u_0\|_{L^{\alpha}(\Omega\times D)})
	\end{align*}
	with a constant $ C_\alpha>0 $ independent of $ u_0 $. Moreover, we have 
	\[ \|u\ind_{[0,\tau)}\|_{L^{2}(\Omega\times [0,T];W^{1,2}_{0}(D))}<\infty. \]
\end{Lemma}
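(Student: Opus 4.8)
The plan is to establish the two assertions of Lemma~\ref{lp_quasi_example_global_uniform_lp_estimate} via a priori energy estimates that do \emph{not} depend on the localising index $n$, and then to pass to the limit along $\tau_n\uparrow\tau$. The starting point is that for each $n$ the process $u$ is a strong solution of $(\operatorname{GDIV})$ on $[0,\tau_n]$, which by the embedding $W^{1,q}_0(D)\hookrightarrow W^{1,2}_0(D)$ (valid since $q\geq 2$ and $D$ is bounded) and \eqref{lp_quasi_example_global_starting_estimate} puts $u$ in the function class required to apply the It\^o formula of Lemma~\ref{lp_quasi_example_appropriate_ito_formula} on the stopping time $\mu=\tau_n$, with the identification $F(s,x)=a(u(s,x))\nabla u(s,x)+G(u(s,x))$ and $H_n(s,x)=B_n(s,x,u(s,x))$. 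First I would treat the case $\alpha=2$ by applying the It\^o formula with $\phi(r)=\tfrac12 r^2$ (after a routine truncation of $\phi''$ to make it bounded, then removing the truncation by monotone/dominated convergence, which is the standard device and is exactly what the references do):
\begin{align*}
\tfrac12\|u(t)\|_{L^2(D)}^2 = \tfrac12\|u_0\|_{L^2(D)}^2 &- \int_0^t\!\!\int_D \nabla u\cdot\big(a(u)\nabla u + G(u)\big)\,dx\,ds\\
&+ \int_0^t\!\!\int_D u\,B(s,x,u)\,dx\,dW(s) + \tfrac12\sum_{n}\int_0^t\!\!\int_D B_n(s,x,u)^2\,dx\,ds.
\end{align*}
Here the leading term $-\int a(u)|\nabla u|^2$ is bounded above by $-\delta_0\|\nabla u\|_{L^2(D)}^2$ by uniform ellipticity $[\operatorname{GD2}]$, the convection term $-\int\nabla u\cdot G(u)$ is absorbed via Young's inequality (using $|G(u)|\lesssim 1+|u|$ from $[\operatorname{GD5}]$ and $G(0)$ bounded on the bounded domain), and the It\^o correction term is controlled by the linear-growth bound in $[\operatorname{GD7}]$, giving $\lesssim \int_0^t(1+\|u(s)\|_{L^2(D)}^2)\,ds$.

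**Energy estimate and Gronwall.** Next I would take a supremum over $t\in[0,\tau_n]$ and then expectations; the martingale term is handled by the Burkholder--Davis--Gundy inequality, whose quadratic variation is $\int_0^{\cdot}\big(\sum_n(\int_D u B_n\,dx)^2\big)ds \le \int_0^{\cdot}\|u(s)\|_{L^2(D)}^2\,\|B(s,\cdot,u(s))\|^2_{\gamma(l^2;L^2(D))}\,ds \lesssim \int_0^{\cdot}\|u(s)\|_{L^2(D)}^2(1+\|u(s)\|_{L^2(D)}^2)\,ds$, so after a Young-type splitting one absorbs $\varepsilon\,\E\sup_{s\le\tau_n}\|u(s)\|_{L^2(D)}^2$ into the left side and is left with $\E\sup_{s\le t\wedge\tau_n}\|u(s)\|_{L^2(D)}^2 \le C\big(1+\|u_0\|_{L^2(\Omega\times D)}^2 + \int_0^t \E\sup_{r\le s\wedge\tau_n}\|u(r)\|_{L^2(D)}^2\,ds\big)$. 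Gronwall's lemma then yields a bound $C_2(1+\|u_0\|_{L^2(\Omega\times D)})$ \emph{uniform in $n$}; letting $n\to\infty$ and using monotone convergence ($\tau_n\uparrow\tau$, $\sup_{s<\tau}=\lim_n\sup_{s\le\tau_n}$) gives the claimed $L^2$-bound, and going back into the energy identity the dissipation term delivers $\E\int_0^{\tau}\|\nabla u(s)\|_{L^2(D)}^2\,ds<\infty$, i.e. $\|u\ind_{[0,\tau)}\|_{L^2(\Omega\times[0,T];W^{1,2}_0(D))}<\infty$, which is the second assertion. For general $\alpha\in[2,\infty)$ I would repeat the argument with $\phi(r)=|r|^\alpha$ (again via truncation/regularisation of $\phi''$); now $\phi''(r)=\alpha(\alpha-1)|r|^{\alpha-2}\ge 0$, so the leading term is $-\alpha(\alpha-1)\delta_0\int_D |u|^{\alpha-2}|\nabla u|^2 \le 0$ and may be dropped, the convection term is $-\alpha(\alpha-1)\int_D |u|^{\alpha-2}\nabla u\cdot G(u)$ which by Young against the negative dissipation term (keeping a fraction of it) is absorbed up to $C\int_D|u|^\alpha\,dx + C$, and the It\^o term is $\tfrac{\alpha(\alpha-1)}{2}\sum_n\int_D|u|^{\alpha-2}B_n^2 \lesssim \int_D|u|^{\alpha-2}(1+|u|^2) \lesssim 1+\int_D|u|^\alpha$; BDG on the stochastic term $\int_D |u|^{\alpha-2}u\,B\,dx$ contributes $\lesssim (\int_D|u|^{2\alpha-2}(1+|u|^2)dx)^{1/2}$ in the quadratic variation, which after $|u|^{2\alpha-2}|B|^2 \lesssim |u|^{\alpha}\cdot|u|^{\alpha-2}(1+|u|^2)$ and Young again is dominated by $\varepsilon\sup\|u\|_{L^\alpha}^\alpha + C\int_0^{\cdot}(1+\|u\|_{L^\alpha(D)}^\alpha)ds$; another absorption and Gronwall close the estimate with a constant $C_\alpha$ independent of $n$, and $n\to\infty$ finishes.

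**Main obstacle.** The technically delicate point is the justification of the It\^o formula itself in the form of Lemma~\ref{lp_quasi_example_appropriate_ito_formula}: the solution $u$ is only a \emph{weak} (distributional) solution living in $W^{-1,2}(D)$ with spatial gradient in $L^2$, so one cannot simply test with $u$ pointwise; one must either invoke Lemma~\ref{lp_quasi_example_appropriate_ito_formula} directly (after checking its hypotheses, in particular that $u\ind_{[0,\tau_n]}\in L^2(\Omega\times[0,T];W^{1,2}_0(D))$ and $t\mapsto u(t)$ is pathwise continuous into $L^2(D)$ --- both of which follow from Theorem~\ref{lp_quasi_example_local_solution} together with the embedding $B^{1-2/p}_{q,p,0}(D)\hookrightarrow L^2(D)$ and $W^{1,q}_0(D)\hookrightarrow W^{1,2}_0(D)$), or reprove it via a Galerkin / mollification approximation. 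A secondary subtlety is the stopping-time bookkeeping: one must verify that $u\ind_{[0,\tau_n]}$ has the stated integrability so that all the expectations above are finite \emph{a priori} for each fixed $n$ (this is \eqref{lp_quasi_example_global_starting_estimate}), run the Gronwall estimate on $[0,\tau_n]$ to get a bound whose constant does not see $n$, and only then let $n\to\infty$; exchanging $\sup_{s<\tau}$ with $\lim_n$ and the time-integral over $[0,\tau)$ with $\lim_n\int_0^{\tau_n}$ is justified by monotone convergence since all integrands are nonnegative. Once these points are in place the rest is the routine Young/BDG/Gronwall chain sketched above.
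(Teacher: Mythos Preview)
Your plan is correct and follows the same overall architecture as the paper's proof: apply the It\^o formula of Lemma~\ref{lp_quasi_example_appropriate_ito_formula} on $[0,\tau_m]$ to $\phi(\xi)=|\xi|^\alpha$ (via a $C^2$-approximation with bounded second derivative), estimate the stochastic integral by BDG and absorb half of it into the left side, close by Gronwall with a constant independent of $m$, and pass to the limit; the $W^{1,2}_0$-bound then drops out of the $\alpha=2$ identity by keeping the coercive term.

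The one substantive difference is your handling of the convection term. You absorb $\int_D\phi''(u)\,\nabla u\cdot G(u)\,dx$ into the dissipation $-\delta_0\int_D\phi''(u)|\nabla u|^2\,dx$ by Young's inequality. The paper instead observes that, because $G$ depends on $u$ only and $u$ vanishes on $\partial D$, one has
\[
\int_D \phi_n''(u)\,\nabla u\cdot G(u)\,dx=\int_D \Div\Big(\int_0^{u(t,x)}\phi_n''(\xi)\,G(\xi)\,d\xi\Big)dx=0,
\]
so this term disappears exactly and the dissipation term can simply be dropped (it is nonpositive). Your route still works and is more robust (it would survive an $x$-dependent $G$), at the price of a little extra bookkeeping: you must retain a fraction of the coercive term throughout the $\alpha$-estimate, and when you pass through the truncation of $\phi$ you should check that the Young split is uniform along the approximation. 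The paper's explicit approximants $\phi_n$ (quadratic extension of $|\xi|^\alpha$ outside $[-n,n]$) together with the inequalities $|\xi\phi_n'(\xi)|\le\alpha\phi_n(\xi)$, $\xi^2\phi_n''(\xi)\le\alpha(\alpha-1)\phi_n(\xi)$, etc., make this bookkeeping clean; your sketch leaves that step implicit, but it is standard.
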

\begin{proof}
	We fix $ m\in\N $ and work on the interval $ [0,\tau_m]. $ By \eqref{lp_quasi_example_global_starting_estimate}, the process $ u$ satisfies $u\ind_{[0,\tau_m]}\in L^{2}(\Omega\times [0,T];W^{1,2}_{0}(D)) $, has almost surely continuous paths viewed as function with values in $ L^{2}(D) $ with $ \E\sup_{0\leq t\leq\tau_m}\|u\|^2_{L^{2}(D)}<\infty $ and can be written in the form
	\[ u(t)-u_0=\int_{0}^{t}\Div(a(u(s))\nabla u(s))+\Div (G(u(s)))\operatorname{ds}+\int_{0}^{t}B(u(s))dW(s) \]
	almost surely for $ t\in [0,\tau_m] $ and is therefore an It\^o  process in $ W^{-1,2}_{0}(D)$. 
	
	We aim to apply the It\^o formula from Lemma \ref{lp_quasi_example_appropriate_ito_formula} to the function $ \phi(\xi)=|\xi|^{\alpha}, $ but unfortunately, its second derivative is not bounded, if we are not in the case $ \alpha=2 $. Therefore, we have to start with an approximation of $ \phi $. We use
	\begin{equation*}
	\phi_n(\xi)=\begin{cases}
	|\xi|^{\alpha},&|\xi|\leq n,\\
	n^{\alpha-2}\big(\frac{\alpha(\alpha-1)}{2}\xi^{2}-\alpha(\alpha-2)n|\xi|+\frac{(\alpha-1)(\alpha-2)}{2}n^{2} \big),&|\xi|> n
	\end{cases}
	\end{equation*}
	for every $ n\in\N $ if $ \alpha\in (2,\infty). $ If $ \alpha=2 $, we set $ \phi_n(\xi):=\xi^{2} $.
	
	One has $ \lim_{n\to\infty}\phi_n(\xi)=|\xi|^{\alpha} $ pointwise for all $ \xi\in\R $, $ \phi_n(0)=0 $ and
	$ \phi_n''(\xi)\geq 0 $ for all $ \xi\in\R. $ Moreover, by calculation, one can derive the estimates
	\begin{align}\label{lp_quasi_example_global_uniform_lp_estimate_bla}
	|\xi\phi_n'(\xi)|&\leq \alpha\phi_n(\xi),\notag\\
	|\phi_n'(\xi)|&\leq \alpha (1+\phi_n(\xi)),\notag\\
	\xi^{2}\phi_n''(\xi)&\leq \alpha(\alpha-1)\phi_n(\xi),\\
	\phi''(\xi)&\leq \alpha(\alpha-1)(1+\phi_n(\xi))\notag
	\end{align}
	for all $ \xi\in\R,n\in\N $. Now we are in the position to apply It\^o's formula and obtain
	\begin{align}\label{lp_quasi_example_global_uniform_lp_estimate_mup}
	\int_{D}\phi_n(u(x,t))\operatorname{dx}=& \int_{D}\phi_n(u_0(x))\operatorname{dx}\notag\\
	&-\int_{0}^{t}\int_{D}\phi_n''(u(s,x))\nabla u(s,x)\big(G(u(s,x))+a(u(s,x))\nabla u(s,x)\big)\operatorname{dx}\operatorname{ds}\notag\\
	&+\int_{0}^{t}\int_{D}\phi_n'(u(s,x))B(s,x)\operatorname{dx}dW(s)\notag\\
	&+\frac{1}{2}\sum_{k=1}^{\infty}\int_{0}^{t}\int_{D}\phi_n''(u(s,x))B_k(s,x,u(s,x))^{2}\operatorname{dx}\operatorname{ds}
	\end{align}
	almost surely for all $ t\in [0,\tau_m]. $ Next, we estimate $ \E\sup_{0\leq s\leq t\wedge\tau_m}\int_{D}\phi_n(u(x,s))\operatorname{dx} $ term by term beginning with the stochastic integral. Applying the scalar valued Burkholder-Davis-Gundy inequality, the assumptions on $ B $ and the estimates \eqref{lp_quasi_example_global_uniform_lp_estimate_bla}, we obtain
	\begin{align*}
	\E\sup_{0\leq s\leq t\wedge\tau_m}&\Big|\int_{0}^{s}\int_{D}\phi_n'(u(r,x))B(r,x,u(r,x))\operatorname{dx}dW(r) \Big|\\
	&\lesssim \E\Big(\int_{0}^{ t\wedge\tau_m}\sum_{k=1}^{\infty}\Big(\int_{D}\phi_n'(u(r,x))B_k(r,x,u(r,x))\operatorname{dx}\Big)^{2}\operatorname{dr} \Big)^{1/2}\\
	&\lesssim \E\Big(\int_{0}^{ t\wedge\tau_m}\|\phi_n'(u(r))^{\tfrac{1}{2}}u(r)^{\tfrac{1}{2}}\|_{L^{2}(D)}^2 \|\phi_n'(u(r))^{\tfrac{1}{2}}u(r)^{-\tfrac{1}{2}}B(r,u(r))\|_{L^{2}(D;l^2(\N)}^{2}\operatorname{dr}\Big)^{1/2}\\
	&\lesssim \E\Big(\int_{0}^{ t\wedge\tau_m}\|\phi_n(u(r))^{\tfrac{1}{2}}\|^2_{L^{2}(D)}\|1+\phi_n(u(r))^{\tfrac{1}{2}}\|_{L^{2}(D)}^{2}\operatorname{dr}\Big)^{1/2}\\
	&\lesssim \E\Big(\int_{0}^{ t\wedge\tau_m}\int_{D}\phi_n(u(r,x))\operatorname{dx}\big(1+\int_{D}\phi_n(u(r,x))\operatorname{dx}\big) \Big)^{1/2}\\
	&\leq \E\Big(\sup_{0\leq s\leq t\wedge\tau_m}\int_{D}\phi_n(u(r,x))\operatorname{dx} \Big)^{1/2}\Big(1+\int_{0}^{ t\wedge\tau_m}\int_{D}\phi_n(u(r,x))\operatorname{dx}\operatorname{dr} \Big)^{1/2}.
	\end{align*}
	Finally, the well-known estimate $ ab\leq \varepsilon a^{2}+C_\varepsilon b^{2} $ for $ a,b> 0 $ yields
	\begin{align*}
	\E\sup_{0\leq s\leq t\wedge\tau_m}&\Big|\int_{0}^{s}\int_{D}\phi_n'(u(r,x))B(r,x,u(r,x))\operatorname{dx}dW(r) \Big|\\
	&\leq\frac{1}{2} \E\sup_{0\leq s\leq t\wedge\tau_m}\int_{D}\phi_n(u(r,x))\operatorname{dx}+C\Big(1+\E\int_{0}^{ t\wedge\tau_m}\int_{D}\phi_n(u(r,x))\operatorname{dx}\operatorname{dr} \Big)
	\end{align*}
	for some $ C>0. $ 
%
	We proceed with the deterministic terms in \eqref{lp_quasi_example_global_uniform_lp_estimate_mup}. Since $ \phi''\geq 0 $ and $ a $ is coercive,
	\begin{align*}
	-\phi_n''(u(s,x))\nabla u(s,x)a(u(s,x)\nabla u(s,x)
	\end{align*}
	is almost surely for all $ s\in[0,\tau_m] $ and all $ x\in D $ non-positive and the corresponding term can be dropped in an upper estimate. Moreover, the divergence theorem of Gauss and $ u(t,x)=0 $ almost surely for $ t\in [0,\tau_m] $ and $ x\in\partial D $ yields
	\begin{align*}
	\int_{D}\phi_n''(u(s,x))\nabla u(s,x)G(u(s,x))\operatorname{dx}&=\int_{D}\Div\Big(\int_{0}^{u(t,x)}\phi_n''(\xi)G(s,\xi)\operatorname{d\xi} \Big)\operatorname{dx}\\
	&=\int_{\partial D}\Big(\int_{0}^{u(t,x)}\phi_n''(\xi)G(s,\xi)\operatorname{d\xi} \Big)\nu\operatorname{d\sigma(x)}=0.
	\end{align*}
	The last remaining term can be estimated with the assumptions on $ B $ and \eqref{lp_quasi_example_global_uniform_lp_estimate_bla}. 
	\begin{align*}
	\E\sup_{0\leq s\leq t\wedge\tau_m}&\Big|\sum_{k=1}^{\infty}\int_{0}^{s}\int_{D}\phi_n''(u(r,x))B_k(r,x,u(r,x))^{2}\operatorname{dx}\operatorname{dr}\Big|\\
	&\lesssim \E\sup_{0\leq s\leq t\wedge\tau_m}\Big|\int_{0}^{s}\int_{D}\phi_n''(u(r,x))(1+u(r,x))^{2}\operatorname{dx}\operatorname{dr}\\
	&\lesssim \E\int^{t\wedge\tau_m}_0\int_{D}1+\phi_n(u(r,x))\operatorname{dx}\operatorname{dr}\\
	&\lesssim 1+\int_{0}^{t}\E\sup_{0\leq s\leq r\wedge\tau_m}\int_{D}\phi_n(s,x)\operatorname{dx}\operatorname{dr}.
	\end{align*}
	All in all, we proved
	\begin{align*}
	E\sup_{0\leq s\leq t\wedge\tau_m}\int_{D}\phi_n(u(x,s))\operatorname{dx}\lesssim\ 1+\E \int_{D}\phi_n(u_0(x))\operatorname{dx}+\int_{0}^{t}\E\sup_{0\leq s\leq r\wedge\tau_m}\int_{D}\phi_n(s,x)\operatorname{dx}\operatorname{dr}
	\end{align*}
	and hence with Gronwall,
	\begin{align*}
	\E\sup_{0\leq s\leq t\wedge\tau_m}\int_{D}\phi_n(u(x,s))\operatorname{dx}\lesssim\ 1+\E \int_{D}\phi_n(u_0(x))\operatorname{dx}
	\end{align*}
	for every $ t\in [0,T] $ and $ n\in\N $ and the estimate is independent of $ n. $ We want to finish the proof by applying Fatou to pass to the limit $ n\to\infty. $ Note that one can interchange $ \sup $ and $ \liminf $ in an upper estimate, since $ \liminf $ can be written in the form $ \sup \inf $ and supremums can be interchanged, whereas $ \sup\inf\leq \inf\sup $. Thus, we have
	\begin{align*}
	\E\sup_{0\leq s\leq t\wedge\tau_m}\int_{D}|u(x,s)|^{\alpha}\operatorname{dx}&\leq\liminf_{n\to\infty}\E\sup_{0\leq s\leq t\wedge\tau_m}\int_{D}\phi_n(u(x,s))\operatorname{dx}\\
	&\lesssim 1+\liminf_{n\to\infty} \E \int_{D}\phi_n(u_0(x))\operatorname{dx}
	\end{align*}
	and the last term equals $ \E\|u_0\|_{L^{\alpha}(D)}, $ which can be proved with Lebesgue's dominated convergences theorem. This proves
		\begin{align*}
		\big(\E\sup_{0\leq t\leq \tau_m}\|u(t)\|_{L^{\alpha}(D)}^{\alpha}\big)^{1/\alpha}\lesssim 1+\|u_0\|_{L^{\alpha}(\Omega\times D)}
		\end{align*}
	for every $ m\in\N $. The first claim now follows from another application of Fatou's Lemma. For the second claim, we have to look at \eqref{lp_quasi_example_global_uniform_lp_estimate_mup} in the special case $ \alpha=2. $ We get
	\begin{align*}
	\|u(t)\|_{L^{2}(D)}^{2}=& \|u_0\|_{L^{2}(D)}^{2}-2\int_{0}^{t}\int_{D}\nabla u(s,x)a(u(s,x))\nabla u(s,x)\operatorname{dx}\operatorname{ds}\notag\\
	&+2\int_{0}^{t}\int_{D}u(s,x)B(s,x,u(s,x))\operatorname{dx}dW(s)+\sum_{k=1}^{\infty}\int_{0}^{t}\int_{D}B_k(s,x,u(s,x))^{2}\operatorname{dx}\operatorname{ds}
	\end{align*}
	almost surely for all $ t\in [0,\tau_m]. $ Coercivity of $ a(u(s,x)) $ then yields
	\begin{align*}
	-\int_{D}\nabla u(s,x)&a(u(s,x))\nabla u(s,x)\operatorname{dx}\leq  -\delta_0\|\nabla u(s)\|_{L^{2}(D)}^{2}
	\end{align*}
	for some $ \varepsilon\in (0,\delta_0) $ and $ C_\varepsilon>0 $. As a consequence, we have
	\begin{align*}
	\delta_0\int_{0}^{t}\|\nabla u(s)\|_{L^{2}(D)}^{2}\operatorname{ds}\leq& \|u_0\|_{L^{2}(D)}^{2}+ 2 \int_{0}^{t}\int_{D}u(s,x)B(s,x,u(s,x))\operatorname{dx}dW(s) \notag\\
	&+\frac{1}{2}\sum_{k=1}^{\infty}\int_{0}^{t}\int_{D}B_k(s,x,u(s,x))^{2}\operatorname{dx}\operatorname{ds}
	\end{align*}
	and with the estimates we already did before and
			\begin{align*}
			\big(\E\sup_{0\leq t< \tau}\|u(t)\|_{L^2(D)}^{2}\big)^{1/2}\leq C_2(1+\|u_0\|_{L^{2}(\Omega\times D)})
			\end{align*}
	we get
			\begin{align*}
			\big(\E\|\nabla u\ind_{[0,\tau)}\|_{L^{2}([0,T]\times D)}^{2}\operatorname{ds}\big)^{1/2}\lesssim (1+\|u_0\|_{L^{2}(\Omega\times D)}).
			\end{align*}
			This finishes the proof.
\end{proof}
As a consequence of these estimates, we can extend $ u $ pathwise to a continuous function with values in $ L^{2}(D) $ on the closed interval $ [0,\tau]. $
\begin{Lemma}\label{lp_quasi_example_global_continuity_up_to_tau}
If we assume $ [\operatorname{GD1}] $-$ [\operatorname{GD7}] $ and additionally $ u_0\in L^{2}(\Omega\times D) $ the function $ u:[0,\tau)\to L^{2}(D) $ is pathwise almost surely uniformly continuous and can be extended to a continuous function on $ [0,\tau]. $
\end{Lemma}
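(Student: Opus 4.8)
The plan is to extend $u$ from $[0,\tau)$ to a globally defined It\^o process with values in $W^{-1,2}(D)$, to show that this extension has paths that are continuous into $L^2(D)$ all the way up to $\tau$, and to read off the statement from the compactness of $[0,\tau]$. Concretely, I would set, for $t\in[0,T]$,
\[
\widetilde u(t):=u_0+\int_0^{t\wedge\tau}\big(\Div(a(u(s))\nabla u(s))+\Div(G(u(s)))\big)\,ds+\int_0^{t\wedge\tau}B(s,u(s))\,dW(s).
\]
The integrands are admissible: since $a$ is bounded and $G$ is Lipschitz, $\|a(u(s))\nabla u(s)+G(u(s))\|_{L^2(D)}\lesssim 1+\|\nabla u(s)\|_{L^2(D)}+\|u(s)\|_{L^2(D)}$, so Lemma \ref{lp_quasi_example_global_uniform_lp_estimate} (the bounds $\sup_{s<\tau}\|u(s)\|_{L^2(D)}<\infty$ a.s.\ and $\|u\ind_{[0,\tau)}\|_{L^2(\Omega\times[0,T];W^{1,2}_0(D))}<\infty$) shows $\ind_{[0,\tau)}\Div(a(u)\nabla u+G(u))\in L^2(\Omega\times[0,T];W^{-1,2}(D))$, and the pointwise growth bound on $B$ in $[\operatorname{GD7}]$ shows $\ind_{[0,\tau)}B(\cdot,u)\in L^2(\Omega\times[0,T];\gamma(l^2;L^2(D)))$. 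Hence $\widetilde u$ is a well-defined, $\F$-adapted process with $\PP$-a.s.\ continuous paths in $W^{-1,2}(D)$, with $\widetilde u\ind_{[0,\tau]}\in L^2(\Omega\times[0,T];W^{1,2}_0(D))$ since it coincides with $u\ind_{[0,\tau)}$ there; and since $u$ is a strong solution of $(\operatorname{GDIV})$ on every $[0,\tau_n]$ and $W^{-1,q}(D)\hookrightarrow W^{-1,2}(D)$, we have $\widetilde u=u$ on $[0,\tau)$. It therefore suffices to prove that $\widetilde u$ has $\PP$-a.s.\ continuous paths $[0,\tau]\to L^2(D)$: then $u=\widetilde u|_{[0,\tau)}$ is the restriction of $\widetilde u|_{[0,\tau]}$, which, being continuous on the compact interval $[0,\tau]$, is uniformly continuous.

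The $L^2(D)$-continuity of $\widetilde u$ up to $\tau$ is the regularity part of the It\^o-formula package for variational stochastic evolution equations in the Gelfand triple $W^{1,2}_0(D)\hookrightarrow L^2(D)\hookrightarrow W^{-1,2}(D)$ (note $[W^{-1,2}(D),W^{1,2}_0(D)]_{1/2}=L^2(D)$, cf.\ the proof of Lemma \ref{lp_quasi_example_div_properties_of_the_operators}), localised to the random interval $[0,\tau]$; see \cite{prevot_roeckner_a_concise_course_on_spde}. I would derive it by hand as follows. On each $[0,\tau_m]$ the process $u$ is continuous into $E_p\hookrightarrow L^2(D)$, satisfies $\E\sup_{[0,\tau_m]}\|u\|_{L^2(D)}^2<\infty$, and is of the form required by Lemma \ref{lp_quasi_example_appropriate_ito_formula}; applying that lemma with $\phi(\xi)=\xi^2$, and using that $\int_D\nabla u\cdot G(u)\,dx=\int_D\Div\big(\int_0^{u}G\big)\,dx=0$ by the divergence theorem and $u|_{\partial D}=0$, yields the energy identity
\[
\|u(t)\|_{L^2(D)}^2+2\int_0^t\!\!\int_D\nabla u\cdot a(u)\nabla u\,dx\,ds=\|u_0\|_{L^2(D)}^2+2\int_0^t(u,B(s,u))_{L^2(D)}\,dW(s)+\int_0^t\|B(s,u)\|_{\gamma(l^2;L^2(D))}^2\,ds
\]
for $t\in[0,\tau_m]$. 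Letting $m\to\infty$, every term on the right extends continuously to $[0,\tau]$: the two pathwise integrals because their integrands are a.s.\ integrable over $[0,\tau)$ (using boundedness of $a$, $\int_0^\tau\|\nabla u\|_{L^2(D)}^2\,ds<\infty$, and $\|B(s,u)\|_{\gamma(l^2;L^2(D))}^2\lesssim 1+\|u(s)\|_{L^2(D)}^2$), and the stochastic integral because it is a continuous local martingale on $[0,T]$, its quadratic variation being bounded by $\int_0^\tau\|u(s)\|_{L^2(D)}^2\|B(s,u)\|_{\gamma(l^2;L^2(D))}^2\,ds<\infty$ a.s. Since $s\mapsto\int_0^s\int_D\nabla u\cdot a(u)\nabla u\,dx\,dr$ is in addition non-decreasing by coercivity of $a$, it follows that $t\mapsto\|u(t)\|_{L^2(D)}^2$ converges as $t\uparrow\tau$. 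On the other hand, the $W^{-1,2}(D)$-continuity of $\widetilde u$ and $\sup_{t<\tau}\|u(t)\|_{L^2(D)}<\infty$ a.s.\ give, by weak compactness in $L^2(D)$, that $\widetilde u(\tau)\in L^2(D)$ and $u(t)\rightharpoonup\widetilde u(\tau)$ weakly in $L^2(D)$ as $t\uparrow\tau$.

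The hard part is to exclude a loss of mass in the limit, i.e.\ to show $\lim_{t\uparrow\tau}\|u(t)\|_{L^2(D)}^2=\|\widetilde u(\tau)\|_{L^2(D)}^2$; together with the weak convergence above this upgrades to strong convergence $u(t)\to\widetilde u(\tau)$ in $L^2(D)$, and, combined with the continuity of $u$ on every $[0,\tau_m]$, it produces the continuous extension to $[0,\tau]$. This is precisely the delicate point of the variational regularity theorem, and it is where the parabolic smoothing $u\ind_{[0,\tau)}\in L^2(\Omega\times[0,T];W^{1,2}_0(D))$ from Lemma \ref{lp_quasi_example_global_uniform_lp_estimate} and the coercivity of $a$ enter. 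The idea is to control the $L^2(D)$-modulus of continuity of $u$ near $\tau$ along times $s$ with $u(s)\in W^{1,2}_0(D)$ — almost every time is of this type — using the interpolation inequality $\|v\|_{L^2(D)}\lesssim\|v\|_{W^{-1,2}(D)}^{1/2}\|v\|_{W^{1,2}_0(D)}^{1/2}$, the Cauchy--Schwarz bound $\int_s^t\|\Div(a(u)\nabla u+G(u))\|_{W^{-1,2}(D)}\,dr\le(t-s)^{1/2}\big(\int_s^t\|\Div(a(u)\nabla u+G(u))\|_{W^{-1,2}(D)}^2\,dr\big)^{1/2}$, and the already established $W^{-1,2}(D)$-continuity of $\widetilde u$. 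This is carried out exactly as for the torus in \cite{debussche_arnoud_hofmanova_degenerate_quasilinear} and \cite{hofmanova_zhang_existence_uniqueness_quasilinear}; the only differences are the Dirichlet boundary, which is harmless since $W^{1,2}_0(D)$-functions have vanishing trace — this is what made the convection term disappear above — and the localisation to the random interval $[0,\tau]$, which is taken care of by the construction of $\widetilde u$ in the first step.
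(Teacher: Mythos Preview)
Your approach is correct in principle but takes a considerably longer route than the paper. You invoke the full variational regularity theorem (Krylov--Rozovskii / Pr\'ev\^ot--R\"ockner) and reconstruct its proof via the energy identity; the ``hard part'' you flag --- upgrading weak $L^2(D)$-convergence plus norm convergence to strong convergence at $\tau$ --- is indeed the core of that theorem, and while your reference to \cite{prevot_roeckner_a_concise_course_on_spde} is legitimate, your own sketch of it via the interpolation inequality and a Cauchy--Schwarz bound is too vague to stand on its own.

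The paper bypasses this entirely by exploiting a clause of $[\operatorname{GD7}]$ you did not use: the bound $\|B(\omega,t,\cdot,f)\|_{\gamma(l^2;W^{1,2}_0(D))}\lesssim 1+\|f\|_{W^{1,2}_0(D)}$. Combined with $u\ind_{[0,\tau)}\in L^2(\Omega\times[0,T];W^{1,2}_0(D))$ from Lemma~\ref{lp_quasi_example_global_uniform_lp_estimate}, It\^o's isometry places the stochastic convolution $t\mapsto\int_0^t B(u(s))\ind_{[0,\tau)}\,dW(s)$ in $L^2(\Omega\times[0,T];W^{1,2}_0(D))$, and it is pathwise continuous in $L^2(D)$ on all of $[0,T]$ by Burkholder--Davis--Gundy. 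Subtracting, the deterministic part $t\mapsto u_0+\int_0^t\Div\big(a(u)\nabla u+G(u)\big)\,ds$ lands pathwise in $L^2(0,\tau;W^{1,2}_0(D))$ as well, and by the fundamental theorem of calculus also in $W^{1,2}(0,\tau;W^{-1,2}(D))$; the classical Lions embedding
\[
W^{1,2}(0,\tau;W^{-1,2}(D))\cap L^2(0,\tau;W^{1,2}_0(D))\hookrightarrow C(0,\tau;L^2(D))
\]
then finishes the proof in one line. What your variational route buys is generality --- it would go through even if $B$ mapped only into $L^2(D)$ --- but here the stronger hypothesis on $B$ is part of $[\operatorname{GD7}]$ and makes the decomposition \emph{stochastic part $+$ deterministic part} dramatically shorter than reproving the Krylov--Rozovskii continuity step.
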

\begin{proof}
We know, that $ u $ is an It\^o process in $ W^{-1,2}(D) $ and that we have
\[ u(t)-u_0=\int_{0}^{t}\big(\Div(a(u(s))\nabla u(s))+\Div (G(u(s)))\big)\operatorname{ds}+\int_{0}^{t}B(u(s))dW(s) \]
for every $ t\in [0,\tau) $ and by Lemma \ref{lp_quasi_example_global_uniform_lp_estimate}, we have $ u\in L^{2}(0,\tau;W^{1,2}_{0}(D)) $ pathwise almost surely. Moreover, by $ [\operatorname{GD7}] $ and It\^o's isometry, we obtain
\begin{align*}
\|t\mapsto\int_{0}^{t}B(u(s))\ind_{[0,\tau)}(s)dW(s)\|_{L^{2}(\Omega\times [0,T];W^{1,2}_0(D))}&=\|B(u)\ind_{[0,\tau)}\|_{L^2(\Omega\times[0,T]\times\N;W^{1,2}_0(D))}\\
&\lesssim 1+\|u\|_{L^{2}(\Omega\times[0,T];W^{1,2}_{0}(D))}<\infty
\end{align*}
and so we also have $ t\mapsto\int_{0}^{t}B(u(s))dW(s)\in L^{2}(0,\tau;W^{1,2}_{0}(D)) $ pathwise almost surely. Consequently, we have
 $$ t\mapsto u_0+ \int_{0}^{t}\Div(a(u(s))\nabla u(s))+\Div (G(u(s)))\operatorname{ds}\in L^{2}(0,\tau;W^{1,2}_{0}(D)) $$
  pathwise almost surely. On the other hand, the fundamental theorem of calculus yields $ t\mapsto u_0+ \int_{0}^{t}(\Div(a(u(s))\nabla u(s))+\Div (G(u(s))))\operatorname{ds}\in W^{1,2}(0,\tau;W^{-1,2}(D)) $ almost surely. Since the embedding
\[ W^{1,2}(0,\tau;W^{-1,2}(D))\cap L^{2}(0,\tau;W^{1,2}_{0}(D))\hookrightarrow C(0,\tau;L^{2}(D)) \]
is bounded, $ t\mapsto u_0+ \int_{0}^{t}\Div(a(u(s))\nabla u(s))+\Div (G(u(s)))\operatorname{ds}$ is uniformly continuous on $ [0,\tau) $ viewed as a function in $ L^{2}(D). $ Clearly, by the Burkholder-Davies-Gundy inequality, the same holds true for the stochastic integral. This closes the proof.
\end{proof} 

In the previous Lemmatas, we extended our local solution $ \big(u,(\tau_n)_n,\tau \big) $ to the closed interval $ [0,\tau] $ and derived estimates for $ u $ on $ [0,\tau]. $ As a consequence, we can apply a regularity result for quasilinear stochastic evolution equations in divergence form, that yields additional regularity properties for $ u. $ It turns out, that $ u $ is even pathwise H\"older continuous in space and time.

\begin{Lemma}\label{lp_quasi_example_global_hoelder_regularity}
If we assume $ (\operatorname{GD1}) $-$ (\operatorname{GD7}) $ and $ u_0\in L^{m}(\Omega\times D) $ for every $ m\in [2,\infty) $, the process $ u:\Omega\times[0,\tau]\times D\to\R $ is pathwise H\"older- continuous in space and time. More precisely there exists $ \eta>0 $, such that 
\begin{align*}
\E\Big( \sup\limits_{t\in[0,\tau],x\in D}|u(t,x)|+\sup\limits_{t,s\in[0,\tau],x,y\in D}\frac{|u(t,x)-u(s,y)|}{\max\{|t-s|^{\eta},|x-y|^{2\eta} \}}\Big)^{m}<\infty
\end{align*}
for every $ m\in [2,\infty). $
\end{Lemma}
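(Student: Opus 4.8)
The plan is to argue pathwise and to reduce the statement to the classical De Giorgi--Nash--Moser regularity theory for divergence-form parabolic equations with bounded measurable coefficients, the stochastic term being treated as a perturbation controlled by the Burkholder--Davis--Gundy inequality; this is precisely the strategy of \cite{debussche_arnoud_hofmanova_degenerate_quasilinear}, adapted from periodic to homogeneous Dirichlet boundary conditions and from a deterministic interval to the random interval $[0,\tau]$. Fix $\omega$ outside a null set. By Lemma~\ref{lp_quasi_example_global_continuity_up_to_tau} the path $u(\omega,\cdot)$ extends continuously to $[0,\tau(\omega)]$ with values in $L^{2}(D)$, lies in $L^{2}(0,\tau(\omega);W^{1,2}_{0}(D))$, and is a weak solution on $[0,\tau(\omega)]\times D$ of
\[ \partial_t u = \Div\big(a(\cdot,u)\nabla u\big)+\Div\big(G(u)\big)+\dot M,\qquad u|_{\partial D}=0,\quad u(0)=u_0, \]
where $M(t):=\int_0^t B(\cdot,u(s))\,dW(s)$. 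The decisive structural point is $[\operatorname{GD2}]$: the matrix $a(\cdot,u(\omega,\cdot,\cdot))$ is bounded by $\|a\|_\infty$ and uniformly elliptic with constant $\delta_0$, with \emph{bounds independent of $\omega$}; moreover $G$ and $B$ depend on $u$ only, hence are of lower order.

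First I would upgrade the moment bounds of Lemma~\ref{lp_quasi_example_global_uniform_lp_estimate}. Repeating the It\^o-formula computation there with the truncated powers $\phi_n$, but raising the resulting differential inequality to the $m$-th power before invoking Gronwall (the martingale increments being handled by Burkholder--Davis--Gundy exactly as in that proof), gives $\E\|u\ind_{[0,\tau)}\|^m_{L^\infty(0,T;L^m(D))}+\E\|\nabla u\ind_{[0,\tau)}\|^m_{L^2((0,T)\times D)}\lesssim 1+\|u_0\|^m_{L^m(\Omega\times D)}$ for every $m\in[2,\infty)$. A parabolic Moser iteration on the stochastic equation --- testing against powers of $u$ truncated at height $k$ and absorbing the martingale contributions by Burkholder--Davis--Gundy at each level --- then yields the $L^\infty$-bound $\E\|u\ind_{[0,\tau)}\|^m_{L^\infty((0,T)\times D)}<\infty$, so $G(u)\in L^m(\Omega;L^\infty)$ as well. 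Since $W^{1,2}_{0}(D)$ is of type $2$, the maximal inequality for stochastic integrals together with $[\operatorname{GD7}]$ gives moreover $\E\sup_{t\le\tau}\|M(t)\|^m_{W^{1,2}_0(D)}\lesssim \E\|B(\cdot,u)\ind_{[0,\tau)}\|^m_{L^2(0,T;\gamma(l^2;W^{1,2}_0(D)))}<\infty$, and a Kolmogorov--Burkholder--Davis--Gundy argument upgrades this to $M\in L^m(\Omega;C^{\gamma}(0,\tau;W^{1,2}_0(D)))$ for every $\gamma<1/2$.

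Second, I would derive the H\"older estimate. Put $v:=u-M$; then $v$ solves pathwise, in $W^{-1,2}(D)$, the \emph{linear} divergence-form problem $\partial_t v=\Div\big(a(\cdot,u)\nabla v+F\big)$, $v|_{\partial D}=0$, $v(0)=u_0$, with $F:=a(\cdot,u)\nabla M+G(u)$ and with coefficients that are bounded and uniformly elliptic with constants independent of $\omega$. De Giorgi's interior and up-to-the-boundary oscillation decay for such equations with homogeneous Dirichlet data then produces an exponent $\eta=\eta(d,\delta_0,\|a\|_\infty)>0$, independent of $\omega$, together with a pathwise bound for $\sup\nolimits_{t,s,x,y}|v(t,x)-v(s,y)|/\max\{|t-s|^\eta,|x-y|^{2\eta}\}$ (and for $\sup_{t,x}|v(t,x)|$) in terms of $\|u\|_{L^\infty((0,\tau)\times D)}$, of $\|F\|_{L^{r}((0,\tau)\times D)}$ for a suitable parabolic exponent $r>d+2$, and of a norm of $u_0$ controlled, via $B^{1-2/p}_{q,p,0}(D)\hookrightarrow C(\bar D)$, by data with finite moments. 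The required $L^r$-integrability of $F$ comes from the $L^\infty$-bound on $G(u)$ and from the space--time regularity of $\nabla M$ gained above, through a bootstrap that, when $d\ge 2$, feeds the freshly obtained H\"older regularity of $u$ back into the estimate; this interplay --- reproducing line by line the argument in \cite{debussche_arnoud_hofmanova_degenerate_quasilinear} and \cite{hofmanova_zhang_existence_uniqueness_quasilinear} --- is the main obstacle. Adding back the H\"older-in-space-and-time regularity of $M$ (again from Burkholder--Davis--Gundy and $[\operatorname{GD7}]$) yields pathwise H\"older continuity of $u$ on $[0,\tau]\times\bar D$. Finally, every constant above is polynomial in the deterministic quantities $\delta_0,\|a\|_\infty$ and in $\|u\|_{L^\infty}$, $\|\nabla u\|_{L^2((0,\tau);L^2)}$, the norms of $M$ and of $u_0$ --- all of which possess moments of every order by the first step and by hypothesis --- so taking $m$-th expectations and applying H\"older's inequality in $\omega$ gives the asserted bound $\E(\cdots)^m<\infty$ for every $m\in[2,\infty)$.
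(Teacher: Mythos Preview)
Your proposal is correct and follows essentially the same route as the paper: pathwise reduction to classical parabolic regularity theory for divergence-form operators with bounded measurable coefficients, the stochastic integral being controlled separately via Burkholder--Davis--Gundy and the a priori bounds of Lemmas~\ref{lp_quasi_example_global_uniform_lp_estimate} and~\ref{lp_quasi_example_global_continuity_up_to_tau}. The only difference is one of presentation: the paper simply records the inputs $u\ind_{[0,\tau]}\in L^{m}(\Omega;L^{\infty}(0,T;L^{m}(D)))\cap L^{2}(\Omega\times[0,T];W^{1,2}_{0}(D))$ together with the compatibility of $u_0$ on $\partial D$, and then invokes Theorem~2.6 of \cite{debussche_arnoud_hofmanova_regularity_quasilinear} (whose proof rests on the Ladyzhenskaya--Solonnikov--Uralceva estimates rather than De~Giorgi's oscillation lemma, but this is immaterial), checking only that the argument goes through on the random interval $[0,\tau]$ and for $C^{1}$-domains; you instead unpack that black box and sketch the splitting $u=v+M$ and the subsequent bootstrap. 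Both approaches lead to the same $\eta$ depending only on $d,\delta_0,\|a\|_{\infty}$ and to moment bounds polynomial in quantities already controlled.
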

\begin{proof}
By Lemma \ref{lp_quasi_example_global_uniform_lp_estimate} and Lemma \ref{lp_quasi_example_global_continuity_up_to_tau}, we have
\begin{align*}
u\ind_{[0,\tau]} \in L^{m}(\Omega;L^{\infty}(0,T;L^{m}(D)))\cap L^{2}(\Omega\times [0,T];W^{1,2}_{0}(D))
\end{align*}
for all $ m\in [2,\infty) $ and $ u:[0,\tau]\to L^{2}(D) $ is pathwise uniformly continuous. Moreover, our initial value $ u_0\in B^{1-2/p}_{q,p,0}(D) $ satisfies $ u_0=0 $ almost surely on $ \partial D $, since required $ 1-2/p>d/q $. Thus a slight variation of \cite{debussche_arnoud_hofmanova_regularity_quasilinear}, Theorem $ 2.6 $ implies the claimed result. The only change we need is that we investigate the equation on the random interval $[0,\tau] $ instead of $ [0,T]. $ However, in the proof of Theorem $ 2.6  $ one can replace $ T $ by $ \tau $ without further difficulties, since they authors argue pathwise with a classical regularity result about deterministic parabolic equations by Ladyzhenskaya, Solonnikov and Uralceva (see \cite{ladyzenskaja_linear_and_quasilinear_equations}, Theorem $ 10.1 $ in Chapter $ \operatorname{III} $). In \cite{debussche_arnoud_hofmanova_regularity_quasilinear}, Theorem $ 2.6 $, $ \partial D $ was assumed to be smooth, but to apply Ladyzhenskaya's result, a piecewise $ C^{1} $-boundary combinded with the so called condition $ A, $ that is explained in \cite{ladyzenskaja_linear_and_quasilinear_equations} on page $ 9,$ is sufficient. However, with a moment of consideration one checks, that our assumption of a $ C^{1} $-boundary implies this condition $ A $.
\end{proof}

Finally, we can prove the main theorem of this section. We show that our local solution $ u $ is indeed a global solution, that exists on the whole interval $ [0,T]. $ 
For this proof, we compare $ u $ with the solution $ z $ of a stochastic heat equation with the noise $ B(u(t))dW(t). $ Then, we investigate the regularity properties of $ u-z $, which solves a non-autonomous deterministic partial differential equation with a random parameter, by applying results on maximal regularity for both the stochastic heat equation and for the arising non-autonomous equation.

\begin{Theorem}\label{lp_quasi_example_global_global existence}
If we assume $ (\operatorname{GD1}) $-$ (\operatorname{GD7}) $, the local solution $ \big(u,(\tau_n)_n,\tau\big) $ of $ (\operatorname{GDIV}) $ is a global solution, i.e. we have $ \tau=T $ almost surely and the solution satisfies
\[ u\in L^{p}(0,T;W^{1,q}_{0}(D))\cap C(0,T;B^{1-2/p}_{q,p,0}(D)) \]
pathwise almost surely.
\end{Theorem}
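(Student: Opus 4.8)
The plan is to combine the pathwise H\"older bounds of Lemma \ref{lp_quasi_example_global_hoelder_regularity} with a comparison against a linear stochastic heat equation: after subtracting a stochastic term that is regular by stochastic maximal $L^p$-regularity, the difference solves a \emph{deterministic} non-autonomous parabolic equation with H\"older-continuous coefficients, to which classical deterministic maximal $L^p$-regularity applies; the blow-up criterion of Theorem \ref{lp_quasi_example_local_solution} then forces $\tau=T$. First I would reduce to initial data with moments. Setting $\Omega_R:=\{\|u_0\|_{B^{1-2/p}_{q,p,0}(D)}\le R\}$ we have $\PP(\bigcup_{R\in\N}\Omega_R)=1$, and by Corollary \ref{lp_quasi_stochastic_parabolic_local_uniqueness_initial_value} the maximal solution with initial value $u_0\ind_{\Omega_R}$ coincides with $(u,\tau)$ on $\Omega_R$; since $B^{1-2/p}_{q,p,0}(D)\hookrightarrow C(\overline D)$ and $D$ is bounded, $u_0\ind_{\Omega_R}\in L^m(\Omega\times D)$ for every $m\in[2,\infty)$. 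So we may assume $u_0\in L^m(\Omega\times D)$ for all $m$, and then Lemma \ref{lp_quasi_example_global_hoelder_regularity} applies: pathwise almost surely $M:=\sup_{t\in[0,\tau]}\|u(t)\|_{L^\infty(D)}<\infty$ and $\|u(t)-u(s)\|_{L^\infty(D)}\le C\,|t-s|^{\eta}$ on $[0,\tau]$, with these quantities in $L^m(\Omega)$ for all $m$.

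Next, let $z$ be the solution on $[0,\tau)$ of $dz=(\Delta z-z)\,dt+B(t,u(t))\,dW(t)$ with $z(0)=0$, where $-\Delta+1$ is the shifted Dirichlet Laplacian on $W^{-1,q}(D)$; by Lemma \ref{lp_quasi_example_div_properties_of_the_operators} (with coefficient $\mathrm{Id}$) it has a bounded $H^\infty$-calculus of angle $<\pi/2$ and $[W^{-1,q}(D),W^{1,q}_0(D)]_{1/2}=L^q(D)$, while Lemma \ref{lp_quasi_example_div_properties_of_the_spaces} supplies the UMD/type-$2$ and $R$-boundedness hypotheses. From $[\operatorname{GD7}]$ and $\gamma(l^2;L^q(D))\cong L^q(D;l^2)$ one gets $\|B(t,u(t))\|_{\gamma(l^2;L^q(D))}\lesssim 1+\|u(t)\|_{L^\infty(D)}$, so by Lemma \ref{lp_quasi_example_global_hoelder_regularity} the integrand $B(\cdot,u)\ind_{[0,\tau)}$ lies in $L^p(\Omega\times[0,T];\gamma(l^2;L^q(D)))$, and the stochastic maximal regularity estimate \eqref{lp_max_reg_evolution_equation_stochastic_max_reg} yields $z\in L^p(\Omega;L^p(0,T;W^{1,q}_0(D))\cap C(0,T;B^{1-2/p}_{q,p,0}(D)))$; in particular $z\in L^p(0,T;W^{1,q}_0(D))\cap C([0,T];B^{1-2/p}_{q,p,0}(D))$ pathwise almost surely.

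Set $v:=u-z$ on $[0,\tau)$. Using $\Div(a(u)\nabla u)=\Div(a(u)\nabla v)+\Div(a(u)\nabla z)$ one checks that $v$ solves the deterministic problem
\[ \partial_t v=\Div\!\big(a(u(t))\nabla v\big)+g(t),\qquad v(0)=u_0,\qquad v|_{\partial D}=0, \]
on $[0,\tau)$, with $g(t):=\Div(a(u(t))\nabla z(t))-\Delta z(t)+z(t)+\Div(G(u(t)))$. Pathwise almost surely $g\in L^p(0,\tau;W^{-1,q}(D))$: the first three terms because $\|a(u(t))\|_{L^\infty}\le\|a\|_{L^\infty}$ by $[\operatorname{GD2}]$ and $z\in L^p(0,\tau;W^{1,q}_0(D))$, and the last because $[\operatorname{GD5}]$ together with $M<\infty$ gives $G(u)\in L^\infty(0,\tau;L^q(D))$. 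Moreover, by $[\operatorname{GD2}]$ each $A(t):=-\Div(a(u(t))\nabla\,\cdot\,)+1$ has a bounded $H^\infty$-calculus on $W^{-1,q}(D)$ with constants depending only on $\delta_0$ and $\|a\|_\infty$ (Lemma \ref{lp_quasi_example_div_properties_of_the_operators}), constant domain $W^{1,q}_0(D)$, and $t\mapsto A(t)$ is $\eta$-H\"older continuous into $B(W^{1,q}_0(D),W^{-1,q}(D))$ since $\|A(t)-A(s)\|\lesssim\|a(u(t))-a(u(s))\|_{L^\infty}\lesssim|t-s|^{\eta}$. Hence deterministic non-autonomous maximal $L^p$-regularity for constant-domain, H\"older-coefficient problems, applied pathwise on the fixed interval $[0,\tau(\omega)]$, gives $v\in W^{1,p}(0,\tau;W^{-1,q}(D))\cap L^p(0,\tau;W^{1,q}_0(D))\hookrightarrow C([0,\tau];B^{1-2/p}_{q,p,0}(D))$, using $u_0\in B^{1-2/p}_{q,p,0}(D)=(W^{-1,q}(D),W^{1,q}_0(D))_{1-1/p,p}$. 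Therefore $u=v+z\in L^p(0,\tau;W^{1,q}_0(D))$ pathwise a.s.\ and $u:[0,\tau)\to B^{1-2/p}_{q,p,0}(D)$ is uniformly continuous; the blow-up alternative of Theorem \ref{lp_quasi_example_local_solution} then gives $\PP\{\tau<T\}=0$, the displayed regularity follows from the previous line with $\tau$ replaced by $T$, and taking the union over $R\in\N$ removes the reduction to bounded data.

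The main obstacle will be the deterministic non-autonomous maximal $L^p$-regularity step: one must verify that H\"older continuity of $t\mapsto A(u(t))$ in $B(W^{1,q}_0(D),W^{-1,q}(D))$ together with the uniform $H^\infty$-calculus is exactly the hypothesis of an available theorem (e.g.\ of Acquistapace--Terreni / Pr\"uss--Simonett / Disser--ter~Elst--Rehberg type), and that the resulting estimate, whose constant now depends on the random H\"older constant of $u$, is legitimate pathwise; a secondary, purely technical point is to set up the process $z$ cleanly on the random interval $[0,\tau)$ and to handle the passage between the approximating stopping times $\tau_n$ and $\tau$.
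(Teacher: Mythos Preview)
Your proposal is correct and follows essentially the same route as the paper: reduce to bounded initial data, invoke Lemma \ref{lp_quasi_example_global_hoelder_regularity} for pathwise H\"older regularity, subtract the solution of a linear stochastic heat equation (the paper uses $dz=\Delta z\,dt+B(u)\,dW$, you add an innocuous shift $-z$), apply stochastic maximal $L^p$-regularity to $z$ and deterministic non-autonomous maximal $L^p$-regularity to the difference, then conclude via the blow-up criterion. For the non-autonomous step the paper cites Pr\"uss--Schnaubelt, \emph{Solvability and maximal regularity of parabolic evolution equations with coefficients continuous in time}, Theorem 2.5, which needs only continuity of $t\mapsto A(u(t))$ in $B(W^{1,q}_0,W^{-1,q})$ together with the $H^\infty$-calculus of each $A(u(t))$; the paper also inserts a short uniqueness remark (Lions' theory in $L^2(0,\tau;W^{1,2}_0(D))$) to identify the maximal-regularity solution with $u-z$, which you should add for completeness.
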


\begin{proof}
We first check the theorem for $ u_0\in L^{\infty}(\Omega;B^{1-2/p}_{q,p,0}(D)). $ By Theorem \ref{lp_quasi_example_local_solution}, there exists a local solution $\big( u,(\tau_n)_n,\tau\big) $ of $ (\operatorname{GDIV}) $ to the initial value $ u_0. $ Since we chose $ 1-2/p>d/q, $ we have $ u_0\in L^{m}(\Omega\times D) $ for all $ m\in [2,\infty) $ and as a consequence, Lemma \ref{lp_quasi_example_global_hoelder_regularity} implies, that $ u:\Omega\times[0,\tau]\times D\to\R $ is pathwise almost surely uniformly continuous in space and time and $ u\ind_{[0,\tau]}\in L^{m}(\Omega;L^{\infty}(0,T;L^{m}(D))) $.

Next, we consider the equation
\begin{equation*}
\begin{cases}
dz(t)=\Delta z(t)\operatorname{dt}+B(u(t))dW(t) \quad \text{ for } t\in[0,T],\\
z(0)=0.
\end{cases}
\end{equation*}
By $ (\operatorname{GD7}) $, we have $ B(u)\in L^{p}(\Omega\times[0,T];\gamma(l^{2};L^{q}(D))). $ Therefore the maximal $L^{p}$-regularity result for stochastic evolution equations, Theorem \ref{lp_auto_stoch_evolution_equation_global_strong_solution_lp_initial_data}, yields a unique solution 
\[ z\in L^{p}(\Omega\times[0,T];W^{1,q}_{0}(D))\cap L^{p}(\Omega;C(0,T;B^{1-2/p}_{q,p,0}(D))). \]
If we now investigate the difference $ y:=u-z $ on $ [0,\tau], $ we find out, that $ y $ pathwise almost surely solves the deterministic non-autonomous parabolic equation
\begin{equation}\label{lp_quasi_example_global_global existence_1}
\begin{cases}
y'(t)=[\Div(a(u(t))\nabla y(t))+\Div(G(u(t)))+\Div((a(u(t))-I)\nabla z(t)),\\
y(0)=u_0.
\end{cases}
\end{equation}
Note that any solution of this equation in $ L^{2}(0,\tau;W^{1,2}_0(D)) $ is unique by a classical result of Lions for non-autonomous evolution equations governed by forms (see e.g. \cite{showalter_monotone_operators_banach_spaces}, Chapter $ \operatorname{III}, $ Proposition $ 2.3. $)

As a next step, we prove that this equation has deterministic maximal $ L^{p} $-regularity. We estimate
\begin{align*}
\|\Div(a(u(t))\nabla x)-\Div(a(u(s))\nabla x)\|_{W^{-1,q}(D)}&\leq \|(a(u(t))- a(u(s)))\nabla x\|_{L^{q}(D)}\\
&\leq \sup_{x\in D}|a(u(t,x))-a(u(s,x))|\|x\|_{W^{1,q}_{0}(D)}\\
&\lesssim \sup_{x\in D}|u(t,x)-u(s,x)|\|x\|_{W^{1,q}_{0}(D)}
\end{align*}
and since $ u $ is pathwise almost surely uniformly continuous on $ [0,\tau]\times D $ (see Lemma \ref{lp_quasi_example_global_hoelder_regularity}), the mapping $[0,\tau]\ni t\mapsto \Div(a(u(t))\nabla)\in B(W^{1,q}_{0}(D),W^{-1,q}(D) $ is almost surely continuous. Moreover, as we have seen in Lemma \ref{lp_quasi_example_div_properties_of_the_operators}, the operator $ \Div(a(u(t))\nabla)  $ has almost surely for fixed $ t\in[0,\tau] $ a bounded $ H^{\infty} $-calculus on $ W^{-1,q}(D) $ and its domain is given by $ W^{1,q}_{0}(D). $ Therefore we can apply \cite{pruss_schnaubelt_solvability_maximal_regularity_of_parabolic_evolution_equations}, Theorem $ 2.5. $ and obtain the pathwise almost surely maximal $ L^{p} $-regularity of the non-autonomous equation \eqref{lp_quasi_example_global_global existence_1}. Moreover, we both have $ \Div( G(u))\in L^{p}(0,\tau;W^{-1,q}(D)) $ and $ \Div((a(u)-I)\nabla z) \in L^{p}(0,\tau;W^{-1,q}(D))$. Indeed $ [\operatorname{GD6}] $ and the regularity of $ z $ together with $ [\operatorname{GD2}] $ imply
\[ \|\Div(G(u))\|_{ L^{p}(0,\tau;W^{-1,q}(D))}\lesssim \|G(u)\|_{L^{p}(0,\tau;L^{q}(D))}\lesssim 1+\|u\|_{L^{p}(0,\tau;L^{q}(D))}, \]
\[ \|\Div((a(u)-I)\nabla z)\|_{L^{p}(0,\tau;W^{-1,q}(D))}\lesssim \|(a(u)-I)\nabla z\|_{L^{p}(0,\tau;L^{q}(D)}\lesssim \|z\|_{L^{p}(0,\tau;W^{1,q}_{0}(D))}.  \]
As a consequence of maximal regularity, we have
\begin{align*}
\|y\|_{L^{p}(0,\tau;W^{1,q}_{0}(D))}&+\|y\|_{C(0,\tau;B^{1-2/p}_{q,p,0}(D))}\\
&\leq C_{\operatorname{MR}}\big( \|\Div(G(u))\|_{ L^{p}(0,\tau;W^{-1,q}(D))}+\|\Div((a(u)-I)\nabla z)\|_{L^{p}(0,\tau;W^{-1,q}(D))}\big)\\
&\lesssim 1+\|u\|_{L^{p}(0,\tau;L^{q}(D))}+\|z\|_{L^{p}(0,\tau;W^{1,q}_{0}(D))}
\end{align*}
and thus $ y\in L^{p}(0,\tau;W^{1,q}_{0}(D))\cap C(0,\tau;B^{1-2/p}_{q,p,0}(D)) $ pathwise almost surely. With the unique solvability in $ L^{2}(0,\tau;W^{1,2}_0(D)) $ and $ u=y+z $ one sees that $ u $ is also pathwise almost surely in the space $L^{p}(0,\tau;W^{1,q}_{0}(D))\cap C(0,\tau;B^{1-2/p}_{q,p,0}(D)). $ Hence the blow-up alternative from Theorem \ref{lp_quasi_example_local_solution} yields $ \tau=T $ almost surely, which is the desired result. 

Last but not least, we have to deal with arbitrary initial values $ u_0:\Omega\to B^{1-2/p}_{q,p,0}(D). $ Defining $ \Lambda_n:=\{\|u_0\|_{B^{1-2/p}_{q,p,0}(D)}<n \} $ and the truncated initial values $ u_0^{(n)}:=u_0\ind_{\Lambda_n}, $ we can apply the result we derived above and we get global solutions $ u_n $ of $ (\operatorname{GDIV}) $ to the initial value $ u_0^{(n)} $, that pathwise almost surely satisfy 
$u_n\in L^{p}(0,T;W^{1,q}_{0}(D))\cap C(0,T;B^{1-2/p}_{q,p,0}(D)). $ By Corollary \ref{lp_quasi_stochastic_parabolic_local_uniqueness_initial_value}, the solutions $ u_n $ and $ u_m $ coincide on $ \Lambda_{n\wedge m}$ and therefore the pointwise limit $ u=\lim_{n\to\infty}u_n $ is a well-defined adapted process. Moreover, since for almost all $ \omega\in\Omega $ there is an $ n(\omega) $ such that $ u(\omega,\cdot)=u_{n(\omega)}(\omega,\cdot), $ $ u $ solves $ (\operatorname{GDIV}) $ and has pathwise almost surely the claimed regularity.
\end{proof}
The reader may ask, why we could not prove
\[ u\in L^{p}(\Omega\times[0,T];W^{1,q}_0(D))\cap L^{p}(\Omega;C(0,T;B^{1-2/p}_{q,p,0}(D))) \]
under the additional assumption $ u_0\in L^{p}(\Omega\times[0,T];B^{1-2/p}_{q,p,0}(D)) $. This is due to the maximal regularity result for non-autonomous deterministic equations we used. The maximal regularity constant $ C_{\operatorname{MR}} $ highly depends on the modulus of continuity of the coefficient function which is in our case given by $ a(u(\omega,t,x)) $. Therefore, $ C_{\operatorname{MR}} $ depends on the modulus of continuity of $ u $ itself, but this one differs from path to path and cannot be controlled uniformly in $ \omega. $ So, the best estimate, we can achieve is
\begin{align*}
 \|u(\omega,\cdot)\|_{L^{p}(0,T;W^{1,q}_0(D))}&=\|y(\omega,\cdot)+z(\omega,\cdot)\|_{L^{p}(0,T;W^{1,q}_0(D))}\\
 &\leq CC_{\operatorname{MR}}(\omega)\big(1+\|u(\omega,\cdot)\|_{L^{p}(0,\tau;L^{q}(D))}+\|z(\omega,\cdot\|_{L^{p}(0,\tau;W^{1,q}_{0}(D))}\big)
\end{align*} 
for almost $ \omega\in\Omega $, but it is impossible to control $\|u\|_{L^{p}(\Omega\times[0,T];W^{1,p}_0(D))}$ in this way. One would need a significantly stronger result on maximal $ L^{p} $- regularity for non-autonomous deterministic equations with $ C_{MR} $ only depending on the upper bound and the ellipticity constant of the coefficient function $ a(u(\omega,t,x)). $ Unfortunately, such a result is only known for $ p=2 $ by a classical result of Lions and for $ p\in [2-\varepsilon,2+\varepsilon] $ for some small $ \varepsilon>0 $ by a recent result of Disser, ter Elst and Rehberg (see \cite{disser_elst__rehberg_maximal_regularity_nonautonomous}, Proposition $ 6.3 $). This can be used to prove at least  
 $$ u\in L^{p}(\Omega;L^{r_1}(0,T;W^{1,r_2}_0(D))) $$
for $ r_1,r_2\in [2-\varepsilon,2+\varepsilon] $.
    \subsection{Acknowledgement}
I gratefully acknowledge financial support by the Deutsche Forschungs\-gemeinschaft (DFG) through CRC 1173. Moreover, I thank my advisor Lutz Weis and Roland Schnaubelt for many useful discussions and for pointing out references on the subject. I am also grateful, that Moritz Egert and Robert Haller-Dintelmann showed me the precise dependence of the $ H^{\infty} $-calculus for divergence-form operators with mixed boundary conditions on the coefficients in Lemma \ref{lp_quasi_example_div_properties_of_the_operators}. Last but not least, I want to mention the help of Fabian Hornung and Christine Grathwohl. They read the article carefully and gave many useful comments.\newpage
    \bibliographystyle{abbrv} 
    \bibliography{Literaturverzeichnis}

\end{document}